\definecolor{leichtgrau}{gray}{.80}
\def\comment#1{\colorbox{leichtgrau}{{\begin{varwidth}{\dimexpr\linewidth-3\fboxsep}#1\end{varwidth}}}}
\newtheorem{theorem}{Theorem}[chapter]
\theoremstyle{definition} 
\newtheorem{definition}[theorem]{Definition} 
\theoremstyle{definition} 
\newtheorem{lemma}[theorem]{Lemma} 
\theoremstyle{definition} 
\theoremstyle{definition} 
\newtheorem{corollary}[theorem]{Corollary} 
\theoremstyle{definition}
\theoremstyle{definition} 
\newtheorem{assumption}[theorem]{Assumption} 
\theoremstyle{definition}
\begin{document}
\pagestyle{empty}
\begin{titlepage}
\begin{center}
\includegraphics{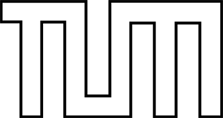}\\[3mm]
\sf
{\Large
  Technische Universit\"at M\"unchen\\[5mm]
  Department of Mathematics\\[8mm]
}
\normalsize
\includegraphics{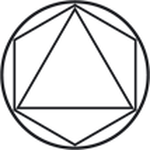}\\[15mm]

Master's Thesis\\[15mm]
{\Huge
  Mathematical foundation of
  \bigskip
  Information Field Dynamics
}

\bigskip

\normalsize

Christian M\"unch
\end{center}
\vspace*{75mm}

Supervisor: Prof. Dr. Simone Warzel
\medskip

Advisor: PD Dr. Torsten En\ss lin
\medskip

Submission Date: 14th November 2014

\end{titlepage}

\vspace*{150mm}

I assure the single handed composition of this master's thesis only supported by declared resources.
\bigskip

Garching, 
\newpage
\pagenumbering{roman}
\pagestyle{headings}
\section*{Zusammenfassung}
Die Vorhersage bestimmter gemessener Eigenschaften eines physikalischen Prozesses, den Daten $d(0)$, f\"ur einen zuk\"unftigen Zeitpunkt ist in vielen F\"allen w\"unschenswert. Prozesse k\"onnen h\"aufig durch eine vektorwertige Funktion $\phi$, das Signal, und eine Evolutionsgleichung modelliert werden, die jedoch oft nicht analytisch zu l\"osen ist.\\
Ein wachsendes Leistungsverm\"ogen in der Computertechnologie erm\"oglicht jedoch die immer genauere numerische Simulation von Evolutionen.\\
F\"ur Probleme sorgen hierbei einerseits die wenigen Freiheitsgrade der Daten im Vergleich zum Signal, die einen eindeutigen R\"uckschluss unm\"oglich machen, und andererseits eine h\"aufig sehr komplizierte Evolutionsgleichung.\\
Torsten En\ss lin verwendet in seiner Arbeit \citep{enss} \"uber Informations Feld Dynamik (IFD) deshalb einen wahrscheinlichkeitstheoretischen Ansatz und verfolgt dann die Idee, Evolutionsgleichungen zu diskretisieren und dadurch vereinfacht nachzumodellieren.\\
Hierbei nimmt er zu jedem Zeitpunkt $t$ zwischen der Startzeit und der Zeit $T$, zu welcher die Daten simuliert werden sollen, eine lineare Relation zwischen den Daten $d(t)$ und dem Signal $\phi(t)$ durch einen Operator $R(t)$ an. Messabweichungen werden durch einen Zufallsvektor $n(t)$ dargestellt. Auch das Signal wird zu jedem Zeitpunkt als Zufallsvektor interpretiert, dessen Wahrscheinlichkeitsverteilung diejenigen Signale gewichtet, die m\"oglicherweise zu den gemessenen Daten gef\"uhrt haben. Die Daten k\"onnen so als eine Realisierung des Zufallsvektors
\[
R(t)\phi(t) + n(t)
\]
aufgefasst werden.\\
IFD partitioniert das Zeitintervall $(0,T]$ durch $\left((t_{i},t_{i+1}]\right)_{i=0}^{2^N}$ und simuliert induktiv f\"ur jedes $i$ aus den Daten $d(t_{i})$ des vorherigen Zeitschrittes die Daten $d(t_{i+1})$, bis $d(T)$ berechnet ist. Dabei wird die bedingte Wahrscheinlichkeitsdichte $\mathcal{P}_{\phi(t_{i}) \vert d(t_{i})}$ aus Annahmen \"uber $\mathcal{P}_{\phi(t_{i})}$ und $\mathcal{P}_{n(t_{i})}$ berechnet und gem\"a\ss\ der linearisierten Evolutionsgleichung weiterentwickelt. Die so entstehende Dichte $\mathcal{P}_{\phi(t_{i}) \vert d(t_{i+1})}$ wird dann mit der bedingten Wahrscheinlichkeitsdichte $\mathcal{P}_{\phi(t_{i+1}) \vert d(t_{i+1})}$ abgeglichen. Dies geschieht durch Minimierung der relativen Entropie zwischen den Dichten bez\"uglich $d(t_{i+1})$ und legt so den simulierten Datenvektor f\"ur den Zeitpunkt $t_{i+1}$ fest. Dabei entsteht $d(t_{i+1})$ durch Matrixmultiplikation aus $d(t_{i})$. Es ergibt sich ein iterativer Algorithmus zur Berechnung von $d(T)$ aus $d(0)$.\\
In dieser Arbeit wird der Ansatz in \citep{enss} auf mathematische Weise pr\"azisiert und einzelne Teile werden korrigiert. Approximationsschritte werden dabei durch das Verschwinden des informationstechnischen Fehlers im Grenzfall $\delta t= \frac{T}{2^N}\rightarrow 0$ gerechtfertigt. Am Beispiel des eindimensionalen Klein-Gordon Feldes, periodisch \"uber $[0,2\pi)$, wird die Simulation mittels IFD illustriert. Dabei ensteht durch den Grenz\"ubergang $\delta t \rightarrow 0$ eine gew\"ohnliche Differentialgleichung f\"ur die Daten, deren L\"osung eine direkte Simulation von $d(T)$ durch $d(0)$ durch Matrixmultiplikation zul\"asst. Dadurch kann der iterative Algorithmus von IFD durch die Berechnung einer einzelnen Matrix weiter vereinfacht werden.

\newpage
\tableofcontents
\newpage

\pagenumbering{arabic}
\pagestyle{headings}

\pagenumbering{arabic}  
\setcounter{page}{1}
\chapter{Introduction}

\section{Necessity of numerical simulations}
Many physical processes evoke the wish in us to predict several features of them for a prospective time.  Those processes are described by the state of a system, mostly taking the form of a vector valued function, i.e. a field, which we call the signal of interest $\phi$.
The features of interest at different times $t$ are imprinted in data vectors $d(t)$. Most often, these vectors contain only a restricted amount of information about the signal $\phi(t)$ at the corresponding time. Therefore, they usually have less degrees of freedom than the signal.
Furthermore, the underlying evolution equation of the process is often not analytically solvable.
Additionally, due to the overwhelming amount of information, needed to fully specify a continuous signal, and due to the limited information in a finite data vector $d(0)$ for the present time, the initial state of the system $\phi(0)$ is usually not known precisely.\\
The rapidly growing possibilities of computer technology permit us to simulate the physical process numerically until the time of interest, starting with the measured data $d(0)$ at present, with increasing accuracy. Nevertheless, the problem that a continuous field is imperfectly represented by finite data, does not disappear.

\section{Simplification of processes and subgrid structures}
Despite that the access to computer storage already expanded a lot, it is still finite. This leads to the necessity of a discretized signal instead of the real one within simulations, because only finitely many of the degrees of freedom that determine the real process can be stored. For this reason, for a numerical simulation, the implicit assumption about some subgrid structure of the signal is generally needed. This structure allows to specify a discretized evolution equation, which models the real process of interest. By this, the real signal can be sufficiently close approximated with a function that is discrete in time and from a finite dimensional space for every time step.\\
Many physical processes are stable enough in time. And besides this, quite often already an approximation of the signal at every time step of the time discretization by a function from a finite dimensional space has a determining effect on the real signal's parameters of interest, which are contained in the data vectors at those times. Such subgrid structures, like constant, linear or polynomial interpolation, are therefore present in many cases. Examples for subgrid scale modeling approaches can be found in \citep[p.16 f.]{enss}

\section{Simulation despite of limited information}
Analogous to the reconstruction of a signal from measurements, the idea of field reconstruction from a data vector $d(0)$ at present time in computer memory came up. The reconstructed field $\phi(0)$ is then evolved to a later time $T$ and the features of interest $d(T)$ for this time can be computed from  or just read out of $\phi(T)$.\\
The difficulty here is not only a possibly very complicated evolution equation of the process, but also the limited amount of information about the signal, which comes with the initial data. In order to deal with this problem, algorithms which simulate features of the process at a prospective time have to incorporate all the present knowledge about the evolution process of the signal and about its connection to the data vector for different times.\\
Most of those simulation algorithms need explicit assumptions about the subgrid structure of the signal and by this, also about the smoothness of the physical process.\\
This is why specific algorithms are in general only suitable to simulate one or a few number of processes for which they were designed. Also the optimal algorithm for the simulation of a specific kind of problem can change with the state the simulated field is in, or if the type of features one is interested in varies.

\section{Information Field Dynamics}
Information Field Dynamics (IFD), as introduced in \cite{enss}, provides a tool to construct simulation schemes for various physical processes, without any concrete assumptions about the subgrid structure of the problem.\\
The setting of IFD assumes a linear connection between the data $d(t)$ and the signal $\phi(t)$ by a (maybe time dependent) response operator $R(t)$ for every time $t$ in between the initial time and the time $T$ for which the data $d(T)$ is simulated. The response describes a measurement process and can be generalized to an arbitrary measurable operator. Also inaccuracies of the measurement process are taken into account by adding a noise term $n(t)$, which is the realization of a random vector with some distribution $\mathbb{P}_{n(t)}$.\\ 
In a first step, IFD discretizes the evolution equation in time by partitioning the evolution interval $[0,T]$. It then aims to update the initially measured data $d(0)$ inductively, time step for time step. This is done by simulating the data at every of those intermediate times $t_{i}$, where $\left((t_{i},t_{i+1}]\right)_{i=0}^{2^N}$ is the partition of $(0,T]$. The simulated data vector $d(t_{i})$ for time step $t_{i}$ is used here to determine the data for the next time $t_{i+1}$. This way, one finally obtains a simulation of the data vector $d(T)$ for the time of interest.\\   

Picture \ref{fig1} by Torsten En\ss lin illustrates one updating step from $t_{i}$ to $t_{i+1}$ with IFD.\\
By the knowledge of the data vector $d(t_{i})$ and assumptions on the prior statistics of the underlying process $\mathcal{P}_{\phi(t_{i})}$ and the noise distribution $\mathbb{P}_{n(t_{i})}$ for the current time $t_{i}$, IFD first constructs the posterior $\mathcal{P}_{\phi(t_{i}) \vert d(t_{i})}$, i.e. the probability density over all possible configurations $\phi(t_{i})$ of the physical field, which might have resulted in $d(t_{i})$. In Picture \ref{fig1}, this is illustrated by an arrow with the label "signal inference", connecting the "data in computer memory" and the "configuration space" of the signal, the field to be simulated.\\

In a second step, this density is evolved to the next time step according to the differential operator which describes the field dynamics of the signal. The transformation formula for Lebesgue integrable functions is used here. This leads to an evolved posterior $\mathcal{P}_{\phi(t_{i+1}) \vert d(t_{i})}$. Picture \ref{fig1} represents this step with an arrow from the signal's "configuration space" at the starting time $t_{i}$ into its "configuration space" at the later time $t_{i+1}$, marked with "time evolution".\\

The simulated data vector $d(t_{i+1})$ gives us several degrees of freedom for the posterior density $\mathcal{P}_{\phi(t_{i+1}) \vert d(t_{i+1})}$ of the signal at the later time $t_{i+1}$ by its relation to the physical process. This way, $\mathcal{P}_{\phi(t_{i+1}) \vert d(t_{i+1})}$ takes the form of a function depending on $d(t_{i+1})$. Entropic matching between the posterior density at time $t_{i+1}$ and the evolved posterior is used to determine those degrees of freedom and thereby the simulation of the data $d(t_{i+1})$. This means that the relative entropy $D\left( \mathcal{P}_{\phi(t_{i+1}) \vert d(t_{i+1})} \Vert \mathcal{P}_{\phi(t_{i+1}) \vert d(t_{i})}\right)$ between the two densities is minimized w.r.t. the data $d(t_{i+1})$ at the posterior time. Because relative entropy is a measure for information discrepancy, that way the least possible amount of spurious information is added within the update from $t_{i}$ to $t_{i+1}$.\\
In Picture \ref{fig1}, this inference step is marked with an arrow from the "configuration space" at time $t_{i+1}$ to the new "data in computer memory", labeled with "entropic matching".\\

The result of the described procedure is a concrete connection in form of a matrix application between the initial data $d(t_{i})$ and the simulated vector $d(t_{i+1})$, as illustrated by the arrow "simulation scheme" in Picture \ref{fig1}, connecting the old "data in computer memory" and the new.

\section{Content of this work}
In this work, the measure theoretical fundament and the necessary probabilistic framework for IFD are introduced in Chapter \ref{chap:Background on measure theory}.
The notation for the rest of the work is established here and essential properties are developed.\\
Amongst others, this includes a definition for the prior $\mathcal{P}_{\phi}$, the posterior $\mathcal{P}_{\phi \vert d}$, the evidence $\mathcal{P}_{d}$ and the likelihood $\mathcal{P}_{d \vert \phi}$ in case $\phi$ is the unknown quantity (signal) and $d$ the known one (data). Also Bayes's Theorem for the setting of IFD is derived.\\
In Chapter \ref{chap:setting}, the general setting for IFD is described, consisting of the signal with its evolution equation and its linear connection to the data by a response operator.\\
The developed environment is then used in Chapter \ref{chap:Updating the data}, to step by step imbed the physical language used in \citep{enss} into a mathematical framework. Also a general approach for the approximation of an evolution equation is given, serving as a base to describe the construction of a simulation scheme with IFD. Simulation errors in the various steps are pointed out, to give an idea where inaccuracies come in and which steps therefore lead to the necessity of many degrees of freedom of the signal and of many time steps within the simulation. In Chapter \ref{chap:Maximum_Entropy_Principle}, an interpretation of differential entropy as a measure of information is given and the idea of entropic matching is explained. Chapter \ref{chap:Example: Klein-Gordon field} illustrates IFD in an example scenario. In this chapter, also the mentioned matrix relation for the update steps of IFD is derived. It allows to simulate a data vector $d(T)$ which averages a Klein-Gordon field with one dimension in space and periodic over $[0,2\pi)$.\\
In the end of the example, also a non-iterative equation for the direct computation of $d(T)$ from $d(0)$ is constructed. This is reached by the fact that the original problem converts into an ordinary differential equation for the data, if the simulation time steps get infinitesimally small.\\

The work closes with a conclusion in Chapter \ref{chap:Conclusion}, summarizing the derived results.\\

Chapter \ref{chap:Dictionary for physicists} contains a small dictionary, serving as a reference work to look up the notation used in this work. All expressions here are translated into the corresponding terms which are typically used in physical literature.\\

\newpage
The following picture by Torsten En\ss lin illustrates the idea of IFD.
\begin{figure}[ht]
	\centering
  \includegraphics[scale=1]{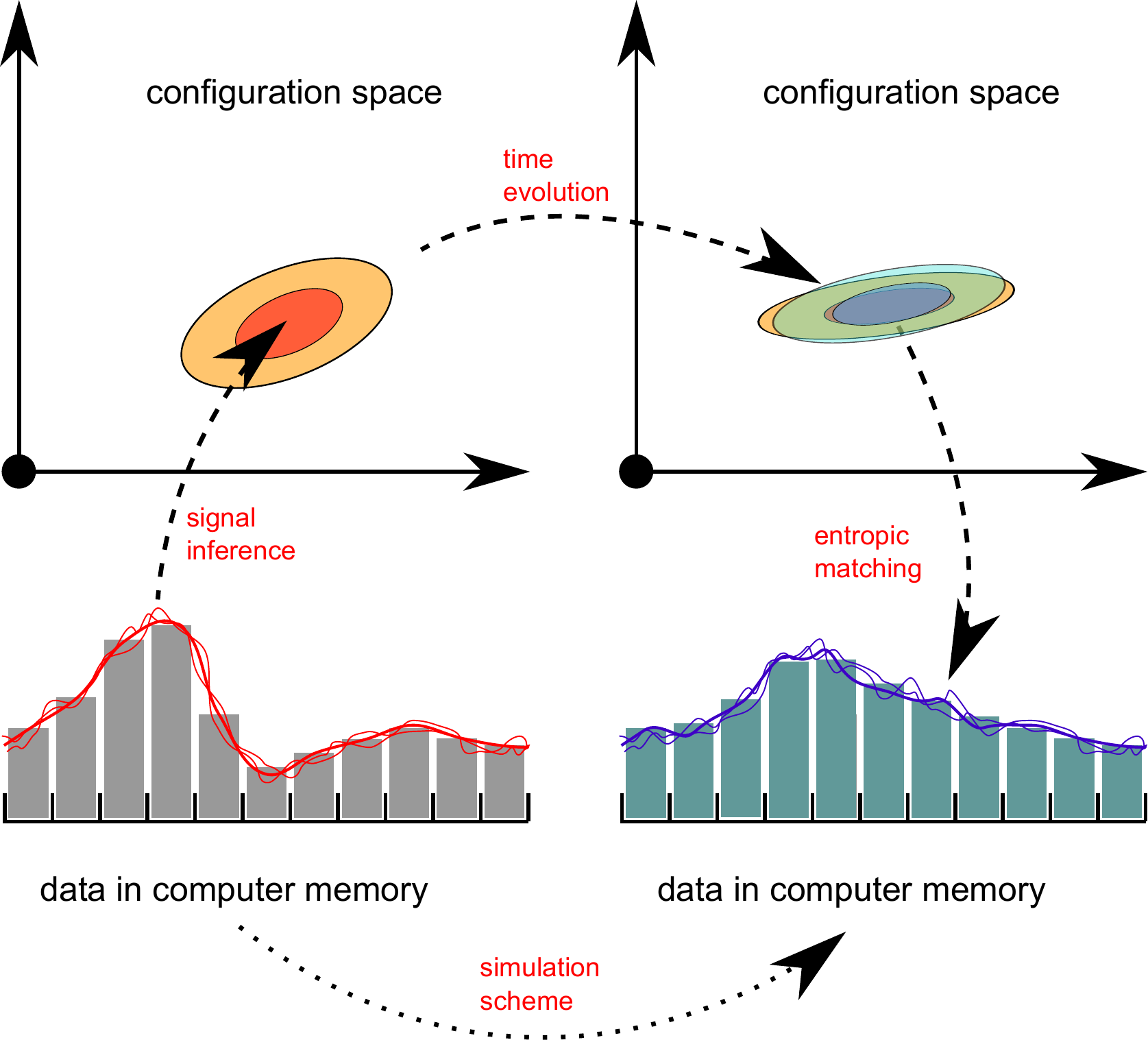} 
	\caption{IFD}
	\label{fig1}
\end{figure}

\chapter{Background on measure theory} 
\label{chap:Background on measure theory} 
\setcounter{section}{-1}
\section{Motivation}
There are many quantities we are interested to know about the world. In the following, such a quantity is called the signal of interest. Some data vector which comes from measuring the signal, constrains the degrees of freedom of the latter within the space of all possible signal configurations. Measurement noise and a large number of signal degrees of freedom often inhibit invertibility of the measurement process, in that several signal configurations could have resulted in the measured data. Therefore, one might consider to work with probability distributions for the signal, representing how likely it is that our measured data vector comes from a certain signal configuration. In this context, technical terms like prior distribution, likelihood, evidence and posterior distribution often appear for specific probabilities.\\
Now there are different concepts to imbed such a problem into a probabilistic setting. One might see probability as a frequency of how often some event happens. This is the stochastic approach to probability theory \cite{fisher}.\\
One might also derive a probabilistic setting as an extension of logic, where the probability of an event $A$ is assumed to indicate how likely it is that $A$ is true. In this case, $A$ is a statement, which one assigns some plausibility value to \cite{cox}.\\
However, all these approaches result in a problem of measure theory and can be recovered within this domain. We therefore should spend some effort to properly define our probabilistic setting in terms of measure theory.\\
For the ones who already feel familiar with those topics, this chapter and especially Section \ref{sec:expectation} might be seen as an introduction into the notation that is used throughout this work.

\section{First terms and definitions}
We start with some general terms and definitions from measure theory, which will consistently be needed in this work.
\begin{definition}[Measure theory]
The following definitions can be found in \cite[p.1 ff.]{kall}.
\begin{itemize}
\item For some abstract space $\Omega$, a $\sigma$-field  $\mathcal{A}$ in $\Omega$ is a nonempty collection of subsets of $\Omega$ which is closed under countable unions and intersections and under complementation.\\
\comment{$\Omega$ can be seen as an index set which labels possible events. In a physical setting, the space in which one defines a $\sigma$-field might for example be a phase space or the configuration space of some field.}

\item A set $A\in \mathcal{A}$, where $\mathcal{A}$ is a $\sigma$-field in $\Omega$, is called a measurable set.\\
\comment{Measurable sets are (classes of) events in our probabilistic setting and we want to assign probabilities to them. More general, one could also define an event, if totally specified, as any $x\in \Omega$. Physical sizes, though, often are adhered some inaccuracy to, so that in our case it makes more sense to define probabilities for classes of events, i.e. for measurable sets $A\in \mathcal{A}$ which embrace single elements of $\Omega$.}

\item If $\mathcal{C}$ is an arbitrary class of subsets of $\Omega$, there is a smallest $\sigma$-field containing $\mathcal{C}$. We call this $\sigma$-field $\sigma(\mathcal{C})$.\\
In case $\Omega = \mathbb{R}^{n}$ for some $n\in \mathbb{N} $, we call $\mathcal{B}(\mathbb{R}^{n})$ the Borel $\sigma$-field which is generated by the set of open sets in $\mathbb{R}^{n}$. If not especially specified, we always mean this $\sigma$-field if we talk about $\mathbb{R}^{n}$.
In general, we define the Borel $\sigma$-field $\mathcal{B}(S)$  in a topological space $S$, where the topology on $S$ is given by the specification of open sets in $S$. $\mathcal{B}(S)$ then means the $\sigma$-field which is generated by the set of open sets in $S$ w.r.t. this topology.\\
\comment{
In physics, where one mostly works with topological spaces, the Borel $\sigma$-field is very often the natural choice.
}

\item A measurable space is a pair $\left( \Omega, \mathcal{A}\right)$, where $\Omega$ is a space and $\mathcal{A}$ is a $\sigma$-field in $\Omega$.

\item A function between two measurable spaces $\ \left(\Omega, \mathcal{A}\right)$ and $\left(S, \mathcal{S}\right)$ is called $\mathcal{A}$-measurable, or just measurable, if the preimage of any set in $\mathcal{S}$ is contained in $\mathcal{A}$.\\
\comment{Amongst others, to consider the Borel $\sigma$-field in a topological space is the natural choice, because this way in particular every continuous mapping between metric spaces - like for example normed spaces - is measurable.\\
The concept of measurable functions allows us to compare measurable sets, i.e. events, in different measurable spaces, by taking the image or preimage of a set under a measurable map between those spaces.}

\item Two measurable spaces $(S,\mathcal{S})$ and $(U,\mathcal{U})$ are Borel isomorphic, if there is a bijection $f:S \rightarrow U$ such that $f$ and $f^{-1}$ are measurable.

\item A space $(S,\mathcal{S})$ which is Borel isomorphic to a Borel subset of $\left[0,1\right]$ is called a Borel space.\\
\comment{We will need this property for several theorems within the derivation of our probabilistic setting.}

\item Given a measurable space $\left( \Omega, \mathcal{A}\right)$, a measure is a function $\mu: \mathcal{A} \rightarrow \mathbb{R}_{+}\cup\lbrace \infty \rbrace$ with $\mu(\emptyset)=0$ and which is countably additive, i.e.:

\[
	\mu \left( \cup_{k \geq 1}A_{k} \right) = \sum_{k \geq 1} \mu (A_{k}) \text{ for } A_{1},A_{2},\cdots\in \mathcal{A}\ \text{disjoint}.
\]
$\mu$ is called $\sigma$-finite, if there is a partition of disjoint sets $A_{1},A_{2},... \in \mathcal{A}$ of $\Omega$ , such that $\mu (A_{n})< \infty$ for all $n$.\\
It is further called finite, if $\mu (\Omega) < \infty$, and a probability measure, if $\mu (\Omega) = 1$.

\item A triple $\left(\Omega, \mathcal{A}, \mu\right)$ is then called a measure space. It is called a probability space, if $\mu(\Omega) = 1$.\\
\comment{This way, one can assign a probability to an event $A\in \mathcal{A}$, which lies in the interval $[0,1]$. $\mu(A)=0$ means that $A$ is impossible, whereas $\mu(A)=1$ indicates that it is certain.\\
In our case, probability measures will mostly have the form of integral measures $f\cdot \lambda$, where $f$ will be a density function and $\lambda$ the Lebesgue measure on $\mathbb{R}^{n}$ for some $n\in \mathbb{N}$. The latter terms will be defined soon.}

\item For two measurable functions $f$ and $g$ from a measure space $\left(\Omega, \mathcal{A}, \mu\right)$ into a measurable space $\left(S, \mathcal{S}\right)$, we say that $f$ and $g$ are equal (or also agree) $\mu$-almost sure ($\mu$-a.s.) or $\mu$-almost everywhere ($\mu$-a.e.), if the set $N = \lbrace \omega\in \Omega: f(\omega)\neq g(\omega) \rbrace$ is of measure zero w.r.t. $\mu$, i.e. $\mu(N)=0$. One also calls a set $N$ with the latter property a $\mu$-null set. If it is clear which measure we refer to, we will also sometimes skip the $\mu$. In general, one says that a property holds a.e. or a.s., if it holds everywhere except for a null set w.r.t. the underlying measure.\\
In this work, we always assume measure spaces $\left(\Omega, \mathcal{A}, \mu\right)$ to be complete, in the sense that we have $\mathcal{A} = \mathcal{A}_{\mu} = \sigma(\mathcal{A},\mathcal{N}_{\mu})$, where $\mathcal{N}_{\mu}$ is the class of all subsets of $\mu$-null sets of $\mathcal{A}$. As we always consider Borel spaces, this assumption can be made w.l.o.g. by \cite[Lemma 1.25]{kall}, which states that a function $f$ from $\left(\Omega, \mathcal{A}, \mu\right)$ into a measurable space $\left(S, \mathcal{S}\right)$ is $\mathcal{A}_{\mu}$-measurable, if and only if there is some $\mathcal{A}$-measurable function $g$ with $f=g$ $\mu$-a.e. The remark after this lemma also assures the existence of a unique extension of $\mu$ from $\mathcal{A}$ to $\mathcal{A}_{\mu}$.\\
\comment{ Fields in physics, e.g. functions over physical spaces like the density of matter in the cosmos or the wave function of an electron, are most often only determined up to null sets with respect to some measure on a measurable space. This shall mean that a function is only a representative out of the equivalence class of functions which agree almost everywhere. This is not a problem though, because the influence of physical fields is most often measured by integration over the space they are defined on, with respect to the underlying measure. Ambiguities of the field values on null sets do not change the value of an integral, as we will see, and are therefore physically irrelevant. One might therefore also argue, that physics in some cases should better be described in terms of equivalence classes of a.e. equal functions, where the underlying measure will mostly be the Lebesgue measure.
}

\item For a measure space $\left(\Omega,\mathcal{A},\mu\right)$ and a simple, nonnegative function $f:\Omega \rightarrow \mathbb{R}$, i.e. 
\[
f = c_{1}1_{A_{1}}+...+c_{n}1_{A_{n}},
\]
with $n\in \mathbb{N}, c_{1},...,c_{n}\in \mathbb{R}_{+}, A_{1},...,A_{n}\in \mathcal{A}$, $\cup_{k=1}^{n}A_{k} = \Omega$ and 
\[
1_{A_{k}}(\omega) = 
\begin{cases}
1,\text{ if }\omega\in A_{k}\\
0 \text{ else},
\end{cases}
\] one can define the integral 
\[
\mu f = \int\limits_{\Omega} f d\mu = \int\limits_{\Omega} f(\omega) \mu(d\omega) := c_{1}\cdot \mu(A_{1})+...+c_{n}\cdot \mathbb{P}(A_{n})
\]
\cite[cf.][p.10]{kall}.\\
This integral can firstly be generalized to measurable functions $f:\Omega \rightarrow \mathbb{R}_{+}$ by approximation with simple functions $f_{n}:\Omega \rightarrow \mathbb{R}_{+}$ from below \cite[see][Lemma 1.11]{kall}:
\[
\lim\limits_{n\rightarrow\infty}f_{n}(\omega) = f(\omega),\ \ \forall\omega\in \Omega,
\]
where
\[
0\leq f_{n}(\omega) \leq f(\omega),\ \ \forall\omega\in \Omega,\ \forall n\in \mathbb{N}.
\]
One can show that the definition of the general integral as a limit of integrals over these simple functions,
\[
\int\limits_{\Omega} f d\mu := \lim\limits_{n\rightarrow\infty} \int\limits_{\Omega} f_{n} d\mu,
\]
is well-defined and unique.\\
A general measurable function $f:\Omega \rightarrow \mathbb{R}$ is called integrable, $f\in L_{1}(\Omega,\mathcal{A},\mu)$, if 
\[
\mu\vert f\vert < \infty,
\]
using the fact that in this case $f=f^{+}-f^{-}$, where $f^{+}=\max\lbrace f, 0\rbrace$ and $f^{-}=\min\lbrace f, 0\rbrace$ are both measurable \cite[cf.][Lemma 1.9]{kall} and integrable and one can define 
\[
\mu f:=\mu f^{+}-\mu f^{-}.
\]
If $f$ is a measurable function into the complex plane $\mathbb{C}$, we define the integral of $f$ as
\[
\mu f:=\mu \mathrm{Re}(f) + i\cdot \mu \mathrm{Im}(f),
\]
in case these integrals exist.
For $A \in \mathcal{A}$, one also defines 
\[
\int\limits_{A} f d\mu  := \int\limits_{\Omega} f\cdot1_{A} d\mu .
\]
For a measure space $\left(\Omega,\mathcal{A},\mu \right)$ and $p>0$, $L_{p}\left(\Omega,\mathcal{A},\mu \right)$ is the space of all equivalence classes of $\mathcal{A}$-measurable functions $f:\Omega \rightarrow \mathbb{R}$ that agree $\mu $-a.e. and satisfy
\[
\Vert f \Vert_{p} = \left( \mu  \vert f \vert^{p} \right)^{1/p} < \infty.
\]
In some cases we also talk about $L_{p}\left(\Omega,\mathcal{A},\mu \right)$ as the space that is constructed from complex valued functions, but this will be mentioned then.\\
For a measurable function $f=(f_{1},...,f_{n})^T:\Omega \rightarrow \mathbb{R}^{n}$, with $n\in \mathbb{N}$, or analogously for $f=(f_{1},...,f_{n})^T:\Omega \rightarrow \mathbb{C}^{n}$, we define
\[
\mu f := (\mu f_{1},...,\mu f_{n})^T,
\]
in case all the integrals exist.\\
\comment{As mentioned above, integrals are needed to calculate a property of a function. We will see that they are also necessary to define the expectation value of a random element and its variance. Those quantities give a hint on which value one has to expect for the random element in the long run and how large the deviation of this value is in average. By \cite[Lemma 1.24]{kall}, if two functions agree a.e., then also their integrals coincide. As already mentioned, this justifies the interpretation of physical fields as equivalence classes of functions that are only a.e. determined and that are defined only up to null sets.}

\item If for two measures $\mu$ and $\nu$, one has the relation 
\begin{align}
\nu = f \cdot \mu\label{density}
\end{align}
for a measurable function $f$, meaning that $\nu(A) = \int\limits_{A} f d\mu$ for all measurable sets $A$, we call $f$ the $\mu$-density or $\mu$-derivative of $\nu$. If $\nu$ is a probability measure, so that $f \geq 0$ $\mu$-a.s. and $\mu f = 1$, we call $f$ the probability density of $\nu$ w.r.t. $\mu$.\\
\comment{In this work, we will mostly deal with probability densities of random elements with respect to the Lebesgue measure.}

\item For two measurable spaces $\left(S,\mathcal{S}\right)$ and $\left(U,\mathcal{U}\right)$, the $\sigma$-field $\mathcal{S}\otimes \mathcal{U}$ on the product space $S\times U$ is defined as the $\sigma$-field that is generated by sets of the form $A\times B$, where $A\in \mathcal{S}$ and $B\in \mathcal{U}$.\\
\comment{This definition is already a preparation for the joint distribution of two random elements.}
\end{itemize}
\end{definition}

We will need Fubini's Theorem \cite[Theorem 1.27]{kall} in many calculations.
\begin{theorem}[Product measures and iterated integrals, Lebesgue, Fubini, Tonelli]\label{fubini}
For any $\sigma$-finite measure spaces $\left(S,\mathcal{S},\mu\right)$ and $\left(U,\mathcal{U},\nu\right)$, there exists a unique measure $\mu\otimes \nu$ on $\left(S\times U,\mathcal{S}\otimes \mathcal{U}\right)$, satisfying
\[
(\mu \otimes \nu)(B\times C) = \mu B \cdot \nu C, \: \forall B\in \mathcal{S}, C\in \mathcal{U}.
\]
Furthermore, for any measurable function $f: S\times U \rightarrow \mathbb{R}_{+}$, it is
\[
(\mu \otimes \nu)f = \int\limits_{S} \int\limits_{U} f(s,u)\nu(du) \mu(ds) = \int\limits_{U} \int\limits_{S} f(s,u)\mu(ds) \nu(du).
\]
The last relation remains valid for any measurable function $f:S\times U \rightarrow \mathbb{R}$ with 
\[(\mu \otimes \nu)\vert f \vert < \infty.\]
\end{theorem}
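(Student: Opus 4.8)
The plan is the classical three-step route: first construct the product measure $\mu\otimes\nu$ and establish its defining property, then prove the iterated-integral identity for nonnegative $f$ (the Tonelli part), and finally deduce it for $f$ with $(\mu\otimes\nu)\vert f\vert<\infty$ (the Fubini part) by splitting into positive and negative parts. Uniqueness of $\mu\otimes\nu$ is the softer half: the measurable rectangles $B\times C$ with $B\in\mathcal S$, $C\in\mathcal U$ form a $\pi$-system generating $\mathcal S\otimes\mathcal U$, and $\sigma$-finiteness lets one exhaust $S\times U$ by such rectangles of finite measure, so a monotone-class / Dynkin $\pi$-$\lambda$ argument shows that any two measures obeying the rectangle formula coincide. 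For existence I would first prove the section-measurability lemma: for every $E\in\mathcal S\otimes\mathcal U$ the section $E_s=\{u:(s,u)\in E\}$ lies in $\mathcal U$ and the map $s\mapsto\nu(E_s)$ is $\mathcal S$-measurable. This is a good-sets argument — the family of $E$ with this property is a $\lambda$-system (closure under increasing unions and proper differences uses monotone convergence, which forces one to reduce first to finite $\nu$ via a $\sigma$-finite exhaustion) and it contains all rectangles, hence all of $\mathcal S\otimes\mathcal U$. Then $(\mu\otimes\nu)(E):=\int_S\nu(E_s)\,\mu(ds)$ is countably additive by monotone convergence under the integral sign and assigns $\mu B\cdot\nu C$ to $B\times C$; running the same construction with the roles of $S$ and $U$ interchanged produces a second candidate, which by uniqueness equals the first, already delivering both iterated-integral formulas for indicator functions.

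For the Tonelli part, the identity for $f=1_E$ is exactly this construction. Linearity extends it to nonnegative simple functions, and for a general measurable $f:S\times U\to\mathbb R_+$ I would choose simple functions $0\le f_n\uparrow f$ (the approximation-from-below lemma quoted in the Definition above) and invoke monotone convergence three times: to the inner integral $\int_U f_n(s,u)\,\nu(du)$, which is $\mathcal S$-measurable and nondecreasing in $n$, and then to the outer integral. Both orders of integration equal $(\mu\otimes\nu)f$, so they agree, and this already proves the first displayed identity for all nonnegative measurable $f$.

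For the Fubini part, applying Tonelli to $\vert f\vert$ shows that $\int_U\vert f(s,u)\vert\,\nu(du)<\infty$ for $\mu$-a.e.\ $s$, so $f(s,\cdot)$ is $\nu$-integrable off a $\mu$-null set and the inner integral is a.e.\ well defined; writing $f=f^{+}-f^{-}$, applying the nonnegative case to each part, and subtracting — legitimate since both parts have finite iterated integrals — gives the identity, with the two orders of iteration matched once more through the symmetric Tonelli formulas. The $\mathbb C$-valued statement then follows by treating real and imaginary parts separately.

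The one genuinely delicate point is the section-measurability lemma together with its $\sigma$-finiteness bookkeeping: the $\lambda$-system step needs $\nu$ (and, for the reversed construction, $\mu$) finite so that complementation behaves, and the passage to the $\sigma$-finite case must be carried out by exhausting with finite-measure sets and invoking monotone convergence on $\nu(E_s\cap U_k)$. Everything downstream — the three Tonelli limit passages and the $f^{\pm}$ decomposition for Fubini — is routine.
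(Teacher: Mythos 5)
Your outline is correct: the paper does not prove this theorem but imports it verbatim from Kallenberg (Theorem 1.27), and your construction---section measurability via a good-sets/monotone-class argument with the $\sigma$-finite exhaustion, uniqueness from the $\pi$-system of rectangles, Tonelli by monotone approximation with simple functions, and Fubini by the $f^{+}-f^{-}$ split after applying Tonelli to $\vert f\vert$---is exactly the standard argument underlying the cited result. Nothing is missing; the one subtlety you flag (finiteness needed for the $\lambda$-system step before passing to the $\sigma$-finite case) is indeed the only delicate point.
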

By \cite[Theorem A.2.4]{werner}, this theorem stays true for measurable functions $f: S\times U \rightarrow \mathbb{C}$.\\
\comment{In our setting, we will use this theorem for example for the derivation of the posterior - the distribution of our signal given some measured data - from their joint distribution.
By Bayes' Theorem, the posterior can then also be computed from the prior - the distribution of the prior knowledge of the signal - the likelihood - the distribution of the data, given the signal - and the evidence - the distribution of the data.}\\

In the setting of Theorem \ref{fubini}, we call $\mu\otimes \nu$ the product measure of $\mu$ and $\nu$ on $\left(S\times U,\mathcal{S}\otimes \mathcal{U}\right)$.\\

\comment{The joint distribution of two random elements $\xi$ and $\eta$, as we will see, is a product measure, if and only if they are independent. In this case, if the distributions of the random elements, as defined in the next section, are densities $\mathcal{P}_{\xi}$ and $\mathcal{P}_{\eta}$ w.r.t. the Lebesgue measure $\lambda$ and if we write $ds$ for $\lambda(ds)$, one can observe the similarity to the notation
\[
\int\limits_{S} \mathcal{P}_{\xi}(s) \int\limits_{U} \mathcal{P}_{\eta}(u) f(s,u)du ds,
\]
which is quite common in physics.
}

\section{Random elements and expectation value}
With this background from measure theory, we are now able to turn our attention towards the concept of random elements, the fundamental component for the rest of this work.\\
To emphasize that we are in a probabilistic setting, we often call the measure in a probability space $\mathbb{P}$ and the measure space $\left(\Omega, \mathcal{A}, \mathbb{P} \right)$. 
\begin{definition}[Random elements and expectation value]
All the definitions can be found in \cite[p.22 ff.]{kall}.
\begin{itemize}
\item A random element $\xi : \Omega \rightarrow S$ is a measurable function between a probability space $\left(\Omega, \mathcal{A}, \mathbb{P} \right)$ and a measurable space $\left(S, \mathcal{S}\right)$. In this case $\mathbb{P}\circ\xi^{-1}$, the so called distribution of $\xi$, which we denote as $\mathbb{P}_{\xi}$, is a probability measure on $\left(S,\mathcal{S}\right)$, so that $\left(S,\mathcal{S},\mathbb{P}_{\xi}\right)$ gets a probability space. We will therefore mostly only consider the latter space if we talk of a random element $\xi$ and assume all random elements to be defined on the same abstract probability space $\left(\Omega, \mathcal{A}, \mathbb{P} \right)$.\\
If $S=\mathbb{R}$, we call $\xi$ a random variable and if $S=\mathbb{R}^{n}$ for some $n\in \mathbb{N}$, we call it a random vector.\\
\comment{
The underlying abstract space $\left(\Omega, \mathcal{A}, \mathbb{P} \right)$ is sometimes needed to compare the probability of events for different random variables and to compute new distributions from existing ones, like the joint distribution, which is defined in the next bullet point.\\
This space is rarely constructed explicitly, its presence is often postulated and its existence is then shown for special cases. In physics, "the wave function of the Universe" might be regarded as an analogous allegory.\\
An element $A\in \mathcal{A}$, which the probability $\mathbb{P}(A)$ is assigned to, can be seen as a label for the event of a random element $\xi$ in a measurable space $(S,\mathcal{S})$ to be found within the set $B\in \mathcal{S}$, with $A = \xi^{-1}(B)$. It is thus a mark which assigns the probability $\mathbb{P}(A)$ to $B$.\\
One wants to know the probability of such an event $B\in \mathcal{S}$. Because $\mathbb{P}(A)$ makes a statement about how likely $A$ is, one then knows the likelihood of $\xi$ to hit the set $B$ within a realization.\\
By this, one can make predictions about the mean value of $\xi$, or of some function $f(\xi)$, within many realizations. Conversely, one often takes estimates for such expectation values from many realizations, to make statements about the distributions of random elements and to model them. By this, one can make more or less reliable predictions for prospective realizations. This is one connection between the measure theoretical approach to probability theory and the frequency dependent one. One can find a similar connection between measure theory and the approach to probabilities as an extension of logic.\\
In this way, our setting includes different concepts of probability theory and forms a generalizing base in this area.
}

\item For a random element $\xi$ from $\left(\Omega, \mathcal{A}, \mathbb{P} \right)$ to $\left(S, \mathcal{S}\right)$, we define the generated $\sigma$-field  as 
\[
\sigma(\xi) := \lbrace \xi^{-1}(B): B \in \mathcal{S} \rbrace.
\]
For a measurable set $A\in \mathcal{S}$, we write $\left\lbrace \xi\in A  \right\rbrace$ or $\xi^{-1}(A)$ for $\left\lbrace \omega\in \Omega: \xi(\omega)\in A  \right\rbrace$.\\
\comment{Events (or event classes) will most often take this form, because as already mentioned, we are interested in the probability of some random element - like the signal - to attain values within some measurable set. }

\item For two random elements $\xi$ and $\eta$ into measurable spaces $\left(S,\mathcal{S}\right)$ and $\left(U,\mathcal{U}\right)$, by \cite[Lemma 1.8]{kall}, the function $\omega \mapsto \left(\xi(\omega),\eta(\omega)\right)$ is measurable  on the product space $S\times U$, endowed with the $\sigma$-field $\mathcal{S}\otimes \mathcal{U}$. Therefore, one can define the joint distribution $\mathbb{P}_{\xi,\eta}$ as the distribution of this random element.

\item Two random elements $\xi$ and $\eta$ are called independent, if for the generated $\sigma$-fields $\sigma(\xi)$ and $\sigma(\eta)$ it holds
\[
\mathbb{P} (A \cap B) = \mathbb{P}(A) \cdot \mathbb{P}(B),\: \forall A\in \sigma(\xi),\: \forall B\in \sigma(\eta).
\]

\item If $\xi$ is a random element in $\left(S,\mathcal{S}\right)$ and $f:S \rightarrow \mathbb{R}$ is in $L_{1}\left( S, \mathcal{S},\mathbb{P}_{\xi} \right)$, one can define the expectation value of $f\circ \xi$ as
\[
\mathbb{E}_{\xi}\left[ f \right]=\mathbb{E}\left[ f(\xi) \right] := \mathbb{P}_{\xi}f.
\]
If $f$ is complex valued, the expectation value is defined the same way, and by the definition of the integral for complex valued functions, it then splits up into
\[
\mathbb{E}_{\xi}\left[ f \right]= \mathbb{P}_{\xi}\mathrm{Re}(f) + i\cdot \mathbb{P}_{\xi}\mathrm{Im}(f).
\]
In the case that $f:S \rightarrow \mathbb{R}^{n}$ for some $n \in \mathbb{N}$, i.e. $f=(f_{1},...,f_{n})^T$, by \cite[Lemma 1.8]{kall}, $f$ is measurable, if and only if all the $f_{i}$ are measurable and in case all the integrals exist, we define the expectation value of $f\circ \xi$ as
\[
\mathbb{E}_{\xi}\left[ f \right]=\mathbb{E}\left[ f(\xi) \right]:=\left(\mathbb{E}_{\xi}\left[f_{1}\right],...,\mathbb{E}_{\xi}\left[f_{n}\right]\right)^T.
\]
The analogous definition is made for complex valued mappings.\\
One also talks of a random variable $\xi$ to be in $L_{p}$ for $p>0$, if it is in $L_{p}\left( \Omega,\mathcal{A}, \mathbb{P} \right)$.\\
For $A \in \mathcal{A}$ and a random element $\xi$ in $\left(S,\mathcal{S}\right)$, one writes
\[
\mathbb{E}\left[\xi; A \right]:= \int\limits_{\Omega} \xi\cdot1_{A} d\mathbb{P}
\]
for the expectation value of $\xi_{A}:=\xi \cdot 1_{A}$, provided that the integral exists. The latter definition often comes in the form
\[
\mathbb{E}\left[\xi; \left\lbrace \xi \in A \right\rbrace \right]= \int\limits_{S} s \cdot1_{A}(s) \mathbb{P}_{\xi}(ds),
\]
for a set $A\in \mathcal{S}$.\\
\comment{Very often, one estimates certain expectation values $\mathbb{E}_{\xi}\left[ f \right]$ with measurements. Together with additional assumptions, one then makes a guess on the distribution of a random element in the statistical interpretation of probabilities or on its distribution, understood as the representation of some likelihood, if probabilities are seen as logic extended uncertainty. This way, amongst other things, expectation values can be seen as a connection between reality, realizations of a random element and probability theory. Physicists often use the notation
\[
\langle f \rangle_{(\xi)}:= \mathbb{E}_{\xi}[f].
\]
}
\end{itemize}
\end{definition}

For the existence issue, we note that in this work all measurable spaces are Borel spaces. \cite[Theorem 2.19,\ (existence,\ Borel)]{kall} states, that for any probability measures $\mu_{1},\mu_{2},...$ on Borel spaces $S_{1},S_{2},...$, there exist independent random elements $\xi_{1}, \xi_{2},...$ on $([0,1],\lambda_{\mathbb{R}})$, where $\lambda_{\mathbb{R}}$ is the Lebesgue measure, with distributions $\mu_{1},\mu_{2},...$.\\
In particular, the existence of all random elements $\xi$, where $\mathbb{P}_{\xi}$ is an arbitrary probability measure on a Borel space, is assured.

\section{Conditional expectation and conditional probability}
\label{sec:expectation}
Now, we introduce the concept of conditional expectation and conditional probability. These concepts will be needed, when the outcome of a certain random element is known or can be restricted to some measurable set and if we want to derive a distribution which represents the outcome of another random element, given this information.\\
\comment{In our case, as an example, we want to make a statement about the distribution of our signal, given the measured data, if we have some prior information on the distribution of the signal and if we know the distribution of the data, given the signal.}\\

Before we come to the main definition of this section, we need to introduce a technical term \cite[cf.][p.19]{kall}.
\begin{definition}[Probability kernel]
Given two measurable spaces $\left( S,\mathcal{S} \right)$ and $\left(U,\mathcal{U} \right)$, a mapping
\[
\mu: S \times U \rightarrow \mathbb{R}_{+}\cup \infty
\]
is called a (probability) kernel from $S$ to $U$, if the function $\mu_{s}B = \mu(s,B)$ is $\mathcal{S}$-measurable in $s\in S$ for fixed $B \in \mathcal{U}$ and a (probability) measure in $B \in \mathcal{U}$ for fixed $s\in S$.\\
A kernel on the basic probability space $\left(\Omega,\mathcal{A},\mathbb{P}\right)$ is called a random measure \cite[p.83]{kall}.
\end{definition}
\comment{Our setting will assure the existence of a probability kernel which agrees a.s. with the conditional probability, as introduced in the next definition. We will further construct our probability kernel as an integral measure w.r.t. the Lebesgue measure for its second argument and this way, derive the more common concept of conditional probability as a density. Conditional means, that we are only interested in a coarser set of possibilities, so that for example we are left with classes of events only in $\mathcal{F}:=\sigma(F)$, for some $F\in \mathcal{A}$, and the restriction of any conditional random element to $\mathcal{F}$. 
One has to consider the whole $\sigma$-field $\mathcal{F}$ and not only $F$ for the concept of conditional probabilities, because a probability measure is defined on a $\sigma$-field and not only on a single measurable set.
}\\

With the previous definition in mind, we now introduce the so called conditional expectation \cite[cf.][p.81]{kall}. Assume first that we have some probability space $\left(\Omega, \mathcal{A},\mathbb{P}\right)$ and an arbitrary sub-$\sigma$-field $\mathcal{F}$ of $\mathcal{A}$, i.e. $\mathcal{F}$ is a $\sigma$-field and each set in $\mathcal{F}$ is also contained in $\mathcal{A}$. One then introduces the closed linear subspace $M$ within the space $L_{2}(\Omega,\mathcal{A},\mathbb{P})$, including all random variables $\eta\in L_{2}(\Omega,\mathcal{A},\mathbb{P})$, that agree a.s. with some random variable in $L_{2}(\Omega,\mathcal{F},\mathbb{P})$. \cite[Theorem 1.34]{kall} states, that for any $\xi \in L_{2}(\Omega,\mathcal{A},\mathbb{P})$, there exists an a.s. unique random variable $\eta \in M$, such that $\xi - \eta \bot M$, i.e. $\mathbb{P}\left[ (\xi - \eta)\cdot \zeta \right] = 0\ \forall\, \zeta\in M$. This leads us to the definition of the conditional expectation $\mathbb{E}\left[ \xi \vert \mathcal{F} \right]$ as an arbitrary $\mathcal{F}$-measurable version of this $\eta$, i.e. one representative out of the equivalence class of random elements in $L_{2}(\Omega,\mathcal{F},\mathbb{P})$ which agree a.e. with $\eta$.\\
\comment{
Note that in this last passage we always talked about random variables, i.e. measurable functions from $\Omega$ to $\mathbb{R}$.
That is, the multiplication under the integral in
\[
\mathbb{P}\left[ (\xi - \eta)\cdot \zeta \right]
\]
from above is just a scalar multiplication, but the whole expression is a scalar product in the Hilbert space $L_{2}(\Omega, \mathcal{A}, \mathbb{P})$.\\
Remember also the definition of $L_{2}(\Omega,\mathcal{A},\mathbb{P})$ as the space of equivalence classes of $\mathbb{P}$-a.e. coinciding and $\mathcal{A}$-measurable functions $f$, for which
\[
\mathbb{P}\vert f \vert^{2}< \infty.
\]
That is, a function $f$ lies in this space, if $\sigma(f)=\lbrace f^{-1}(B): B\in \mathcal{B}(\mathbb{R}) \rbrace \subset \mathcal{A}$, and if the above integral is finite.\\
An $\mathcal{F}$-measurable function is not necessarily $\mathcal{A}$-measurable. The subspace $M$ is the space of functions which are $\mathcal{A}$-measurable and agree to some element out of $L_{2}(\Omega,\mathcal{F},\mathbb{P})$ on every set in $\mathcal{F}$ which is not a null set w.r.t. $\mathbb{P}$. This subspace is needed, precisely because the integral is only defined for one $\sigma$-field at a time, so that one has to imbed $L_{2}(\Omega,\mathcal{F},\mathbb{P})$ into $L_{2}(\Omega,\mathcal{A},\mathbb{P})$ first, before one can compare elements of the two spaces. An $\mathcal{A}$-measurable function is automatically $\mathcal{F}$-measurable. This is not the case the other way around. So if we want to define an element in $L_{2}(\Omega,\mathcal{F},\mathbb{P})$ by some $\eta$ in $L_{2}(\Omega,\mathcal{A},\mathbb{P})$, we need $\eta$ to be contained in $M$.\\
The conditional expectation $\mathbb{E}\left[ \xi \vert \mathcal{F} \right]$ is the $\mathbb{P}$-a.s. determined $\mathcal{F}$-measurable random variable, that equals $\eta$ from above $\mathbb{P}$-a.e. In this sense, one shrinks the $\sigma$-field of possible events $\sigma(\xi)=\lbrace \xi^{-1}(B): B\in \mathcal{B}(\mathbb{R}) \rbrace$ to $\sigma \left( \lbrace \xi^{-1}(B): B\in \mathcal{B}(\mathbb{R}) \rbrace \right) \cap \mathcal{F}$ and allows the new random variable $\mathbb{E}\left[ \xi \vert \mathcal{F} \right]$ to only hit sets whose preimage is contained within this smaller $\sigma$-field, i.e. that are labeled by sets in it. This way, one can incorporate new knowledge about a random variable and reduce uncertainty, by diminishing its $\sigma$-field of possible outcomes.\\
For example, consider the case when we want to condition on some event $F\in \mathcal{A}$ with $0<\mathbb{P}(F)<1$. In this case, we set $\mathcal{F}:= \lbrace F, \Omega\backslash F, \Omega, \emptyset \rbrace$. For $\xi \in L_{2}(\Omega,\mathcal{A},\mathbb{P})$, one then has
\[
\mathbb{E}[\xi \vert \mathcal{F}] = \dfrac{\mathbb{E}[ \xi; F]}{\mathbb{P}(F)} \cdot 1_{F} + \dfrac{\mathbb{E}[ \xi; \Omega\backslash  F]}{\mathbb{P}(\Omega\backslash F)} \cdot 1_{\Omega\backslash F}
\]
$\mathbb{P}$-a.s.\\
$\mathbb{E}[\xi \vert \mathcal{F}]$ can only attain two values and the expectation value of $\xi$, given that $F$ is true, can easily be computed from the conditional expectation, because it is
\[
\mathbb{E}[\mathbb{E}[\xi \vert \mathcal{F}]; F] = \mathbb{E}[\xi; F].
\]
}\\

By \cite[Theorem 5.1]{kall}, the $L_{2}$-projection $\mathbb{E}\left[.\vert \mathcal{F}\right]$ can be extended to an a.s. unique linear operator $\mathbb{E}\left[ . \vert \mathcal{F} \right] : L_{1}(\Omega,\mathcal{A},\mathbb{P}) \rightarrow L_{1}(\Omega,\mathcal{F},\mathbb{P})$, with the property 
\[
\mathbb{E}\left[\mathbb{E}\left[\xi \vert \mathcal{F} \right]; A \right] = \mathbb{E}\left[\xi; A \right],\: \xi \in L_{1}(\Omega,\mathcal{A},\mathbb{P}),\: A\in \mathcal{F}. \label{condexpec}
\]
In most of the cases, $\mathcal{F}$ will be the $\sigma$-field $\sigma(\eta)$, generated by some other random element $\eta$ which maps into a measurable space $\left(U,\mathcal{U} \right)$.
We then write $\mathbb{E}\left[ \xi \vert \eta \right]$ for $\mathbb{E}\left[ \xi \vert \sigma(\eta) \right]$.\\
Now we have enough background to define the conditional probability \cite[cf.][p.83]{kall}.
\begin{definition}[Conditional probability]
The conditional probability of an event $A\in \mathcal{A}$, given a $\sigma$-field $\mathcal{F} \subset \mathcal{A}$, we define as
\[
\mathbb{P}\left[A \vert \mathcal{F}\right] := \mathbb{E}\left[ 1_{A} \vert \mathcal{F} \right].
\]
If $\eta$ is a random element in some measurable space $\left(U,\mathcal{U}\right)$, this definition reads as
\[
\mathbb{P}\left[A \vert \eta\right] := \mathbb{E}\left[ 1_{A} \vert \eta \right],\: A\in \mathcal{A}.
\]
\end{definition}
The conditional probability, by \cite[Theorem 5.1]{kall}, is the a.s. unique random variable in $L_{1}\left(\Omega,\mathcal{F},\mathbb{P}\right)$ (or in the second case in $L_{1}\left(\Omega,\sigma(\eta),\mathbb{P}\right)$), satisfying
\[
\mathbb{E}\left[ \mathbb{P}\left[A \vert \mathcal{F}\right] ; B \right] \overset{\eqref{condexpec}}{=} \mathbb{E}\left[ 1_{A} ; B \right] = \mathbb{P}\left( A \cap B \right),\: \forall B\in \mathcal{F},
\]
or 
\[
\mathbb{E}\left[ \mathbb{P}\left[A \vert \eta\right] ; B \right] = \mathbb{P}\left( A \cap B \right),\: \forall B\in \sigma(\eta).\label{unique}
\]
\comment{
So for two random elements $\xi$ in $(S,\mathcal{S})$ and $\eta$ in $(U,\mathcal{U})$, the probability 
\[
\mathbb{P}\left( \lbrace \xi \in A \rbrace \cap \lbrace \eta \in B \rbrace \right) = \mathbb{P}_{\xi,\eta}(A \cap B)
\]
of an event, where $A \times B \in \mathcal{S}\otimes \mathcal{U}$, can not only be computed with the joint distribution $\mathbb{P}_{\xi,\eta}$, but also as the expectation value of the conditional probability $\mathbb{P}\left[ \lbrace \xi \in A \rbrace \vert \eta \right]$, restricted to $\lbrace \eta \in B \rbrace$ by multiplication with $1_{\lbrace \eta \in B \rbrace}$.
Because by \cite[Lemma 1.13]{kall}, $\mathbb{P}\left[A \vert \eta\right]$ takes the form $f(\eta)$ for a measurable function $f$ on $U$ \cite[cf.][p.83]{kall} and with this function, the same probability can also be computed by the integral
\[
\int\limits_{B}f(u)\mathbb{P}_{\eta}(du).
\]
Note also that we now talk about arbitrary random elements again, and not just about random variables. The random variable that we need for our definition of the conditional expectation appears as an indicator function $1_{\lbrace \xi \in A \rbrace}$ for the measurable set $\lbrace \xi \in A \rbrace$, which lies in the $\sigma$-field generated by $\xi$.\\
We might now be interested in the probability of the random element $\xi$ to hit the set $A\in \mathcal{S}$, i.e. of the event $\lbrace \xi \in A \rbrace$, which is described by the expectation value of the random variable $1_{\lbrace \xi \in A \rbrace}$. If in addition, there is another random element $\eta$, then this probability, with present knowledge about $\eta$, is described by the random variable $\mathbb{P}\left[ \lbrace \xi \in A \rbrace \vert \eta \right]$ in the following sense. From observations, we might know, or at least assume with high probability, that $\eta$ only hits values in a set $B\in \mathcal{U}$. The probability of $\xi$ to hit $A$, given this knowledge, is thus described as the expectation value of the random variable $\mathbb{P}\left[ \left\lbrace \xi \in A \right\rbrace \vert \eta \right] \cdot 1_{\left\lbrace \eta \in B \right\rbrace}$.
}\\

For two random elements $\xi$ in $(S,\mathcal{S})$ and $\eta$ in $(U,\mathcal{U})$, in case there is a probability kernel $\mu$ from $U$ to $S$ satisfying
\[
\mu(\eta,A) = \mathbb{P}\left[ \xi \in A \vert \eta \right]
\]
a.s. in $\left(\Omega, \sigma(\eta), \mathbb{P}\right)$ for every $A \in \mathcal{S}$,
one calls $\mu$ a regular conditional distribution.\\
\comment{If for example $\xi$ is $\eta$-measurable, i.e. $\sigma(\xi) \subset \sigma(\eta)$, then $(u,A)\mapsto 1_{\lbrace\xi \in A \rbrace}$, $u\in U$, $A\in \mathcal{S}$, is such a regular version.\\
In case $\xi$ is independent of $\eta$, then the function $(u,A)\mapsto \mathbb{P}(\lbrace \xi \in A \rbrace)$, $u\in U$, $A\in \mathcal{S}$, is one \cite[cf.][p.84]{kall}.\\
In the latter case, one has 
\[
\mathbb{P}(\left\lbrace \xi \in A \right\rbrace \cap \left\lbrace \eta \in B \right\rbrace)
= \mathbb{P}_{\xi,\eta}(A \cap B)
= \mathbb{P}_{\xi}(A) \cdot  \mathbb{P}_{\eta}(B)
= \mathbb{P}(\left\lbrace \xi \in A \right\rbrace) \cdot  \mathbb{P}( \left\lbrace \eta \in B \right\rbrace),
\]
for $A \times B \in \mathcal{S}\otimes \mathcal{U}$.
}\\

In this work, we will only be confronted with the case, when $S = \mathbb{R}^{n_{S}}$ and $U = \mathbb{R}^{n_{u}}$, for maybe distinct $n_{S},n_{U} \in \mathbb{N}$.
Those spaces, by \cite[p.7]{kall}, are Borel spaces, because they are separable and complete with respect to a norm.\\
Therefore, for two random elements $\xi$ in $S = \mathbb{R}^{n_{S}}$ and $\eta$ in $U = \mathbb{R}^{n_{U}}$, \cite[Theorem 5.3]{kall} states, that there is indeed a probability kernel $\mu$ from $U$ to $S$ satisfying 
\begin{align}
\mathbb{P}\left[\xi \in .\vert \eta \right] = \mu(\eta,.)\label{unique_prob_kernel}
\end{align}
a.s. and that $\mu$ is $\mathbb{P}_{\eta}$-a.e. unique in the first argument.\\
For this probability kernel, the disintegration theorem \cite[Theorem 5.4]{kall} gives us the identities
\begin{align}
\mathbb{E}\left[ f(\xi,\eta) \vert \eta \right] &= \int\limits_{S} \mu(\eta,ds)f(s,\eta) \; \text{a.s. and}\\
\mathbb{E}\left[ f(\xi,\eta)\right] &= \mathbb{E}\left[ \int\limits_{S} \mu(\eta,ds) f(s,\eta)\right],
\end{align}
for every $f \in L_{1}(S\times U, \mathcal{S}\otimes \mathcal{U}, \mathbb{P}_{\xi,\eta})$.\\
These equations already look quite similar to what most people know as Bayes' Theorem. We want to use this identity for our setting and therefore shrink it to the cases that we need.\\

So coming back from the general case to the concrete problems in our case, and switching to the notation of this work, we assume:
\begin{assumption}\label{continuous_density}
The two random elements $\phi$ in $S = \mathbb{R}^{n_{S}}$ and $d$ in $U = \mathbb{R}^{n_{U}}$, with $n_{S},n_{U}\in \mathbb{N}$, have a joint distribution $\mathbb{P}_{\phi,d}$ which is absolutely continuous with respect to the Lebesgue measure $\lambda_{\mathbb{R}^{n_{S}+n_{U}}}$ on $S\times U$.
\end{assumption} 

We denote by $\lambda_{\mathbb{R}^{n_{S}}}$ the Lebesgue measure on $S$, and by $\lambda_{\mathbb{R}^{n_{U}}}$ the one on $U$ and will often write $dx$ for $\lambda(dx)$, if we mean integration with respect to a Lebesgue measure. By the Theorem of Radon-Nikodym \cite[5.2, Theorem 1]{henze}, and the remark after it, there exists a $\lambda_{\mathbb{R}^{n_{S}+n_{U}}}$-a.e. unique density $\mathcal{P}_{\phi,d}\in L_{1}(S\times U, \mathcal{S}\otimes \mathcal{U},\lambda_{\mathbb{R}^{n_{S}+n_{U}}})$ of $\mathbb{P}_{\phi,d}$, i.e.
\[
\mathbb{P}_{\phi,d}(B) = \int\limits_{B} \mathcal{P}_{\phi,d}(s,u) d(s,u)\ \forall B\in \mathcal{S}\otimes \mathcal{U}.
\]
As defined in \eqref{density}, we denote this relation by $\mathbb{P}_{\phi,d} = \mathcal{P}_{\phi,d}\cdot\lambda_{\mathbb{R}^{n_{S}+n_{U}}}$.\\

By Theorem \ref{fubini}, we then have
\begin{align}
\mathbb{P}_{d} &= \mathcal{P}_{d}\cdot\lambda_{\mathbb{R}^{n_{U}}} \text{ for the } \lambda_{\mathbb{R}^{n_{U}}}\text{-density }  \mathcal{P}_{d}(u) = \int\limits_{S} \mathcal{P}_{\phi,d}(s,u) ds \ \text{and}\\
\mathbb{P}_{\phi} &= \mathcal{P}_{\phi}\cdot\lambda_{\mathbb{R}^{n_{S}}} \text{ for the } \lambda_{\mathbb{R}^{n_{S}}}\text{-density } \mathcal{P}_{\phi}(s) = \int\limits_{U} \mathcal{P}_{\phi,d}(s,u) du.
\end{align}
By \cite[Lemma 1.26 and Lemma 1.12]{kall}, the function
\[
(s,u) \mapsto \frac{\mathcal{P}_{\phi,d}(s,u)}{\mathcal{P}_{d}(u)},
\]
as a map on $\left( S\times \lbrace u\in U: \mathcal{P}_{d}(u)>0 \rbrace, \mathcal{S}\otimes \sigma\left( \lbrace u\in U: \mathcal{P}_{d}(u)>0 \rbrace \cap \mathcal{U} \right), \lambda_{\mathbb{R}^{n_{S}+n_{U}}}\right)$, is measurable in $u$ for every $s\in S$ and by construction, $\left(\frac{\mathcal{P}_{\phi,d}(.,u)}{\mathcal{P}_{d}(u)} \right)\cdot\lambda_{\mathbb{R}^{n_{S}}}$ is a probability measure on $S$ for every $u\in \lbrace u\in U: \mathcal{P}_{d}(u)>0 \rbrace$.\\
\comment{
By $\lbrace u\in U: \mathcal{P}_{d}(u)>0 \rbrace \cap \mathcal{U}$, we mean all sets $\lbrace u\in U: \mathcal{P}_{d}(u)>0 \rbrace \cap B$, with $B\in \mathcal{U}$. Because $\lbrace u\in U: \mathcal{P}_{d}(u)>0 \rbrace$ is $\mathcal{U}$-measurable, it is the set of all $B\in \mathcal{U}$, that are also subsets of $\lbrace u\in U: \mathcal{P}_{d}(u)>0 \rbrace$.
}\\

We can therefore define:
\begin{definition}[Conditional probability density]\label{condprobdens}
If the joint distribution $\mathbb{P}_{\phi,d}$ of the random elements $\phi$ and $d$ from Assumption \ref{continuous_density} has the $\lambda_{\mathbb{R}^{n_{S}+n_{U}}}\text{-density } \mathcal{P}_{\phi,d}$, we define the conditional probability density $\mathcal{P}_{\phi \vert d}$ as
\begin{align}
\mathcal{P}_{\phi \vert d}(s,u) &:=  \frac{\mathcal{P}_{\phi,d}(s,u)}{\mathcal{P}_{d}(u)}, \text{ if } \mathcal{P}_{d}(u)>0 
\end{align}
and equal to any probability density function w.r.t. $\lambda_{\mathbb{R}^{n_{S}}}$, if $\mathcal{P}_{d}(u)=0$.
\end{definition}

\begin{theorem}
With Assumption \ref{continuous_density}, for every set $\lbrace \phi \in A \rbrace= \phi^{-1}(A) \in \sigma(\phi)$, with $A\in \mathcal{S}$, it is 
\[
\left(\mathcal{P}_{\phi \vert d}(.,d) \cdot \lambda_{\mathbb{R}^{n_{S}}}\right)(A) = \int\limits_{A} \mathcal{P}_{\phi \vert d}(s,d) ds = \mathbb{P}\left[ \phi \in A \vert d \right]
\]
$\mathbb{P}$-a.s in $(\Omega,\sigma(d),\mathbb{P})$.\\
This defines the probability kernel from $U$ to $S$ from \eqref{unique_prob_kernel} that is $\mathbb{P}_{d}$-a.e unique in the first argument.
\end{theorem}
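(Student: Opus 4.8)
The plan is to write down the kernel explicitly and check it against the characterizing property \eqref{unique} of the conditional probability. First I would fix, once and for all, a jointly $\mathcal{S}\otimes\mathcal{U}$-measurable representative of $\mathcal{P}_{\phi\vert d}$: on the measurable set $\{u\in U:\mathcal{P}_{d}(u)>0\}$ it is the quotient $\mathcal{P}_{\phi,d}(s,u)/\mathcal{P}_{d}(u)$, jointly measurable as a ratio of the jointly measurable $\mathcal{P}_{\phi,d}$ and the strictly positive, $\mathcal{U}$-measurable $\mathcal{P}_{d}$; on the complement $\{\mathcal{P}_{d}(u)=0\}$ I would take the ``any probability density'' allowed in Definition \ref{condprobdens} to be a fixed density $g$ on $\mathbb{R}^{n_{S}}$ not depending on $u$, so that the glued function stays jointly measurable. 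Then set $\mu(u,A):=\int_{A}\mathcal{P}_{\phi\vert d}(s,u)\,ds$ for $u\in U$ and $A\in\mathcal{S}$.

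Next I would verify that $\mu$ is a probability kernel from $U$ to $S$. For fixed $A\in\mathcal{S}$, Tonelli's theorem (Theorem \ref{fubini}) applied to $(s,u)\mapsto 1_{A}(s)\,\mathcal{P}_{\phi\vert d}(s,u)$ shows $u\mapsto\mu(u,A)$ is $\mathcal{U}$-measurable. For fixed $u\in U$, $A\mapsto\mu(u,A)$ is a measure by countable additivity of the integral, and it is a probability measure because $\mathcal{P}_{\phi\vert d}(\cdot,u)$ integrates to $1$: when $\mathcal{P}_{d}(u)>0$ this is Fubini, since $\int_{S}\mathcal{P}_{\phi,d}(s,u)\,ds=\mathcal{P}_{d}(u)$, and when $\mathcal{P}_{d}(u)=0$ it holds because $g$ is a probability density.

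The core step is to show $\mu(d,A)=\mathbb{P}[\phi\in A\vert d]$ $\mathbb{P}$-a.s.\ for every $A\in\mathcal{S}$. By \eqref{unique}, it suffices to check that $\omega\mapsto\mu(d(\omega),A)$ is $\sigma(d)$-measurable (the composition of the $\mathcal{U}$-measurable $\mu(\cdot,A)$ with $d$), lies in $L_{1}(\Omega,\sigma(d),\mathbb{P})$ (clear, since $0\le\mu(d,A)\le1$), and satisfies $\mathbb{E}[\mu(d,A);B]=\mathbb{P}(\{\phi\in A\}\cap B)$ for all $B\in\sigma(d)$. For the last point I would write $B=\{d\in C\}$ with $C\in\mathcal{U}$ and unfold, using the image-measure formula and $\mathbb{P}_{d}=\mathcal{P}_{d}\cdot\lambda_{\mathbb{R}^{n_{U}}}$,
\[
\mathbb{E}[\mu(d,A);B]=\int_{U}1_{C}(u)\Big(\int_{A}\mathcal{P}_{\phi\vert d}(s,u)\,ds\Big)\mathcal{P}_{d}(u)\,du .
\]
On $\{\mathcal{P}_{d}>0\}$ one has $\mathcal{P}_{\phi\vert d}(s,u)\,\mathcal{P}_{d}(u)=\mathcal{P}_{\phi,d}(s,u)$; on $\{\mathcal{P}_{d}=0\}$ both $\mathcal{P}_{d}(u)=0$ and, since $\mathcal{P}_{\phi,d}\ge0$ with $\int_{S}\mathcal{P}_{\phi,d}(s,u)\,ds=\mathcal{P}_{d}(u)=0$, also $\mathcal{P}_{\phi,d}(\cdot,u)=0$ $\lambda_{\mathbb{R}^{n_{S}}}$-a.e. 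Hence the integrand agrees $\lambda_{\mathbb{R}^{n_{S}+n_{U}}}$-a.e.\ with $1_{A\times C}(s,u)\,\mathcal{P}_{\phi,d}(s,u)$, and Fubini gives $\mathbb{P}_{\phi,d}(A\times C)=\mathbb{P}(\{\phi\in A\}\cap\{d\in C\})$, as needed. Since $(\mathcal{P}_{\phi\vert d}(\cdot,d)\cdot\lambda_{\mathbb{R}^{n_{S}}})(A)=\int_{A}\mathcal{P}_{\phi\vert d}(s,d)\,ds=\mu(d,A)$ by definition, this closes the chain of equalities; and by \cite[Theorem 5.3]{kall} the kernel satisfying \eqref{unique_prob_kernel} is $\mathbb{P}_{d}$-a.e.\ unique in its first argument, so $\mu$ is that kernel.

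I expect the main obstacle to be the careful treatment of the exceptional set $\{\mathcal{P}_{d}=0\}$: one must pick a jointly measurable version of $\mathcal{P}_{\phi\vert d}$ there, then argue that this choice is invisible to the $\mathcal{P}_{d}$-weighted integral (it is multiplied by the vanishing $\mathcal{P}_{d}$) while simultaneously using that $\mathcal{P}_{\phi,d}$ itself vanishes a.e.\ in $s$ on that set. Everything else is routine bookkeeping with Tonelli and Fubini (Theorem \ref{fubini}) together with the uniqueness clause behind \eqref{unique}.
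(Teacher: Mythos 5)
Your proposal is correct and follows essentially the same route as the paper: define the kernel $(u,A)\mapsto\int_{A}\mathcal{P}_{\phi\vert d}(s,u)\,ds$, verify the characterizing property \eqref{unique} by a Fubini computation in which $\mathcal{P}_{\phi\vert d}\cdot\mathcal{P}_{d}=\mathcal{P}_{\phi,d}$ on $\lbrace\mathcal{P}_{d}>0\rbrace$ and $\mathcal{P}_{\phi,d}(\cdot,u)$ vanishes $\lambda$-a.e.\ on $\lbrace\mathcal{P}_{d}=0\rbrace$, and invoke \cite[Theorem 5.3]{kall} for the $\mathbb{P}_{d}$-a.e.\ uniqueness. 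The only cosmetic differences are that the paper first proves the disintegration identity for general $f\in L_{1}$ (reused later as \eqref{bayes}) before specializing to $f=1_{A\times B}$, whereas you work with indicators directly, and your explicit choice of a fixed density $g$ on $\lbrace\mathcal{P}_{d}=0\rbrace$ to secure joint measurability is a small but legitimate sharpening of the paper's ``any probability density'' clause.
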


\begin{proof}
First of all, note that 
\begin{align}
0&= \int\limits_{\lbrace u\in U: \mathcal{P}_{d}(u)=0 \rbrace} \mathcal{P}_{d}(u) du\\
&\overset{\mathrm{Thm.\, \ref{fubini}}}{=} \int\limits_{S\times \lbrace u\in U: \mathcal{P}_{d}(u)=0 \rbrace} \mathcal{P}_{\phi,d}(s,u) d(s,u).
\end{align}
Because $\mathcal{P}_{\phi,d}(s,u)\geq 0$ $\lambda_{\mathbb{R}^{n_{S}+n_{U}}}$-a.s.,
it is 
\[
\mathcal{P}_{\phi,d}(s,u)= 0
\]
$\lambda_{\mathbb{R}^{n_{S}+n_{U}}}$-a.s. in $S \times \lbrace u\in U:  \mathcal{P}_{\phi}(u)=0 \rbrace$.\\
That is, the set
\[
\left( S\times \lbrace u\in U: \mathcal{P}_{d}(u) = 0 \rbrace \right) \cap \lbrace (s,u)\in S\times U: \mathcal{P}_{\phi,d}(s,u)>0\rbrace\label{zero}
\]
is a $\lambda_{\mathbb{R}^{n_{S}+n_{U}}}$-null set.\\
By Definition \ref{condprobdens}, the map
\[
(u,A)\mapsto \left( \mathcal{P}_{\phi \vert d}(.,u)\cdot\lambda_{\mathbb{R}^{n_{S}}}\right)(A),
\]
for $u\in U$ and $A\in \mathcal{S}$, defines a probability kernel from $U$ to $S$ and for every $f \in L_{1}(S\times U, \mathcal{S}\otimes \mathcal{U}, \mathbb{P}_{\phi,d})$:
\begin{align}
\mathbb{E}\left[ f(\phi,d) \right] &= \int\limits_{S\times U} f(s,u) \cdot \mathcal{P}_{\phi,d}(s,u) d(s,u)\\
&\overset{\eqref{zero}}{=} \int\limits_{S\times \lbrace u\in U: \mathcal{P}_{d}(u) > 0 \rbrace} f(s,u) \cdot \frac{\mathcal{P}_{\phi,d}(s,u)}{\mathcal{P}_{d}(u)}\cdot\mathcal{P}_{d}(u) d(s,u)\\
&\overset{\mathrm{Thm.}\, \ref{fubini},\: \mathrm{Def.}\, \ref{condprobdens}}{=} \int\limits_{U} \left( \int\limits_{S} \mathcal{P}_{\phi \vert d}(s,u)\cdot f(s,u)ds \right) \cdot\mathcal{P}_{d}(u) du \label{bayes}\\
&= \int\limits_{U} \left( \int\limits_{S} \mathcal{P}_{\phi \vert d}(s,u)\cdot f(s,u) ds \right)\mathbb{P}_{d}(du)\\
&= \mathbb{E}\left[ \int\limits_{S} \mathcal{P}_{\phi \vert d}(s,d)\cdot f(s,d) ds\right].
\end{align}
For $f=1_{A\times B}$, with measurable sets $A\in \mathcal{S}$ and $B\in \mathcal{U}$, this gets
\begin{align}
\mathbb{E}\left[\ \mathbb{P}\left[ \phi \in A \vert d \right] ; d \in B \right] &\overset{\eqref{unique}}{=} \mathbb{P}\left( \lbrace \phi \in A \rbrace \cap \lbrace d \in B \rbrace \right)\\
&=\mathbb{P}_{\phi,d}\left( A \times B \right)\\
&= \mathbb{E}\left[  \int\limits_{A} \mathcal{P}_{\phi \vert d}(s,d) ds ; d \in B \right].
\end{align}
Therefore, for every set $\lbrace \phi \in A \rbrace= \phi^{-1}(A) \in \sigma(\phi)$, with $A\in \mathcal{S}$, by \eqref{unique}, it is 
\[
\left(\mathcal{P}_{\phi \vert d}(.,d) \cdot \lambda_{\mathbb{R}^{n_{S}}}\right)(A) = \int\limits_{A} \mathcal{P}_{\phi \vert d}(s,d) ds = \mathbb{P}\left[ \phi \in A \vert d \right]
\]
$\mathbb{P}$-a.s in $L_{1}(\Omega,\sigma(d),\mathbb{P})$.\\
Thus, we found the probability kernel from $U$ to $S$ from \eqref{unique_prob_kernel} that is $\mathbb{P}_{d}$-a.e unique in the first argument, i.e. 
\begin{align}
\mu(d,A)= \left(\mathcal{P}_{\phi \vert d}(.,d) \cdot \lambda_{\mathbb{R}^{n_{S}}}\right)(A)
\end{align}
$\mathbb{P}$-a.e. in $\left(\Omega, \sigma(d), \mathbb{P}\right)$ for every $A\in \mathcal{S}$.
\end{proof}

Note also, that $\mathcal{P}_{\phi \vert d}(s,u)$ is a.e. unique in $L_{1}(S\times U, \mathcal{S}\otimes\mathcal{U}, \lambda_{\mathbb{R}^{n_{S}}}\otimes \mathbb{P}_{d})$.\\
The same way, one can define the function $\mathcal{P}_{d \vert \phi}$ and construct a probability kernel from $S$ to $U$ by
\[
(s,B)\mapsto \left( \mathcal{P}_{d \vert \phi}(s,.)\cdot\lambda_{\mathbb{R}^{n_{U}}}\right)(B),
\]
for $s\in S$ and $B\in \mathcal{U}$.\\

If Assumption \ref{continuous_density} is satisfied, we therefore define:
\begin{definition}[Prior, posterior, evidence, likelihood]\label{Prior, posterior, evidence, likelihood}
With the notation from above, we call $\mathcal{P}_{\phi}$ the prior, $\mathcal{P}_{\phi \vert d}$ the posterior, $\mathcal{P}_{d}$ the evidence and $\mathcal{P}_{d \vert \phi}$ the likelihood in case $\phi$ is the unknown quantity (signal) and $d$ the known one (data).
\end{definition}
\comment{
Physicists often write $\mathcal{P}(\phi \vert d)$ instead of $\mathcal{P}_{\phi \vert d}(s,u)$ by identifying the index with the argument variables. In case they need to distinguish them, they write $\mathcal{P}(\phi=s \vert d=u)$.
}\\

By the above construction, we can also state the following theorem:
\begin{theorem}[Bayes' Theorem]\label{bayesthm}
If $\phi$ is a random vector in $S=\mathbb{R}^{n_{S}}$ and $d$ is a random vector in $U=\mathbb{R}^{n_{U}}$, for $n_{S},n_{U}\in \mathbb{N}$, such that $\mathbb{P}_{\phi,d}$ is absolutely continuous w.r.t. the Lebesgue measure, then:
\[
\mathcal{P}_{\phi \vert d}(s,u) = \frac{\mathcal{P}_{d \vert \phi}(u,s)\cdot \mathcal{P}_{\phi}(s)}{\mathcal{P}_{d}(u)} \text{ for } \lambda_{\mathbb{R}^{n_{S}}}\otimes \mathbb{P}_{d}\text{-a.e. }(s,u)\in S\times  U.
\]
\end{theorem}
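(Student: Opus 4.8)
The plan is to check the identity pointwise on the region where both marginal densities are strictly positive, and then to argue that what is left over is negligible for the measure $\lambda_{\mathbb{R}^{n_{S}}}\otimes\mathbb{P}_{d}$. The hypothesis of the theorem is exactly Assumption \ref{continuous_density}, so the joint density $\mathcal{P}_{\phi,d}$, its marginals $\mathcal{P}_{\phi}$ and $\mathcal{P}_{d}$, and the conditional densities $\mathcal{P}_{\phi\vert d}$ and $\mathcal{P}_{d\vert\phi}$ from Definition \ref{condprobdens} and the paragraph following it are all available, and everything reduces to unwinding these definitions.

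First I would fix $M:=\{(s,u)\in S\times U:\mathcal{P}_{\phi}(s)>0\text{ and }\mathcal{P}_{d}(u)>0\}$. On $M$, Definition \ref{condprobdens} gives $\mathcal{P}_{\phi\vert d}(s,u)=\mathcal{P}_{\phi,d}(s,u)/\mathcal{P}_{d}(u)$, while the likelihood, defined in the same way with the roles of $\phi$ and $d$ exchanged, satisfies $\mathcal{P}_{d\vert\phi}(u,s)=\mathcal{P}_{\phi,d}(s,u)/\mathcal{P}_{\phi}(s)$ (reading the arguments so that $s$ is the signal value and $u$ the data value). Substituting the latter into the right-hand side of the claimed formula, the factor $\mathcal{P}_{\phi}(s)$ cancels and one is left with $\mathcal{P}_{\phi,d}(s,u)/\mathcal{P}_{d}(u)=\mathcal{P}_{\phi\vert d}(s,u)$, so the identity holds everywhere on $M$, with no exceptional set. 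It then remains to control $(S\times U)\setminus M=\big(S\times\{\mathcal{P}_{d}=0\}\big)\cup\big(\{\mathcal{P}_{\phi}=0\}\times U\big)$. For the first piece, $\mathbb{P}_{d}(\{\mathcal{P}_{d}=0\})=\int_{\{\mathcal{P}_{d}=0\}}\mathcal{P}_{d}\,d\lambda_{\mathbb{R}^{n_{U}}}=0$, so by Theorem \ref{fubini} (Tonelli) $(\lambda_{\mathbb{R}^{n_{S}}}\otimes\mathbb{P}_{d})\big(S\times\{\mathcal{P}_{d}=0\}\big)=\int_{S}\mathbb{P}_{d}(\{\mathcal{P}_{d}=0\})\,\lambda_{\mathbb{R}^{n_{S}}}(ds)=0$; off this null set the denominators $\mathcal{P}_{d}(u)$ are genuinely positive.

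For the second piece $\{\mathcal{P}_{\phi}=0\}\times U$ I would reuse the argument from the proof of the preceding theorem: by Theorem \ref{fubini}, $0=\int_{\{\mathcal{P}_{\phi}=0\}}\mathcal{P}_{\phi}\,d\lambda_{\mathbb{R}^{n_{S}}}=\int_{\{\mathcal{P}_{\phi}=0\}\times U}\mathcal{P}_{\phi,d}\,d\lambda_{\mathbb{R}^{n_{S}+n_{U}}}$, and since $\mathcal{P}_{\phi,d}\geq 0$ $\lambda_{\mathbb{R}^{n_{S}+n_{U}}}$-a.e., this forces $\mathcal{P}_{\phi,d}=0$ $\lambda_{\mathbb{R}^{n_{S}+n_{U}}}$-a.e. on $\{\mathcal{P}_{\phi}=0\}\times U$. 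Because $\lambda_{\mathbb{R}^{n_{S}}}\otimes\mathbb{P}_{d}=\lambda_{\mathbb{R}^{n_{S}}}\otimes(\mathcal{P}_{d}\cdot\lambda_{\mathbb{R}^{n_{U}}})$ is, by Theorem \ref{fubini}, absolutely continuous with respect to $\lambda_{\mathbb{R}^{n_{S}+n_{U}}}$ with density $(s,u)\mapsto\mathcal{P}_{d}(u)$, the same vanishing holds $\lambda_{\mathbb{R}^{n_{S}}}\otimes\mathbb{P}_{d}$-a.e. Hence, off a $\lambda_{\mathbb{R}^{n_{S}}}\otimes\mathbb{P}_{d}$-null set, every $(s,u)$ with $\mathcal{P}_{\phi}(s)=0$ satisfies either $\mathcal{P}_{d}(u)=0$ (already discarded) or $\mathcal{P}_{d}(u)>0$ together with $\mathcal{P}_{\phi,d}(s,u)=0$; in the latter case $\mathcal{P}_{\phi\vert d}(s,u)=0/\mathcal{P}_{d}(u)=0$ and the right-hand side is $\mathcal{P}_{d\vert\phi}(u,s)\cdot 0/\mathcal{P}_{d}(u)=0$ as well, the likelihood density being finite as an element of $L_{1}$. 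Combining the three regimes gives the identity for $\lambda_{\mathbb{R}^{n_{S}}}\otimes\mathbb{P}_{d}$-a.e.\ $(s,u)$.

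The substitution step on $M$ is purely formal, and the first exceptional piece $S\times\{\mathcal{P}_{d}=0\}$ is outright null; the one genuine point — which I expect to be the main obstacle — is the set $\{\mathcal{P}_{\phi}=0\}\times U$, which in general is \emph{not} $\lambda_{\mathbb{R}^{n_{S}}}\otimes\mathbb{P}_{d}$-null (the prior may vanish on a set of positive Lebesgue measure), so it cannot simply be discarded and one must exploit both the nonnegativity of $\mathcal{P}_{\phi,d}$ through Fubini and the absolute continuity of $\lambda_{\mathbb{R}^{n_{S}}}\otimes\mathbb{P}_{d}$ with respect to $\lambda_{\mathbb{R}^{n_{S}+n_{U}}}$ to see that both sides collapse to $0$ there.
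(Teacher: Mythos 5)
Your proof is correct, and its skeleton --- cancel on $\{\mathcal{P}_{\phi}>0,\ \mathcal{P}_{d}>0\}$, discard $S\times\{\mathcal{P}_{d}=0\}$ as $\lambda_{\mathbb{R}^{n_{S}}}\otimes\mathbb{P}_{d}$-null, then deal separately with $\{\mathcal{P}_{\phi}=0\}\times U$ --- matches the paper's decomposition; the difference lies in how that last, genuinely nontrivial region is handled. You work at the level of the joint density: Tonelli gives $\int_{\{\mathcal{P}_{\phi}=0\}\times U}\mathcal{P}_{\phi,d}\,d\lambda_{\mathbb{R}^{n_{S}+n_{U}}}=0$, nonnegativity forces $\mathcal{P}_{\phi,d}=0$ Lebesgue-a.e. there, absolute continuity of $\lambda_{\mathbb{R}^{n_{S}}}\otimes\mathbb{P}_{d}$ with respect to $\lambda_{\mathbb{R}^{n_{S}+n_{U}}}$ (with density $(s,u)\mapsto\mathcal{P}_{d}(u)$) transfers this, and then Definition \ref{condprobdens} makes the left-hand side literally zero at a.e. such point, matching the right-hand side, which vanishes through the factor $\mathcal{P}_{\phi}(s)=0$. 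The paper instead applies the ``zero integral plus nonnegativity'' argument one level up, to the posterior itself: it tests the identity $\int\mathcal{P}_{\phi\vert d}\,\mathcal{P}_{d}\,f=\mathbb{E}[f(\phi,d)]=\int\mathcal{P}_{d\vert\phi}\,\mathcal{P}_{\phi}\,f$ (the computation \eqref{bayes} from the preceding kernel theorem) against $f=1_{\{\mathcal{P}_{\phi}=0\}\times U}$ and concludes $\mathcal{P}_{\phi\vert d}=0$ $\lambda_{\mathbb{R}^{n_{S}}}\otimes\mathbb{P}_{d}$-a.e. on that set. Your route is slightly more elementary: it simply re-runs, for the marginal $\mathcal{P}_{\phi}$, the argument the paper already used for $\mathcal{P}_{d}$ in the kernel theorem, and needs nothing about the conditional densities beyond their defining formula; the paper's route makes more visible that the statement is an equality of two representations of the same expectation functional. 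Your observation that $\{\mathcal{P}_{\phi}=0\}\times U$ is the one set that cannot simply be discarded is exactly the crux, and your treatment of it is sound; the only cosmetic caveat is that on $\{\mathcal{P}_{\phi}=0\}$ the likelihood is an arbitrarily chosen probability density, so one should (as you note) work with a finite, or a.e. finite, version so that its product with $\mathcal{P}_{\phi}(s)=0$ is unambiguously zero.
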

\begin{proof}
Note first, that because $\mathbb{P}_{d}=\mathcal{P}_{d}\cdot \lambda_{\mathbb{R}^{n_{U}}}$, the set
\[
\lbrace u\in U: \mathcal{P}_{d}(u)=0 \rbrace
\]
is a $\mathbb{P}_{d}$-null set and that therefore
\[
S\times \lbrace u\in U: \mathcal{P}_{d}(u)=0 \rbrace
\]
is a $\lambda_{\mathbb{R}^{n_{S}}}\otimes\mathbb{P}_{d}$-null set. This way, the right side in the statement of the theorem is well-defined as a function in $L_{1}\left(S\times U, \mathcal{S}\otimes \mathcal{U},\lambda_{\mathbb{R}^{n_{S}}}\otimes\mathbb{P}_{d} \right)$.\\
For ${\mathcal{P}_{\phi}(s)}>0$, by definition, one has for $\mathbb{P}_{d}$-a.e. $u\in U$, that
\[
\mathcal{P}_{\phi \vert d}(s,u) = \frac{\mathcal{P}_{\phi,d}(s,u)\cdot \mathcal{P}_{\phi}(s)}{\mathcal{P}_{\phi}(s)  \mathcal{P}_{d}(u)} = \frac{\mathcal{P}_{d \vert \phi}(u,s)\cdot \mathcal{P}_{\phi}(s)}{\mathcal{P}_{d}(u)}.
\]
As in \eqref{bayes}, for $f \in L_{1}(S\times U, \mathcal{S}\otimes \mathcal{U}, \mathbb{P}_{\phi,d})$, it follows
\begin{align}
\int\limits_{S\times U} \mathcal{P}_{\phi \vert d}(s,u)\cdot\mathcal{P}_{d}(u)\cdot f(s,u) d(s,u)
&= \int\limits_{S\times U} \mathcal{P}_{\phi,d}(s,u) \cdot f(s,u) d(s,u) = \mathbb{E}\left[ f(\phi,d) \right]\\
&= \int\limits_{S\times U} \mathcal{P}_{d \vert \phi}(u,s)\cdot\mathcal{P}_{\phi}(s)\cdot f(s,u) d(s,u).
\end{align}
With $f = 1_{\lbrace s\in S:  \mathcal{P}_{\phi}(s)=0 \rbrace\times U}$, this leads to 
\[
\int\limits_{S\times U} \mathcal{P}_{\phi \vert d}(s,u)\cdot\mathcal{P}_{d}(u)\cdot f(s,u) d(s,u) = \int\limits_{S\times U} \mathcal{P}_{d \vert \phi}(u,s)\cdot\mathcal{P}_{\phi}(s) \cdot f(s,u) d(s,u) = 0.
\]
Because $\mathcal{P}_{\phi \vert d}(s,u)\geq 0$ $\lambda_{\mathbb{R}^{n_{S}}} \otimes \mathbb{P}_{d}$-a.s., we therefore have 
\[
\mathcal{P}_{\phi \vert d}(s,u) = 0
\]
$\lambda_{\mathbb{R}^{n_{S}}} \otimes \mathbb{P}_{d}$-a.s. in $\lbrace s\in S:  \mathcal{P}_{\phi}(s)=0 \rbrace\times U$.\\
That is,
\[
\left( \lbrace s\in S : \mathcal{P}_{\phi}(s)=0 \rbrace\times U\right)\cap \lbrace (s,u)\in S\times U:\mathcal{P}_{\phi \vert d}(s,u) > 0\rbrace
\]
is a $\lambda_{\mathbb{R}^{n_{S}}}\otimes \mathbb{P}_{d}$-null set, which finishes the proof.
\end{proof}
\comment{This way, if we are in the setting of Theorem \ref{bayesthm}, we are able to compute the posterior in two different ways. One possibility is the way we constructed it from the joint distribution. The other possibility arises by Bayes' Theorem, if we know the prior, the likelihood and the evidence. }\\

In the setting of Theorem \ref{bayesthm}, one can also define:

\begin{definition}[Information Hamiltonian]\label{hamiltonian}
If Assumption \ref{continuous_density} is satisfied, the information Hamiltonian \cite[cf.][p.2]{enss} on $U\times S$ is for $\lambda_{\mathbb{R}^{n_{U}+n_{S}}}$-a.e. $(u,s)\in U\times S$ given by
\begin{align}
H_{d,\phi}(u,s) := \begin{cases}
-\log \left( \mathcal{P}_{\phi,d}(s,u)\right) = -\log \left( \mathcal{P}_{d \vert \phi}(u,s) \right) - \log \left( \mathcal{P}_{\phi}(s) \right), & \text{if } \mathcal{P}_{d \vert \phi}(u,s)>0,\\
\infty, & \text{else.}
\end{cases}
\end{align}

\end{definition}
The information Hamiltonian can thus be infinite on some set.\\
We then compute the posterior by
\[
\mathcal{P}_{\phi \vert d}(s,u) = \frac{e^{-H_{d,\phi}(u,s)}}{\mathcal{P}_{d}(u)}\ \lambda_{\mathbb{R}^{n_{S}}}\otimes \mathbb{P}_{d}\text{-a.e.},\label{hamiltonianrelation}
\]
where we set $e^{- \infty}:=0$.\\
Note that on the subset where $\mathcal{P}_{d}(u)=0$ the posterior by definition equals any probability density function w.r.t. $\lambda_{\mathbb{R}^{n_{S}}}$. But if we condition on $d$, these values are actually canceled out in the integration, as one multiplies with $\mathcal{P}_{d}(u)$ then.\\
That is, the information Hamiltonian contains all information about $\phi$.\\
\comment{
Although the change from probability densities to Hamiltonians is only a coordinate transformation, it allows to connect to the rich physical literature on statistical mechanics and thermodynamics in which a large tool set of computational methods are developed.
}

\chapter{The setting}
\label{chap:setting}
In physics, a process at some time is usually described by a field $\phi_{\mathrm{real},0}$, i.e. a vector valued function $\phi_{\mathrm{real},0}: D \rightarrow \mathbb{R}^{m}$ with $m \in \mathbb{N}$, $D\subset \mathbb{R}^{k}$ and $k \in \mathbb{N}$, which belongs to a certain Banach space of functions $V$, endowed with a norm $\Vert . \Vert_{V}$.
In some cases, the signal might belong to a Hilbert space, and the norm then comes from a scalar product $\langle .,. \rangle_{V}$.\\
This field, over a time interval $[0,T]$ with $T>0$, mostly develops according to some evolution equation, or at least does so in a simplified model. Such an equation can come in the form of a partial differential equation or an integro-differential equation, with initial condition $\phi_{\mathrm{real}}(0) = \phi_{\mathrm{real},0}$ and can often be brought into the form
\begin{align}
\partial_{t}\phi(.,t) = F[t,\phi(.,t)],\ t\in [0,T],\label{pde}
\end{align}
where 
\begin{align}
\phi(.,t)\in V
\end{align}
for every $t\in [0,T]$ and with a function 
\begin{align}
F&:[0,T]\times V\rightarrow V.
\end{align}
One then has to specify the subspace of functions in $\mathrm{map}(D \times [0,T],\mathbb{R}^{m})$, in which one searches for a solution of the equation and quite often brings a partial differential equation into its weak form, resulting in a weak solution first. If the latter is regular enough, it is also a classical solution of the original problem. We assume our equation to have a unique solution for every initial condition $\phi_{\mathrm{real},0}\in V$, so that the problem is well-posed.

\section{The signal}
In many cases, an evolution equation is not analytically solvable. But very often, one can approximate it by functions which are piecewise constant in time and which are contained in a finite dimensional subspace of $V$ for every time. Physical processes are often relatively smooth in time, because the underlying physical system does not change its properties instantaneously. A possible approximation by a function from a finite dimensional space is often a result of the modeling part in the equation, because only a limited amount of properties and information about the real field can be included here, leading to an approximation already in the first place, when the equation is developed.\\
There are many different of such simplification techniques, also depending on the equation. An idea of when such an approximation could be justified and how it could be constructed will be explained in the following and referred to in the further chapters. Although, this will be done in quite a general way and without going into detail, because we do not assume a concrete underlying equation and only want to deliver the concept.\\

If our field follows an equation like in \eqref{pde}, one might regard the field as a function, which depends only on time and has values in $V$, i.e. $\phi_{\mathrm{real}}:[0,T] \rightarrow V$.
\eqref{pde} then takes the form
\[
\dfrac{d}{dt}\phi(t) = F[t,\phi(t)],\ t\in [0,T].
\]
One should then specify the subspace $W$ of functions in $\mathrm{map}([0,T],V)$, endowed with a norm $\Vert . \Vert_{W}$, in which one searches for solutions of this problem.\\
Also differentiability of functions in $W$ has to be specified.
We assume the equation to have a unique (maybe weak) solution $\phi_{\mathrm{real}}:[0,T] \rightarrow V$, $\phi_{\mathrm{real}} \in W$, for every initial condition $\phi_{\mathrm{real},0}\in V$.\\
Considering the smoothness in time, we further assume the space $W$ to be of the type, that for any $\phi_{\mathrm{real}}\in W$, if $W$ is a space of equivalence classes of functions, we have at least one representative which is piecewise continuous, so that pointwise evaluation $\phi_{\mathrm{real}}(t)\in V$, for $t\in [0,T]$, makes sense at all, and always consider this representative.\\
We want to approximate the solution of our equation, taking a sequence of discretizations in time $\left(\left(t_{i}=i\dfrac{T}{2^N}\right)_{i=0}^{2^N}\right)_{N=1}^{\infty}$ and finite dimensional subspaces $\left(V_{n}\right)_{n=1}^{\infty}$ of $V$, and constructing solutions of simplified problems which are step functions in time, i.e. constant in $V_{n}$ for every time interval $(t_{i},t_{i+1}]$, and which converge to the actual solution in $W$ with $N,n\rightarrow \infty$.\\
This will be done step by step to illustrate the concept and the procedure of how such an approximation could be derived.\\
Firstly, we assume:
\begin{assumption}\label{regularity_W}
$W$ can be continuously imbedded in the space $C([0,T],V)$ of uniformly continuous functions from $[0,T]$ to $V$, endowed with the norm 
\begin{align}
\sup\limits_{t\in[0,T]}\Vert . \Vert_{V}.\label{sup}
\end{align}
This way, convergence in $W$ is reached by convergence in $C([0,T],V)$ for $N,n\rightarrow \infty$.
\end{assumption}
For every $N\in \mathbb{N}$, we set $\delta t := \dfrac{T}{2^N}$ and $t_{i}:=i\cdot \delta t$ for $i=0,...,2^N$.\\
We assume then that:

\begin{assumption}\label{ass:discretetime}
For given $N\in \mathbb{N}$, we find an operator valued function
\[
A_{N}: \lbrace t_{0},..., t_{2^N-1}\rbrace \rightarrow \mathrm{map}(V,V),
\]
that has the following property:\\
If we start with $\phi_{\mathrm{real}}(t_{i})$, $i \in \lbrace 0,...2^N-1 \rbrace$ and $f\in V$ and let $\phi_{\mathrm{real}}(t_{i})$ evolve according to the exact equation until $t_{i+1}$, then:
\begin{align}\sup\limits_{t\in (t_{i},t_{i+1}]} \Vert \phi_{\mathrm{real}}(t) - A_N(t_{i}) f \Vert_{V} \leq C_{1}\cdot \dfrac{\epsilon_{1,N}}{2^N} + \Vert \phi_{\mathrm{real}}(t_{i}) - f \Vert_{V}, \label{discretetime}
\end{align}
with $\epsilon_{1,N} \geq 0$, $\lim\limits_{N\rightarrow \infty}\epsilon_{1,N} = 0$ and some $C_{1} > 0$.
\end{assumption}

We then define:
\begin{definition}\label{brev_phi}
With Assumption \ref{regularity_W} and Assumption \ref{ass:discretetime}, we define $\phi_{N}: [0,T] \rightarrow V$ by
\begin{align}
&\phi_{N}(t) :=A_{N}(t_{i}) \phi_{N}(t_i) \text{ for }t\in(t_{i},t_{i+1}],\  i=0,...,2^N-1 \text{ and } \\
& \phi_{N,0} = \phi_{N}(0) := \phi_{\mathrm{real},0}.
\end{align}
\end{definition}
This way we get a step function in $W$, which is $C_{1}\cdot \epsilon_{1,N}$ close in $V$ to the solution of our differential equation for every time $t\in[0,T]$. By Assumption \ref{regularity_W}, it converges to $\phi_{\mathrm{real}}$ in $W$ with $N \rightarrow \infty$.\\

We further assume, that an appropriate functional basis to represent good approximations of the solution is known:
\begin{assumption}\label{ass:finitedim}
There is a nested family of finite dimensional subspaces $V_{n}=\mathrm{span}(f_{1},...,f_{n})$ of $V$ and a sequence of numbers $\epsilon_{2,n}\geq 0$ with
\[
\lim\limits_{n\rightarrow \infty}\epsilon_{2,n} = 0,
\]
such that for every admissible initial condition $f\in V$ of evolution equation \eqref{pde} there exists a function $f^{(n)}\in V_{n}$ with
\[\Vert f - f^{(n)} \Vert_{V} \leq \epsilon_{2,n}.\]

We further assume the basis elements of each subspace $V_{n}$ to be linear independent, so that every $f^{(n)}\in V_n$ has a unique representation 
\begin{align}
f^{(n)}= \sum\limits_{j=1}^n c_j\cdot f_j,
\end{align}
with $c_1,...,c_n\in \mathbb{R}$.
\end{assumption}

\comment{
The basis elements of $V_{n}$ could for example be interpolation functions or, in case $V$ is a separable Hilbert space like $L_{2}(\mathbb{R}^{m})$, they could be elements of some orthonormal basis. If $V = L_{2}([0,L])$ for some $L>0$, this could for example be the Fourier basis and the linear combination would then be an aborted Fourier series.
}\\

We will see that Assumption \ref{ass:discretetime} and Assumption \ref{ass:finitedim}, together with one more assumption, assure the existence of a step function in $\mathrm{map}([0,T],V_{n})$, with values in an $n$-dimensional space, which is $C_{1}\cdot \epsilon_{1,N} + 2\cdot\epsilon_{2,n}$ close in $V$ to the solution of our differential equation for every time $t\in [0,T]$ and for which we get a concrete construction scheme.
Because this expression converges to zero as $N,n \rightarrow \infty$, by Assumption \ref{regularity_W}, the step function also converges to $\phi_{\mathrm{real}}$ in $W$, so that one can choose a degree of accuracy that one wants to reach and pick $N$ and $n$ large enough to assure an approximation of the solution that is piecewise constant in time and spacial finite dimensional, and that reaches this accuracy level in $V$ for every time $t\in [0,T]$. In particular, it stays within this range until time $T$.\\
One can also identify the simplified field with a time discrete function in $\mathrm{map}(\lbrace t_{0},...,t_{2^N} \rbrace,V_{n})$, because it is constant in every interval $(t_{i},t_{i+1}]$, $i=0,...,2^N-1$.\\
We are actually interested in the signal at time $T$.
Because we assume that we are not able to compute the exact solution $\phi_{\mathrm{real}}$, we want to approximate it with our discretized signal. If we find a possibility to construct the latter time step for time step, starting with an approximation of our initial condition, we have a possibility to compute an approximation for time $T$ which converges to $\phi_{\mathrm{real}}(T)$ in $V$ as $N,n\rightarrow \infty$. The reason why we are interested in an approximation of $A_{N}$, is because this allows us to inductively construct our simplified signal.\\
Between each updating step, one usually has to save the discrete signal for the current time $t_{i}$, $i \in \left\lbrace 0,...,2^N \right\rbrace$, in a computer. Unfortunately, one only has access to a finite amount of memory here. With our construction scheme, we will therefore in general anyway have to choose some level of accuracy and live with an approximation which is very close to the exact solution in $W$. That is, we change our original differential equation by a time discrete and spatially finite dimensional evolution equation. The solutions of this equation, as we will see, are $C_{1}\cdot \epsilon_{1,N} + 2\cdot \epsilon_{2,n}$ close to $\phi_{\mathrm{real}}$ in $V$ at every time and thus, by Assumption \ref{regularity_W}, also arbitrarily close to it in $W$ for certain $N$ and $n$. Usually, large numbers $N$ and $n$ are required for a high accuracy. 
\\
We now characterize a concrete case, when such an approximation can be constructed.\\
In addition to the Assumptions \ref{regularity_W}, \ref{ass:discretetime} and \ref{ass:finitedim}, we also suppose to have:
\begin{assumption}\label{ass:finiteevolution}
For every $n,N \in \mathbb{N}$, we find an operator valued function
\[ A_{n,N}: \lbrace t_{0},..., t_{2^N-1}\rbrace \rightarrow \mathrm{map}(V_{n},V_{n}),
\]
such that for every $f\in V$ and $g\in V_n$ it is
\begin{align}
\Vert A_N(t_i) f - A_{n,N}(t_{i})g\Vert_{V} \leq \Vert f-g \Vert_V + \frac{\epsilon_{2,n}}{2^N}.\label{finiteevolution}
\end{align}
\end{assumption}
In this case, we actually have a construction rule for our simplified signal.\\
Because then we can define:
\begin{definition}\label{phi_n,N}
We suppose the Assumptions \ref{regularity_W}, \ref{ass:discretetime}, \ref{ass:finitedim} and \ref{ass:finiteevolution} to be valid and define $\phi^{(n)}_{N}: \lbrace t_{0},...,t_{2^N} \rbrace \rightarrow V_{n}$ as
\begin{align}
&\phi^{(n)}_{N}(t_{i+1}) :=A_{n,N}(t_{i}) \phi^{(n)}_{N}(t_i) \text{ for } i=0,...,2^N-1 \text{ and } \\
&\phi_{N,0}^{(n)} = \phi^{(n)}_{N}(0) := f^{(n)}\\
&\text{for some function } f^{(n)} \in V_{n} \text{ with } \Vert f^{(n)} - \phi_{\mathrm{real},0} \Vert_{V} \leq \epsilon_{2,n}.
\end{align} 
\end{definition}

As already mentioned, due to finite amount of computer memory, we change the original evolution equation and its solutions by this simplified equation and the just constructed signal.
\begin{theorem}\label{signalapproximation}
If the Assumptions \ref{regularity_W}, \ref{ass:discretetime}, \ref{ass:finitedim} and \ref{ass:finiteevolution} hold, then for given $n,N\in \mathbb{N}$ we can identify $\phi_{N}^{(n)}$ with the step function
\begin{align}
\phi_{N}^{(n)}(0)&=\phi_{N,0}^{(n)},\\
\phi_{N}^{(n)}(t) &= \phi_{N}^{(n)}(t_{i+1})\text{ for } t\in(t_{i},t_{i+1}],\ i=0,...,2^N-1
\end{align}
and get
\begin{align}
\Vert \phi_{\mathrm{real},0} - \phi^{(n)}_{N,0} \Vert_{V}\leq \epsilon_{2,n}
\end{align}
and
\begin{align}
\sup\limits_{t\in (t_{i},t_{i+1}]} \Vert \phi_{\mathrm{real}}(t) - \phi^{(n)}_{N}(t) \Vert_{V}  \leq C_{1}\cdot \epsilon_{1,N} + 2\cdot\epsilon_{2,n}
\end{align}
for every $i\in \lbrace 0,...,2^N-1 \rbrace$.\\
Furthermore, $\phi_{N}^{(n)}$ converges to $\phi_{\mathrm{real}}$ in $W$.
\end{theorem}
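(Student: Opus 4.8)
The plan is to treat the three assertions in turn. The identification of $\phi_N^{(n)}$ with the displayed step function is purely a matter of extending the grid function from Definition \ref{phi_n,N} to all of $[0,T]$ by declaring it constant equal to $\phi_N^{(n)}(t_{i+1})$ on each $(t_i,t_{i+1}]$; nothing needs to be proved there. The bound $\Vert \phi_{\mathrm{real},0} - \phi_{N,0}^{(n)}\Vert_V \leq \epsilon_{2,n}$ is immediate, since Definition \ref{phi_n,N} chose $\phi_{N,0}^{(n)} = f^{(n)}$ with exactly that property. So the real content is the per-interval estimate, which I would obtain by a discrete Gr\"onwall-type induction over $i = 0,\dots,2^N$.

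Concretely, I would set $e_i := \Vert \phi_{\mathrm{real}}(t_i) - \phi_N^{(n)}(t_i)\Vert_V$ and establish the one-step recursion. Fix $i$ and apply Assumption \ref{ass:discretetime} with $f := \phi_N^{(n)}(t_i) \in V_n \subseteq V$; since, by the standing well-posedness hypothesis, the evolution of $\phi_{\mathrm{real}}(t_i)$ under the exact equation is the restriction of $\phi_{\mathrm{real}}$ to $(t_i,t_{i+1}]$, this gives
\[
\sup_{t\in(t_i,t_{i+1}]}\Vert \phi_{\mathrm{real}}(t) - A_N(t_i)\phi_N^{(n)}(t_i)\Vert_V \;\leq\; C_1\frac{\epsilon_{1,N}}{2^N} + e_i .
\]
Then I would apply Assumption \ref{ass:finiteevolution} with $f = g := \phi_N^{(n)}(t_i)$, which yields $\Vert A_N(t_i)\phi_N^{(n)}(t_i) - A_{n,N}(t_i)\phi_N^{(n)}(t_i)\Vert_V \leq \epsilon_{2,n}/2^N$, and note $A_{n,N}(t_i)\phi_N^{(n)}(t_i) = \phi_N^{(n)}(t_{i+1})$, which is the value of $\phi_N^{(n)}$ throughout $(t_i,t_{i+1}]$. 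Combining by the triangle inequality gives
\[
\sup_{t\in(t_i,t_{i+1}]}\Vert \phi_{\mathrm{real}}(t) - \phi_N^{(n)}(t)\Vert_V \;\leq\; e_i + C_1\frac{\epsilon_{1,N}}{2^N} + \frac{\epsilon_{2,n}}{2^N},
\]
and, evaluating at $t_{i+1}$, the recursion $e_{i+1} \leq e_i + C_1\frac{\epsilon_{1,N}}{2^N} + \frac{\epsilon_{2,n}}{2^N}$.

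Iterating from $e_0 \leq \epsilon_{2,n}$ gives $e_i \leq \epsilon_{2,n} + i\bigl(C_1\frac{\epsilon_{1,N}}{2^N} + \frac{\epsilon_{2,n}}{2^N}\bigr)$ for all $i \leq 2^N$. Substituting this into the supremum bound above for the interval $(t_i,t_{i+1}]$ (so $i+1 \leq 2^N$) produces a bound of $\epsilon_{2,n} + (i+1)\bigl(C_1\frac{\epsilon_{1,N}}{2^N} + \frac{\epsilon_{2,n}}{2^N}\bigr) \leq C_1\epsilon_{1,N} + 2\epsilon_{2,n}$, which is the claimed estimate for every $i$. Taking the supremum over all $i$ (the point $t=0$ being covered by $e_0 \leq \epsilon_{2,n}$) then yields $\sup_{t\in[0,T]}\Vert \phi_{\mathrm{real}}(t) - \phi_N^{(n)}(t)\Vert_V \leq C_1\epsilon_{1,N} + 2\epsilon_{2,n}$, which tends to $0$ as $N,n \to \infty$, and Assumption \ref{regularity_W} upgrades this uniform convergence to convergence of $\phi_N^{(n)}$ to $\phi_{\mathrm{real}}$ in $W$.

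This is essentially bookkeeping, so the two points I would be careful about are: (i) justifying that the ``exact evolution'' appearing in Assumption \ref{ass:discretetime} is literally $\phi_{\mathrm{real}}$ restricted to $(t_i,t_{i+1}]$, which is where uniqueness of solutions of the evolution equation enters; and (ii) checking that the two error sources accumulate only linearly in $i$ — this is exactly why Assumptions \ref{ass:discretetime} and \ref{ass:finiteevolution} are stated with the previous-step error appearing with coefficient $1$ rather than some growth factor such as $1+\delta t$, so that summing $2^N$ steps turns the $1/2^N$ factors into $O(1)$ constants instead of something exponentially large. The minor conceptual wrinkle that the $\phi_N^{(n)}$ are discontinuous step functions whereas $\phi_{\mathrm{real}}$ is continuous is handled, as in the surrounding text, by leaning on Assumption \ref{regularity_W} to pass from uniform control to the $W$-topology.
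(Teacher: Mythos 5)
Your proof is correct, and it reaches the stated bound with exactly the stated constants, but it organizes the estimate differently from the paper. The paper routes the comparison through the auxiliary step function $\phi_{N}$ of Definition \ref{brev_phi}: it first proves, by an induction using only Assumption \ref{ass:finiteevolution}, the bound $\Vert \phi_{N}(t_{i})-\phi^{(n)}_{N}(t_{i})\Vert_{V}\leq \left(\tfrac{i}{2^N}+1\right)\epsilon_{2,n}\leq 2\epsilon_{2,n}$ (this is \eqref{phi_approx}), and then controls $\sup_{t\in(t_{i},t_{i+1}]}\Vert \phi_{\mathrm{real}}(t)-\phi_{N}(t_{i+1})\Vert_{V}$ by a separate telescoping chain of applications of Assumption \ref{ass:discretetime} with $f=\phi_{N}(t_{i})$, starting from $\phi_{N}(0)=\phi_{\mathrm{real},0}$; the two contributions are then added. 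You instead run a single discrete Gr\"onwall-type induction directly on $e_{i}=\Vert\phi_{\mathrm{real}}(t_{i})-\phi^{(n)}_{N}(t_{i})\Vert_{V}$, applying Assumption \ref{ass:discretetime} with $f=\phi^{(n)}_{N}(t_{i})$ and Assumption \ref{ass:finiteevolution} with $f=g=\phi^{(n)}_{N}(t_{i})$ within one time step, so the intermediate object $\phi_{N}$ never enters your argument. Both routes are legitimate: your one-step recursion $e_{i+1}\leq e_{i}+C_{1}\tfrac{\epsilon_{1,N}}{2^N}+\tfrac{\epsilon_{2,n}}{2^N}$, together with $e_{0}\leq\epsilon_{2,n}$, gives $\epsilon_{2,n}+(i+1)\bigl(C_{1}\tfrac{\epsilon_{1,N}}{2^N}+\tfrac{\epsilon_{2,n}}{2^N}\bigr)\leq C_{1}\epsilon_{1,N}+2\epsilon_{2,n}$, matching the paper. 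What your version buys is a leaner proof (no appeal to Definition \ref{brev_phi} inside the argument) and a cleaner triangle-inequality step; what the paper's two-stage decomposition buys is an explicit separation of the time-discretization error (accumulating to $C_{1}\epsilon_{1,N}$) from the spatial-projection error (accumulating to $2\epsilon_{2,n}$), which is the accounting the surrounding discussion refers back to. Your two flagged points of care are also exactly right and consistent with the paper: Assumption \ref{ass:discretetime} is phrased precisely about $\phi_{\mathrm{real}}$ restricted to $(t_{i},t_{i+1}]$ (well-posedness is a standing hypothesis), and both assumptions carry the previous error with coefficient $1$, which is what makes the linear accumulation over $2^N$ steps work; the passage to convergence in $W$ via Assumption \ref{regularity_W} is used identically in the paper.
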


\begin{proof}
The first inequality holds by Definition \ref{phi_n,N}.
With Definition \ref{brev_phi}, Definition \ref{phi_n,N} and Assumption \ref{ass:finiteevolution} we get inductively
\begin{align}
\Vert \phi_{N}(t_{i}) -\phi^{(n)}_{N}(t_{i}) \Vert_V \leq \left(\frac{i}{2^N}+1\right)\cdot \epsilon_{2,n} \leq 2\cdot\epsilon_{2,n}\label{phi_approx}
\end{align}
for all $i=0,...,2^N$.\\
For $i\in \lbrace 0,...,2^N-1 \rbrace$ it follows
\begin{align}
\sup\limits_{t\in (t_{i},t_{i+1}]} \Vert \phi_{\mathrm{real}}(t) - \phi^{(n)}_{N}(t) \Vert_{V}  &\overset{\mathrm{Def.}\, \ref{phi_n,N}}{=} \sup\limits_{t\in (t_{i},t_{i+1}]} \Vert \phi_{\mathrm{real}}(t) - \phi^{(n)}_{N}(t_{i+1}) \Vert_{V}\\
&\leq \sup\limits_{t\in (t_{i},t_{i+1}]} \Vert \phi_{\mathrm{real}}(t) - \phi_{N}(t_{i+1}) \Vert_{V} + \Vert \phi^{(n)}_{N}(t_{i+1}) - \phi^{(n)}_{N}(t_{i+1})\Vert_V\\
&\overset{\eqref{phi_approx}}{\leq} \sup\limits_{t\in (t_{i},t_{i+1}]} \Vert \phi_{\mathrm{real}}(t) - \phi_{N}(t_{i+1}) \Vert_{V} + 2\cdot\epsilon_{2,n} \\
	 &\overset{\mathrm{Def.}\, \ref{brev_phi}}{=} \sup\limits_{t\in (t_{i},t_{i+1}]} \Vert \phi_{\mathrm{real}}(t) - A_{N}(t_{i}) \phi_{N}(t_{i}) \Vert_{V} + 2\cdot\epsilon_{2,n} \\
	&\overset{\mathrm{Ass.}\, \ref{ass:discretetime}}{\leq}  \Vert \phi_{\mathrm{real}}(t_{i}) - \phi_{N}(t_{i}) \Vert_{V} + C_{1}\cdot \dfrac{\epsilon_{1,N}}{2^N} + 2\cdot\epsilon_{2,n} \\
	&\leq ... \overset{\mathrm{Ass.}\, \ref{ass:discretetime},\mathrm{Def.}\, \ref{brev_phi}}{\leq} i\cdot C_{1}\cdot \dfrac{\epsilon_{1,N}}{2^N} + 2\cdot\epsilon_{2,n}\\
	&\leq 2^N\cdot C_{1}\cdot \dfrac{\epsilon_{1,N}}{2^N} + 2\cdot\epsilon_{2,n} 
	= C_{1}\cdot \epsilon_{1,N} + 2\cdot\epsilon_{2,n}.
\end{align}
This converges to zero as $N,n\rightarrow \infty$, so that by Assumption \ref{regularity_W}, $\phi_{N}^{(n)}$ converges to $\phi_{\mathrm{real}}$ in $W$.\\
\end{proof}
We can thus identify $\phi_{\mathrm{real}}$ with the time discrete signal $\phi_{N}^{(n)}$, and further $\phi_{\mathrm{real}}(t_{i})$ with the vector of prefactors $\left(c_{1,t_{i}},...,c_{n,t_{i}}\right)^T$ in the linear combination $\phi^{(n)}_{N}(t_{i}) = \sum\limits_{j=1}^{n} c_{j,t_{i}}f_{j}$ for each $i\in {0,...,2^N}$.

\section{Connection to the data}
\label{sec:Connection to the data}
In this section, the connection between the measured data and our signal shall be inspected.\\
We fix a certain $n$ and $N$ in the following.\\
$\phi^{(n)}_{N}(t_{i})$ , for $i\in {0,...,2^N}$, only depends on $\left(c_{1,t_{i}},...,c_{n,t_{i}}\right)^T$.\\
For a clear arrangement, we write 
\[
\phi(t_{i})
\]
for $\phi^{(n)}_{N}(t_{i})$ and $\left(c_{1,t_{i}},...,c_{n,t_{i}}\right)^T$ and switch between the interpretation as an $n$-dimensional function in $V_n$ and a vector in $\mathbb{R}^{n}$.\\
We start again with an exact solution $\phi_{\mathrm{real}}$ of the given evolution equation and replace it by our approximation $\phi$ later.\\
We assume that we measured our signal $\phi_{\mathrm{real},0}$ at the initial time and obtained some data $d_{0} \in \mathbb{R}^{Y_{0}}$, where $0< Y_{0} < \infty $.\\
We would now like to simulate the data $d(T)$ at the end of our time interval, that we will most likely measure at this time.\\
By the relation of the signal to the data, in this section it will be explained on the one hand why it is necessary for us to work with probability distributions. On the other hand, the role of this relation within our probabilistic setting will be clarified.\\
We assume first, we are given some measurement device, maybe depending on time, which is able to measure the signal at every time $t_{i}$, for $i=0,...,2^N$, and gives us some data $d(t_{i})\in \mathbb{R}^{Y_{i}}$, where $0< Y_{i} < \infty$. We also allow a measurement error $n(t_{i}) \in \mathbb{R}^{Y_{i}}$.
\begin{assumption}\label{datacont}
If for given $i \in \lbrace 0,...,2^N \rbrace$, we could measure the real signal at the time $t_i$, we assume to get a relation
\[
d_{\mathrm{real}}(t_{i}) = R_{N}(t_{i})\phi_{\mathrm{real}}(t_{i}) + n(t_{i}), 
\]
with a known, linear operator valued function $R_{N}$ on $\lbrace t_{0},...,t_{2^N}\rbrace$, i.e.
\[
R_{N}(t_{i}) \in \mathrm{lin map}\left(V,\mathbb{R}^{Y_{i}}\right)
\]
for $i=0,...,2^N$, and some realization $n(t_{i})$ of an uncertainty, that is a random vector with distribution $\mathbb{P}_{n(t_{i})}$.
\end{assumption}
\comment{
$R_{N}(t_{i})$, for $i=0,...,2^N$, is often called a response operator. This mapping represents our measurement device. It is quite reasonable to assume that we know how a measurement is related to a signal by $R_{N}$, because in general we know this device quite well. The unknown part of the relation is also included as an uncertainty of our measurement, represented by the random vector $n(t_{i})$. In Assumption \ref{datacont}, $n(t_{i})$ actually means a realization of a random vector with distribution $\mathbb{P}_{n(t_{i})}$.
}\\

For every $m\in \mathbb{N}$ we allways consider the euclidean norm in $\mathbb{R}^{m}$ which is given by \[\Vert x \Vert_{2} = \left( \sum\limits_{i=1}^{m} \vert x_{i} \vert^{2} \right)^{\frac{1}{2}}.
\]
For the response operators, we make the following
\begin{assumption}\label{bound_response}
We assume that every $R_{N}(t_{i})$ is bounded as an operator in the space $\mathrm{map}\left(V,\mathbb{R}^{Y_{i}}\right)$, with a common upper bound $C_{2}>0$ which is independent of $N$. This way, for any $N \in \mathbb{N}$ and $i \in \lbrace 0,...,2^N \rbrace$:
\begin{align}
\Vert R_{N}(t_{i}) \Vert = \sup_{f\in V, \Vert f \Vert_{V} \leq 1} \Vert R_{N}(t_{i})f \Vert_{2} \leq C_{2}.
\end{align}
\end{assumption}
\comment{
This assumption should be physically reasonable, because usually a measurement device averages over some property of the signal, weighting each position by some bounded value. This way, it maximally increases the norm of the signal by the maximal absolute value of all those weights. The measurement device can often be described by an integral operator, and the norm in $V$ also often is some integral expression. 
The bound is usually also independent of the time that we measure at, because we take the same device for every measurement. Also if some external parameters, like temperature, change during $[0,T]$, having an effect on this measurement device, it is quite unlikely, that the whole mechanism changes its properties completely. This justifies to also assume that we can choose a $C_2$ that is independent of time.
}\\
\begin{lemma}\label{response_approx}
In case Assumption \ref{bound_response} and all the requirements of Theorem \ref{signalapproximation} are satisfied, then for every $i \in \lbrace 0,...,2^N \rbrace$ we have for our approximation $\phi(t_{i}) \in V_{n}$ and the exact solution $\phi_{\mathrm{real}}(t_{i})$ of our differential equation:
\[
\left\Vert R_{N}(t_{i})\phi_{\mathrm{real}}(t_{i})  - R_{N}(t_{i})\phi(t_{i}) \right\Vert_{2} \leq C_{2}\cdot (C_{1}\cdot \epsilon_{1,N} + 2\cdot\epsilon_{2,n}).
\]
\end{lemma}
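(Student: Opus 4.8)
The plan is to peel off the response operator first and then invoke the signal approximation bound. First I would use that $R_N(t_i)$ is linear (Assumption \ref{datacont}) to rewrite the left-hand side as $\left\Vert R_N(t_i)\bigl(\phi_{\mathrm{real}}(t_i) - \phi(t_i)\bigr)\right\Vert_2$, and then apply the uniform operator bound from Assumption \ref{bound_response}, which states $\Vert R_N(t_i) f \Vert_2 \le C_2 \Vert f \Vert_V$ for all $f \in V$, to arrive at the estimate $C_2 \cdot \Vert \phi_{\mathrm{real}}(t_i) - \phi(t_i)\Vert_V$.

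The second step is to recall that $\phi(t_i)$ is just our shorthand for $\phi^{(n)}_N(t_i)$ and to bound $\Vert \phi_{\mathrm{real}}(t_i) - \phi^{(n)}_N(t_i)\Vert_V$ by $C_1 \cdot \epsilon_{1,N} + 2\cdot\epsilon_{2,n}$ using Theorem \ref{signalapproximation}. Here one has to match the evaluation point with the half-open intervals appearing in that theorem: for $i \ge 1$ the point $t_i$ lies in $(t_{i-1}, t_i]$, so the supremum estimate of Theorem \ref{signalapproximation} applied with index $i-1$ immediately gives $\Vert \phi_{\mathrm{real}}(t_i) - \phi^{(n)}_N(t_i)\Vert_V \le C_1 \cdot \epsilon_{1,N} + 2\cdot\epsilon_{2,n}$; for $i = 0$ one instead uses the initial-condition inequality $\Vert \phi_{\mathrm{real},0} - \phi^{(n)}_{N,0}\Vert_V \le \epsilon_{2,n}$ from the same theorem, which is bounded by $C_1 \cdot \epsilon_{1,N} + 2\cdot\epsilon_{2,n}$ since $\epsilon_{1,N}, \epsilon_{2,n} \ge 0$. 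Substituting this into the bound from the first step yields the claim.

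There is no genuine obstacle in this lemma; it is essentially the concatenation of ``$R_N(t_i)$ is a bounded linear operator'' with the already-proven approximation estimate for the discretized signal. The only point worth flagging is the bookkeeping with the half-open partition intervals --- associating $t_i$ with $(t_{i-1}, t_i]$ rather than with $(t_i, t_{i+1}]$ --- together with treating the endpoint $i = 0$ separately via the initial-data inequality.
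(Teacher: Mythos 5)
Your proposal is correct and follows essentially the same route as the paper: peel off $R_N(t_i)$ using linearity and the uniform bound $C_2$ from Assumption \ref{bound_response}, then invoke the estimate of Theorem \ref{signalapproximation}. The only difference is that you spell out the bookkeeping (associating $t_i$ with the interval $(t_{i-1},t_i]$ and handling $i=0$ via the initial-condition inequality), which the paper's one-line proof leaves implicit; this is a welcome extra precision but not a different argument.
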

\begin{proof}
\[
\left\Vert R_{N}(t_{i})\phi_{\mathrm{real}}(t_{i})  - R_{N}(t_{i})\phi(t_{i}) \right\Vert_{2} \overset{\mathrm{Ass.}\, \ref{bound_response}}{\leq} C_{2}\cdot \left\Vert \phi_{\mathrm{real}}(t_{i}) - \phi(t_{i}) \right\Vert_{V} \overset{\mathrm{Thm.}\, \ref{signalapproximation}}{\leq} C_{2}\cdot (C_{1}\cdot \epsilon_{1,N} + 2\cdot\epsilon_{2,n}).
\]
\end{proof}
Because $R_{N}(t_{i})$ is linear and because $\phi(t_{i}) = \sum\limits_{j=1}^{n} c_{j,t_{i}} f_{j}$ is a linear combination for every $i\in \lbrace 0,...,2^N \rbrace$, we can identify $\phi(t_{i})$ with $\left(c_{1,t_{i}},...,c_{n,t_{i}}\right)^T$ as we already did above and define matrices $R_{N,n}(t_{i})\in \mathbb{R}^{Y_{i}\times n}$ as
\[
R_{N,n}(t_{i})_{kj} := \left( R_{N}(t_{i})f_{j} \right)_{k} \text{ for } 1\leq k \leq Y_{i},\; 1\leq j \leq n.
\] 
This way, we have
\[
R_{N,n}(t_{i})\phi(t_{i}) = R_{N}(t_{i})\phi(t_{i}),
\]
where on the left side we considered $\phi(t_{i})\in \mathbb{R}^{n}$ and on the right side its representation $\phi(t_{i})\in V_{n}$.

If the requirements of Lemma \ref{response_approx} are fulfilled, we can therefore approximate the relation in Assumption \ref{datacont} by
\[
d(t_{i}) = R_{N,n}(t_{i})\phi(t_{i}) + n(t_{i}),\label{datadiscrete}
\]
with the same random vector $n(t_{i})$.\\
For accuracy reasons, we should include the bound $C_{2}$ in our choice of $N$ and $n$ if we simplify the relation in Assumption \ref{datacont} of our data to the signal by \eqref{datadiscrete}.\\

\comment{
One should note, that this is an approximation of the underlying relation between data and signal. We leave the data that we actually measured unchanged. But the changed connection \eqref{datadiscrete} might lead to a shift in the signal $\phi(t_{i})$, that we obtain in the retracing from our measurement at time $t_{i}$, $i=0,...,2^N$.
}\\

In reality, we might only be able to measure $\phi(t_{i})\in \mathbb{R}^{n}$ for every $i \in \lbrace 0,...,2^N\rbrace$, instead of the real signal, and know the response operators $R_{N,n}(t_{i}) \in \mathbb{R}^{Y_{i}\times n}$ for large $N$ and $n$. We will therefore work with those matrices in the following.\\

\comment{
Consider for example the case, when $V=L_{2}([0,L])$ for $L>0$ and when $V_{n}$ is spanned by the first $n$ Fourier modes. It is quite likely that a measurement device is only able to measure frequencies within a finite range and therefore only finitely many Fourier modes. In this case, the replacement of $R_{N}$ by $R_{N,n}$ is justified.
}\\

Relation \eqref{datadiscrete} however, even for fixed $n(t_{i})$, which usually is not known, is in general still not invertible. This is because the number of resolution elements $n$ will mostly be much larger than the dimension of the data space $Y_{i}$, as usually a large $n$ is necessary to get $\epsilon_{2,n}$ from the Assumptions \ref{ass:finitedim} and \ref{ass:finiteevolution} small. So $\phi(t_{i})$ has many more degrees of freedom than $d(t_{i})$. This inhibits relation \eqref{datadiscrete} to be invertible and we have to think about a way to overcome this obstacle.\\
We therefore assume:
\begin{assumption}
$\phi(t_{i})$ is a random vector in $\mathbb{R}^{n}$ with a distribution $\mathbb{P}_{\phi(t_{i})}$ for each $i\in \lbrace 0,...,2^N \rbrace$.
\end{assumption}
Our relation \eqref{datadiscrete} is thus a restriction of $\phi(t_{i})$ within the space of all random vectors in $\left( \mathbb{R}^{n}, \mathcal{B}(\mathbb{R}^{n}) \right)$. This way, for each time $t_{i}$, $i=0,...,2^N$, $\phi(t_{i})$ represents all signals in $V_{n}$ which might have led to the measurement $d(t_{i})$, arranged according to how likely it is that the signal we measured was $\phi(t_{i})$.
This classification takes the form of a probability distribution $\mathbb{P}_{\phi(t_{i})}$ and usually is the result of a mixture of physical laws and experience.\\
With relation \eqref{datadiscrete}, we will in a first step construct a simulation scheme for $d(t_{i+1})$, if we know $d(t_{i})$, the distributions $\mathbb{P}_{\phi(t_{i})}, \mathbb{P}_{n(t_{i})}$, $\mathbb{P}_{\phi(t_{i+1})}, \mathbb{P}_{n(t_{i+1})}$, as well as $R_{N,n}(t_{i})$, $R_{N,n}(t_{i+1})$ and $A_{N,n}(t_{i})$.\\
By the resulting updating rule, we then obtain an algorithm to simulate all the $d(t_{i})$ for $i\in \lbrace 1,...,2^N \rbrace$, given $d_{0}$, all the uncertainty distributions, $\mathbb{P}_{\phi(t_{i})}$ and the matrices $R_{N,n}(t_{i})$ for $i=0,...,2^N$, and $A_{N,n}(t_{i})$ for $i=0,...,2^N-1$.\\
In particular, we construct a simulation scheme for the data vector $d(t_{2^N})=d(T)$.\\
The knowledge of \eqref{datadiscrete} is necessary for the computation of the joint distribution $\mathbb{P}_{\phi(t_{i}), d(t_{i})}$ from the distributions $\mathbb{P}_{\phi(t_{i})}$ and $\mathbb{P}_{n(t_{i})}$. The latter is needed to calculate the posterior, which shall be evolved to the later time.\\
Because in the first step, only two time points in a fixed space-time discretization are considered, we introduce a short notation:
\begin{definition}\label{simplifying_notation}
The following notation will be used in the next sections:
\begin{tabbing}
\= $\phi := \phi(t_{i}) \in \mathbb{R}^{n}\ \ \ \ \ \ \ \ \ \ \ \ \ \ \ \ \ \ \ \ $ \= $\phi':=\phi(t_{i+1})\in \mathbb{R}^{n}$\\
\> $\mathbb{R}^{Y}:= \mathbb{R}^{Y_{i}}$ 		\> $\mathbb{R}^{Y'}:= \mathbb{R}^{Y_{i+1}}$\\
\> $t:=t_{i}$									\> $t':=t_{i+1}$\\
\> $d := d(t_{i}) \in \mathbb{R}^{Y}$		\> $d' := d(t_{i+1})\in \mathbb{R}^{Y'}$\\
\> $n := n(t_{i}) \in \mathbb{R}^{Y}$		\> $n' := n(t_{i+1})\in \mathbb{R}^{Y'}$\\
\> $R:= R_{N,n}(t_{i}) \in \mathbb{R}^{Y\times n}$  \> $R':= R_{N,n}(t_{i+1}) \in \mathbb{R}^{Y'\times n}$\\
\> $A:= A_{N,n}(t_{i}) \in \mathrm{map}(\mathbb{R}^{n},\mathbb{R}^{n})$.
\end{tabbing}
\end{definition}
Note that $\phi$ and $\phi'$ are determined by their representation as a linear combination of the basis elements $f_{1},...,f_{n}$ of $V_{n}$ from Assumption \ref{ass:finitedim}. We can thus identify the operator $A\in \mathrm{map}(V_{n},V_{n})$ with the operator in $\mathrm{map}(\mathbb{R}^n,\mathbb{R}^{n})$, that maps the one linear combination into the other, and call this operator $A$ again.

\section{Wiener filter}
\label{Wiener filter}
In this section, we aim to construct the posterior $\mathcal{P}_{\phi \vert d}$ for our initial time $t$ from $\mathbb{P}_{\phi}$ and $\mathbb{P}_{n}$. This will be done in terms of a signal processing scheme that is called the Wiener filter.\\
Our notation in the following will include $\vert A \vert := \vert \det A \vert$ for a matrix $A$ and the complex scalar product $x^{\dagger}y = \sum\limits_{i=1}^{m}\overline{x}_{i}y_{i}$ for vectors $x,y\in \mathbb{C}^{m}$ or the real scalar product $x^{T}y = \sum\limits_{i=1}^{m}x_{i}y_{i}$ for vectors $x,y\in \mathbb{R}^{m}$.\\
For the general case of the Wiener filter setting, we assume that the following three conditions are fulfilled.

\begin{assumption}\label{Wiener_filter_setting} The general setting satisfies the following:
\begin{itemize}
\item
Our prior knowledge about the signal justifies to assume the distribution of $\phi$ to be Gaussian with zero mean, i.e. that it is absolutely continuous w.r.t. $\lambda_{\mathbb{R}^{n}}$ and
\[
\mathcal{P}_{\phi}(s) = \mathcal{G}(s,\Phi) = \frac{1}{\vert 2\pi \Phi \vert^{1/2}} \exp\left( -\frac{1}{2}s^{T} \Phi^{-1} s \right), \label{distributionsignal}
\]
with a positive definite matrix $\Phi \in \mathbb{R}^{n\times n}$.

\item The data is related to the signal by
\[d = R\phi + n, \label{datasignalrelation}\]
with the matrix $R$ from Definition \ref{simplifying_notation}.

\item
The noise $n$ is a realization of a Gaussian distributed random vector $n$ with zero mean, i.e. of a vector which is absolutely continuous w.r.t. $\lambda_{\mathbb{R}^{Y}}$ and for which
\[
\mathcal{P}_{n}(u) = \mathcal{G}(u,N) = \frac{1}{\vert 2\pi N \vert^{1/2}} \exp\left( -\frac{1}{2}u^{T} N^{-1} u \right) \label{distributionnoise}
\]
for a positive definite matrix $N \in \mathbb{R}^{Y\times Y}$.
\item Furthermore, $n$ is independent of $\phi$, i.e.
\[
\mathbb{P}_{\phi,n} = \mathbb{P}_{\phi}\otimes \mathbb{P}_{n}. \label{independence}
\]
\end{itemize}
\comment{
The Gaussian distribution for $\phi$ will also be a reasonable assumption in Chapter \ref{chap:Updating the data}, albeit possibly with a shifted mean value. An explanation for the validity of this supposition will be given in Section \ref{sec:prior_knowledge}.
}
\end{assumption}
\begin{lemma}\label{likelihood}
In the setting of Assumption \ref{Wiener_filter_setting}, the likelihood is given by
\begin{align}
\mathcal{P}_{d \vert \phi}(u,s) = \mathcal{G}(u - Rs,N)
\end{align}
for $\lambda_{\mathbb{R}^{Y+n}}$-a.e. $(u,s)\in \mathbb{R}^{Y}\times \mathbb{R}^{n}$.
\end{lemma}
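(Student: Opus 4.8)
The plan is to obtain the joint density of $(\phi,d)$ explicitly and then divide by the prior density. First I would use the independence assumption \eqref{independence}: since $\mathbb{P}_{\phi}=\mathcal{P}_{\phi}\cdot\lambda_{\mathbb{R}^{n}}$ and $\mathbb{P}_{n}=\mathcal{P}_{n}\cdot\lambda_{\mathbb{R}^{Y}}$ are both absolutely continuous w.r.t.\ Lebesgue measure, Theorem \ref{fubini} shows that the product measure $\mathbb{P}_{\phi,n}=\mathbb{P}_{\phi}\otimes\mathbb{P}_{n}$ has the $\lambda_{\mathbb{R}^{n+Y}}$-density $(s,v)\mapsto\mathcal{P}_{\phi}(s)\,\mathcal{P}_{n}(v)$.

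Next I would transport this density along the linear map $T:\mathbb{R}^{n}\times\mathbb{R}^{Y}\to\mathbb{R}^{n}\times\mathbb{R}^{Y}$, $T(s,v):=(s,Rs+v)$. This map is a linear isomorphism whose matrix is block lower triangular with identity blocks on the diagonal, so $\lvert\det T\rvert=1$. Since $(\phi,d)=(\phi,R\phi+n)=T(\phi,n)$ by \eqref{datasignalrelation}, the transformation formula for Lebesgue integrable functions gives that $\mathbb{P}_{\phi,d}$ is absolutely continuous w.r.t.\ $\lambda_{\mathbb{R}^{n+Y}}$ with density
\[
\mathcal{P}_{\phi,d}(s,u)=\mathcal{P}_{\phi}(s)\,\mathcal{P}_{n}(u-Rs)=\mathcal{G}(s,\Phi)\,\mathcal{G}(u-Rs,N)
\]
for $\lambda_{\mathbb{R}^{n+Y}}$-a.e.\ $(s,u)$. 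In particular Assumption \ref{continuous_density} holds for the pair $(\phi,d)$, so the prior, evidence, likelihood and posterior of Definition \ref{Prior, posterior, evidence, likelihood} are all well defined; for the present statement one does not even need to evaluate the evidence integral $\mathcal{P}_{d}(u)=\int_{\mathbb{R}^{n}}\mathcal{G}(s,\Phi)\,\mathcal{G}(u-Rs,N)\,ds$, and a quick check that $\int_{\mathbb{R}^{Y}}\mathcal{G}(u-Rs,N)\,du=1$ shows the marginal $\mathcal{P}_{\phi}$ obtained here agrees with the one postulated in Assumption \ref{Wiener_filter_setting}.

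Finally, because $\Phi$ is positive definite we have $\mathcal{P}_{\phi}(s)=\mathcal{G}(s,\Phi)>0$ for \emph{every} $s\in\mathbb{R}^{n}$, so the analogue of Definition \ref{condprobdens} for the likelihood (interchanging the roles of $\phi$ and $d$) applies on all of $\mathbb{R}^{Y}\times\mathbb{R}^{n}$ and yields
\[
\mathcal{P}_{d\vert\phi}(u,s)=\frac{\mathcal{P}_{\phi,d}(s,u)}{\mathcal{P}_{\phi}(s)}=\frac{\mathcal{G}(s,\Phi)\,\mathcal{G}(u-Rs,N)}{\mathcal{G}(s,\Phi)}=\mathcal{G}(u-Rs,N)
\]
for $\lambda_{\mathbb{R}^{Y+n}}$-a.e.\ $(u,s)$, which is the claim. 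The step requiring care is the change of variables in the second paragraph: one must verify that $T$ is a genuine diffeomorphism — here it is a linear isomorphism, hence it also maps Lebesgue null sets to Lebesgue null sets — so that the transformation formula applies, and one must compute its Jacobian determinant, which equals $1$ precisely because of the triangular block structure induced by $d=R\phi+n$. Everything else is routine bookkeeping with Fubini's theorem and the definition of the conditional density.
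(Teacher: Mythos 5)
Your proposal is correct and follows essentially the same route as the paper: both identify the law of $(\phi,d)=(\phi,R\phi+n)$ from the independence of $\phi$ and $n$ and then read off the conditional density from Definition \ref{Prior, posterior, evidence, likelihood}. The only cosmetic difference is that you write down the joint density explicitly via the block-triangular change of variables $(s,v)\mapsto(s,Rs+v)$ with Jacobian $1$ and then divide by $\mathcal{P}_{\phi}(s)=\mathcal{G}(s,\Phi)>0$, whereas the paper tests against arbitrary $f\in L_{1}$, applies Theorem \ref{fubini}, and uses translation invariance of the Lebesgue measure in the noise variable for fixed $s$ --- the same volume-preservation fact in a slightly different guise.
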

\begin{proof}
For all $f \in L_{1}(\mathbb{R}^{n}\times \mathbb{R}^{Y},\mathcal{B}(\mathbb{R}^{n}\times \mathbb{R}^{Y}), \mathbb{P}_{\phi,d})$:
\begin{align}
\mathbb{E} \left[ f(\phi, d) \right] &\overset{\eqref{datasignalrelation}}{=} \mathbb{E} \left[ f(\phi, R\phi + n) \right]\\
&= \int\limits_{\mathbb{R}^{n}\times \mathbb{R}^{Y}} f(s,Rs + u) \mathbb{P}_{\phi,n}(d(s,u))\\
&\overset{\mathrm{Thm.} \ref{fubini}, \eqref{distributionnoise},\eqref{independence}}{=} \int\limits_{\mathbb{R}^{n}} \int\limits_{\mathbb{R}^{Y}} f(s,Rs + u)\cdot \mathcal{G}(u,N) du \mathbb{P}_{\phi}(ds)\\
&= \int\limits_{\mathbb{R}^{n}} \int\limits_{\mathbb{R}^{Y}} f(s,\tilde{u})\cdot \mathcal{G}(\tilde{u}-Rs,N) d(\tilde{u}-Rs) \mathbb{P}_{\phi}(ds)\\
&\overset{\mathrm{renaming}}{=} \int\limits_{\mathbb{R}^{n}} \int\limits_{\mathbb{R}^{Y}} f(s,u)\cdot \mathcal{G}(u-Rs,N) du \mathbb{P}_{\phi}(ds),
\end{align}
where the second last equality follows with substitution \cite[Lemma 1.22]{kall}, because $(s,u)\mapsto (s,Rs + u)$ is a continuous and therefore $\lambda_{\mathbb{R}^{n+Y}}$-measurable function. In the last equality we used translation invariance of the Lebesgue measure. By Definition \ref{Prior, posterior, evidence, likelihood}, the likelihood is therefore given by
\begin{align}
\mathcal{P}_{d \vert \phi}(u,s) = \mathcal{G}(u - Rs,N) 
\end{align}
for $\lambda_{\mathbb{R}^{Y+n}}$-a.e. $(u,s)\in \mathbb{R}^{Y}\times \lbrace s\in \mathbb{R}^{n}: \mathcal{P}_{\phi}(s) >0 \rbrace \overset{\mathcal{P}_{\phi}(s)>0\, \forall s\in \mathbb{R}^{n}\, \text{by}\, \eqref{distributionsignal}}{=}  \mathbb{R}^{Y}\times \mathbb{R}^{n}$.
\end{proof}

\comment{
Physicists often write $\mathcal{P}(d \vert \phi)=\mathcal{G}(d-R\phi,N)$ for this.
}\\

Now we are able to compute the posterior.
\begin{theorem}\label{thm:posterior}
The posterior in the setting of Assumption \ref{Wiener_filter_setting} on the set \(\mathbb{R}^{n}\times \lbrace u\in \mathbb{R}^{Y}: \mathcal{P}_{d}(u)>0 \rbrace\) is given by
\begin{align}
\mathcal{P}_{\phi \vert d}(s,u) &= \mathcal{G}(s-m(u),D)\ \lambda_{\mathbb{R}^{n+Y}}\text{-a.s with}\\
\text{uncertainty variance }D &= \left( \Phi^{-1} + R^{T}N^{-1}R  \right)^{-1} \text{ and}\\
\text{mean }m(u) & =Dj(u)=DR^{T}N^{-1}u.
\end{align}
In short, it is $\mathcal{P}_{\phi \vert d}(s,u) = \mathcal{G}(s-m(u),D)$ $\lambda_{\mathbb{R}^{n}}\otimes \mathbb{P}_{d}$-a.e in $\mathbb{R}^{n}\times \mathbb{R}^{Y}$.
\end{theorem}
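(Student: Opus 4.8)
The plan is to use Bayes' Theorem (Theorem \ref{bayesthm}) to write the posterior as a product of the likelihood and prior divided by the evidence, and then to complete the square in the exponent to recognize a Gaussian. By Lemma \ref{likelihood} and \eqref{distributionsignal}, for $\lambda_{\mathbb{R}^{n}}\otimes\mathbb{P}_{d}$-a.e.\ $(s,u)$ with $\mathcal{P}_{d}(u)>0$ we have
\[
\mathcal{P}_{\phi\vert d}(s,u) = \frac{\mathcal{G}(u-Rs,N)\cdot\mathcal{G}(s,\Phi)}{\mathcal{P}_{d}(u)}.
\]
First I would expand the quadratic form in the numerator's exponent,
\[
-\tfrac{1}{2}(u-Rs)^{T}N^{-1}(u-Rs) - \tfrac{1}{2}s^{T}\Phi^{-1}s,
\]
and collect the terms in $s$: the quadratic-in-$s$ part is $-\tfrac12 s^{T}(\Phi^{-1}+R^{T}N^{-1}R)s = -\tfrac12 s^{T}D^{-1}s$, the linear part is $s^{T}R^{T}N^{-1}u = s^{T}j(u)$, and the remainder $-\tfrac12 u^{T}N^{-1}u$ is independent of $s$. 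Since $\Phi$ and $N$ are positive definite, $D^{-1}=\Phi^{-1}+R^{T}N^{-1}R$ is positive definite, hence invertible, so $D$ and $m(u)=Dj(u)=DR^{T}N^{-1}u$ are well-defined.

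Next I would complete the square: write
\[
-\tfrac12 s^{T}D^{-1}s + s^{T}j(u) = -\tfrac12\bigl(s-m(u)\bigr)^{T}D^{-1}\bigl(s-m(u)\bigr) + \tfrac12 m(u)^{T}D^{-1}m(u),
\]
using $D^{-1}m(u)=j(u)$ and the symmetry of $D^{-1}$. Hence the numerator equals $c(u)\cdot\exp\!\bigl(-\tfrac12(s-m(u))^{T}D^{-1}(s-m(u))\bigr)$, where $c(u)$ collects all the $s$-independent factors (the normalization constants of the two Gaussians and $\exp(-\tfrac12 u^{T}N^{-1}u + \tfrac12 m(u)^{T}D^{-1}m(u))$). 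Since $\mathcal{P}_{\phi\vert d}(\cdot,u)$ is a probability density in $s$ w.r.t.\ $\lambda_{\mathbb{R}^{n}}$ for $\mathbb{P}_{d}$-a.e.\ $u$ (this was established before Definition \ref{condprobdens}), and $\mathcal{G}(s-m(u),D)$ is the unique density proportional to $\exp\!\bigl(-\tfrac12(s-m(u))^{T}D^{-1}(s-m(u))\bigr)$ that integrates to $1$ over $\mathbb{R}^{n}$, it follows that $c(u)/\mathcal{P}_{d}(u) = \vert 2\pi D\vert^{-1/2}$ and therefore $\mathcal{P}_{\phi\vert d}(s,u) = \mathcal{G}(s-m(u),D)$ for $\lambda_{\mathbb{R}^{n}}\otimes\mathbb{P}_{d}$-a.e.\ $(s,u)$.

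I expect the main obstacle to be purely bookkeeping rather than conceptual: keeping careful track of which factors depend on $s$ versus $u$, and justifying the normalization step cleanly. Rather than compute $\mathcal{P}_{d}(u)$ explicitly (which would require its own marginalization integral), the cleanest route is to invoke the fact that both sides are probability densities in $s$ for $\mathbb{P}_{d}$-a.e.\ $u$, so that equality of the $s$-dependent shapes forces equality of the normalizations; this avoids evaluating any Gaussian integral directly. A minor point worth a sentence is that the asserted identity only needs to hold $\lambda_{\mathbb{R}^{n}}\otimes\mathbb{P}_{d}$-a.e., so the behaviour of $\mathcal{P}_{\phi\vert d}$ on the $\mathbb{P}_{d}$-null set $\{u:\mathcal{P}_{d}(u)=0\}$ is irrelevant, exactly as in the statement of Theorem \ref{bayesthm}.
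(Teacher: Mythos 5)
Your proposal is correct and follows essentially the same route as the paper: complete the square in the exponent of likelihood times prior to identify $D^{-1}=\Phi^{-1}+R^{T}N^{-1}R$ and $m(u)=DR^{T}N^{-1}u$, then fix the normalization via the fact that $\mathcal{P}_{\phi\vert d}(\cdot,u)$ is a probability density in $s$. The paper merely phrases the Bayes step through the information Hamiltonian and relation \eqref{hamiltonianrelation} and wraps the conclusion in an expectation-value identity for $f\in L_{1}$, which is the same argument in slightly different packaging.
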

\begin{proof}
The information Hamiltonian from Definition \ref{hamiltonian} reads
\begin{align}
H_{d,\phi}(u,s) &=  -\log \mathcal{P}_{d \vert \phi}(u,s) - \log \mathcal{P}_{\phi}(s)\\
&= \frac{1}{2} (u-Rs)^{T}N^{-1}(u-Rs) + \frac{1}{2} s^{T}\Phi^{-1}s + c_{1}\\
&\overset{N^{-1T}=N^{-1} ,(Rs)^{T}= s^{T}R^{T} }{=} \frac{1}{2} \left[ s^{T} \underbrace{\left( \Phi^{-1} + R^{T}N^{-1}R  \right)}_{\substack{=:D^{-1}}} s - s^{T} \underbrace{R^{T}N^{-1}u}_{\substack{=:j(u)}} - (R^{T}N^{-1}u)^{T}s \right] + c_{2}(u)\\
&\overset{D=D^{T}}{=} \frac{1}{2} (s - \underbrace{Dj(u)}_{\substack{=:m(u)}})^{T} D^{-1} (s - Dj(u))  + c_{3}(u)\\
&= \frac{1}{2} (s - m(u))^{T} D^{-1} (s - m(u))  + c_{3}(u),\label{hamiltonian_wiener_filter}
\end{align}
with $c_{1}$, $c_{2}(u)$ and $c_{3}(u)$ independent of $s$.
One has to note here, that $D$ is well defined and invertible, since with $\Phi$ and $N$ also their inverses are positive definite. Thus, $R^{T}N^{-1}R$ is still positive semidefinite and therefore $\Phi^{-1} + R^{T}N^{-1}R$ is positive definite, implying its invertibility.
With the definitions of 
\[
D=\left( \Phi^{-1} + R^{T}N^{-1}R  \right)^{-1}
\]
and 
\[
m(d)=Dj(d)=DR^{T}N^{-1}d,
\]
one has for $f \in L_{1}(\mathbb{R}^{n}\times \mathbb{R}^{Y},\mathcal{B}(\mathbb{R}^{n}\times \mathbb{R}^{Y}), \mathbb{P}_{\phi,d})$:
\begin{align}
\mathbb{E} \left[ f(\phi, d) \right] &\overset{\eqref{bayes}}{=} \int\limits_{\mathbb{R}^{Y}} \int\limits_{\mathbb{R}^{n}} \mathcal{P}_{\phi \vert d}(s,u)\cdot f(s,u) ds \mathcal{P}_{d}(u) du\\
&\overset{\eqref{hamiltonianrelation}}{=} \int\limits_{\left\lbrace u\in \mathbb{R}^{Y}: \mathcal{P}_{d}(u)>0 \right\rbrace} \int\limits_{\mathbb{R}^{n}} \frac{e^{-H_{d,\phi}(u,s)}}{\mathcal{P}_{d}(u)}\cdot f(s,u) ds \mathcal{P}_{d}(u) du\\
&\overset{\eqref{hamiltonian_wiener_filter}}{=} \int\limits_{\left\lbrace u\in \mathbb{R}^{Y}: \mathcal{P}_{d}(u)>0 \right\rbrace} \int\limits_{\mathbb{R}^{n}} \frac{e^{-\frac{1}{2} (s -m(u))^{T} D^{-1} (s - m(u))  + c_{3}(u)}}{\mathcal{P}_{d}(u)} \cdot f(s,u) ds \mathcal{P}_{d}(u) du\\
&= \int\limits_{\mathbb{R}^{Y}} \int\limits_{\mathbb{R}^{n}} \frac{e^{-\frac{1}{2} (s - m(u))^{T} D^{-1} (s - m(u)) } }{\vert 2\pi D \vert^{1/2}} \cdot f(s,u) ds \mathcal{P}_{d}(u) du. \label{posteriorcalculation}
\end{align}
Note in last step, that we know the normalization constant must be $\vert 2\pi D \vert^{1/2}$, because $(u,A)\mapsto \left(\mathcal{P}_{\phi \vert d}(.,u)\cdot \lambda_{\mathbb{R}^{n}}\right)(A)$, for $A\in  \mathcal{B}\left(\mathbb{R}^{n}\right)$ and $u \in \mathbb{R}^{Y}$, defines a probability kernel from $\mathbb{R}^{Y}$ to $\mathbb{R}^{n}$.\\
That is, one has 
\[
\int\limits_{\mathbb{R}^{n}} \mathcal{P}_{\phi \vert d}(s,u) ds \overset{!}{=} 1,
\]
and for arbitrary $m(u)\in \mathbb{R}^{n}$ it is
\[
\int\limits_{\mathbb{R}^{n}} e^{-\frac{1}{2} (s - m(u))^{T} D^{-1} (s - m(u))} ds = \vert 2\pi D \vert^{1/2}.
\]
Therefore, we have shown that on the set $\mathbb{R}^{n}\times \lbrace u\in \mathbb{R}^{Y}: \mathcal{P}_{d}(u)>0 \rbrace$:
\begin{align}
\mathcal{P}_{\phi \vert d}(s,u) &= \mathcal{G}(s-m(u),D)\ \lambda_{\mathbb{R}^{n+Y}}\text{-a.s with}\\
\text{uncertainty variance }D &= \left( \Phi^{-1} + R^{T}N^{-1}R  \right)^{-1} \text{ and}\\
\text{mean }m(u) & =Dj(u)=DR^{T}N^{-1}u.
\end{align}
\end{proof}

In \cite[p.3]{enss}, in terms of signal reconstruction by this (generalized) Wiener filter \cite[cf.][]{wiener}, $D$ is also called the information propagator and $j(d)$ the information source, so that the a posteriori mean field $m$ is obtained by applying $D$ to $j(d)$. One also calls 
\[
W:= DR^{T}N^{-1}\label{def:Wienerfilter}
\]
the Wiener filter and get's the linear connection 
\[
m=Wd
\]
between the data and the posterior mean field.\\
One should also note, that there are two equivalent definitions of the Wiener filter. It is
\begin{align}
W &= (\Phi^{-1} + R^{T}N^{-1}R)^{-1} R^{T}N^{-1}\\
&= \Phi R^{T}(R\Phi R^{T} + N)^{-1}.\label{Wienerfilter_representations}
\end{align}
The first equality is called signal space representation and the second one data space representation, where the names refer to the spaces in which operator inversions happen \cite[p.3]{enss}. The second representation can also cope with negligible noise, $N \rightarrow 0$, in case $R\Phi R^{T}$ is (at least pseudo-) invertible.
For our update of the data to a later time, as mentioned in the end of Section \ref{sec:Connection to the data}, we now do not aim to evolve only the a posteriori mean field $m(d)$, but the complete posterior $\mathcal{P}_{\phi \vert d}$.\\
By relation \eqref{datadiscrete} we have
\[
d' = R'\phi' + n'
\]
at the later time $t'$. With help of the resulting posterior $\mathcal{P}_{\phi' \vert d'}$ for this time, we then construct a rule to compute $d'$ from $d$, $\mathcal{P}_{\phi \vert d}$ and $\mathcal{P}_{\phi' \vert d'}$.

\chapter{Updating the data}
\label{chap:Updating the data}
In this chapter, with the notation from Definition \ref{simplifying_notation}, a rule to update the data vector $d$ to the new data $d'$, according to the evolving posterior $\mathcal{P}_{\phi \vert d}$ and the posterior $\mathcal{P}_{\phi' \vert d'}$ at time $t'$, and only depending on the old data, shall be constructed. We will try to follow the steps in \cite{enss} closely, imbedding them into our mathematical framework.
Throughout this chapter, we suppose all the assumptions from Chapter \ref{chap:setting} to be valid.

\section{Field dynamics}
\label{sec:Field dynamics}
First of all, remember Definition \ref{phi_n,N} and Definition \ref{simplifying_notation}, which give us the relation
\begin{align}
\phi' = A \phi,
\end{align}
with $A\in \mathrm{map}(\mathbb{R}^n,\mathbb{R}^{n})$.\\
On the one hand, we would like to stay with a matrix in our updating, instead of a complicated operator from $\mathbb{R}^{Y'}$ to $\mathbb{R}^{Y}$, because this way we can assure measurability of the update procedure.\\
And on the other hand, we would like to have the evolution of our signal invertible at least for small time steps, because we need the requirement of the transformation formula for Lebesgue integrable functions to be fulfilled, in order to compute the evolved posterior $\mathcal{P}_{\phi' \vert d}$.
\\
This is why we make the following
\begin{assumption}\label{ass:linearization}
If $N\in \mathbb{N}$ is large enough, then for arbitrary $i\in \lbrace 0,...,2^N-1 \rbrace$, $n\in \mathbb{N}$ and the time evolution from $t=t_i$ to $t'= t_i+ \delta t$, with $\delta t = \frac{T}{2^N}$, we can find a matrix $L\in \mathbb{R}^{n\times n}$ and a vector $c\in \mathbb{R}^{n}$, such that for all $x\in \mathbb{R}^{n}$ the evolution operator $A$, which describes the evolution of the time discrete signal from $t$ to $t'$, satisfies
\begin{align}
Ax &= x + \delta t \left( Lx +c \right) + \mathcal{O}( (\delta t)^{2} )\\
&= \left( \mathbbm{1}_{\mathbb{R}^{n}} + \delta t \cdot L \right) x + \delta t \cdot c + \mathcal{O}( (\delta t)^{2} ).
\end{align}
$\mathcal{O}( (\delta t)^{2} )$ in this context shall indicate the existence of some $C\geq 0$, such that we have
\begin{align}
\sup\limits_{\Vert x \Vert_{2}\leq 1}\Vert Ax - \left( \mathbbm{1}_{\mathbb{R}^{n}} + \delta t \cdot L \right) x - \delta t \cdot c \Vert_{2} \leq C\cdot (\delta t)^{2}.
\end{align}
Equivalently, $\mathcal{O}( (\delta t)^{2} )$ means that $x \mapsto Ax - \left( \mathbbm{1}_{\mathbb{R}^{n}} + \delta t \cdot L \right) x - \delta t \cdot c$ is bounded by $C\cdot (\delta t)^{2}$ as an operator in $\mathrm{map}(\mathbb{R}^{n},\mathbb{R}^{n})$, where $\mathbb{R}^{n}$ is endowed with the norm $\Vert.\Vert_{2}$.\\
We also assume, that $C=C_{\mathrm{max}}$ is the maximum of those bounds over all the $2^N$ evolution steps and that $C$ is independent of $N$.
\end{assumption}

In the following, we illustrate a case in which Assumption \ref{ass:linearization} is valid.\\
Remember the evolution equation \eqref{pde} that we started with in Chapter \ref{chap:setting}. We are interested in the evolution from the time $t=t_{i}$ to the time $t'=t_{i+1}=t_{i} + \delta t$.\\
We suppose Assumption \ref{ass:finitedim} to hold and introduce the finite dimensional subspaces $V_{n}$ of $V$ again.\\
Now we fix a maybe large $N_0\in \mathbb{N}$ and assume for all $N\geq N_0$ that:
\begin{assumption}\label{one_step_evolution}
For arbitrary $i\in \left\lbrace 0,...,2^N-1 \right\rbrace$, the evolution of the signal $\phi_{\mathrm{real}}$ from $t=t_i$ to $t'=t_i + \delta t$ can be described by an operator
\[
A_i:V\times [t,t'] \rightarrow V,
\]
in the sense that
\begin{align}
\phi_{\mathrm{real}}\left(\tilde{t}\right)=A_i\left(\phi_{\mathrm{real}}(t),\tilde{t}\right) \text{ for } \tilde{t}\in [t,t'].
\end{align}
If one restricts $A_i$ to $V_n$ for $n\in \mathbb{N}$, then one has
\[
A_i:V_n\times [t,t'] \rightarrow V_n.
\]
Furthermore, we assume that there is an $M_{i,N}\geq 1$, such that for all $\tilde{t}\in [t,t']$ and $f,g\in V$:
\begin{align}
\left\Vert A_i\left(f,\tilde{t}\right) - A_i\left(g,\tilde{t}\right) \right\Vert_V \leq M_{i,N}\cdot \Vert f-g \Vert_V.
\end{align}
For arbitrary $N\geq N_0$, the number $M_N:= \max\left\lbrace M_{i,N}: i\in \lbrace 0,...,2^N-1 \rbrace \right\rbrace$ has the property
\begin{align}
M_N^{2^N} \leq M \label{smoothness_of_small_steps}
\end{align}
for some $M\geq 1$.
\end{assumption}

\comment{
The inequality \eqref{smoothness_of_small_steps} in Assumption \ref{one_step_evolution} can be interpreted in the following way. If we make more steps in our time discretization, then we assume the error for every single time step to get smaller, because we assume the signal not to change its properties instantaneously. This smoothness assumption is imprinted in \eqref{smoothness_of_small_steps}. 
}\\

Now for a fix $N\geq N_0$ we can simplify the real signal in the following way:
\begin{definition}\label{def:discrete_signal}
If the Assumptions \ref{ass:finitedim} and \ref{one_step_evolution} are valid, then for $n\in \mathbb{N}$ a simplified signal $\phi^{(n)}_{N}: [0,T] \rightarrow V_{n}$ can be defined as
\begin{align}
&\phi^{(n)}_{N}(\tilde{t}) :=A_i\left(\phi^{(n)}_{N}(t_i),\tilde{t}\right)  \text{ if } \tilde{t}\in (t_{i},t_{i+1}] \text{ for } i\in \lbrace 0,...,2^N-1 \rbrace \text{ and } \\
&\phi_{N,0}^{(n)} = \phi^{(n)}_{N}(0) := f^{(n)}\\
&\text{for some function } f^{(n)} \in V_{n} \text{ with } \left\Vert f^{(n)} - \phi_{\mathrm{real},0} \right\Vert_{V} \leq \epsilon_{2,n}.
\end{align} 
\end{definition}
This way, we get the following estimate:
\begin{lemma}\label{lem:approx_bound}
With the Assumptions \ref{ass:finitedim} and \ref{one_step_evolution}, we can estimate for $\tilde{t}\in [0,T]$:
\begin{align}
\left\Vert \phi_{\mathrm{real}}(\tilde{t}) - \phi^{(n)}_{N}(\tilde{t}) \right\Vert_V \leq M\cdot \epsilon_{2,n},
\end{align}
with $M$ from Assumption \ref{one_step_evolution}.
\end{lemma}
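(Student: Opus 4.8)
The plan is to run an induction over the $2^N$ time steps, using the key point that both $\phi_{\mathrm{real}}$ and $\phi^{(n)}_{N}$ are propagated by the \emph{same} family of operators $A_i$ on each interval $(t_i,t_{i+1}]$, so that the Lipschitz bound of Assumption \ref{one_step_evolution} lets the initial mismatch of size $\epsilon_{2,n}$ grow by at most a factor $M_{i,N}$ per step. The product of these factors over all steps is then controlled by \eqref{smoothness_of_small_steps}.

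First I would establish the base case: by Definition \ref{def:discrete_signal}, $\left\Vert \phi_{\mathrm{real}}(0) - \phi^{(n)}_{N}(0) \right\Vert_V = \left\Vert \phi_{\mathrm{real},0} - f^{(n)} \right\Vert_V \leq \epsilon_{2,n} = M_N^{0}\,\epsilon_{2,n}$. For the inductive step, assume $\left\Vert \phi_{\mathrm{real}}(t_i) - \phi^{(n)}_{N}(t_i) \right\Vert_V \leq M_N^{i}\,\epsilon_{2,n}$ for some $i \in \{0,\dots,2^N-1\}$. For any $\tilde t \in (t_i,t_{i+1}]$ one has $\phi_{\mathrm{real}}(\tilde t) = A_i(\phi_{\mathrm{real}}(t_i),\tilde t)$ and $\phi^{(n)}_{N}(\tilde t) = A_i(\phi^{(n)}_{N}(t_i),\tilde t)$, so the Lipschitz estimate of Assumption \ref{one_step_evolution} together with $M_{i,N}\leq M_N$ gives
\[
\left\Vert \phi_{\mathrm{real}}(\tilde t) - \phi^{(n)}_{N}(\tilde t) \right\Vert_V \leq M_{i,N}\left\Vert \phi_{\mathrm{real}}(t_i) - \phi^{(n)}_{N}(t_i) \right\Vert_V \leq M_N^{i+1}\,\epsilon_{2,n}.
\]
Taking $\tilde t = t_{i+1}$ in particular advances the hypothesis to the next grid point, which closes the induction.

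Finally, any $\tilde t \in [0,T]$ is either $0$ or lies in some $(t_i,t_{i+1}]$ with $i+1\leq 2^N$, so the displayed bound combined with $M_N\geq 1$ yields $\left\Vert \phi_{\mathrm{real}}(\tilde t) - \phi^{(n)}_{N}(\tilde t)\right\Vert_V \leq M_N^{2^N}\,\epsilon_{2,n}$, and \eqref{smoothness_of_small_steps} bounds the right-hand side by $M\,\epsilon_{2,n}$. I expect no genuine obstacle here; the only points needing a little care are that the estimate at a grid point $t_{i+1}$ carries the exponent $i+1$ (since $t_{i+1}$ is the right endpoint of the $i$-th interval, not the start of the $(i+1)$-st), and that the monotonicity $M_N^{i+1}\leq M_N^{2^N}$ relies on $M_N\geq 1$, which is built into Assumption \ref{one_step_evolution}.
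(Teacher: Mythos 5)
Your proposal is correct and follows essentially the same route as the paper: the paper likewise iterates the Lipschitz bound of Assumption \ref{one_step_evolution} from the initial mismatch $\left\Vert \phi_{\mathrm{real},0} - f^{(n)} \right\Vert_V \leq \epsilon_{2,n}$ down through the grid points, arriving at $M_N^{2^N}\epsilon_{2,n} \leq M\epsilon_{2,n}$ via \eqref{smoothness_of_small_steps}. Your explicit induction with the exponent bookkeeping and the remark on $M_N \geq 1$ is just a slightly more careful write-up of the same argument.
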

\begin{proof}
For $\tilde{t}=0$, this is true by Definition \ref{def:discrete_signal}, because $M_{N}\geq 1$ and therefore $M_N\leq M$ with \eqref{smoothness_of_small_steps}.\\
Otherwise, it is $\tilde{t}\in (t_{i},t_{i+1}]$ for some $i\in \lbrace 0,...,2^N-1 \rbrace$.\\
Then:
\begin{align}
\left\Vert \phi_{\mathrm{real}}(\tilde{t}) - \phi^{(n)}_{N}(\tilde{t}) \right\Vert_V &\overset{\mathrm{Ass.}\, \ref{one_step_evolution}, \mathrm{Def.}\, \ref{def:discrete_signal}}{=} \left\Vert A_i\left(\phi_{\mathrm{real}}(t_{i}),\tilde{t}\right) - A_i\left(\phi^{(n)}_{N}(t_{i}),\tilde{t}\right) \right\Vert_V\\
&\overset{\mathrm{Ass.}\, \ref{one_step_evolution}}{\leq} M_N\cdot \left\Vert \phi_{\mathrm{real}}(t_{i}) - \phi^{(n)}_{N}(t_{i}) \right\Vert_V\\
&\overset{\mathrm{Ass.}\, \ref{one_step_evolution} , \mathrm{Def.}\, \ref{def:discrete_signal}}{\leq} \cdots \overset{\mathrm{Ass.}\, \ref{one_step_evolution} , \mathrm{Def.}\, \ref{def:discrete_signal}}{\leq} M_N^{2^N}\cdot \epsilon_{2,n} \overset{\mathrm{Ass.}\, \ref{one_step_evolution}}{\leq} M\cdot \epsilon_{2,n}.
\end{align}
\end{proof}
Since $M$ is constant, $M\cdot \epsilon_{2,n}$ converges to zero as $n\rightarrow \infty$, so that our simplified signal represents a reasonable approximation of the real signal.\\
We want to regard the simplified signal as a vector in $\mathbb{R}^n$ and therefore further define:
\begin{definition}\label{def:discrete_evolution}
In the setting of Lemma \ref{lem:approx_bound}, for $n\in \mathbb{N}$ and $V_n= \mathrm{span}\left(f_1,...,f_n\right)$, we first define the projection operator $P_n:V_n \rightarrow \mathbb{R}^{n}$ by
\begin{align}
P_n\left(\sum\limits_{k=1}^{n}a_{k}\cdot f_{k}\right):=(a_1,...,a_n)^T.
\end{align}
For arbitrary $i\in \lbrace 0,...,2^N-1 \rbrace$, $t=t_i$ and $t'=t_{i+1}$, we then define the operator
\[
A_i^{(n)}:\mathbb{R}^{n}\times [t,t'] \rightarrow \mathbb{R}^{n},
\]
that describes the evolution of the simplified field in $\mathbb{R}^n$ from $t$ to $t'$, as
\begin{align}
A_i^{(n)}\left((a_1,...,a_n)^T,\tilde{t}\right) := P_n \circ A_i\left(\sum\limits_{k=1}^{n}a_{k}\cdot f_{k},\tilde{t}\right)
\end{align}
for $(a_1,...,a_n)^T \in \mathbb{R}^{n}$ and $\tilde{t}\in [t,t']$.
\end{definition}
Note that $A_i^{(n)}$ is well-defined for every $i$, because by Assumption \ref{one_step_evolution} we have 
\[A_i:V_n\times [t,t'] \rightarrow V_n.\]
The operators from Definition \ref{def:discrete_evolution} allow us to approximate the original equation in the time interval $[t,t']$ by the evolution of $P_n\circ \phi^{(n)}_{N}$ with $\phi^{(n)}_{N}$ from Definition \ref{def:discrete_signal}.\\
For $\tilde{t}\in [t,t']$, we have 
\begin{align}
A_i^{(n)}\left(.,\tilde{t}\right) = \sum\limits_{k=1}^{n} e_{k}\cdot A^{(n)}_{i,k}\left(.,\tilde{t}\right),\label{fin_dim_A}
\end{align}
for $e_{k}= (0,...,0,\underbrace{1}_{\substack{k-th\; place}},0,...,0)^T$ and some operators $A^{(n)}_{i,k}\left(.,\tilde{t}\right)\in \mathrm{map}(\mathbb{R}^{n},\mathbb{R})$.\\
Because a signal that does not evolve at all should stay the same, we expect $A_i(.,t)$ to be the identity in $V$ for all $i$.\\
Therefore, we assume
\begin{assumption}\label{ass:identity_starting_time}
In the setting of Lemma \ref{lem:approx_bound}, for arbitrary $i\in \lbrace 0,...,2^N-1 \rbrace$ and the time $t=t_i$, it is
\begin{align}
A_i^{(n)}(.,t)=\mathbbm{1}_{\mathbb{R}^{n}}
\end{align}
and for $k=1,...,n$, it is
\[
A^{(n)}_{i,k}(.,t) \overset{\eqref{fin_dim_A}}{=} e_{k}^{T}.
\]
\end{assumption}
If we also assume that:
\begin{assumption}\label{smoothness_evol_op}
For arbitrary $i\in \left\lbrace 0,...,2^N-1 \right\rbrace$, $t=t_i$, $t'=t_{i+1}$, each $x\in \mathbb{R}^{n}$ and for all $k=1,...,n$, the maps $A^{(n)}_{i,k}(x,.):[t,t']\rightarrow \mathbb{R}$ are twice Fr\'{e}chet differentiable.\\
For arbitrary but fixed $\xi_{i,k}\in (t,t')$, every operator $x\mapsto \dfrac{1}{2}\dfrac{\partial^2}{\partial\tilde{t}^{2}} A^{(n)}_{i,k}(x,\xi_{i,k})$ is bounded in $\mathrm{map}(\mathbb{R}^{n}, \mathbb{R})$ by some $C_{i,k}>0$.\\
The number $C = \max \left\lbrace C_{i,k} : i\in \left\lbrace 0,...,2^N-1 \right\rbrace,\,  k\in \lbrace 1,...,n \rbrace \right\rbrace$ is independent of $N$.
\end{assumption}

Then we can state the following lemma which brings us close to Assumption \ref{ass:linearization}:
\begin{lemma}\label{lemma:smoothness_evol_op}
In the setting of Lemma \ref{lem:approx_bound}, suppose that also the Assumptions \ref{ass:identity_starting_time} and \ref{smoothness_evol_op} hold.\\
Let $i\in \left\lbrace 0,...,2^N-1 \right\rbrace$ be arbitrary and $t=t_i$ as well as $t'=t_{i+1}$.\\
Then for every $x\in \mathbb{R}^{n}$, $k \in \lbrace 1,...,n \rbrace$ and $L_{i,k}(x):= \dfrac{\partial}{\partial\tilde{t}} A^{(n)}_{i,k}(x,t)$, it is
\begin{align}
A^{(n)}_{i,k}\left(x,\tilde{t}\right)= e_{k}^{T}x + L_{i,k}(x)\cdot \left(\tilde{t}-t\right) + \dfrac{1}{2}\dfrac{\partial^2}{\partial \tilde{t}^{2}} A_{i,k}(x,\xi_{i,k})\cdot \left(\tilde{t}-t\right)^{2}
\end{align}
for $\tilde{t}\in (t,t']$ with some numbers $\xi_{i,k}\in (t,t')$.\\
Furthermore,
\begin{align}
\sum\limits_{k=1}^{n}e_{k}\cdot \dfrac{1}{2}\dfrac{\partial^2}{\partial\tilde{t}^{2}} A^{(n)}_{i,k}(x,\xi_{i,k})\cdot \left(\tilde{t}-t\right)^{2} = \mathcal{O}\left(\left(\tilde{t}-t\right)^{2}\right),\label{order_approx}
\end{align} 
where the bound for the operator norm is given by $\dfrac{n}{2}\cdot C$ with $C$ from Assumption \ref{smoothness_evol_op}.
\end{lemma}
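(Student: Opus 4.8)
The statement is at bottom a one–variable Taylor expansion in the time slot, so the plan is to apply Taylor's theorem with Lagrange remainder to the scalar functions $\tilde t\mapsto A^{(n)}_{i,k}(x,\tilde t)$ and then to identify the Taylor coefficients using the two standing assumptions. First I would fix $i\in\{0,\dots,2^N-1\}$, $x\in\mathbb{R}^n$ and $k\in\{1,\dots,n\}$ and set $g(\tilde t):=A^{(n)}_{i,k}(x,\tilde t)$ for $\tilde t\in[t,t']$, which by \eqref{fin_dim_A} is a real-valued function of the single real variable $\tilde t$. By Assumption \ref{smoothness_evol_op} the map $g$ is twice (Fr\'echet, hence ordinary) differentiable on $[t,t']$, so in particular $g'$ is defined and continuous there and $g''$ exists on $(t,t')$; thus Taylor's theorem with Lagrange remainder applies and yields, for every $\tilde t\in(t,t']$, a point $\xi_{i,k}\in(t,\tilde t)\subset(t,t')$ with
\[
g(\tilde t)=g(t)+g'(t)\,(\tilde t-t)+\tfrac12\, g''(\xi_{i,k})\,(\tilde t-t)^2 .
\]

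Next I would substitute the initial data into this expansion. By Assumption \ref{ass:identity_starting_time} one has $g(t)=A^{(n)}_{i,k}(x,t)=e_k^Tx$, and $g'(t)=\frac{\partial}{\partial\tilde t}A^{(n)}_{i,k}(x,t)=L_{i,k}(x)$ by the very definition of $L_{i,k}(x)$ given in the lemma, while $g''(\xi_{i,k})=\frac{\partial^2}{\partial\tilde t^2}A^{(n)}_{i,k}(x,\xi_{i,k})$. This turns the displayed Taylor formula into exactly the first claimed identity of the lemma, valid for each $\tilde t\in(t,t']$ with the corresponding Lagrange point $\xi_{i,k}$.

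For the order estimate \eqref{order_approx} I would collect the remainder terms against the basis vectors $e_k$ and bound the operator norm of $x\mapsto\sum_{k=1}^n e_k\cdot\tfrac12\frac{\partial^2}{\partial\tilde t^2}A^{(n)}_{i,k}(x,\xi_{i,k})$. Here is the one spot that deserves a little care: the Lagrange point $\xi_{i,k}$ depends on $x$ (and on $\tilde t$), so this map is not a fixed linear operator in $x$; however Assumption \ref{smoothness_evol_op} is formulated precisely so as to give a bound that is uniform over the second slot, i.e.\ $|\tfrac12\frac{\partial^2}{\partial\tilde t^2}A^{(n)}_{i,k}(x,\cdot)|\le C\,\|x\|_2$ with $C$ independent of $N$. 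Summing the $n$ contributions via the triangle inequality, together with $\|e_k\|_2=1$, then yields the operator-norm bound $\tfrac n2\,C$ asserted in \eqref{order_approx}, and multiplying by $(\tilde t-t)^2$ produces the claimed $\mathcal{O}((\tilde t-t)^2)$. Since both Taylor's theorem and this norm estimate are routine, I do not expect any genuine obstacle; the only thing to watch throughout is the $x$-dependence of $\xi_{i,k}$, which is exactly why Assumption \ref{smoothness_evol_op} bounds the relevant operator for \emph{every} fixed value of the second argument rather than at one particular time.
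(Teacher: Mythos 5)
Your proposal is correct and follows essentially the same route as the paper: a one-variable Taylor expansion with Lagrange remainder in the time slot, identification of the zeroth- and first-order coefficients via Assumption \ref{ass:identity_starting_time} and the definition of $L_{i,k}$, and a triangle-inequality bound of the remainder by $\tfrac{n}{2}\cdot C$ using Assumption \ref{smoothness_evol_op}. Your explicit remark that $\xi_{i,k}$ depends on $x$ and $\tilde{t}$, and that the assumption's uniformity over the second argument is what makes the operator bound legitimate, is a point the paper handles only implicitly, but it does not change the argument.
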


\begin{proof}
With Assumption \ref{smoothness_evol_op}, by Taylor's Theorem \cite[Theorem III.5.5]{werner}, it is
\begin{align}
A^{(n)}_{i,k}\left(x,\tilde{t}\right) &= A^{(n)}_{i,k}(x,t) + \underbrace{\dfrac{\partial}{\partial\tilde{t}} A^{(n)}_{i,k}(x,t)}_{\substack{=:L_{i,k}(x)}}\cdot \left(\tilde{t}-t\right) + \dfrac{1}{2}\dfrac{\partial^2}{\partial\tilde{t}^{2}} A^{(n)}_{i,k}(x,\xi_{i,k})\cdot \left(\tilde{t}-t\right)^{2}\\
&\overset{\mathrm{Ass.}\, \ref{ass:identity_starting_time}}{=} e_{k}^{T}x + L_{i,k}(x)\cdot \left(\tilde{t}-t\right) + \dfrac{1}{2}\dfrac{\partial^2}{\partial \tilde{t}^{2}} A_{i,k}(x,\xi_{i,k})\cdot \left(\tilde{t}-t\right)^{2}
\end{align}
for $\tilde{t}\in (t,t']$ and some $\xi_{i,k}$ with $t<\xi_{i,k}<\tilde{t}$.\\
By Assumption \ref{smoothness_evol_op}, every functional $x\mapsto \dfrac{1}{2}\dfrac{\partial^2}{\partial\tilde{t}^{2}} A^{(n)}_{i,k}(x,\xi_{i,k})$ is bounded by some $C_{i,k}>0$ in $\mathrm{map}(\mathbb{R}^{n}, \mathbb{R})$, and the maximum over the $C_{i,k}$ is less or equal to $C$.\\
Therefore, the map $x \mapsto \sum\limits_{k=1}^{n}e_{k}\cdot \dfrac{1}{2}\dfrac{\partial^2}{\partial\tilde{t}^{2}} A^{(n)}_{i,k}(x,\xi_{i,k})\cdot \left(\tilde{t}-t\right)^{2}$ is bounded by $\frac{n}{2}\cdot C \cdot \left(\tilde{t}-t\right)^{2}$ as an operator in $\mathrm{map}(\mathbb{R}^{n},\mathbb{R}^{n})$.\\
In the notation from Assumption \ref{ass:linearization}, this means
\begin{align}
\sum\limits_{k=1}^{n}e_{k}\cdot \dfrac{1}{2}\dfrac{\partial^2}{\partial\tilde{t}^{2}} A^{(n)}_{i,k}(x,\xi_{i,k})\cdot \left(\tilde{t}-t\right)^{2} = \mathcal{O}\left(\left(\tilde{t}-t\right)^{2}\right)
\end{align} 
with the bound $\dfrac{n}{2}\cdot C$.
\end{proof}

We need one more assumption about the smoothness of the operators $A_i^{(n)}$. 
\begin{assumption}\label{ass:derivative_of_A}
For arbitrary $i\in \left\lbrace 0,...,2^N-1 \right\rbrace$, $t=t_i$ and $k\in \lbrace 1,...,n\rbrace$, the map
$x\mapsto L_{i,k}(x)= \dfrac{\partial}{\partial\tilde{t}} A^{(n)}_{i,k}(x,t)$ is twice Fr\'{e}chet differentiable with vanishing second derivative.
\end{assumption}

Now we can state the following:
\begin{theorem}\label{thm:assumption}
If in addition to the assumptions from Lemma \ref{lemma:smoothness_evol_op} also Assumption \ref{ass:derivative_of_A} holds, then for arbitrary $i\in \left\lbrace 0,...,2^N-1 \right\rbrace$ and $t=t_i$, $t'=t_{i+1}$, $\phi=P_n \circ \phi^{(n)}_{N}(t)$, $\phi'=P_n \circ \phi^{(n)}_{N}(t')$ and $A(.)=A_i^{(n)}(., t')$, Assumption \ref{ass:linearization} is satisfied. The bound for the $\mathcal{O}\left(\left(\delta t\right)^{2}\right)$ estimate in the assumption is given by $\frac{n}{2}C$ with $C$ from Assumption \ref{smoothness_evol_op}.
\end{theorem}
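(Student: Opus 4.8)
The plan is to read Assumption \ref{ass:linearization} off directly from the componentwise second order Taylor expansion in Lemma \ref{lemma:smoothness_evol_op}, and to use Assumption \ref{ass:derivative_of_A} to convert the first order coefficients into a genuine matrix together with a vector.

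First I fix $i\in\lbrace 0,\dots,2^N-1\rbrace$, set $t=t_i$, $t'=t_{i+1}=t+\delta t$ and $A(.)=A_i^{(n)}(.,t')$. By Definition \ref{def:discrete_signal} and Definition \ref{def:discrete_evolution} one has $\phi'=P_n\circ\phi^{(n)}_N(t')=A_i^{(n)}(P_n\circ\phi^{(n)}_N(t),t')=A\phi$, so $A$ is indeed the evolution operator to be linearized. Evaluating the expansion of Lemma \ref{lemma:smoothness_evol_op} at $\tilde t=t'$ and assembling the components via \eqref{fin_dim_A} gives, for every $x\in\mathbb{R}^{n}$,
\[
Ax = \sum_{k=1}^{n} e_{k}\, A_{i,k}^{(n)}(x,t') = x + \delta t \sum_{k=1}^{n} e_{k}\, L_{i,k}(x) + \sum_{k=1}^{n} e_{k}\, \tfrac{1}{2}\tfrac{\partial^{2}}{\partial \tilde{t}^{2}} A_{i,k}(x,\xi_{i,k})\,(\delta t)^{2},
\]
where, by Assumption \ref{ass:identity_starting_time}, the zeroth order term collapses to $\sum_k e_k (e_k^T x)=x$, and the last sum is exactly the term \eqref{order_approx}, hence bounded in $\mathrm{map}(\mathbb{R}^{n},\mathbb{R}^{n})$ by $\tfrac{n}{2}C\,(\delta t)^{2}$ with $C$ from Assumption \ref{smoothness_evol_op}. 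It therefore only remains to identify the map $x\mapsto\sum_{k=1}^{n} e_{k}\, L_{i,k}(x)$ with a map of the form $x\mapsto Lx+c$.

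For this I invoke Assumption \ref{ass:derivative_of_A}: each $L_{i,k}:\mathbb{R}^{n}\to\mathbb{R}$ is twice Fr\'echet differentiable with vanishing second derivative. A map between normed spaces whose second derivative vanishes identically has constant first derivative and is thus affine (for instance, apply the mean value inequality to $x\mapsto DL_{i,k}(x)-DL_{i,k}(0)$, or restrict to line segments), so $L_{i,k}(x)=a_{i,k}^{T}x+b_{i,k}$ for suitable $a_{i,k}\in\mathbb{R}^{n}$ and $b_{i,k}\in\mathbb{R}$. Defining $L\in\mathbb{R}^{n\times n}$ as the matrix with $k$-th row $a_{i,k}^{T}$ and $c:=(b_{i,1},\dots,b_{i,n})^{T}\in\mathbb{R}^{n}$ yields $\sum_{k=1}^{n} e_{k}\, L_{i,k}(x)=Lx+c$, and substituting back gives $Ax=(\mathbbm{1}_{\mathbb{R}^{n}}+\delta t\cdot L)x+\delta t\cdot c+\mathcal{O}((\delta t)^{2})$ with operator-norm bound $\tfrac{n}{2}C\,(\delta t)^{2}$, which is precisely the estimate required in Assumption \ref{ass:linearization}. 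Since $C$ is independent of $N$ by Assumption \ref{smoothness_evol_op}, the common bound $C_{\mathrm{max}}=\tfrac{n}{2}C$ taken over all $2^N$ evolution steps is likewise $N$-independent, settling the last requirement of Assumption \ref{ass:linearization}.

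I do not anticipate a real obstacle: the statement is essentially a repackaging of Lemma \ref{lemma:smoothness_evol_op}. The only points needing a word of care are the elementary lemma that a twice-differentiable map with vanishing second derivative is affine, and the bookkeeping that translates the componentwise estimate \eqref{order_approx} into the matrix--vector form of Assumption \ref{ass:linearization}; both are routine.
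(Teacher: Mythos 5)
Your proposal is correct and follows essentially the same route as the paper: evaluate the Taylor expansion from Lemma \ref{lemma:smoothness_evol_op} at $\tilde t = t'$, use Assumption \ref{ass:derivative_of_A} to conclude each $L_{i,k}$ is affine (the paper phrases this as Taylor's theorem with vanishing remainder, $L_{i,k}(x)=L_{i,k}(0)+DL_{i,k}(0)x$, rather than your mean-value argument, but this is the same fact), assemble $L$ and $c$ rowwise, and inherit the bound $\tfrac{n}{2}C$ and its $N$-independence from \eqref{order_approx} and Assumption \ref{smoothness_evol_op}. No gaps.
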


\begin{proof}
With Assumption \ref{ass:derivative_of_A}, for the evolution from $t$ to $t'$ one has by Taylor's Theorem, that
\begin{align}
L_{i,k}(x) = \underbrace{L_{i,k}(0)}_{\substack{=:c_{k}}} + \underbrace{DL_{i,k}(0)}_{\substack{=:L_{k}^{T}}}x
\end{align}
for every $x\in \mathbb{R}^n$, so that $L_{i,k}(x) = c_{k} + L_{k}^{T}x$ with $c_{k}\in \mathbb{R}$ and $L_{k}\in \mathbb{R}^{n}$.\\ 
Therefore, with $L:=\sum\limits_{k=1}^{n}e_{k} \cdot L_{k}^{T} \in \mathbb{R}^{n\times n}$ and $c:=\sum\limits_{k=1}^{n}e_{k}\cdot c_{k}\in \mathbb{R}^{n}$, we have 
\begin{align}
A_i^{(n)}\left(x,\tilde{t}\right) &\overset{\eqref{fin_dim_A}}{=} \sum\limits_{k=1}^{n} e_{k} \cdot A^{(n)}_{i,k}\left(x,\tilde{t}\right)\\
&\overset{\mathrm{Lem.}\, \ref{lemma:smoothness_evol_op}}{=} \sum\limits_{k=1}^{n} \left( e_{k} \cdot e_{k}^{T}x + e_{k} \cdot (\tilde{t} - t) \cdot L_{i,k}(x) \right)+ \mathcal{O}\left(\left(\tilde{t}-t\right)^{2}\right)\\
&= \mathbbm{1}_{\mathbb{R}^{n}}x + \left(\tilde{t}-t\right)\cdot (Lx + c) + \mathcal{O}\left(\left(\tilde{t}-t\right)^{2}\right).\label{whole_approx}
\end{align}
With $\delta t = t' - t$, it follows
\begin{align}
\phi' \overset{\mathrm{Def.}\, \ref{def:discrete_signal}}{=} A_i^{(n)}(\phi, t') &\overset{\eqref{whole_approx}}{=} \mathbbm{1}_{\mathbb{R}^{n}}\phi + \delta t\cdot (L\phi + c) + \mathcal{O}((\delta t)^{2}).
\end{align}
Independence of the bound from $N$ follows directly with Assumption \ref{smoothness_evol_op} and Lemma \ref{lemma:smoothness_evol_op}.
\end{proof}

\begin{theorem}\label{thm:invertdynamics}
Assume in addition to the assumptions of Theorem \ref{thm:assumption}, that for arbitrary $i\in \left\lbrace 0,...,2^N-1 \right\rbrace$ and $k\in \lbrace 1,...,n \rbrace$, the maps $x\mapsto L_{i,k}(x) = \dfrac{\partial}{\partial\tilde{t}} A^{(n)}_{i,k}(x,t)$ have a common bound $C_L$ which is independent of $N$.\\
For a fix $i\in \left\lbrace 0,...,2^N-1 \right\rbrace$, consider the matrix $L=\sum\limits_{k=1}^{n}e_{k} \cdot L_{k}^{T}$, which is constructed from the map $x\mapsto L_{i,k}(x)$ in the proof of Theorem \ref{thm:assumption}.\\
Then there is an $N_0\in \mathbb{N}$, such that if $N\geq N_0$, the operator $\left( \mathbbm{1}_{\mathbb{R}^{n}} + \delta t \cdot L \right)$ is invertible with
\begin{align}
\left( \mathbbm{1}_{\mathbb{R}^{n}} + \delta t \cdot L \right)^{-1} = \mathbbm{1}_{\mathbb{R}^{n}} - \delta t \cdot L + \mathcal{O}( (\delta t)^{2} ).
\end{align}
If $N_0$ is small enough, so that $\delta t < \frac{1}{4\cdot n\cdot C_L}$, then the constant of the bound for the error of this estimate is smaller than $8\cdot n^2 \cdot C_L^2$.
\end{theorem}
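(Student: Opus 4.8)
The plan is to invert $\mathbbm{1}_{\mathbb{R}^{n}}+\delta t\cdot L$ by a Neumann series, which converges as soon as $\delta t\cdot\Vert L\Vert<1$ in the operator norm on $(\mathbb{R}^{n},\Vert\cdot\Vert_{2})$; the whole argument then reduces to (i) bounding $\Vert L\Vert$ in terms of $n$ and $C_L$, (ii) choosing $N_0$ so that $\delta t$ is small enough, and (iii) controlling the tail of the series to identify it with an $\mathcal{O}((\delta t)^{2})$ term whose constant is below $8n^{2}C_L^{2}$.

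First I would recall from the proof of Theorem \ref{thm:assumption} that $L_{i,k}(x)=c_{k}+L_{k}^{T}x$ with $c_{k}\in\mathbb{R}$, $L_{k}\in\mathbb{R}^{n}$, and that $L=\sum_{k=1}^{n}e_{k}\cdot L_{k}^{T}$. Since each affine map $x\mapsto L_{i,k}(x)$ is bounded by $C_L$ on the unit ball, so is its linear part, because $L_{k}^{T}x=\tfrac12\left(L_{i,k}(x)-L_{i,k}(-x)\right)$ gives $\vert L_{k}^{T}x\vert\le C_L$ for $\Vert x\Vert_{2}\le 1$, hence $\Vert L_{k}\Vert_{2}\le C_L$ for every $k$. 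As $(Lx)_{j}=L_{j}^{T}x$, for $\Vert x\Vert_{2}\le 1$ one obtains $\Vert Lx\Vert_{2}=\left(\sum_{k=1}^{n}\vert L_{k}^{T}x\vert^{2}\right)^{1/2}\le\sum_{k=1}^{n}\Vert L_{k}\Vert_{2}\le n\cdot C_L$, so that $\Vert L\Vert\le n\cdot C_L$; the crude step $\ell^{2}\le\ell^{1}$ is used here on purpose so that the advertised constant comes out. Note that this bound is uniform in $i$, since $C_L$ is, and $n$ is fixed throughout this section.

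Next I would (possibly enlarging $N_0$) choose $N_0$ so that $N\ge N_0$ forces $\delta t=T/2^{N}<\frac{1}{4nC_L}$, which is possible because $C_L$ and $n$ are independent of $N$; then $\delta t\cdot\Vert L\Vert<\tfrac14$. Consequently the Neumann series $\sum_{j\ge 0}(-\delta t\cdot L)^{j}$ converges in operator norm, $\mathbbm{1}_{\mathbb{R}^{n}}+\delta t\cdot L$ is invertible, and $\left(\mathbbm{1}_{\mathbb{R}^{n}}+\delta t\cdot L\right)^{-1}=\mathbbm{1}_{\mathbb{R}^{n}}-\delta t\cdot L+R$ with $R:=\sum_{j\ge 2}(-\delta t\cdot L)^{j}$. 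Finally I would estimate $\Vert R\Vert\le\sum_{j\ge 2}(\delta t\Vert L\Vert)^{j}=\frac{(\delta t\Vert L\Vert)^{2}}{1-\delta t\Vert L\Vert}\le\frac{4}{3}\Vert L\Vert^{2}(\delta t)^{2}\le\frac{4}{3}n^{2}C_L^{2}(\delta t)^{2}<8n^{2}C_L^{2}(\delta t)^{2}$, which exhibits $R$ as an $\mathcal{O}((\delta t)^{2})$ term with a constant below $8n^{2}C_L^{2}$ that is uniform in $N$ and (via uniformity of $C_L$) in $i$, consistent with the $\mathcal{O}$-convention of Assumption \ref{ass:linearization}.

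I do not expect a genuine obstacle: this is a textbook Neumann-series argument. The only points needing care are reading the "common bound $C_L$" correctly — it controls the affine maps $x\mapsto L_{i,k}(x)$, from which one must extract a bound on the linear parts $L_{k}$ — measuring $\Vert L\Vert$ in the euclidean operator norm rather than an entrywise norm, and keeping the intermediate estimates loose enough (the factors $\ell^{2}\le\ell^{1}$ and $\tfrac{1}{1-1/4}\le\tfrac43$) that $8n^{2}C_L^{2}$ is honestly an upper bound while remaining finite and independent of $N$.
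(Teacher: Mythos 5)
Your proposal is correct and follows essentially the same route as the paper: invert $\mathbbm{1}_{\mathbb{R}^{n}}+\delta t\cdot L$ by the Neumann series once $\delta t<\frac{1}{4nC_L}$ and bound the tail by a geometric series to get an $\mathcal{O}((\delta t)^2)$ remainder with an $N$-independent constant below $8n^2C_L^2$. The only (harmless) deviation is your bound on $\Vert L\Vert$: you extract the linear part via the symmetrization $L_k^T x=\tfrac12\bigl(L_{i,k}(x)-L_{i,k}(-x)\bigr)$ and get $\Vert L\Vert\le nC_L$, whereas the paper bounds $\Vert c\Vert_2\le nC_L$ and uses the triangle inequality to get $\Vert L\Vert\le 2nC_L$, so your remainder constant $\tfrac43 n^2C_L^2$ is even sharper than the paper's, which sits just under $8n^2C_L^2$.
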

\begin{proof}
The vector $c$ in the proof of Theorem \ref{thm:assumption} is bounded by $n\cdot C_L$, because
\begin{align}
\Vert c \Vert_2 &= \left\Vert \sum\limits_{k=1}^{n}e_{k}\cdot c_{k} \right\Vert_2 = \left\Vert \sum\limits_{k=1}^{n}e_{k}\cdot L_{i,k}(0) \right\Vert_2\\
&\leq \sum\limits_{k=1}^{n} \Vert L_{i,k}(0) \Vert_2 \leq n\cdot C_L.
\end{align}
For arbitrary $x\in \mathbb{R}^n$,
\begin{align}
\Vert Lx \Vert_2 \leq \Vert Lx +c \Vert_2 + \Vert c \Vert_2 = \left\Vert \sum\limits_{k=1}^{n}e_{k} L_{i,k}(x) \right\Vert_2 + \Vert c \Vert_2 \leq 2\cdot n\cdot C_L
\end{align}
by the triangle inequality and the bounded $C_L$ for the operators $L_{i,k}$.\\
It follows
\begin{align}
\Vert L \Vert \leq 2\cdot n \cdot C_L.
\end{align}

If we choose $N_0$ large enough, so that $\delta t= \frac{T}{2^N}$ gets small for $N\geq N_0$, then
\[
\Vert \delta t \cdot L \Vert = \delta t \cdot \Vert L \Vert
= \delta t \cdot \sup\limits_{\Vert x \Vert_{2} \leq 1} \Vert  Lx \Vert_{2} < 1.
\]
By the Neumann series \cite[cf.][Theorem II.1.11]{werner}, it follows
\begin{align}
\left( \mathbbm{1}_{\mathbb{R}^{n}} + \delta t \cdot L \right)^{-1} = \sum\limits_{k=0}^{\infty} ( -\delta t \cdot L)^{k} = \mathbbm{1}_{\mathbb{R}^{n}} - \delta t \cdot L + \mathcal{O}( (\delta t)^{2} ).\label{invertdynamics}
\end{align}
If $\delta t\leq 1$ and small enough, to that $\delta t\cdot 2\cdot n\cdot C_L < \frac{1}{2}$, the constant of the bound for the error of the estimate follows with
\begin{align}
\left\Vert \left( \mathbbm{1}_{\mathbb{R}^{n}} + \delta t \cdot L \right)^{-1} -  \left( \mathbbm{1}_{\mathbb{R}^{n}} - \delta t \cdot L\right)\right\Vert_2 &= \left\Vert \sum\limits_{k=2}^{\infty} ( -\delta t \cdot L)^{k}  \right\Vert_2\\
&\leq \left\Vert L \right\Vert_2^2 \cdot (\delta t)^2 \cdot\sum\limits_{k=0}^{\infty} \left(\delta t\cdot \left\Vert L \right\Vert_2 \right)^k\\
&\leq (\delta t)^2 \cdot 4\cdot n^2 \cdot C_L^2 \cdot \sum\limits_{k=0}^{\infty} \left(\delta t\cdot \left\Vert L \right\Vert_2\right)^k\\
& \leq (\delta t)^2 \cdot  \frac{4\cdot n^2 \cdot C_L^2}{1- \delta t\cdot 2\cdot n\cdot C_L}\\
&< (\delta t)^2 \cdot  8\cdot n^2 \cdot C_L^2,
\end{align}
where the second last inequality follows by the geometric series.
\end{proof}

\comment{
In the setting of Theorem \ref{thm:invertdynamics}, the approximation error that occurs if we abort the series in \eqref{invertdynamics} after $\mathbbm{1}_{\mathbb{R}^{n}} - \delta t \cdot L$ is in $\mathcal{O}((\delta t)^2)$, with the same constant $8\cdot n^2 \cdot C_L^2$ for every evolution step and independently of $N$. Also the error that comes with the approximation of $A$ by $\left( \mathbbm{1}_{\mathbb{R}^{n}} + \delta t \cdot L \right) + \delta t \cdot c$ in Theorem \ref{thm:assumption} is in $\mathcal{O}((\delta t)^2)$, and the constant of the bound $\frac{n}{2}C$, with $C$ from Assumption \ref{smoothness_evol_op}, is the same for every step and independent of $N$. This way, every simplification up to order $\mathcal{O}((\delta t)^2)$, which is made with an abortion of the series in \eqref{invertdynamics} after $\mathbbm{1}_{\mathbb{R}^{n}}- \delta t \cdot L$ and with respect to the modeling of the evolution of the simplified signal by an operator of the form $\left( \mathbbm{1}_{\mathbb{R}^{n}} + \delta t \cdot L \right) + \delta t \cdot c$ in every single time step, leads to an approximation of the simulation until time $T$ which is still in $\mathcal{O}\left( \dfrac{T^2}{2^N}\right)$. The constant of the bound for the error here is quadratic in $n$.  All the above assumptions are fulfilled if the evolution equation is smooth enough in time. This is the case if the modeled physical process is sufficiently stable.\\
Lemma \ref{lem:approx_bound} assures that
\begin{align}
\sup\limits_{\tilde{t}\in [0,T]} \left\Vert\phi_{\mathrm{real}}(\tilde{t}) - \phi^{(n)}_{N}(\tilde{t}) \right\Vert_V \leq M\cdot \epsilon_{2,n}
\end{align}
for some $M\geq 1$.\\
Suppose that the desired bound for the error from Theorem \ref{thm:assumption} and approximations like in \eqref{invertdynamics} of the whole simulation requires 
\[
\frac{n^2}{2^N}\leq \epsilon
\]
for some $\epsilon> 0$.\\
Even if $n$ has to be large in order to get $M\cdot \epsilon_{2,n}$ small and thereby stay close to the real signal, the error from the approximation in \eqref{invertdynamics} and Theorem \ref{thm:assumption} can still be kept small by a sufficiently large chosen $N$.
}

\section{Prior knowledge}
\label{sec:prior_knowledge}
In this section we want to specify $\mathbb{P}_{\phi}$ from physical knowledge about the evolution equation, as well as from experience gained through measurements in the past.\\
The physical knowledge can not be incorporated in the general setting, because the equation differs from case to case.\\
For the initial time $t$, we assume that we can at least approximate the distribution which describes our prior knowledge of the field $\phi$ by a Gaussian distribution.

\begin{assumption}
$\mathbb{P}_{\phi}$ has the $\lambda_{\mathbb{R}^{n}}$-derivative
\begin{align}
\mathcal{P}_{\phi}(s) = \mathcal{G}(s - \psi, \Phi),
\end{align}
with a positive definite covariance matrix $\Phi\in \mathbb{R}^{n\times n}$ and a mean $\psi \in \mathbb{R}^{n}$. Compare this also to \eqref{distributionsignal} in Assumption \ref{Wiener_filter_setting}.
\end{assumption}
In case one only has an estimate for the mean value $\psi:=\mathbb{E}[\phi]$ and the covariance structure $\Phi:=\mathbb{E}\left[(\phi-\psi)(\phi-\psi)^{T}\right]$ of the discretized field at time $t$ from experience and if one assumes that $\phi$ has a density w.r.t. the Lebesgue measure, then this supposition can be justified by the maximum entropy principle, which will be discussed in Chapter \ref{chap:Maximum_Entropy_Principle}. Because in this case, by \eqref{least_information_entropy}, the entropy $h(\phi)$ is maximized for a random vector $\phi$ with Gaussian distribution, so that the density $\mathcal{P}_{\phi}(s)$ is given by $\mathcal{G}(s - \psi, \Phi)$. As entropy is a measure for uncertainty and the lack of information, and because we only want the prior to contain information that we actually have, this is a reasonable choice for our prior.

\section{Data constraints}
\label{sec:dataconstraints}
Having specified a prior, we would like to compute the posterior from $\mathcal{P}_{\phi}$, $\mathcal{P}_n$ and the constraints that arise due to the relation between the signal and our data vector.\\
As in \eqref{datasignalrelation}, we assume this relation to be given by
\[
d = R\phi + n.
\]
We also assume \eqref{distributionnoise} and \eqref{independence} in Assumption \ref{Wiener_filter_setting} to hold.
\begin{theorem}
The posterior $\mathcal{P}_{\phi\vert d}$ is given by
\begin{align}
\mathcal{P}_{\phi \vert d}(s,u) &= \mathcal{G}(s-m(u),D)\ \lambda_{\mathbb{R}^{n}}\otimes \mathbb{P}_{d}\text{-a.e in } \mathbb{R}^{n}\times \mathbb{R}^{Y} \text{ with}\\
\text{uncertainty variance }D &= \left( \Phi^{-1} + R^{T}N^{-1}R  \right)^{-1} \text{ and}\\
\text{mean }m(u) &= \psi + W(u-R\psi) = D(R^{T}N^{-1}d + \Phi^{-1}\psi).
\end{align}
\end{theorem}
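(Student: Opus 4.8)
The plan is to repeat the argument from the proof of Theorem~\ref{thm:posterior}, now carrying the shifted prior mean $\psi$ through the computation. First I would observe that Lemma~\ref{likelihood} still applies verbatim: its proof used only the relation $d = R\phi + n$, the Gaussian form \eqref{distributionnoise} of $\mathcal{P}_{n}$, and the independence \eqref{independence} of $n$ and $\phi$, never the zero mean of $\phi$. Hence $\mathcal{P}_{d\vert\phi}(u,s) = \mathcal{G}(u-Rs,N)$ for $\lambda_{\mathbb{R}^{Y+n}}$-a.e.\ $(u,s)$, and since $\mathcal{P}_{\phi}(s) = \mathcal{G}(s-\psi,\Phi) > 0$ for every $s$, the information Hamiltonian of Definition~\ref{hamiltonian} is finite and equals
\[ H_{d,\phi}(u,s) = \tfrac{1}{2}(u-Rs)^{T}N^{-1}(u-Rs) + \tfrac{1}{2}(s-\psi)^{T}\Phi^{-1}(s-\psi) + c_{1}, \]
with $c_{1}$ independent of $s$.

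Second, I would complete the square in $s$. The quadratic term is $s^{T}(\Phi^{-1}+R^{T}N^{-1}R)s = s^{T}D^{-1}s$ with $D := (\Phi^{-1}+R^{T}N^{-1}R)^{-1}$, which is well defined and invertible for exactly the reason given in Theorem~\ref{thm:posterior} ($\Phi^{-1}$ positive definite, $R^{T}N^{-1}R$ positive semidefinite, so the sum is positive definite). The part of $H_{d,\phi}$ that is linear in $s$ is $-s^{T}(R^{T}N^{-1}u + \Phi^{-1}\psi)$ plus its transpose, so with $j(u) := R^{T}N^{-1}u + \Phi^{-1}\psi$ and $m(u) := Dj(u)$ one gets, modulo a term $c_{3}(u)$ independent of $s$,
\[ H_{d,\phi}(u,s) = \tfrac{1}{2}(s-m(u))^{T}D^{-1}(s-m(u)) + c_{3}(u). \]

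Third, I would check that $m(u) = \psi + W(u - R\psi)$ with $W := DR^{T}N^{-1}$, which is a purely algebraic identity: from $D^{-1} = \Phi^{-1}+R^{T}N^{-1}R$ one has $DR^{T}N^{-1}R = \mathbbm{1}_{\mathbb{R}^{n}} - D\Phi^{-1}$, hence $\psi + W(u-R\psi) = \psi + DR^{T}N^{-1}u - (\mathbbm{1}_{\mathbb{R}^{n}} - D\Phi^{-1})\psi = D(R^{T}N^{-1}u + \Phi^{-1}\psi) = m(u)$, which is also the third expression in the statement.

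Finally, just as in the passage \eqref{posteriorcalculation} of the proof of Theorem~\ref{thm:posterior}, I would substitute \eqref{hamiltonianrelation} into the disintegration identity \eqref{bayes} for arbitrary $f\in L_{1}(\mathbb{R}^{n}\times\mathbb{R}^{Y}, \mathcal{B}(\mathbb{R}^{n}\times\mathbb{R}^{Y}), \mathbb{P}_{\phi,d})$; the factor $e^{-c_{3}(u)}/\mathcal{P}_{d}(u)$ is forced to equal $\vert 2\pi D\vert^{-1/2}$ because $(u,A)\mapsto(\mathcal{P}_{\phi\vert d}(\cdot,u)\cdot\lambda_{\mathbb{R}^{n}})(A)$ is a probability kernel and $\int_{\mathbb{R}^{n}} e^{-\frac{1}{2}(s-m(u))^{T}D^{-1}(s-m(u))}\,ds = \vert 2\pi D\vert^{1/2}$ for every $m(u)\in\mathbb{R}^{n}$. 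This gives $\mathcal{P}_{\phi\vert d}(s,u) = \mathcal{G}(s-m(u),D)$ on $\mathbb{R}^{n}\times\{u\in\mathbb{R}^{Y}:\mathcal{P}_{d}(u)>0\}$, and hence $\lambda_{\mathbb{R}^{n}}\otimes\mathbb{P}_{d}$-a.e.\ on $\mathbb{R}^{n}\times\mathbb{R}^{Y}$, since the complementary set $\mathbb{R}^{n}\times\{u:\mathcal{P}_{d}(u)=0\}$ is a $\lambda_{\mathbb{R}^{n}}\otimes\mathbb{P}_{d}$-null set. I do not expect a genuine obstacle here: the only points that need a moment's care are checking that Lemma~\ref{likelihood} survives the shift of the prior mean and the short linear-algebra identity relating the two forms of $m$.
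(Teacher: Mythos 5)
Your proposal is correct and follows essentially the same route as the paper's proof: reuse of Lemma \ref{likelihood} unchanged, completion of the square in the information Hamiltonian with $D=(\Phi^{-1}+R^{T}N^{-1}R)^{-1}$, the algebraic identity showing $m(u)=\psi+W(u-R\psi)=D(R^{T}N^{-1}u+\Phi^{-1}\psi)$, and the normalization argument from \eqref{posteriorcalculation} via the probability-kernel property. The only differences are cosmetic (you absorb $\Phi^{-1}\psi$ into $j(u)$ and verify the mean identity via $DR^{T}N^{-1}R=\mathbbm{1}_{\mathbb{R}^{n}}-D\Phi^{-1}$ rather than via $\psi-WR\psi=D\Phi^{-1}\psi$), so nothing further is needed.
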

\begin{proof}
The calculation of our likelihood in the Wiener filter Section \ref{Wiener filter} in Lemma \ref{likelihood} was independent of the concrete choice of $\mathbb{P}_{\Phi}$. Thus, also with our new assumption of a possible mean $\psi$, one still has the likelihood
\begin{align}
\mathcal{P}_{d \vert \phi}(u,s) = \mathcal{G}(u - Rs,N)
\end{align}
for $\lambda_{\mathbb{R}^{Y+n}}$-a.e. $(u,s)\in \mathbb{R}^{Y}\times \lbrace s\in \mathbb{R}^n: \mathcal{P}_{\phi}(s) >0 \rbrace = \mathbb{R}^{Y}\times \mathbb{R}^{n}$.
With the definitions of the positive definite information propagator $D$ and the information source $j(d)$ from Section \ref{Wiener filter}, and because $D^{-1}=(D^{-1})^{T}$, the information Hamiltonian in this case reads
\begin{align}
H_{d,\phi}(u,s) &=  -\log \mathcal{P}_{d \vert \phi}(u,s) - \log \mathcal{P}_{\phi}(s)\\
&= \frac{1}{2} (u-Rs)^{T}N^{-1}(u-Rs) + \frac{1}{2} (s - \psi)^{T}\Phi^{-1}(s - \psi) + c_{1}\\
&= \frac{1}{2} \left[ s^{T} D^{-1} s - s^{T}j(u) - j(u)^{T}s - s^{T}\Phi^{-1}\psi -\psi^{T}\Phi^{-1}s \right] + c_{2}(u)\\
&= \frac{1}{2} \left[ s^{T} D^{-1} s - s^{T}D^{-1}Dj(u) - (Dj(u))^{T}D^{-1}s - s^{T}D^{-1}D\Phi^{-1}\psi \right.\\
&\left. -(D\Phi^{-1}\psi)^{T}D^{-1}s \right] + c_{2}(u)\\
&= \frac{1}{2} (s-m(u))^{T}D^{-1}(s-m(u)) + c_{3}(u),
\end{align}
with $c_{1},c_{2}(u)$ and $c_{3}(u)$ independent of $s$ and the mean field 
\[
m(d) = D( j(d) + \Phi^{-1}\psi ) = D(R^{T}N^{-1}d + \Phi^{-1}\psi) = Wd + D\Phi^{-1}\psi.
\]
Furthermore, we have the equality
\begin{align}
\psi - WR\psi &\overset{\eqref{def:Wienerfilter}}{=}  \psi - DR^{T}N^{-1}R\psi - D\Phi^{-1}\psi + D\Phi^{-1}\psi\\
&= \psi - D\underbrace{(\Phi^{-1} + R^{T}N^{-1}R)}_{\substack{=D^{-1}}}\psi + D\Phi^{-1}\psi\\
&= D\Phi^{-1}\psi,
\end{align}
so that
\[
m(d) = Wd + D\Phi^{-1}\psi = \psi + W(d-R\psi).
\]
As in \eqref{posteriorcalculation} in the proof of Theorem \ref{thm:posterior}, one then shows, that the posterior in this case is given by
\begin{align}
\mathcal{P}_{\phi \vert d}(s,u) &= \mathcal{G}(s-m(u),D)\ \lambda_{\mathbb{R}^{n}}\otimes \mathbb{P}_{d}\text{-a.e in } \mathbb{R}^{n}\times \mathbb{R}^{Y} \text{ with}\\
\text{uncertainty variance }D &= \left( \Phi^{-1} + R^{T}N^{-1}R  \right)^{-1} \text{ and}\\
\text{mean }m(u) &= \psi + W(u-R\psi) = D(R^{T}N^{-1}d + \Phi^{-1}\psi).
\end{align}

\end{proof}
\section{Field evolution}
\label{field_evolution}
For an observed data vector $d$ at the initial time, we now know that our posterior is given by $\mathcal{P}_{\phi \vert d}(s,d) = \mathcal{G}(s-m(d),D)\ \lambda_{\mathbb{R}^{n}}$-a.e. in $\mathbb{R}^{n}$ , which determines the posterior mean field by  $m(d)=\psi + W(d-R\psi) = D(R^{T}N^{-1}d + \Phi^{-1}\psi)$.\\
This posterior evolves together with the field to the later time $t'$, leading to the new posterior $\mathcal{P}_{\phi' \vert d}$.\\
To compute this new posterior, we need the evolution equation in operator form. We wish to apply the transformation formula for Lebesgue integrable functions, which requires invertibility as well as continuous differentiability of the operator and its inverse. This is why we first simplify the evolution, using our information about the field dynamics from Section \ref{sec:Field dynamics}.\\
With Assumption \ref{ass:linearization}, we approximate $\phi'$ by
\begin{align}
\phi'' = \left( \mathbbm{1}_{\mathbb{R}^{n}} + \delta t \cdot L \right) \phi + \delta t \cdot c =: g(\phi)\label{evolutionfunction}
\end{align}
and get
\begin{align}
\Vert \phi'' - \phi' \Vert_{2} \leq C\cdot (\delta t)^{2}
\end{align}
for some $C \geq 0$, and $C$ is independent of $N$ and equal for every evolution step.\\
\comment{
One should remember the definition $\delta t= \frac{T}{2^N}$ here. The bound in the above approximation, $C\cdot (\delta t)^{2}$, thus reads as $C\cdot \dfrac{T^2}{2^{2N}}$. We make this approximation in every evolution step $t_{i}$ to $t_{i+1}$ for $i=0,...,2^N-1$, and $C$ is the same for all steps by Assumption \ref{ass:linearization}.\\
After $2^N$ time steps, this leads to an approximation error in $\mathbb{R}^{n}$ still less or equal to $C\cdot \dfrac{T^2}{2^N}$. This error gets arbitrarily small for large $N$, which justifies the approximation of $\phi'$ by $\phi''$. Nevertheless, one should be aware of this approximation when choosing $N$ and $n$ in the beginning.
}\\

We would like to have a probability kernel which comes from a density like in our construction of the posterior in Section \ref{sec:expectation} and which satisfies $(\mathcal{P}_{\phi'' \vert d}(.,d) \cdot \lambda_{\mathbb{R}^{n}})(.) = \mathbb{P}\left[ \phi''\in . \vert d \right]$ a.s.
In order to get this, we need a bit of preparation in terms of functional calculus \cite[cf.][VII.1]{werner}.\\
For every real or complex quadratic $n$-dimensional Matrix $A$, there is a unitary matrix $U\in \mathbb{C}^{n}$, i.e. $UU^{\dagger}=U^{\dagger}U=\mathbbm{1}_{\mathbb{C}^{n}}$, such that
\begin{align}
U^{\dagger}AU = D, \label{diagonal}
\end{align}
with a diagonal matrix $D$, which contains the eigenvalues $a_{k}$, $k=1,...n$, of $A$.\\
Equivalently, there is an orthonormal basis $\lbrace x_{k}\rbrace_{k=1}^{n}$ of $\mathbb{C}^{n}$, consisting of eigenvectors of $A$.
The orthogonal Projektions $P_{j}$ onto the eigenspaces $V_{j}$ of the $m$ pairwise distinct eigenvalues $a_{k_{j}}$, $j=1,...,m$, are then given by
\begin{align}
P_{j}:= \sum\limits_{l=1}^{\mathrm{dim}(V_{j})}x_{k_{j},l}\cdot x_{k_{j},l}^{\dagger}, \label{projection}
\end{align}
where $\left\lbrace x_{k_{j},l} \right\rbrace_{l=1}^{\mathrm{dim}(V_{j})}$ is the subset of vectors of $\lbrace x_{k}\rbrace_{k=1}^{n}$, that builds an orthonormal basis of the eigenspace $V_{j}$ for the eigenvalue $a_{k_{j}}$ of $A$.\\
With those orthogonal projections, one gets
\[
A = \sum\limits_{j=1}^{m}a_{k_{j}}P_{j}.
\]
The connection here is, that
\begin{align}
\sum\limits_{k=1}^{n}a_{k} e_{k}e_{k}^{\dagger} = D = U^{\dagger}AU = \sum\limits_{j=1}^{m}a_{k_{j}} U^{\dagger}P_{j}U = \sum\limits_{j=1}^{m}a_{k_{j}}\sum\limits_{l=1}^{\mathrm{dim}(V_{j})}U^{\dagger}x_{k_{j},l}\cdot x_{k_{j},l}^{\dagger}U, \label{conn_U}
\end{align}
with $e_{k}=(0,....,0,\underbrace{1}_{\substack{k\mathrm{-th}\; \mathrm{entry}}},0,...,0)^{\dagger}$.\\
So the $k$-th column of $U$ is given by one of the vectors $x_{k_{j},l}$, $l=1,...,\mathrm{dim}(V_{j})$, where $a_{k}=a_{k_{j}}$ for $j\in \lbrace 1,...,m\rbrace.$\\
For the trace of $A$, one has
\begin{align}
\mathrm{Tr}(A) = \sum\limits_{k=1}^{n} x_{k}^{\dagger}Ax_{k} = \sum\limits_{k=1}^{n}\sum\limits_{j=1}^{m}a_{k_{j}} x_{k}^{\dagger}P_{j}x_{k} = \sum\limits_{k=1}^{n}\sum\limits_{j=1}^{m}a_{k_{j}}\sum\limits_{l=1}^{\mathrm{dim}(V_{j})} \underbrace{x_{k}^{\dagger}x_{k_{j},l}}_{\substack{=\delta_{k\, k_{j},l}}}x_{k_{j},l}^{\dagger}x_{k} = \sum\limits_{k=1}^{n} a_{k}.
\label{trace}\end{align}
\begin{definition}[Functional calculus]\label{func_calc}
If $f$ is a function which is well defined for all the eigenvalues of $A$, one can define the operator 
\begin{align}
f(A):=\sum\limits_{j=1}^{m}f(a_{k_{j}})P_{j}.
\end{align}
\end{definition}
A couple of properties of this operator are stated in the following:

\begin{lemma}\label{lemma:func_calc}
For the trace of $f(A)$ one has
\begin{align}
\mathrm{Tr}(f(A)) = \sum\limits_{k=1}^{n}f(a_{k}).\label{tr_func_calc}
\end{align}
With $U$ from \eqref{diagonal} and the eigenvalues $a_1,...,a_n$ of $A$,
\begin{align}
U^{\dagger}f(A)U = f(D)\label{trafo_func_calc}
\end{align}
is a diagonal matrix with entries $f(a_{k})$ for $k=1,...,n$.
\begin{align}
\det(f(A)) = \prod\limits_{k=1}^{n}f(a_{k}).\label{det_func_calc}
\end{align}
If $f=\exp$, one has
\begin{align}
\vert \exp(A) \vert = \exp(\mathrm{Tr}(A)).\label{det_exp}
\end{align}
If $A>0$, then $\log(A)$ is well defined and
\begin{align}
\mathrm{Tr}(\log(A)) = \log\left(\vert A \vert\right).\label{trace_log}
\end{align}
\end{lemma}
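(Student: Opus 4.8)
The plan is to derive all five identities from the functional-calculus Definition \ref{func_calc} together with the spectral decomposition recorded in \eqref{diagonal}--\eqref{conn_U}, the cyclicity of the trace, and the multiplicativity of the determinant. The only identity requiring genuine work is \eqref{trafo_func_calc}; the other four then drop out by one-line trace/determinant algebra.

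First I would establish \eqref{trafo_func_calc}. By \eqref{conn_U} the columns of $U$ are exactly the orthonormal eigenvectors $x_{k}$ of $A$, so $U^{\dagger}x_{k_j,l}$ is a standard basis vector $e_{k}$ with $a_{k}=a_{k_j}$. Hence, using \eqref{projection}, $U^{\dagger}P_{j}U = \sum_{l=1}^{\dim(V_j)}\bigl(U^{\dagger}x_{k_j,l}\bigr)\bigl(U^{\dagger}x_{k_j,l}\bigr)^{\dagger}$ is the coordinate projection onto $\mathrm{span}\{e_{k}:a_{k}=a_{k_j}\}$. Summing these with the weights $f(a_{k_j})$ and invoking Definition \ref{func_calc} gives $U^{\dagger}f(A)U = \sum_{j=1}^{m}f(a_{k_j})\,U^{\dagger}P_{j}U = \sum_{k=1}^{n}f(a_{k})\,e_{k}e_{k}^{\dagger} = f(D)$, which is diagonal with entries $f(a_{k})$; this also shows $f(A)$ is well defined whenever $f$ is defined on the spectrum of $A$.

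The remaining identities follow at once. For \eqref{tr_func_calc}, cyclic invariance of the trace together with $UU^{\dagger}=\mathbbm{1}$ gives $\mathrm{Tr}(f(A)) = \mathrm{Tr}(U^{\dagger}f(A)U) = \mathrm{Tr}(f(D)) = \sum_{k=1}^{n}f(a_{k})$ (alternatively one reads this off directly as in \eqref{trace} using orthonormality of the $x_{k}$). For \eqref{det_func_calc}, multiplicativity of the determinant and $\det(U^{\dagger})\det(U)=\det(U^{\dagger}U)=1$ yield $\det(f(A)) = \det(U^{\dagger}f(A)U) = \det(f(D)) = \prod_{k=1}^{n}f(a_{k})$. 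Specialising $f=\exp$ in \eqref{det_func_calc} and recalling $\vert\cdot\vert=\vert\det(\cdot)\vert$ gives $\vert\exp(A)\vert = \bigl|\prod_{k=1}^{n}e^{a_{k}}\bigr| = \bigl|e^{\sum_{k}a_{k}}\bigr| = e^{\mathrm{Tr}(A)}$, where the last step uses \eqref{trace} and the fact that in our applications $A$ is real, so $\mathrm{Tr}(A)\in\mathbb{R}$ and the modulus is superfluous. Finally, if $A>0$ then all eigenvalues $a_{k}$ are strictly positive, so $\log$ is defined on the spectrum of $A$ and $\log(A)$ makes sense via Definition \ref{func_calc}; applying \eqref{tr_func_calc} with $f=\log$ and $\sum_{k}\log a_{k}=\log\bigl(\prod_{k}a_{k}\bigr)$, together with $\vert A\vert=\det(A)$ (valid since $\det A>0$ for $A>0$), gives $\mathrm{Tr}(\log(A)) = \log\bigl(\prod_{k=1}^{n}a_{k}\bigr) = \log(\det A) = \log(\vert A\vert)$.

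I do not expect a serious obstacle. The one point that needs care is the bookkeeping in \eqref{trafo_func_calc}, namely matching each column of $U$ with the eigenvector it equals so that $U^{\dagger}P_{j}U$ really is the corresponding coordinate projection; once this is pinned down the rest is routine. The only other thing worth flagging is the harmless reality assumption entering \eqref{det_exp}.
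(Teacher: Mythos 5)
Your proof is correct and follows essentially the same route as the paper: you establish \eqref{trafo_func_calc} from \eqref{conn_U}, then read off \eqref{tr_func_calc}, \eqref{det_func_calc}, \eqref{det_exp} and \eqref{trace_log} by trace/determinant algebra, exactly as the paper does. The only cosmetic differences are that you obtain \eqref{tr_func_calc} by conjugating with $U$ and using cyclicity of the trace (the paper computes it directly from the projector expansion, which you also mention), and that you explicitly flag the harmless reality point in \eqref{det_exp}.
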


\begin{proof}
\eqref{tr_func_calc} follows with
\begin{align}
\mathrm{Tr}(f(A)) = \sum\limits_{k=1}^{n}\sum\limits_{j=1}^{m}f(a_{k_{j}})x_{k}^{\dagger}P_{j}x_{k} = \sum\limits_{k=1}^{n}f(a_{k}).
\end{align}
For $U$ from \eqref{diagonal}, with \eqref{conn_U}, also \eqref{trafo_func_calc} follows. Because then
\begin{align}
U^{\dagger}f(A)U = \sum\limits_{j=1}^{m}f(a_{k_{j}}) U^{\dagger}P_{j}U = \sum\limits_{k=1}^{n} f(a_{k}) e_{k}e_{k}^{\dagger} = f(D)
\end{align}
is a diagonal matrix with entries $f(a_{k})$ for $k=1,...,n$.\\
\eqref{det_func_calc} is given because
\begin{align}
\det(f(A)) = \det(UU^{\dagger}f(A))=\det(U^{\dagger}f(A)U)\overset{\eqref{trafo_func_calc}}{=}\prod\limits_{k=1}^{n}f(a_{k}).
\end{align}
The properties of the two special cases can be seen as follows.\\
If $f=\exp$, \eqref{det_exp} is given because
\begin{align}
\vert \exp(A) \vert &= \det(\exp(A)) \overset{\eqref{det_func_calc}}{=} \prod\limits_{k=1}^{n}\exp(a_{k}) \overset{\eqref{trace}}{=} \exp(\mathrm{Tr}(A)).
\end{align}
If $A>0$, so that $a_{k}>0$ for $k=1,...,n$ and so that all eigenvalues of $A$ are real values in particular, then $\log(A)$ is well defined and \eqref{trace_log} follows with
\begin{align}
\mathrm{Tr}(\log(A)) \overset{\eqref{tr_func_calc}}{=} \sum_{k=1}^{n}\log(a_{k}) = \log\left(\prod\limits_{k=1}^{n}(a_{k})\right) = \log(\det(D)) \overset{\eqref{diagonal}}{=} \log\left(\vert A \vert\right).
\end{align}
\end{proof}

Now we can state the following:
\begin{lemma}\label{invertability}
For $\left\vert \delta t \right\vert <1$ small enough and $n\geq 2$, the map $g: x \mapsto \left( \mathbbm{1}_{\mathbb{R}^{n}} + \delta t \cdot L \right)x + \delta t \cdot c$ is invertible, continuously differentiable and the same holds for its inverse. For the Jacobian of $g^{-1}$ at $g(x)\in \mathbb{R}^{n}$, one has
\[
\vert Dg^{-1}(g(x)) \vert = \vert Dg(x) \vert^{-1} = 1+ \delta t\cdot \mathrm{Tr}(L) + \mathcal{O}((\delta t)^{2}).
\]
\end{lemma}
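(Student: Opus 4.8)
The plan is to exploit that $g$ is affine, so that invertibility, smoothness and the Jacobian come almost for free, and then to reduce the determinant formula to the spectral machinery of Lemma~\ref{lemma:func_calc}. First I would write $g(x) = Mx + \delta t\,c$ with $M := \mathbbm{1}_{\mathbb{R}^{n}} + \delta t\cdot L$. By Theorem~\ref{thm:invertdynamics}, for $N$ large enough (equivalently $\vert\delta t\vert$ small enough) the matrix $M$ is invertible, so $g$ is a bijection of $\mathbb{R}^{n}$ with $g^{-1}(y) = M^{-1}(y-\delta t\,c)$, which is again affine. Affine maps are $C^{\infty}$; hence both $g$ and $g^{-1}$ are continuously differentiable, with the constant Jacobians $Dg(x)\equiv M$ and $Dg^{-1}(y)\equiv M^{-1}$. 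Since $\det(M^{-1}) = (\det M)^{-1}$, this already gives $\vert Dg^{-1}(g(x))\vert = \vert\det M\vert^{-1} = \vert Dg(x)\vert^{-1}$, so it only remains to expand $\vert Dg(x)\vert = \vert\det(\mathbbm{1}_{\mathbb{R}^{n}} + \delta t\,L)\vert$ to first order in $\delta t$.

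For that expansion I would use Lemma~\ref{lemma:func_calc}. For $\vert\delta t\vert$ small, all eigenvalues of $\mathbbm{1}_{\mathbb{R}^{n}} + \delta t\,L$ lie close to $1$, so $B := \log(\mathbbm{1}_{\mathbb{R}^{n}} + \delta t\,L)$ is well defined by Definition~\ref{func_calc}; applying $\log(1+z) = z + \mathcal{O}(z^{2})$ to the eigenvalues and using \eqref{tr_func_calc} gives $\mathrm{Tr}(B) = \delta t\,\mathrm{Tr}(L) + \mathcal{O}((\delta t)^{2})$, the remainder being controlled because $\Vert L\Vert \le 2nC_{L}$ independently of $N$, as shown in the proof of Theorem~\ref{thm:invertdynamics}. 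Then \eqref{det_exp} yields $\vert\mathbbm{1}_{\mathbb{R}^{n}} + \delta t\,L\vert = \vert\exp B\vert = \exp(\mathrm{Tr}(B)) = \exp\!\big(\delta t\,\mathrm{Tr}(L) + \mathcal{O}((\delta t)^{2})\big) = 1 + \delta t\,\mathrm{Tr}(L) + \mathcal{O}((\delta t)^{2})$. This quantity is $>0$ for $\vert\delta t\vert$ small, so the absolute value is redundant; combining with the previous paragraph gives $\vert Dg(x)\vert = 1 + \delta t\,\mathrm{Tr}(L) + \mathcal{O}((\delta t)^{2})$ and $\vert Dg^{-1}(g(x))\vert = \vert Dg(x)\vert^{-1}$, which is the displayed formula.

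A more pedestrian variant of the determinant step, which sidesteps any discussion of whether $L$ is diagonalizable, is to invoke $\det(\mathbbm{1}_{\mathbb{R}^{n}} + \delta t\,L) = \prod_{k=1}^{n}(1+\delta t\,\lambda_{k})$ with $\lambda_{1},\dots,\lambda_{n}$ the eigenvalues of $L$ counted with algebraic multiplicity (read off from the characteristic polynomial or a triangularization), together with $\sum_{k}\lambda_{k} = \mathrm{Tr}(L)$; expanding the product gives constant term $1$ and linear term $\delta t\,\mathrm{Tr}(L)$. Either way, the argument is essentially routine: the one point I would be careful about is the $\mathcal{O}((\delta t)^{2})$ bookkeeping — checking that the remainder constants are uniform in $x$ (immediate, since $Dg$ does not depend on $x$) and independent of $N$ (via the uniform bound on $\Vert L\Vert$ from Theorem~\ref{thm:invertdynamics}). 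The hypothesis $n\ge 2$ plays no essential role in this lemma.
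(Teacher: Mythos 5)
Your argument is correct and its skeleton (affine map, invertibility for small $\delta t$, constant Jacobians, $\vert Dg^{-1}\vert=\vert Dg\vert^{-1}$, then a first-order expansion of $\det(\mathbbm{1}_{\mathbb{R}^{n}}+\delta t\,L)$) matches the paper's; the genuine difference is in the determinant step. The paper expands via the matrix exponential: it writes $\mathbbm{1}_{\mathbb{R}^{n}}+\delta t\,L=\exp(\delta t\,L)+\mathcal{O}((\delta t)^{2})$, pushes the $\mathcal{O}((\delta t)^{2})$ error through the determinant by hand using the Leibniz formula, applies $\vert\exp(A)\vert=\exp(\mathrm{Tr}(A))$ from Lemma \ref{lemma:func_calc}, and Taylor-expands the scalar exponential; it then recovers the expansion of $\vert Dg^{-1}(g(x))\vert$ from the chain rule and a geometric series. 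Your route — $\det(\mathbbm{1}_{\mathbb{R}^{n}}+\delta t\,L)=\prod_{k}(1+\delta t\,\lambda_{k})$ with $\sum_{k}\lambda_{k}=\mathrm{Tr}(L)$ — reaches the same expansion more directly and avoids the Leibniz-formula bookkeeping; your triangularization variant is in fact on safer ground than the paper's functional-calculus version of the same idea, since Definition \ref{func_calc} rests on the (generally false) claim that every square matrix is unitarily diagonalizable, whereas eigenvalues with algebraic multiplicity from a triangularization always exist over $\mathbb{C}$. Your remark on uniformity via $\Vert L\Vert\le 2nC_{L}$ matches the paper's concern, though note this uses the extra hypotheses of Theorem \ref{thm:invertdynamics}, which the lemma itself does not formally assume (for a fixed $L$ the constant may simply depend on $L$). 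One small loose end: you stop at $\vert Dg^{-1}(g(x))\vert=\vert Dg(x)\vert^{-1}$, whereas the paper adds a geometric-series step to expand this reciprocal explicitly as $1-\delta t\,\mathrm{Tr}(L)+\mathcal{O}((\delta t)^{2})$; the displayed formula in the lemma is itself loose about this sign, so your reading is defensible, but a one-line expansion of the reciprocal would make your proof match what the paper actually proves.
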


\begin{proof}
Differentiability of the bounded linear map $g$  acting on the finite dimensional space $\mathbb{R}^{n}$ should be obvious. If $\delta t$ is small enough, so that $\Vert \delta t \cdot L \Vert < 1$, invertibility follows like in \eqref{invertdynamics} in the proof of Theorem \ref{thm:invertdynamics}.\\
For every $x\in \mathbb{R}^{n}$ with $\Vert x \Vert_{2}\leq 1$, we have 
\begin{align}
\Vert g^{-1}(x) \Vert_{2} &= \left\Vert \sum\limits_{k=0}^{\infty}(-\delta t \cdot L)^{k}(x - \delta t\cdot c) \right\Vert_{2}\\
&\leq \sum\limits_{k=0}^{\infty} \Vert \delta t \cdot L \Vert^{k} \cdot (\Vert x \Vert_{2} + \delta t\cdot \Vert c \Vert_{2})\\
&\overset{\mathrm{geom. series}}{=} (1-\delta t \cdot \Vert L \Vert)^{-1}\cdot (\Vert x \Vert_{2} + \delta t\cdot \Vert c \Vert_{2})\\
&\leq (1-\delta t \cdot \Vert L \Vert)^{-1}\cdot (1 + \delta t\cdot \Vert c \Vert_{2}).
\end{align}
Therefore, also $g^{-1}$ is linear and bounded as a map on the finite dimensional space $\mathbb{R}^{n}$. That is, $g$ is continuously invertible and also $g^{-1}$ is continuously differentiable. 
For the Jacobian of $g$ at $x\in \mathbb{R}^{n}$ one has
\begin{align}
\vert Dg(x) \vert &= \vert \mathbbm{1}_{\mathbb{R}^{n}} + \delta t \cdot L \vert\\
&\overset{\vert . \vert > 0}{=} \exp \log \left\vert \underbrace{\mathbbm{1}_{\mathbb{R}^{n}} + \delta t \cdot L }_{\substack{= \exp(\delta t \cdot L) + \mathcal{O}((\delta t)^{2})}} \right\vert\\
&\overset{*}{=} \exp \log \vert \exp(\delta t \cdot L) \vert + \mathcal{O}((\delta t)^{2})\\
&\overset{\eqref{det_exp}}{=} \exp \log \exp(\mathrm{Tr}(\delta t \cdot L)) + \mathcal{O}((\delta t)^{2})\\
&= \exp(\mathrm{Tr}(\delta t \cdot L)) + \mathcal{O}((\delta t)^{2})\\
&\overset{\mathrm{Taylorexp.}}{=} 1 + \delta t \cdot \mathrm{Tr}(L) + \mathcal{O}((\delta t)^{2}).
\end{align}
The equality $ \mathbbm{1}_{\mathbb{R}^{n}} + \delta t \cdot L = \exp(\delta t \cdot L) + \mathcal{O}((\delta t)^{2})$ follows from
\begin{align}
\Vert \exp(\delta t \cdot L) - \mathbbm{1}_{\mathbb{R}^{n}} - \delta t \cdot L \Vert
&= \left\Vert \sum\limits_{k=2}^{\infty}\dfrac{(\delta t \cdot L)^{k}}{k!} \right\Vert\\
&\leq (\delta t)^{2}\cdot \Vert L \Vert^{2} \cdot  \sum\limits_{k=0}^{\infty} \frac{\Vert \delta t \cdot L\Vert^{k}}{k!} \overset{\Vert \delta t \cdot L\Vert < 1}{\leq} (\delta t)^{2}\cdot \Vert L \Vert^{2}\cdot \exp(1)\\
&= (\delta t)^{2}\cdot \Vert L \Vert^{2}\cdot e.
\end{align}
$*$ holds by the Leibniz formula for the determinant. For a matrix $A = (a_{ij})_{1\leq i,j \leq n}$ one has by \cite[3.2.5.\, Theorem]{fischer}:
\begin{align}
\det(A) = \sum\limits_{\sigma \in S_{n}} \mathrm{sgn}(\sigma)\cdot a_{1\sigma(1)} \cdots a_{n\sigma(n)}.
\end{align}
Assume now that $A$ is of the form $A = \exp(\delta t \cdot B) + (\delta t)^2\cdot D = \sum\limits_{k=0}^{\infty} \frac{(\delta t \cdot B)^{k}}{k!} + (\delta t)^2\cdot D$, for some matrices $B$ and $D$, with $\Vert \delta t \cdot B \Vert < 1$. Then the formula for the determinant can be split into the determinant of $\mathbbm{1}_{\mathbb{R}^{n}} +  \delta t \cdot B$ and sum over all expressions, that contain at least one prefactor with a $(\delta t)^2$ in it. If we factor $(\delta t)^2$ out of this determinant and call the remaining part $C\in \mathbb{R}$, then from this representation, it follows
\begin{align}
\left\vert \vert A \vert - \vert \mathbbm{1}_{\mathbb{R}^{n}} +  \delta t \cdot B \vert \right\vert &\overset{A,\, \mathbbm{1}_{\mathbb{R}^{n}} +  \delta t \cdot B > 0}{=} \vert \det(A) - \det(\mathbbm{1}_{\mathbb{R}^{n}} +  \delta t \cdot B) \vert\\
&=\vert (\delta t)^2\cdot C \vert = (\delta t)^2\cdot \vert C \vert.
\end{align}
This gives the $\mathcal{O}((\delta t)^2)$ in $*$ with $B=L$ and $(\delta t)^2\cdot D= -\sum\limits_{k=2}^{\infty}\dfrac{(\delta t \cdot L)^{k}}{k!}$.\\
It is further by the chain rule
\begin{align}
1 &= \vert \mathbbm{1}_{\mathbb{R}^{n}} \vert = \vert D\mathbbm{1}_{\mathbb{R}^{n}} x \vert\\
&= \vert D(g^{-1}\circ g)(x) \vert = \vert Dg^{-1}(g(x)) Dg(x)) \vert = \vert Dg^{-1}(g(x)) \vert \cdot \vert Dg(x) \vert.\label{chain_rule}
\end{align}
We now take $\delta t$ small enough, so that $\vert \delta t \cdot \mathrm{Tr}(L) + \mathcal{O}((\delta t)^{2}) \vert < 1$, where with $\mathcal{O}((\delta t)^{2})$ we mean the bound $C$ from $*$. If $\vert Dg(x) \vert$ is given by $1 + \delta t \cdot \mathrm{Tr}(L) + (\delta t)^{2}\cdot C'$, one therefore gets by the geometric series
\begin{align}
\vert \vert Dg^{-1}(g(x)) \vert - (1 - \delta t \cdot \mathrm{Tr}(L))\vert &\overset{\eqref{chain_rule}}{=} \vert \vert Dg(x) \vert^{-1} - 1 + \delta t \cdot \mathrm{Tr}(L)\vert\\
&= \left\vert \frac{1}{1 + \delta t \cdot \mathrm{Tr}(L) + (\delta t)^{2}\cdot C'} - 1 + \delta t \cdot \mathrm{Tr}(L)  \right\vert\\
&= \left\vert \sum\limits_{k=0}^{\infty} (-\delta t \cdot \mathrm{Tr}(L) - (\delta t)^{2}\cdot C')^{k} - 1 + \delta t \cdot \mathrm{Tr}(L) \right\vert \\
&= \left\vert -(\delta t)^{2}\cdot C' + \sum\limits_{k=2}^{\infty} (-\delta t \cdot \mathrm{Tr}(L) - (\delta t)^{2}\cdot C')^{k} \right\vert\\
&= (\delta t)^{2}\cdot \left\vert -C' + \sum\limits_{k=2}^{\infty} (\delta t)^{k-2}\cdot(-\mathrm{Tr}(L) + \delta t \cdot C')^{k}\right\vert.
\end{align}
This makes the $\mathcal{O}((\delta t)^{2})$, because the series in the last equation converge due to 
\[
\vert \delta t \cdot \mathrm{Tr}(L) + (\delta t)^{2}\cdot C' \vert < 1,
\]
and our proof is finished.
\end{proof}
\comment{
We will not need the estimate $\vert Dg^{-1}(g(x)) \vert = 1+ \delta t\cdot \mathrm{Tr}(L) + \mathcal{O}((\delta t)^{2})$ from Lemma \ref{invertability} for the rest of this work. Nevertheless, the validity of this approximation was pointed out in \cite[p.8]{enss} and therefore proved as well.
}\\

With Assumption \ref{ass:linearization} we can now approximate $\phi'$ by
\begin{align}
\phi'' = \left( \mathbbm{1}_{\mathbb{R}^{n}} + \delta t \cdot L \right) \phi + \delta t \cdot c = g(\phi)
\end{align}
and get the following:
\begin{theorem}\label{posterior_update}
The approximated posterior $\mathcal{P}_{\phi'' \vert d}$ is given by 
\begin{align}
\mathcal{P}_{\phi'' \vert d}(s',u) &= \mathcal{G}( s' - m''(u),D'')\ \lambda_{\mathbb{R}^{n}}\otimes \mathbb{P}_{d}\text{-a.e in } \mathbb{R}^{n}\times \mathbb{R}^{Y}, \text{ with}\\
m''(u) &= m(u) + \delta t \cdot (c+ Lm(u))\\
&= (\mathbbm{1}_{\mathbb{R}^{n}} + \delta t\cdot L)m(u) +\delta t \cdot c\\
&= (\mathbbm{1}_{\mathbb{R}^{n}} + \delta t\cdot L)[ \psi + W(u-R\psi) ] +\delta t \cdot c \text{ and}\\
D'' &= \left[(\mathbbm{1}_{\mathbb{R}^{n}} + \delta t\cdot L)^{-1T} D^{-1} (\mathbbm{1}_{\mathbb{R}^{n}} + \delta t\cdot L)^{-1}\right]^{-1}\\
&= (\mathbbm{1}_{\mathbb{R}^{n}} + \delta t\cdot L)D(\mathbbm{1}_{\mathbb{R}^{n}} + \delta t\cdot L)^{T}\\
&= D + \delta t (LD + DL^{T}) + (\delta t)^{2}LDL^{T}.
\end{align}
\end{theorem}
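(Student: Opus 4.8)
The plan is to obtain the law of $\phi''=g(\phi)$ conditional on $d$ by transporting the already computed Gaussian posterior $\mathcal{P}_{\phi\vert d}(\cdot,u)=\mathcal{G}(\cdot-m(u),D)$ from Section~\ref{sec:dataconstraints} through the affine map $g$ via the transformation formula for Lebesgue integrable functions. Its hypotheses — $g$ and $g^{-1}$ continuously differentiable bijections of $\mathbb{R}^{n}$ — hold for $\delta t$ small by Lemma~\ref{invertability}, where also $\vert Dg^{-1}(s')\vert=\vert Dg(g^{-1}(s'))\vert^{-1}=\vert\mathbbm{1}_{\mathbb{R}^{n}}+\delta t\cdot L\vert^{-1}$ is recorded; note this Jacobian is a constant, independent of $s'$, because $g$ is affine.

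First I would reproduce the computation from the proof of Theorem~\ref{thm:posterior}: for every $f\in L_{1}(\mathbb{R}^{n}\times\mathbb{R}^{Y},\mathcal{B}(\mathbb{R}^{n}\times\mathbb{R}^{Y}),\mathbb{P}_{\phi'',d})$,
\[
\mathbb{E}[f(\phi'',d)]=\mathbb{E}[f(g(\phi),d)]=\int_{\{\mathcal{P}_{d}(u)>0\}}\Big(\int_{\mathbb{R}^{n}}f(g(s),u)\,\mathcal{P}_{\phi\vert d}(s,u)\,ds\Big)\mathcal{P}_{d}(u)\,du,
\]
using that $\mathcal{P}_{\phi\vert d}$ is the conditional density and that $\{\mathcal{P}_{d}(u)=0\}$ contributes nothing. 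In the inner integral I substitute $s'=g(s)$, i.e. $s=g^{-1}(s')=(\mathbbm{1}_{\mathbb{R}^{n}}+\delta t\cdot L)^{-1}(s'-\delta t\cdot c)$, which introduces the constant factor $\vert\mathbbm{1}_{\mathbb{R}^{n}}+\delta t\cdot L\vert^{-1}$, so that
\[
\mathbb{E}[f(\phi'',d)]=\mathbb{E}\Big[\int_{\mathbb{R}^{n}}f(s',d)\,\mathcal{P}_{\phi\vert d}(g^{-1}(s'),d)\,\vert\mathbbm{1}_{\mathbb{R}^{n}}+\delta t\cdot L\vert^{-1}\,ds'\Big].
\]
Choosing $f=1_{A\times B}$ and invoking relation~\eqref{unique} together with the $\lambda_{\mathbb{R}^{n}}\otimes\mathbb{P}_{d}$-a.e. uniqueness of the conditional density identifies, on $\{\mathcal{P}_{d}(u)>0\}$,
\[
\mathcal{P}_{\phi''\vert d}(s',u)=\mathcal{P}_{\phi\vert d}(g^{-1}(s'),u)\,\vert\mathbbm{1}_{\mathbb{R}^{n}}+\delta t\cdot L\vert^{-1}\qquad\lambda_{\mathbb{R}^{n}}\otimes\mathbb{P}_{d}\text{-a.e.},
\]
with the convention of Definition~\ref{condprobdens} on the complementary null set.

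It then remains to evaluate the right-hand side. Writing $m''(u):=(\mathbbm{1}_{\mathbb{R}^{n}}+\delta t\cdot L)m(u)+\delta t\cdot c$ I use $g^{-1}(s')-m(u)=(\mathbbm{1}_{\mathbb{R}^{n}}+\delta t\cdot L)^{-1}\big(s'-m''(u)\big)$, so that the quadratic form in the exponent of $\mathcal{P}_{\phi\vert d}(g^{-1}(s'),u)$ turns into $-\tfrac12(s'-m''(u))^{T}(\mathbbm{1}_{\mathbb{R}^{n}}+\delta t\cdot L)^{-1T}D^{-1}(\mathbbm{1}_{\mathbb{R}^{n}}+\delta t\cdot L)^{-1}(s'-m''(u))$; hence the covariance is the congruence transform $D''=(\mathbbm{1}_{\mathbb{R}^{n}}+\delta t\cdot L)D(\mathbbm{1}_{\mathbb{R}^{n}}+\delta t\cdot L)^{T}$, which is positive definite since $D$ is. For the normalisation I check $\vert 2\pi D''\vert^{1/2}=\vert\mathbbm{1}_{\mathbb{R}^{n}}+\delta t\cdot L\vert\cdot\vert 2\pi D\vert^{1/2}$, so the Jacobian factor exactly converts $\vert 2\pi D\vert^{-1/2}$ into $\vert 2\pi D''\vert^{-1/2}$, giving $\mathcal{P}_{\phi''\vert d}(s',u)=\mathcal{G}(s'-m''(u),D'')$. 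The three stated forms of $m''(u)$ then follow by substituting $m(u)=\psi+W(u-R\psi)$, and the three forms of $D''$ by expanding $(\mathbbm{1}_{\mathbb{R}^{n}}+\delta t\cdot L)D(\mathbbm{1}_{\mathbb{R}^{n}}+\delta t\cdot L)^{T}=D+\delta t(LD+DL^{T})+(\delta t)^{2}LDL^{T}$ and by taking the inverse of the first expression.

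The only subtle point is the rigorous use of the transformation formula inside the probability-kernel framework: one must keep $\delta t$ small enough for Lemma~\ref{invertability}, treat the null set $\{\mathcal{P}_{d}(u)=0\}$ separately, and use the a.e.-uniqueness statement to certify that the density constructed above genuinely represents $\mathbb{P}[\phi''\in\cdot\,\vert d]$; once that is in place, the rest is routine Gaussian bookkeeping.
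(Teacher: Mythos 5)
Your proposal is correct and follows essentially the same route as the paper: transport the Gaussian posterior $\mathcal{G}(\cdot-m(u),D)$ through the affine map $g$ using Lemma \ref{invertability} and the transformation formula, identify the resulting expression as $\mathcal{P}_{\phi''\vert d}$ via the a.e.-uniqueness of the conditional density, and then complete the Gaussian bookkeeping $g^{-1}(s')-m(u)=(\mathbbm{1}_{\mathbb{R}^{n}}+\delta t\cdot L)^{-1}(s'-m''(u))$ with the constant Jacobian absorbing the change of normalisation from $\vert 2\pi D\vert^{1/2}$ to $\vert 2\pi D''\vert^{1/2}$. Your explicit check of the normalisation constant and the $f=1_{A\times B}$ identification are slightly more detailed than the paper's write-up but not a different argument.
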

\begin{proof}
By Lemma \ref{invertability}, the map $x \mapsto \left( \mathbbm{1}_{\mathbb{R}^{n}} + \delta t \cdot L \right)x + \delta t \cdot c = g(x)$ is continuously differentiable and the same holds for its inverse. By the transformation formula for Lebesgue integrable functions (trans.) \cite[\S 13., Theorem 2.]{forster}, and the substitution $\phi''=g(\phi)$ (subst.) \cite[Lemma 1.22]{kall}, one therefore gets for any $f \in L_{1}(\mathbb{R}^{n} \times \mathbb{R}^{Y},\mathcal{B}(\mathbb{R}^{n}\times \mathbb{R}^{Y}), \mathbb{P}_{\phi'',d})$:
\begin{align}
\mathbb{E}\left[ f(\phi'',d)\right] &= \int\limits_{\mathbb{R}^{n}\times \mathbb{R}^{Y}} f(\tilde{s},u) \mathbb{P}_{\phi'',d}(d(\tilde{s},u))\\
&\overset{\mathrm{subst.}}{=} \int\limits_{\mathbb{R}^{n}\times \mathbb{R}^{Y}} f(g(s),u) \mathbb{P}_{\phi,d}(d(s,u))\\
&\overset{\mathrm{Thm.} \ref{fubini}, \eqref{bayes}}{=} \int\limits_{\mathbb{R}^{Y}} \int\limits_{\mathbb{R}^{n}} \mathcal{P}_{\phi \vert d}(s,u) \cdot f(g(s),u)ds \mathbb{P}_{d}(du)\\
&\overset{\mathrm{trans.}}{=} \int\limits_{\mathbb{R}^{Y}} \int\limits_{\mathbb{R}^{n}} \mathcal{P}_{\phi \vert d}(g^{-1}(s'),u) \cdot f(s',u) \cdot \vert Dg^{-1}(s') \vert ds' \mathbb{P}_{d}(du).
\end{align}
That is, we have
\[
\mathcal{P}_{\phi'' \vert d}(s',u) = \mathcal{P}_{\phi \vert d}(g^{-1}(s'),u) \cdot \vert Dg^{-1}(s') \vert\ \lambda_{\mathbb{R}^{n}}\otimes \mathbb{P}_{d}\text{-a.e in } \mathbb{R}^{n}\times \mathbb{R}^{Y}.
\]
We also know from Theorem \ref{thm:posterior}, that $\mathcal{P}_{\phi \vert d}(s,u) = \mathcal{G}(s-m(u),D)\ \lambda_{\mathbb{R}^{n}}\otimes \mathbb{P}_{d}\text{-a.e in } \mathbb{R}^{n}\times \mathbb{R}^{Y}$.\\
Now we recall, that
\begin{align}
g^{-1}(x) &= (\mathbbm{1}_{\mathbb{R}^{n}} + \delta t \cdot L)^{-1}(x-\delta t \cdot c) \text{ for } x\in \mathbb{R}^{n},
\end{align}
and that by Lemma \ref{invertability}, we have
\begin{align}
\vert Dg^{-1}(g(x)) \vert = \vert Dg(x) \vert^{-1} = \vert (\mathbbm{1}_{\mathbb{R}^{n}} + \delta t \cdot L) \vert^{-1}.
\end{align}
This leads to
\begin{align}
\mathcal{P}_{\phi'' \vert d}(s',u) &= \mathcal{G}(g^{-1}(s')-m(u),D)\cdot \vert Dg^{-1}(s') \vert \\
&= \mathcal{G}((\mathbbm{1}_{\mathbb{R}^{n}} + \delta t \cdot L)^{-1}(s'-\delta t \cdot c)-m(u),D)\cdot \vert (\mathbbm{1}_{\mathbb{R}^{n}} + \delta t \cdot L) \vert^{-1}\\
&= \mathcal{G}((\mathbbm{1}_{\mathbb{R}^{n}} + \delta t \cdot L)^{-1}(s'-\delta t \cdot c- (\mathbbm{1}_{\mathbb{R}^{n}} + \delta t \cdot L)m(u)),D)\cdot \vert (\mathbbm{1}_{\mathbb{R}^{n}} + \delta t \cdot L) \vert^{-1}\\
&= \mathcal{G}\left(s'-\delta t \cdot c- (\mathbbm{1}_{\mathbb{R}^{n}} + \delta t \cdot L)m(u) , \left[(\mathbbm{1}_{\mathbb{R}^{n}} + \delta t\cdot L)^{-1T} D^{-1} (\mathbbm{1}_{\mathbb{R}^{n}} + \delta t\cdot L)^{-1}\right]^{-1}\right)\\
&= \mathcal{G}\left(s'-\underbrace{(\delta t \cdot c + (\mathbbm{1}_{\mathbb{R}^{n}} + \delta t \cdot L)m(u)}_{\substack{=:m''(u)}}) , \underbrace{\left[(\mathbbm{1}_{\mathbb{R}^{n}} + \delta t\cdot L)^{-1T} D^{-1} (\mathbbm{1}_{\mathbb{R}^{n}} + \delta t\cdot L)^{-1}\right]^{-1}}_{\substack{=:D''}}\right)
\end{align}
$\lambda_{\mathbb{R}^{n}}\otimes \mathbb{P}_{d}$-a.e in $\mathbb{R}^{n}\times \mathbb{R}^{Y}$.\\
That is, we have 
\begin{align}
\mathcal{P}_{\phi'' \vert d}(s',u) &= \mathcal{G}( s' - m''(u),D'')\ \lambda_{\mathbb{R}^{n}}\otimes \mathbb{P}_{d}\text{-a.e in } \mathbb{R}^{n}\times \mathbb{R}^{Y}, \text{ with}\\
m''(u) &= m(u) + \delta t \cdot (c+ Lm(u))\\
&= (\mathbbm{1}_{\mathbb{R}^{n}} + \delta t\cdot L)m(u) +\delta t \cdot c\\
&= (\mathbbm{1}_{\mathbb{R}^{n}} + \delta t\cdot L)[ \psi + W(u-R\psi) ] +\delta t \cdot c \text{ and}\\
D'' &= \left[(\mathbbm{1}_{\mathbb{R}^{n}} + \delta t\cdot L)^{-1T} D^{-1} (\mathbbm{1}_{\mathbb{R}^{n}} + \delta t\cdot L)^{-1}\right]^{-1}\\
&= (\mathbbm{1}_{\mathbb{R}^{n}} + \delta t\cdot L)D(\mathbbm{1}_{\mathbb{R}^{n}} + \delta t\cdot L)^{T}\\
&= D + \delta t (LD + DL^{T}) + (\delta t)^{2}LDL^{T}.
\end{align}
\end{proof}

We will see later, that for the computation of $d'$ from $d$, we actually need $D''^{-1}$.\\
If $(\mathbbm{1}_{\mathbb{R}^{n}} - \delta t\cdot L)^{-1}$ is complicated to compute, we approximate $\mathcal{P}_{\phi'' \vert d}$ by a Gaussian density with mean $m^{*}(u)=m''(u)=(\mathbbm{1}_{\mathbb{R}^{n}} + \delta t\cdot L)[ \psi + W(u-R\psi) ] +\delta t \cdot c$ and a covariance matrix $D^{*}$ with inverse $D^{*-1}=D^{-1} - \delta t(D^{-1}L + L^{T}D^{-1})$.\\
This means, that we approximate $\phi'$ by a random element $\tilde{\phi}''$ which satisfies that
$\mathcal{P}_{\tilde{\phi}'' \vert d}$ is $\lambda_{\mathbb{R}^{n}}\otimes \mathbb{P}_{d}$-a.s. Gaussian density with this mean and covariance.
\begin{theorem}\label{thm:further_approximation}
With Assumption \ref{ass:linearization} and if the setting of Theorem \ref{thm:invertdynamics} is given, then for $N\in \mathbb{N}$ large enough,
\begin{align}
D''^{-1} = D^{-1} -\delta t(D^{-1}L + L^{T}D^{-1}) + \mathcal{O}((\delta t)^2)= D^{*}+\mathcal{O}((\delta t)^2).\label{D''approx}
\end{align}
If $N$ is chosen, such that $\delta t < \frac{1}{4\cdot n\cdot C_L}$, then constant for the bound for the error of this estimate is smaller than $16\cdot n^2 \cdot C_L^2 \cdot \Vert D^{-1}\Vert$.\\
Furthermore, an approximation of $\phi''$ by $\tilde{\phi}''$ leads to the same expectation value $m''(u)=m^{*}(u)$ and a variance $D^{*}$ which is $\mathcal{O}((\delta t)^2)$ close to the variance $D''$ of $\phi''$ in operator norm.\\
For $\delta t < \frac{1}{8n C_L\sqrt{3\Vert D^{-1} \Vert \Vert D \Vert}}$, the error for this estimate has a constant which is smaller than $288\cdot \Vert D \Vert^2 \cdot \Vert D^{-1} \Vert\cdot n^2 \cdot C_L^2 $.\\
For given $d=u\in \mathbb{R}^Y$, the information theoretical error 
\begin{align}
D(\mathcal{P}_{\phi'' \vert d}(.,u) &\Vert \mathcal{P}_{\tilde{\phi}'' \vert d}(.,u) ) = \mathbb{E}_{\phi'' \vert u} \left[ \log\left( \frac{\mathcal{P}_{\phi'' \vert d}(., u)}{\mathcal{P}_{\tilde{\phi}'' \vert d}(., u )} \right) \right],
\end{align}
as introduced in the next section and explained in Chapter \ref{chap:Maximum_Entropy_Principle}, that comes with the approximation of $\phi''$ by $\tilde{\phi}''$ is in $\mathcal{O}((\delta t)^2)$ as well.\\
If $\delta t < \frac{1}{4\cdot n\cdot C_L}$, the bound of the error has a constant smaller than $48\cdot n^3 \cdot C_L^2 \cdot \Vert D^{-1} \Vert \cdot \Vert D \Vert$.
\end{theorem}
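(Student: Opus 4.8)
\textbf{Part 1 (expansion of $D''^{-1}$).} By Theorem~\ref{posterior_update} we have $D'' = (\mathbbm{1}_{\mathbb{R}^{n}} + \delta t\cdot L)\,D\,(\mathbbm{1}_{\mathbb{R}^{n}} + \delta t\cdot L)^{T}$, hence
\[
D''^{-1} = (\mathbbm{1}_{\mathbb{R}^{n}} + \delta t\cdot L)^{-T}\,D^{-1}\,(\mathbbm{1}_{\mathbb{R}^{n}} + \delta t\cdot L)^{-1}.
\]
The plan is to insert the Neumann expansion $(\mathbbm{1}_{\mathbb{R}^{n}} + \delta t\cdot L)^{-1} = \mathbbm{1}_{\mathbb{R}^{n}} - \delta t\cdot L + \mathcal{O}((\delta t)^{2})$ from Theorem~\ref{thm:invertdynamics} (and its transpose, which has the same operator norm) and multiply out. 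The terms of order $(\delta t)^{0}$ and $(\delta t)^{1}$ add up to exactly $D^{-1} - \delta t(D^{-1}L + L^{T}D^{-1})$, which is $D^{*-1}$, the inverse of the matrix $D^{*}$ introduced just before the theorem; every remaining summand carries either the explicit factor $(\delta t)^{2}$ or one of the two Neumann remainders, whose norm is $< 8\,n^{2}C_L^{2}(\delta t)^{2}$. Using $\Vert L\Vert \le 2nC_L$ (also from Theorem~\ref{thm:invertdynamics}) and the hypothesis $\delta t < \tfrac{1}{4nC_L}$ — which bounds $\delta t\Vert L\Vert$ and the remainder norms by $\tfrac12$ — a term-by-term estimate collapses the error to $\Vert D''^{-1} - D^{*-1}\Vert \le 16\,n^{2}C_L^{2}\Vert D^{-1}\Vert(\delta t)^{2}$.

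\textbf{Part 2 (closeness of $D^{*}$ to $D''$).} Equality of the means is immediate, because $m^{*}(u)$ is \emph{defined} to equal $m''(u) = (\mathbbm{1}_{\mathbb{R}^{n}} + \delta t\cdot L)[\psi + W(u-R\psi)] + \delta t\cdot c$ from Theorem~\ref{posterior_update}. For the covariances I would write $D''^{-1} = D^{*-1} + E$ with $\Vert E\Vert \le 16\,n^{2}C_L^{2}\Vert D^{-1}\Vert(\delta t)^{2}$ by Part~1, so that $D^{*} = (\mathbbm{1}_{\mathbb{R}^{n}} - D''E)^{-1}D''$. To control $\Vert D''E\Vert$ one bounds $\Vert D''\Vert \le (1+\delta t\Vert L\Vert)^{2}\Vert D\Vert$ from the explicit form of $D''$; under the stronger restriction $\delta t < \tfrac{1}{8nC_L\sqrt{3\Vert D^{-1}\Vert\Vert D\Vert}}$ this forces $\Vert D''E\Vert$ into the regime where the Neumann series for $(\mathbbm{1}_{\mathbb{R}^{n}} - D''E)^{-1}$ converges, and then $D^{*} - D'' = \big(\sum_{k\ge1}(D''E)^{k}\big)D''$ is $\mathcal{O}((\delta t)^{2})$; tracking the constants yields $\Vert D^{*} - D''\Vert \le 288\,\Vert D\Vert^{2}\Vert D^{-1}\Vert\,n^{2}C_L^{2}(\delta t)^{2}$.

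\textbf{Part 3 (information theoretical error).} For $\delta t$ small both $D''$ and $D^{*}$ are symmetric positive definite (small symmetric perturbations of $D>0$ at the level of inverses), so the two conditional laws are non-degenerate Gaussians with the \emph{same} mean; computing the expectation in the definition of the relative entropy with the two Gaussian densities gives
\[
D\big(\mathcal{P}_{\phi''\vert d}(.,u)\,\Vert\,\mathcal{P}_{\tilde\phi''\vert d}(.,u)\big) = \tfrac12\Big[\mathrm{Tr}(D^{*-1}D'') - n + \log\tfrac{\vert D^{*}\vert}{\vert D''\vert}\Big].
\]
Substituting $D^{*-1} = D''^{-1} + E$ and using $\log\vert M\vert = \mathrm{Tr}(\log M)$ from \eqref{trace_log} turns the bracket into $\mathrm{Tr}(ED'') - \mathrm{Tr}\log(\mathbbm{1}_{\mathbb{R}^{n}} + ED'')$; expanding the logarithm, the first-order contributions $\mathrm{Tr}(ED'')$ and $-\mathrm{Tr}(ED'')$ cancel by cyclicity of the trace, leaving $\tfrac12\sum_{k\ge2}\tfrac{(-1)^{k}}{k}\mathrm{Tr}((ED'')^{k})$. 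Bounding the trace of the $k$-th power of an $n\times n$ matrix by $n$ times the $k$-th power of its norm and summing the geometric tail gives a bound of order $n\Vert ED''\Vert^{2} = \mathcal{O}((\delta t)^{4})$; using $(\delta t)^{2}\le \tfrac{1}{16 n^{2}C_L^{2}}$ (from $\delta t < \tfrac{1}{4nC_L}$) to trade one factor $(\delta t)^{2}$ for a constant produces the claimed estimate with constant $48\,n^{3}C_L^{2}\Vert D^{-1}\Vert\Vert D\Vert$.

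\textbf{Main obstacle.} The conceptual moves are all short; the genuine work sits in the constant bookkeeping of Parts~1 and~2 — ensuring that the Neumann-series remainders and the resolvent-identity corrections telescope into precisely the constants claimed — and in checking that each small-$\delta t$ hypothesis really does push the relevant auxiliary operators ($\delta t L$, $D''E$) into the range where the geometric/Neumann expansions converge with the stated error bounds.
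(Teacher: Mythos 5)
Your Parts 1 and 2 follow essentially the same route as the paper: Neumann expansion of $(\mathbbm{1}_{\mathbb{R}^{n}} + \delta t\cdot L)^{-1}$ for \eqref{D''approx}, and then writing $D^{*-1}=D''^{-1}+E$ with $\Vert E\Vert\leq 16 n^{2}C_L^{2}\Vert D^{-1}\Vert(\delta t)^{2}$, bounding $\Vert D''\Vert\leq 3\Vert D\Vert$, and resumming a Neumann series for the covariance comparison. (Minor slip: from $D^{*-1}=D''^{-1}+E=D''^{-1}(\mathbbm{1}_{\mathbb{R}^{n}}+D''E)$ one gets $D^{*}=(\mathbbm{1}_{\mathbb{R}^{n}}+D''E)^{-1}D''$, not $(\mathbbm{1}_{\mathbb{R}^{n}}-D''E)^{-1}D''$; this only flips signs in the series and does not affect the norm estimates.)

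Part 3 has a genuine gap. You expand $\mathrm{Tr}(ED'')-\mathrm{Tr}\log(\mathbbm{1}_{\mathbb{R}^{n}}+ED'')$ into the full power series, cancel the first-order terms, and bound the tail by $n\Vert ED''\Vert^{2}$ times a geometric factor. But under the hypothesis actually stated for this part, $\delta t<\frac{1}{4nC_L}$, the only available bound is $\Vert ED''\Vert\leq 48 n^{2}C_L^{2}\Vert D^{-1}\Vert\,\Vert D\Vert(\delta t)^{2}< 3\Vert D^{-1}\Vert\,\Vert D\Vert$, which is never below $1$, so the logarithm series and your geometric tail need not converge; convergence would require roughly the stronger condition from Part 2. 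Moreover, even granting convergence, the bookkeeping does not produce the claimed constant: with $\Vert ED''\Vert\leq 48 n^{2}C_L^{2}\Vert D^{-1}\Vert\,\Vert D\Vert(\delta t)^{2}$, trading one factor $(\delta t)^{2}\leq\frac{1}{16n^{2}C_L^{2}}$ leaves a bound of order $48\, n^{3}C_L^{2}\Vert D^{-1}\Vert^{2}\Vert D\Vert^{2}(\delta t)^{2}$, i.e.\ an extra condition-number factor $\Vert D^{-1}\Vert\,\Vert D\Vert\geq 1$ beyond the stated $48\, n^{3}C_L^{2}\Vert D^{-1}\Vert\,\Vert D\Vert$. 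The paper avoids both problems by \emph{not} exploiting the cancellation: it keeps the first-order term, writes the logarithm with a one-term Taylor expansion with remainder, and bounds the whole relative entropy by $(\delta t)^{2}\mathrm{Tr}[D''B]\leq n(\delta t)^{2}\Vert D''B\Vert$, which yields the stated constant directly under $\delta t<\frac{1}{4nC_L}$. To repair your argument, either adopt that first-order bound, or strengthen the smallness assumption on $\delta t$ so that $\Vert ED''\Vert<1$ and accept (or re-derive) the resulting constant.
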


\begin{proof}
If we are in the setting of Theorem \ref{thm:invertdynamics}, we can assume $\Vert \delta t \cdot L \Vert <1$ for $N$ large enough. That is, \eqref{D''approx} follows with 
\begin{align}
D''^{-1} &\overset{\mathrm{Thm.}\, \ref{posterior_update}}{=} (\mathbbm{1}_{\mathbb{R}^{n}} + \delta t\cdot L)^{-1T} D^{-1} (\mathbbm{1}_{\mathbb{R}^{n}} + \delta t\cdot L)^{-1}\\
&= \left(\sum\limits_{k=0}^{\infty}(-\delta t \cdot L^{T})^{k}\right)D^{-1}\left(\sum\limits_{k=0}^{\infty}(-\delta t \cdot L)^{k}\right)\\
&= D^{-1} -\delta t(D^{-1}L + L^{T}D^{-1}) + \mathcal{O}((\delta t)^2).
\end{align}
If $\delta t < \frac{1}{4\cdot n\cdot C_L}$, the bound for the constant of the error in \eqref{D''approx} follows similar to the computation in the proof of Theorem \ref{thm:invertdynamics}.\\
In this case, by definition of $D''$ and the assumption on the bound of $L$, it also follows
\begin{align}
\Vert D'' \Vert \leq \Vert D \Vert \cdot \left(  1+ \delta t \cdot 4 \cdot n \cdot C_L + \left( \delta t \cdot 4 \cdot n \cdot C_L\right)^2  \right) \leq 3\cdot\Vert D \Vert
\end{align}
An approximation of $\phi''$ by $\tilde{\phi}''$ clearly leads to the same expectation value $m''(u)=m^{*}(u)$. The statement that the variance $D^{*}$ of $\tilde{\phi}''$ is $\mathcal{O}((\delta t)^2)$ close to the variance $D''$ of $\phi''$ in operator norm follows with \eqref{D''approx}. Because we have $D^{*-1} = D''^{-1} + (\delta t)^2B$ for some matrix $B\in \mathbb{R}^{n\times n}$. With the preceding computation, one has $\Vert B \Vert \leq 16\cdot n^2 \cdot C_L^2 \cdot \Vert D^{-1}\Vert$.\\ For $\delta t$ small enough, so that $\Vert (\delta t)^2D''B\Vert <1$, by the Neumann series it follows
\begin{align}
\Vert D^{*} - D'' \Vert &= \Vert (D''^{-1} + (\delta t)^2B)^{-1} - D'' \Vert\\
&= \Vert (\mathbbm{1}_{\mathbb{R}^{n}} + (\delta t)^2 D''B)^{-1}(D''^{-1})^{-1} - D'' \Vert\\
&= \left\Vert \left(\sum\limits_{k=0}^{\infty}\left[-(\delta t)^2 D''B)\right]^{k}\right) D'' - D'' \right\Vert\\
&= \left\Vert \left(\sum\limits_{k=1}^{\infty}\left[-(\delta t)^2 D''B\right]^{k}\right)D'' \right\Vert\\
&\leq (\delta t)^2 \cdot \Vert D''B \Vert \cdot \Vert D'' \Vert \cdot\frac{1}{1-(\delta t)^2 \Vert D''B \Vert}.
\end{align}
For $\delta t < \frac{1}{8n C_L\sqrt{3\Vert D^{-1} \Vert \Vert D \Vert}}$, the stated bound for the constant of the error of this approximation now follows from the fact that $(\delta t)^2 \Vert D''B \Vert < \frac{1}{2}$.\\
For given $d=u\in \mathbb{R}^Y$, the information theoretical error that comes with the approximation of $\phi''$ by $\tilde{\phi}''$ is in $\mathcal{O}((\delta t)^2)$ because as we will see in \eqref{cross_entropy}:
\begin{align}
D(\mathcal{P}_{\phi'' \vert d}(.,u) \Vert \mathcal{P}_{\tilde{\phi}'' \vert d}(.,u) ) &= \frac{1}{2} \mathrm{Tr} \left[ D''D^{*-1} - \mathbbm{1}_{\mathbb{R}^{n}} - \log(D''D^{*-1})\right]\\
&=\frac{(\delta t)^2}{2} \mathrm{Tr} \left[ D''B \right] - \frac{1}{2}\mathrm{Tr} \left[ \log\left(\mathbbm{1}_{\mathbb{R}^{n}} + (\delta t)^2D''B\right)\right]\\
&= \frac{(\delta t)^2}{2} \mathrm{Tr} \left[ D''B \right] - \frac{1}{2}\sum\limits_{k=1}^{n} \log\left(1 + (\delta t)^2\left(D''B\right)_{kk} \right)\\
&= \frac{(\delta t)^2}{2} \mathrm{Tr} \left[ D''B \right] - \frac{(\delta t)^2}{2}\sum\limits_{k=1}^{n} \frac{1}{1+\xi_{k}}\cdot \left(D''B\right)_{kk},
\end{align}
with $0< \xi_{k} < (\delta t)^2 \left(D''B\right)_{kk}$ for $1\leq k \leq n$.\\
The last equality follows by a Taylor expansion of the logarithm in the summands around one \cite[cf.][Theorem III.5.5]{werner}.\\
So for $\delta t < \frac{1}{4\cdot n\cdot C_L}$, one has the estimate
\begin{align}
D(\mathcal{P}_{\phi'' \vert d}(.,u) \Vert \mathcal{P}_{\tilde{\phi}'' \vert d}(.,u) ) \leq (\delta t)^2 \mathrm{Tr} \left[ D''B \right] \leq n\cdot (\delta t)^2 \cdot \Vert D'' B \Vert \leq 48\cdot n^3 \cdot C_L^2 \cdot \Vert D^{-1} \Vert \cdot \Vert D \Vert
\end{align}
as stated.
\end{proof}
In our case, the agreement of $m''(u)$ and $m^{*}(u)$ and the $\mathcal{O}((\delta t)^{2})$ bound of the deviation of $D^{*-1}$ from $D''^{-1}$ will be enough. Because we will see in Section \ref{data_update}, that for the computation of the simulated data $d'$, we only need the information $m^{*}(d)$ and $D^{*-1}$ about the approximated evolved posterior $\mathcal{P}_{\tilde{\phi}'' \vert d}$.

\section{Prior update}
The statistics of our field at time $t'$ might for various reasons be different from the ones at the initial time. We should thus allow for a changed prior.
\begin{assumption}
$\mathbb{P}_{\phi'}$ has the $\lambda_{\mathbb{R}^{n}}$-derivative
\begin{align}
\mathcal{P}_{\phi'}(s) = \mathcal{G}(s - \psi', \Phi'),
\end{align}
with a positive definite covariance matrix $\Phi'\in \mathbb{R}^{n\times n}$ and a mean $\psi' \in \mathbb{R}^{n}$.
\end{assumption}
Here, we again use \eqref{least_information_entropy} to justify the distribution to be Gaussian. This is because we assume $\phi'$ to be a random vector with a density w.r.t. the Lebesgue measure which contains the least amount of information aside from a given mean $\psi'$ and covariance $\Phi'$. Therefore the evolved field maximizes the entropy $h(\phi').$ This maximum is exactly reached for a random vector $\phi'$ with Gaussian distribution, which has the density $\mathcal{P}_{\phi'}(s) = \mathcal{G}(s - \psi', \Phi')$.

\section{Data update}
\label{data_update}
In this section, we aim to find a concrete relation between the new data $d'$ and the old data $d$. 

For the later time, according to \eqref{datasignalrelation}, \eqref{distributionnoise} and \eqref{independence} in Assumption \ref{Wiener_filter_setting}, we suppose the random vector $d'$ to be related to the evolved field $\phi'$ by
\begin{align}
d' = R'\phi' + n',
\end{align}
with the response operator $R'$ from Definition \ref{simplifying_notation} and a still Gaussian distributed, zero centered noise $n'$, albeit maybe with a changed covariance matrix $N'$.

\begin{theorem}\label{posterior_evolved_field}
The new posterior $\mathcal{P}_{\phi' \vert d'}$ is given by
\begin{align}
\mathcal{P}_{\phi' \vert d'}(s,u) &= \mathcal{G}(s-m'(u),D')\ \lambda_{\mathbb{R}^{n}}\otimes \mathbb{P}_{d'}\text{-a.e in } \mathbb{R}^{n}\times \mathbb{R}^{Y'} \text{ with}\\
\text{uncertainty variance }D' &= \left( \Phi'^{-1} + R'^{T}N'^{-1}R'  \right)^{-1},\\
\text{mean }m'(u) &= \psi' + W'(u-R'\psi') = D'(R'^{T}N'^{-1}u + \Phi'^{-1}\psi') \text{ and}\\
\text{Wiener filter } W' &= D'R'^{T}N'^{-1} = \Phi' R'^{T}(R'\Phi' R'^{T} + N')^{-1}.
\end{align}
\end{theorem}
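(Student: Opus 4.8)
The plan is to mirror, at the later time $t'$, the computation that produced the posterior in Section~\ref{sec:dataconstraints} (and, without a prior mean, in Theorem~\ref{thm:posterior}), replacing every object by its primed counterpart. First I would note that the derivation of the likelihood in Lemma~\ref{likelihood} used only the relation $d=R\phi+n$, the Gaussian form of $n$ and the independence of $n$ and $\phi$, never the prior. Since by assumption $d'=R'\phi'+n'$ with $n'$ Gaussian of mean zero and covariance $N'$, independent of $\phi'$, the same argument — substitution via \cite[Lemma~1.22]{kall} and translation invariance of $\lambda_{\mathbb{R}^{Y'}}$ — gives
\[
\mathcal{P}_{d'\vert\phi'}(u,s)=\mathcal{G}(u-R's,N')
\]
for $\lambda_{\mathbb{R}^{Y'+n}}$-a.e.\ $(u,s)\in\mathbb{R}^{Y'}\times\mathbb{R}^{n}$, where one uses $\mathcal{P}_{\phi'}(s)>0$ for all $s$.

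Next I would write the information Hamiltonian $H_{d',\phi'}(u,s)=-\log\mathcal{P}_{d'\vert\phi'}(u,s)-\log\mathcal{P}_{\phi'}(s)$ from Definition~\ref{hamiltonian} and complete the square in $s$ exactly as in Section~\ref{sec:dataconstraints}: using $N'^{-1T}=N'^{-1}$ and $\Phi'^{-1T}=\Phi'^{-1}$ one collects the quadratic term $s^{T}(\Phi'^{-1}+R'^{T}N'^{-1}R')s$, sets $D'^{-1}:=\Phi'^{-1}+R'^{T}N'^{-1}R'$ (well defined and positive definite, hence invertible, since $\Phi'^{-1}>0$ and $R'^{T}N'^{-1}R'\geq0$), and identifies $m'(u):=D'(R'^{T}N'^{-1}u+\Phi'^{-1}\psi')$, obtaining $H_{d',\phi'}(u,s)=\frac{1}{2}(s-m'(u))^{T}D'^{-1}(s-m'(u))+c(u)$ with $c(u)$ independent of $s$. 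The identity $\psi'-W'R'\psi'=D'\Phi'^{-1}\psi'$, proved by adding and subtracting $D'\Phi'^{-1}\psi'$ and recognizing $D'^{-1}$, then yields the second form $m'(u)=\psi'+W'(u-R'\psi')$.

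To pass from the Hamiltonian to the posterior I would argue as in the proof of Theorem~\ref{thm:posterior}: inserting \eqref{hamiltonianrelation} into the disintegration identity \eqref{bayes} for $\mathbb{P}_{\phi',d'}$ shows that, on $\mathbb{R}^{n}\times\{u:\mathcal{P}_{d'}(u)>0\}$, the kernel $(u,A)\mapsto(\mathcal{P}_{\phi'\vert d'}(.,u)\cdot\lambda_{\mathbb{R}^{n}})(A)$ integrates to one in $s$, which forces the normalization constant to be $\vert 2\pi D'\vert^{1/2}$; hence $\mathcal{P}_{\phi'\vert d'}(s,u)=\mathcal{G}(s-m'(u),D')$ holds $\lambda_{\mathbb{R}^{n}}\otimes\mathbb{P}_{d'}$-a.e.\ on $\mathbb{R}^{n}\times\mathbb{R}^{Y'}$. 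Finally, the equivalence $W'=D'R'^{T}N'^{-1}=\Phi'R'^{T}(R'\Phi'R'^{T}+N')^{-1}$ is just \eqref{Wienerfilter_representations} with primed matrices. I do not expect a genuine obstacle: the only point to check with care is that all structural hypotheses of Assumption~\ref{Wiener_filter_setting} — Gaussianity of prior and noise, their independence, and the linear data relation — transfer to the primed time step, which they do by the assumptions stated immediately before the theorem; the rest is the already-performed Wiener-filter computation repeated with primes.
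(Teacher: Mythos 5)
Your proposal is correct and matches the paper's intent exactly: the paper proves this theorem simply by the remark ``As the proof of Theorem \ref{thm:posterior}'' (with the nonzero prior mean handled as in Section \ref{sec:dataconstraints}), and your write-up is precisely that argument carried out with primed quantities, including the normalization step and the identity $\psi'-W'R'\psi'=D'\Phi'^{-1}\psi'$.
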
 

\begin{proof}
As the proof of Theorem \ref{thm:posterior}.
\end{proof}

To simulate the evolved data by means of the measured data, we try to find a substitution for $d'$ in terms of $d$, i.e. $d' = g(d)$ for a measurable function $g: \mathbb{R}^{Y} \rightarrow \mathbb{R}^{Y'}$, such that the substituted posterior,
\begin{align}
\mathcal{P}_{\phi' \vert d'}(s,g(u)) = \mathcal{G}(s-(m'\circ g)(u),D')\ \lambda_{\mathbb{R}^{n}}\otimes \mathbb{P}_{d}\text{-a.e in } \mathbb{R}^{n}\times \mathbb{R}^{Y},
\end{align}
matches our approximation of the evolved posterior $\mathcal{P}_{\tilde{\phi}'' \vert d}$ as good as possible.
Note as a reason for our aim, that by substituting $d'=g(d)$, with (subst.) \cite[Lemma 1.22]{kall}, we have the relation
\begin{align}
\mathbb{E}\left[ f(\phi') \right] &\overset{\mathrm{Thm.}\ref{fubini},\eqref{bayes}}{=} \int\limits_{\mathbb{R}^{Y}} \int\limits_{\mathbb{R}^{n}} \mathcal{P}_{\phi' \vert d'}(s,u') \cdot f(s) ds \mathbb{P}_{d'}(du')\\
&\overset{\mathrm{subst.}}{=} \int\limits_{\mathbb{R}^{Y}} \int\limits_{\mathbb{R}^{n}} \mathcal{P}_{\phi' \vert d'}(s,g(u)) \cdot f(s) \mathbb{P}_{d}(du)
\end{align}
for every $f \in C_{b}(\mathbb{R}^{n})$ and that we want to match this expectation value as good as possible to $\mathbb{E}\left[ f\left(\tilde{\phi}''\right) \right]$.
$\mathcal{P}_{\phi' \vert d'}(.,u')$ and $\mathcal{P}_{\tilde{\phi}'' \vert d}(.,u)$ are both a.s. Gaussian, and therefore determined by their mean and covariance matrix as we will see in \eqref{characteristic_gauss}.
But unfortunately, in our case only the mean depends on the data. So even if we could find a measurable function $g$ such that $m^{*}(d) = (m'\circ g)(d) = m'(d')$, in general we still do not have $D^{*}=D'$, and therefore have to look for another possibility to determine $g$ by a reasonable approximation.
In general, we can not even find a substitution which satisfies $m^{*}(d) = (m'\circ g)(d) = m'(d')$, since $m'(d')=\psi' + W'(d'-R'\psi')$ and $m^{*}(d)=(\mathbbm{1}_{\mathbb{R}^{n}} + \delta t\cdot L)[ \psi + W(d-R\psi) ] +\delta t \cdot c$, where the Wiener filter $W'$ and the matrix $\mathbbm{1}_{\mathbb{R}^{n}} + \delta t\cdot L$ operate on different vector spaces \cite[cf.][p.9]{enss}.
As in \cite{enss}, entropic matching is used to overcome this problem, meaning that for fixed $u$, $g(u) = u'$, and the random vector
\begin{align}
\phi'\vert u'\label{cond_rand_vec}
\end{align}
which has the density $\mathcal{P}_{\phi' \vert d'}(.,u')$, we want to minimize the relative entropy
\begin{align}
D(\mathcal{P}_{\phi' \vert d'}(.,u') \Vert \mathcal{P}_{\tilde{\phi}'' \vert d}(.,u) ) &= \mathbb{E}_{\phi' \vert u'} \left[ \log\left( \frac{\mathcal{P}_{\phi' \vert d'}(., u')}{\mathcal{P}_{\tilde{\phi}'' \vert d}(., u )} \right) \right]\\
&=\int\limits_{\mathbb{R}^{n}} \mathcal{P}_{\phi' \vert d'}(s,u') \log\left( \frac{\mathcal{P}_{\phi' \vert d'}(s,u')}{\mathcal{P}_{\tilde{\phi}'' \vert d}(s,u)} \right) ds
\end{align}
with respect to $u'$.\\
\comment{
Some background on the concept of relative entropy and entropic matching is given in Chapter \ref{chap:Maximum_Entropy_Principle}.
}\\

Because both of the densities in this computation are Gaussian, the entropy simplifies a lot as we will see. 
To get to these simplifications, we need some properties of the expectation value of a Gaussian distribution.
First of all, we introduce a tool which allows us to compare probability measures \cite[cf.][chapter 4.]{kall}.
\begin{definition}[Characteristic function]
For a random vector $\xi$ in $\mathbb{R}^{n}$ with distribution $\mathbb{P}_{\xi}$, we define the characteristic function $\hat{\mathbb{P}}_{\xi}$ for $u\in \mathbb{R}^{n}$ by
\begin{align}
\hat{\mathbb{P}}_{\xi}(u) := \int\limits_{\mathbb{R}^{n}} e^{iu^{T}s} \mathbb{P}_{\xi}(ds) = \mathbb{E}\left[ e^{iu^{T}\xi} \right]. \label{characteristic_function}
\end{align}
\end{definition}
By \cite[Theorem 4.3.]{kall}, we get for two probability measures $\mu$ and $\nu$ on $\mathbb{R}^{n}$ the equivalence
\begin{align}
\mu &= \nu \text{ in the weak sense, i.e. } \forall f\in C_{b}(\mathbb{R}^{n}):\ \mu f = \nu f\\
&\Leftrightarrow \hat{\mu}(u) = \hat{\nu}(u)\ \forall u\in \mathbb{R}^{n}.\label{measure_equality}
\end{align}

\begin{lemma}\label{lemma_expectation_values}
If $\xi$ has a Gaussian distribution with density $\mathcal{G}(s - m,\Phi)$, then
\begin{align}
\hat{\mathbb{P}}_{\xi}(u) = \exp\left( im^{T}u -\frac{1}{2} u^{T}\Phi u \right).\label{characteristic_gauss}
\end{align}
Furthermore, one has
\[\mathbb{E}_{\xi}\left[ 1 \right] = 1,
\]
\[
\mathbb{E}\left[ \xi_{k} \right] =  m_{k},
\]
\begin{align}
\mathbb{E}\left[ \xi_{k}^{2} \right] = \Phi_{kk} + m_{k}^{2},
\end{align}
and if $m=0$,
\[
\mathbb{E}\left[ \xi_{k}\xi_{l} \right]= \Phi_{kl}.
\]
\end{lemma}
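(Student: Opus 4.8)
The plan is to establish the characteristic function formula \eqref{characteristic_gauss} first and then read off all the moment identities from it by differentiating at the origin.

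For the characteristic function, I would start from the definition $\hat{\mathbb{P}}_{\xi}(u) = \int_{\mathbb{R}^n} e^{iu^Ts}\,\mathcal{G}(s-m,\Phi)\,ds$ and substitute $s \mapsto s+m$; by translation invariance of the Lebesgue measure this factors out $e^{im^Tu}$ and leaves $\int_{\mathbb{R}^n} e^{iu^Ts}\,\mathcal{G}(s,\Phi)\,ds$. Since $\Phi$ is symmetric and positive definite, the spectral theorem (as in the functional calculus leading to \eqref{diagonal}) gives a real orthogonal $O$ with $O^T\Phi O = \Lambda = \mathrm{diag}(\lambda_1,\dots,\lambda_n)$, all $\lambda_k>0$. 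Substituting $s = Oy$ (Jacobian $|\det O| = 1$) turns $\mathcal{G}(s,\Phi)$ into the product $\prod_k \mathcal{G}(y_k,\lambda_k)$ and $u^Ts$ into $v^Ty$ with $v := O^Tu$, so the integral factorizes into one-dimensional centered Gaussian characteristic functions. The one genuinely analytic step is then the scalar identity $\int_{\mathbb{R}} e^{itx}\,\mathcal{G}(x,\sigma^2)\,dx = e^{-\sigma^2 t^2/2}$, which I would obtain either by completing the square and shifting the contour to the real axis (Cauchy's theorem applies since the Gaussian is entire and decays on horizontal strips), or, avoiding complex analysis, by checking that $\varphi(t) := \mathbb{E}[e^{itX}]$ solves $\varphi'(t) = -\sigma^2 t\,\varphi(t)$, $\varphi(0)=1$, via differentiation under the integral sign and one integration by parts. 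Reassembling the factors gives $\prod_k e^{-\lambda_k v_k^2/2} = e^{-\frac{1}{2} v^T\Lambda v} = e^{-\frac{1}{2} u^T\Phi u}$, and hence $\hat{\mathbb{P}}_{\xi}(u) = \exp(im^Tu)\exp(-\frac{1}{2}u^T\Phi u)$, which is \eqref{characteristic_gauss}.

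The identity $\mathbb{E}_{\xi}[1] = 1$ is immediate, either from $\hat{\mathbb{P}}_{\xi}(0) = 1$ or directly from the normalization $\int_{\mathbb{R}^n}\mathcal{G}(s-m,\Phi)\,ds = 1$ already used in the proof of Theorem \ref{thm:posterior}. For the first and second moments I would use that a Gaussian random vector has moments of every order (exponential tail decay), so $\hat{\mathbb{P}}_{\xi}$ is $C^\infty$ and its derivatives at the origin recover the moments: $\partial_{u_k}\hat{\mathbb{P}}_{\xi}(0) = i\,\mathbb{E}[\xi_k]$ and $\partial_{u_k}\partial_{u_l}\hat{\mathbb{P}}_{\xi}(0) = -\,\mathbb{E}[\xi_k\xi_l]$, a standard property of characteristic functions \cite{kall}. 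Writing $\hat{\mathbb{P}}_{\xi}(u) = e^{h(u)}$ with $h(u) = im^Tu - \frac{1}{2}u^T\Phi u$, one has $\partial_{u_l}h = im_l - (\Phi u)_l$ and $\partial_{u_k}\partial_{u_l}h = -\Phi_{kl}$, so at $u = 0$ the product rule gives $\partial_{u_k}\hat{\mathbb{P}}_{\xi}(0) = im_k$ and $\partial_{u_k}\partial_{u_l}\hat{\mathbb{P}}_{\xi}(0) = -\Phi_{kl} - m_km_l$. Hence $\mathbb{E}[\xi_k] = m_k$, $\mathbb{E}[\xi_k\xi_l] = \Phi_{kl} + m_km_l$; specializing $k = l$ yields $\mathbb{E}[\xi_k^2] = \Phi_{kk} + m_k^2$, and setting $m = 0$ yields $\mathbb{E}[\xi_k\xi_l] = \Phi_{kl}$.

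Alternatively, the moment statements can be obtained without the characteristic-function-to-moments correspondence: substitute $s \mapsto s+m$ in the defining integrals, use that $\mathcal{G}(\cdot,\Phi)$ is even under $s\mapsto -s$ so odd-degree integrands vanish, and evaluate $\int_{\mathbb{R}^n} s_k s_l\,\mathcal{G}(s,\Phi)\,ds$ in the diagonalizing coordinates of $\Phi$, where it collapses to $\lambda_k\delta_{kl}$ via the scalar formula $\int_{\mathbb{R}} x^2\,\mathcal{G}(x,\sigma^2)\,dx = \sigma^2$ (one integration by parts). Either way, the only non-algebraic ingredient in the whole proof is the scalar Gaussian characteristic-function computation — the contour shift, or equivalently the ODE argument — together with the routine justification of differentiating under the integral sign; I expect that to be the main point requiring care, everything else being bookkeeping once $\Phi$ has been diagonalized.
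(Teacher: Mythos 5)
Your proposal is correct and follows the same overall strategy as the paper: first establish \eqref{characteristic_gauss}, then read off all the moment identities by differentiating the characteristic function at $u=0$ (the paper does this exactly as you do, following the proof of \cite[Lemma 4.10]{kall} and justifying the differentiation under the integral sign by dominated convergence, then differentiating the explicit formula). The only real difference is how the Gaussian integral is handled: the paper completes the square in $n$ dimensions in one step and then asserts that the resulting integrand $\frac{1}{\vert 2\pi\Phi\vert^{1/2}}\exp\bigl(-\frac{1}{2}(s-m-i\Phi u)^{T}\Phi^{-1}(s-m-i\Phi u)\bigr)$ \emph{is a probability density}, so that the integral equals one; since the shift $i\Phi u$ is imaginary, that assertion strictly requires exactly the kind of justification you supply, namely diagonalizing $\Phi$, factorizing into one-dimensional integrals, and shifting the contour (or running the ODE argument $\varphi'(t)=-\sigma^{2}t\,\varphi(t)$, $\varphi(0)=1$). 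So your version is a bit more careful at the one point where the paper is terse, at the cost of the extra diagonalization bookkeeping; otherwise the two proofs coincide, including the normalization argument for $\mathbb{E}_{\xi}[1]=1$ and the evaluation of the first and second derivatives of $\exp\left(im^{T}u-\frac{1}{2}u^{T}\Phi u\right)$ at the origin, from which your more general identity $\mathbb{E}[\xi_{k}\xi_{l}]=\Phi_{kl}+m_{k}m_{l}$ specializes to the three moment statements of the lemma.
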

\begin{proof}
\eqref{characteristic_gauss} follows by computation:
\begin{align}
\hat{\mathbb{P}}_{\xi}(u) &= \int\limits_{\mathbb{R}^{n}} \frac{1}{\vert 2\pi \Phi \vert^{1/2}} \cdot \exp\left(iu^{T}s - \frac{1}{2} (s - m)^{T}\Phi^{-1}(s-m) \right)ds\\
&\overset{*}{=} \int\limits_{\mathbb{R}^{n}} \frac{1}{\vert 2\pi \Phi \vert^{1/2}} \cdot \exp\left(-\dfrac{1}{2}(s - m - i\Phi u)^{T}\Phi^{-1}(s-m-i\Phi u) \right)ds \cdot \exp\left( im^{T}u - \frac{1}{2} u^{T}\Phi u \right)\\
&= \exp\left( im^{T}u -\frac{1}{2} u^{T}\Phi u \right).
\end{align}
$*$ holds because
\begin{align}
iu^{T}s - \frac{1}{2} (s - m)^{T}\Phi^{-1}(s-m) &\overset{\Phi^{-1} = (\Phi^{-1})^{T}}{=} -\frac{1}{2} \left[ -2(i\Phi u)^{T}\Phi^{-1}s + (s - m)^{T}\Phi^{-1}(s-m) \right]\\
&= -\frac{1}{2} \left[ (s - m - i\Phi u)^{T}\Phi^{-1}(s-m-i\Phi u) - i^{2}u^{T}\Phi u - 2im^{T}u \right]\\
&= -\frac{1}{2} \left[ (s - m - i\Phi u)^{T}\Phi^{-1}(s-m-i\Phi u)\right] - \frac{1}{2}u^{T}\Phi u + im^{T}u.
\end{align}
The expression under the integral after $*$ is a probability density, so that the integral simplifies to one.\\
We continue as in the proof of \cite[Lemma 4.10]{kall}, extending the statement there from random variables to random vectors.  If $\xi$ has a Gaussian distribution, then it is $\mathbb{E}\left[\vert (\xi_{j})^{m}(\xi_{l})^{k-m}\vert\right]<\infty$ for $0\leq k \leq 2$, $1\leq j,l \leq n$ and $0\leq m \leq k$.\\
As in the proof of the lemma, by dominated convergence, with $\vert e^{iu_{j}} -1 \vert \leq u_{j}$ (or the same with $l$ respectively), for $u_{j}, u_{l}\in \mathbb{R}$ and $1\leq j,l \leq n$, one gets recursively
\[
\dfrac{\partial^{k}}{\partial u_{j}^{m}\partial u_{l}^{k-m}}\hat{\mathbb{P}}_{\xi}(u) = \mathbb{E}\left[(i\xi_{j})^{m}(i\xi_{l})^{k-m}\exp(iu^{T}\xi)\right] \text{ for } 0\leq k \leq 2,\ 0\leq m \leq k,
\]
where the existence of the right side follows by $\vert (i\xi_{j})^{m}(i\xi_{l})^{k-m}\exp(iu^{T}\xi) \vert \leq \vert (\xi_{j})^{m}(\xi_{l})^{k-m}\vert$.
From this identity, with $u=0$, it follows:
\[\mathbb{E}_{\xi}\left[ 1 \right] = 1,
\]
\[
\mathbb{E}\left[ \xi_{k} \right] = \left. -i\cdot \partial_{u_{k}}\hat{\mathbb{P}}_{\xi}(u) \right\vert_{u=0} \overset{\eqref{characteristic_gauss}}{=} m_{k},
\]
\begin{align}
\mathbb{E}\left[ \xi_{k}^{2} \right] &= \left. - \frac{\partial^{2}}{\partial u_{k}^{2}}\hat{\mathbb{P}}_{\xi}(u) \right\vert_{u=0} \overset{\eqref{characteristic_gauss}}{=} \left. - \partial_{u_{k}}\left[ \left( im_{k} - \sum\limits_{j=1}^{n} \Phi_{kj}u_{j}\right)\cdot \exp\left(im^{T}u - \frac{1}{2}u^{T}\Phi u\right)\right] \right\vert_{u=0} \\
&= \Phi_{kk} + m_{k}^{2},
\end{align}
as well as for $m=0$,
\[
\mathbb{E}\left[ \xi_{k}\xi_{l} \right]= \Phi_{kl}.
\]
\end{proof}

Now we can get a more simple expression for $D(\mathcal{P}_{\phi' \vert d'}(.,u') \Vert \mathcal{P}_{\tilde{\phi}'' \vert d}(.,u) )$.
\begin{corollary}\label{cross_entropy}
The relative entropy $D(\mathcal{P}_{\phi' \vert d'}(.,u') \Vert \mathcal{P}_{\tilde{\phi}'' \vert d}(.,u) )$ simplifies to
\begin{align}
&D(\mathcal{P}_{\phi' \vert d'}(.,u') \Vert \mathcal{P}_{\tilde{\phi}'' \vert d}(.,u) ) = \frac{1}{2} \mathrm{Tr} \left[ ( \delta m(u,u') \delta m(u,u')^{T} + D') D^{*-1} - \mathbbm{1}_{\mathbb{R}^{n}} - \log(D'D^{*-1})\right],
\end{align}
where $\delta m(u,u') = m'(u')-m^{*}(u)$.\\
Compare this also to \cite[p.10]{enss}.
\end{corollary}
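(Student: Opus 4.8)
The plan is to exploit the fact that both $\mathcal{P}_{\phi' \vert d'}(\cdot,u')$ and $\mathcal{P}_{\tilde\phi'' \vert d}(\cdot,u)$ are Gaussian densities and to reduce the relative entropy to the classical closed form for the Kullback--Leibler divergence of two multivariate normal laws. By Theorem \ref{posterior_evolved_field} we have $\mathcal{P}_{\phi' \vert d'}(\cdot,u') = \mathcal{G}(\cdot - m'(u'),D')$, and by the construction preceding Theorem \ref{thm:further_approximation}, $\mathcal{P}_{\tilde\phi'' \vert d}(\cdot,u) = \mathcal{G}(\cdot - m^*(u),D^*)$ with $D^* > 0$. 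Using $-\log \mathcal{G}(s-m,\Sigma) = \tfrac12\log\vert 2\pi\Sigma\vert + \tfrac12 (s-m)^T\Sigma^{-1}(s-m)$, the integrand in
\[
D(\mathcal{P}_{\phi' \vert d'}(.,u') \Vert \mathcal{P}_{\tilde\phi'' \vert d}(.,u)) = \mathbb{E}_{\phi' \vert u'}\!\left[ \log\frac{\mathcal{P}_{\phi' \vert d'}(\phi',u')}{\mathcal{P}_{\tilde\phi'' \vert d}(\phi',u)} \right]
\]
becomes $\tfrac12\log\tfrac{\vert 2\pi D^*\vert}{\vert 2\pi D'\vert} - \tfrac12 (\phi'-m'(u'))^T D'^{-1}(\phi'-m'(u')) + \tfrac12 (\phi'-m^*(u))^T D^{*-1}(\phi'-m^*(u))$; note that the $(2\pi)^{n}$ factors cancel in the determinant ratio, so this equals $\tfrac12\log\tfrac{\vert D^*\vert}{\vert D'\vert} + \cdots$.

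Next I would compute the expectation term by term. Under $\phi' \vert u'$ the vector $\xi := \phi' - m'(u')$ has mean $0$ and covariance $D'$, so Lemma \ref{lemma_expectation_values} gives $\mathbb{E}[\xi_k\xi_l] = D'_{kl}$ and $\mathbb{E}[\xi_k]=0$. Hence $\mathbb{E}[(\phi'-m'(u'))^T D'^{-1}(\phi'-m'(u'))] = \mathrm{Tr}(D'^{-1}\mathbb{E}[\xi\xi^T]) = \mathrm{Tr}(D'^{-1}D') = n$. For the remaining term, write $\phi' - m^*(u) = \xi + \delta m(u,u')$ with the deterministic shift $\delta m(u,u') = m'(u')-m^*(u)$; expanding the quadratic form and taking expectations, the cross term vanishes since $\mathbb{E}[\xi]=0$, leaving $\mathbb{E}[(\phi'-m^*(u))^T D^{*-1}(\phi'-m^*(u))] = \mathrm{Tr}(D^{*-1}D') + \delta m(u,u')^T D^{*-1}\delta m(u,u')$.

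Finally I would reassemble the three contributions, using $n = \mathrm{Tr}(\mathbbm{1}_{\mathbb{R}^{n}})$, the identity $\delta m^T D^{*-1}\delta m = \mathrm{Tr}(D^{*-1}\delta m\,\delta m^T)$ (cyclicity of the trace applied to a scalar), and $\log\tfrac{\vert D^*\vert}{\vert D'\vert} = -\mathrm{Tr}\log(D'D^{*-1})$. For the last identity I would invoke Lemma \ref{lemma:func_calc}: although $D'D^{*-1}$ need not be symmetric, it is similar to the positive definite matrix $D'^{1/2}D^{*-1}D'^{1/2}$, hence diagonalizable with positive eigenvalues, so the functional calculus of Definition \ref{func_calc} applies and $\mathrm{Tr}\log(D'D^{*-1}) = \log\det(D'D^{*-1}) = \log\vert D'\vert - \log\vert D^*\vert$ by \eqref{tr_func_calc} and \eqref{det_func_calc}. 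Collecting everything under a single trace and using cyclicity to write $\mathrm{Tr}(D^{*-1}D') = \mathrm{Tr}(D'D^{*-1})$ yields
\[
D(\mathcal{P}_{\phi' \vert d'}(.,u') \Vert \mathcal{P}_{\tilde\phi'' \vert d}(.,u)) = \tfrac12\mathrm{Tr}\!\left[(\delta m(u,u')\,\delta m(u,u')^T + D')D^{*-1} - \mathbbm{1}_{\mathbb{R}^{n}} - \log(D'D^{*-1})\right].
\]
The only genuinely delicate point is the well-definedness of $\log(D'D^{*-1})$ and the trace--log identity for this non-symmetric product; the remaining steps are routine Gaussian second-moment computations already prepared by Lemma \ref{lemma_expectation_values} and Lemma \ref{lemma:func_calc}.
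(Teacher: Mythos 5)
Your proposal is correct and takes essentially the same route as the paper's proof: expand the logarithm of the ratio of the two Gaussian densities, evaluate the quadratic forms via the Gaussian second moments of Lemma \ref{lemma_expectation_values} (giving $\mathrm{Tr}(\mathbbm{1}_{\mathbb{R}^{n}})$ for the $D'^{-1}$ term and $\mathrm{Tr}(D'D^{*-1}) + \delta m^{T}D^{*-1}\delta m$ for the shifted term), and convert the determinant ratio into $\mathrm{Tr}\log(D'D^{*-1})$ using Lemma \ref{lemma:func_calc}. Your explicit justification of the trace--log step via similarity of $D'D^{*-1}$ to the positive definite matrix $D'^{1/2}D^{*-1}D'^{1/2}$ only spells out what the paper notes in passing about using the square root of a positive operator from the functional calculus.
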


\begin{proof}
Theorem \ref{posterior_evolved_field} together with Lemma \ref{lemma_expectation_values} lead to the following expectation values for $\phi' \vert u'$ from \eqref{cond_rand_vec}:
\begin{align}
\mathbb{E}_{\phi' \vert u'}\left[ 1 \right] &= 1,\\
\mathbb{E}[\phi' \vert u'] &= m'(u')
\end{align}
and
\begin{align}
\mathbb{E}_{\phi' \vert u'}\left[(\phi' \vert u')^{T}(\phi' \vert u')\right] &= \sum\limits_{k=1}^{n} \mathbb{E}\left[(\phi' \vert u')_{k}^{2}\right]\\
&= \sum\limits_{k=1}^{n}\left[ D'_{kk} + (m'_{k}(u'))^{2}\right]= \mathrm{Tr}(D') + m'(u')^{T}m'(u').
\end{align}
By the transformation formula for Lebesgue integrable functions, it also is
\begin{align}
\dfrac{1}{2}\mathbb{E}\left[(\phi' \vert u' - m'(u'))^{T}D'^{-1}(\phi' \vert u' - m'(u'))\right] = \frac{n}{2} = \frac{1}{2}\mathrm{Tr}(\mathbbm{1}_{\mathbb{R}^n}).
\end{align}
In addition, one has cyclicity of the trace operator $\mathrm{Tr}$, i.e.
\[
\mathrm{Tr}(ABC)=\mathrm{Tr}(BCA),
\]
and the equality
\[
\log \det(A) = \mathrm{Tr}(\log(A))
\]
for a matrix $A>0$ by \eqref{trace_log} in Lemma \ref{lemma:func_calc}.

Thus, the relative entropy in our case simplifies to\\
\begin{align}
&D(\mathcal{P}_{\phi' \vert d'}(.,u') \Vert \mathcal{P}_{\tilde{\phi}'' \vert d}(.,u) ) = \mathbb{E}_{\phi' \vert u'} \left[ \log\left( \frac{\mathcal{P}_{\phi' \vert d'}(., u')}{\mathcal{P}_{\tilde{\phi}'' \vert d}(., u )} \right) \right]\\
&= \mathbb{E}\left[ \frac{1}{2} \lbrace -(\phi' \vert u' - m'(u'))^{T}D'^{-1}(\phi' \vert u' - m'(u')) + (\phi' \vert u' - m^{*}(u))^{T}D^{*-1}(\phi' \vert u' - m^{*}(u)) \rbrace \right]\\
 &- \log \left\vert D'D^{*-1} \right\vert^{1/2} \\
&= \frac{1}{2} \mathrm{Tr} \left[ ( \delta m(u,u') \delta m(u,u')^{T} + D') D^{*-1} - \mathbbm{1}_{\mathbb{R}^{n}} - \log(D'D^{*-1})\right],
\end{align}
where $\delta m(u,u') = m'(u')-m^{*}(u)$.\\
Note that we used the well definedness of the square root of a positive operator by functional calculus as defined in Definition \ref{func_calc} within the computation.
\end{proof}
With help of the following theorem, we will be able to define the substitution function for the update of our data.
\begin{theorem}\label{thm:minimization_relative_entropy}
Let $u\in \mathbb{R}^Y$ be given.\\
If $(W'^{T} D^{*-1}W')>0$, then the relative entropy $u'\mapsto D(\mathcal{P}_{\phi' \vert d'}(.,u') \Vert \mathcal{P}_{\tilde{\phi}'' \vert d}(.,u) )$, with $u'\in \mathbb{R}^{Y'}$, is minimized for
\begin{align}
u' = (W'^{T}D^{*-1}W')^{-1}W'^{T}D^{*-1}(m^{*}(u) - \psi') + R'\psi'.
\end{align}
Otherwise there is some $u_{0}\in \mathbb{R}^{Y'}\backslash\left\lbrace 0 \right\rbrace$ with $W'^{T}D^{*-1}W'u_{0} = 0$.\\
If also $(D'\Phi'^{-1}\psi' - m^{*}(u))^{T}D^{*-1}W'=0 \in \mathbb{R}^{Y'}$, then $0\in \mathbb{R}^{Y'}$ is a minimizer for the relative entropy.\\
If not, then $(D'\Phi'^{-1}\psi' - m^{*}(u))^{T}D^{*-1}W' \in V^{\perp}_{0}$, where $V^{\perp}_{0}$ is the orthogonal complement of the nullspace $V_{0}$ of $W'^{T}D^{*-1}W'$. Then the vector which minimizes the relative entropy within $V^{\perp}_{0}$ is also a minimizer in $\mathbb{R}^{Y'}$.\\
This vector is also minimal in vector norm.\\
For the projection matrix $P_{V^{\perp}_{0}}$ of $V^{\perp}_{0}$ into $\mathbb{R}^{\mathrm{dim}(V^{\perp}_{0})}$  and the matrix $(P_{V^{\perp}_{0}})^{-1}$ which imbeds $\mathbb{R}^{\mathrm{dim}(V^{\perp}_{0})}$ back into $V^{\perp}_{0} \subset \mathbb{R}^{Y'}$, this vector is then given by
\begin{align}
u' = (P_{V^{\perp}_{0}})^{-1}\left(P_{V^{\perp}_{0}}(W'^{T} D^{*-1}W')(P_{V^{\perp}_{0}})^{-1}\right)^{-1} P_{V^{\perp}_{0}}W'^{T}D^{*-1} (m^{*}(u) -D'\Phi'^{-1}\psi').
\end{align}
\end{theorem}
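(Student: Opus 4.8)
The plan is to reduce the minimization of the relative entropy to that of a convex quadratic in $u'$, and then to distinguish the case where its Hessian is positive definite from the degenerate case, which is handled by elementary linear algebra. By Corollary \ref{cross_entropy},
\[
D(\mathcal{P}_{\phi' \vert d'}(.,u') \Vert \mathcal{P}_{\tilde{\phi}'' \vert d}(.,u)) = \frac{1}{2}\mathrm{Tr}\left[(\delta m(u,u')\delta m(u,u')^{T} + D')D^{*-1} - \mathbbm{1}_{\mathbb{R}^{n}} - \log(D'D^{*-1})\right],
\]
and only the summand $\frac{1}{2}\mathrm{Tr}[\delta m(u,u')\delta m(u,u')^{T}D^{*-1}]$ depends on $u'$; by cyclicity of the trace this equals the scalar $\frac{1}{2}\,\delta m(u,u')^{T}D^{*-1}\delta m(u,u')$. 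Using Theorem \ref{posterior_evolved_field} together with the primed analogue of the identity established in Section \ref{sec:dataconstraints}, namely $\psi' - W'R'\psi' = D'\Phi'^{-1}\psi'$, one writes $m'(u') = W'u' + D'\Phi'^{-1}\psi'$, so minimizing the relative entropy in $u'$ is the same as minimizing
\[
q(u') := (W'u' + D'\Phi'^{-1}\psi' - m^{*}(u))^{T}D^{*-1}(W'u' + D'\Phi'^{-1}\psi' - m^{*}(u)).
\]
Since $D^{*-1}$ is positive definite (for $N$ large enough, cf. Theorem \ref{thm:further_approximation}), $q$ is a convex quadratic, so any stationary point is a global minimizer.

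Next I would set the gradient $\nabla q(u') = 2W'^{T}D^{*-1}W'u' + 2W'^{T}D^{*-1}(D'\Phi'^{-1}\psi' - m^{*}(u))$ to zero, obtaining the normal equation $W'^{T}D^{*-1}W'u' = W'^{T}D^{*-1}(m^{*}(u) - D'\Phi'^{-1}\psi')$. When $(W'^{T}D^{*-1}W')>0$ this has the unique solution $u' = (W'^{T}D^{*-1}W')^{-1}W'^{T}D^{*-1}(m^{*}(u) - D'\Phi'^{-1}\psi')$. To bring it into the stated form, substitute $D'\Phi'^{-1}\psi' = \psi' - W'R'\psi'$ and use $(W'^{T}D^{*-1}W')^{-1}W'^{T}D^{*-1}W' = \mathbbm{1}_{\mathbb{R}^{Y'}}$, which turns the contribution $-(W'^{T}D^{*-1}W')^{-1}W'^{T}D^{*-1}D'\Phi'^{-1}\psi'$ into $-(W'^{T}D^{*-1}W')^{-1}W'^{T}D^{*-1}\psi' + R'\psi'$, yielding the first displayed formula of the theorem.

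It remains to treat the degenerate case. Since $D^{*-1}$ is positive definite, $W'^{T}D^{*-1}W'$ is positive semidefinite, and $u_0^{T}W'^{T}D^{*-1}W'u_0 = 0$ forces $W'u_0 = 0$; hence its nullspace $V_0$ equals $\ker W'$ and, by symmetry, $V_0^{\perp} = \mathrm{Range}(W'^{T}D^{*-1}W') = \mathrm{Range}(W'^{T})$. The right-hand side of the normal equation lies in $\mathrm{Range}(W'^{T}) = V_0^{\perp}$, so the equation is consistent; if it actually vanishes, i.e. $(D'\Phi'^{-1}\psi' - m^{*}(u))^{T}D^{*-1}W' = 0$, then $u'=0$ solves it and minimizes $q$. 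Otherwise I would restrict $q$ to $V_0^{\perp}$, on which $W'$ is injective and $W'^{T}D^{*-1}W'$ restricts to a bijection of $V_0^{\perp}$ onto itself; conjugating by the isometric identification $P_{V_0^{\perp}}$ produces an invertible $\dim(V_0^{\perp})\times\dim(V_0^{\perp})$ matrix $P_{V_0^{\perp}}(W'^{T}D^{*-1}W')(P_{V_0^{\perp}})^{-1}$, and solving the reduced normal equation and embedding back yields the final displayed formula. Such a $u'$ minimizes $q$ over all of $\mathbb{R}^{Y'}$ because adding any element of $V_0 = \ker W'$ leaves $W'u'$, hence $q$, unchanged, and it is the unique minimizer lying in $V_0^{\perp}$, therefore the one of least Euclidean norm.

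The routine parts are the gradient computation and the quadratic-completion algebra. The main obstacle I anticipate is the bookkeeping in the degenerate case: confirming consistency of the normal equation, identifying $V_0$ with $\ker W'$, and verifying that $P_{V_0^{\perp}}(W'^{T}D^{*-1}W')(P_{V_0^{\perp}})^{-1}$ is genuinely invertible so that the closed-form minimizer is well defined, together with the purely algebraic step of reconciling the two equivalent expressions for the minimizer in the nondegenerate case.
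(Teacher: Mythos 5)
Your proposal is correct and follows essentially the same route as the paper: reduce the relative entropy via Corollary \ref{cross_entropy} to the convex quadratic $\frac{1}{2}u'^{T}(W'^{T}D^{*-1}W')u' + (D'\Phi'^{-1}\psi'-m^{*}(u))^{T}D^{*-1}W'u' + \mathrm{const}$, solve the normal equation in the positive definite case (using $\psi'-W'R'\psi' = D'\Phi'^{-1}\psi'$ to reconcile the two forms), and in the degenerate case restrict to $V_{0}^{\perp}$ via $P_{V_{0}^{\perp}}$ and pick the minimal-norm solution. The only genuine divergence is local: you establish that the linear coefficient lies in $V_{0}^{\perp}$ by pure linear algebra (positive definiteness of $D^{*-1}$ gives $V_{0}=\ker W'$, hence $V_{0}^{\perp}=\mathrm{Range}(W'^{T})$ contains $W'^{T}D^{*-1}(m^{*}(u)-D'\Phi'^{-1}\psi')$), whereas the paper deduces it from nonnegativity of the relative entropy via a scaling/contradiction argument; both are valid, and your variant is arguably cleaner since it does not invoke the information-theoretic bound \eqref{positive_entropy}.
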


\begin{proof}
By Corollary \ref{cross_entropy}, the relative entropy as a function of $u'$ reads as
\begin{align}
&\frac{1}{2} \mathrm{Tr} \left[ ( \delta m(u,u') \delta m(u,u')^{T} + D') D^{*-1} - \mathbbm{1}_{\mathbb{R}^{n}} - \log(D'D^{*-1})\right]\\
&= \frac{1}{2}( (m'(u')-m^{*}(u))^{T} D^{*-1} (m'(u')-m^{*}(u)) + c_{1}\\
&\overset{\mathrm{Thm.}\, \ref{posterior_evolved_field}}{=} \frac{1}{2}( (W'u' + D'\Phi'^{-1}\psi' - m^{*}(u))^{T} D^{*-1} (W'u' + D'\Phi'^{-1}\psi' - m^{*}(u)) + c_{1}\\
&=  \frac{1}{2} u'^{T}(W'^{T} D^{*-1}W')u' + (D'\Phi'^{-1}\psi' - m^{*}(u))^{T}D^{*-1}W'u' + c_{2}\label{update_data_entropy}
\end{align}
for $c_{1},c_{2}\in \mathbb{R}$ independent of $u'$.
So the entropy is a convex function of $u'$. As a general property of the relative entropy, from \eqref{positive_entropy}, we know $D(\mathcal{P}_{\phi' \vert d'}(.,u') \Vert \mathcal{P}_{\tilde{\phi}'' \vert d}(.,u) ) \geq 0$. 
Now we have to distinguish two cases.
\begin{enumerate}
\item If $(W'^{T} D^{*-1}W')>0$, then the Hessian matrix of the entropy, $W'^{T} D^{*-1}W'$, is positive definite, so that the root of the derivative of the entropy is a minimum by \cite[\S 7, Theorem 4]{forster2}.
We then have 
\begin{align}
0 &= \partial_{u'} D(\mathcal{P}_{\phi' \vert d'}(.,u') \Vert \mathcal{P}_{\tilde{\phi}'' \vert d}(.,u) )\\
&= W'^{T}D^{*-1}W'u' + W'^{T}D^{*-1}(D'\Phi'^{-1}\psi' - m^{*}(u))\\
&= W'^{T}D^{*-1}[W'(u' -R'\psi') + \psi' - m^{*}(u))]\\
&\Longleftrightarrow W'^{T}D^{*-1}W'u' = (W'^{T}D^{*-1}W')R'\psi' + W'^{T}D^{*-1}(m^{*}(u) - \psi')\\
&\Longleftrightarrow u' = (W'^{T}D^{*-1}W')^{-1}W'^{T}D^{*-1}(m^{*}(u) - \psi') + R'\psi'.
\end{align}
\item Otherwise there is some $u_{0}\in \mathbb{R}^{Y'}\backslash\left\lbrace 0 \right\rbrace$ with $W'^{T}D^{*-1}W'u_{0} = 0$.\\
Then, since $ D(\mathcal{P}_{\phi' \vert d'}(.,u') \Vert \mathcal{P}_{\tilde{\phi}'' \vert d}(.,u) )\geq 0$, one has 
\[(D'\Phi'^{-1}\psi' - m^{*}(u))^{T}D^{*-1}W'u_{0} = 0,\]
i.e. $(D'\Phi'^{-1}\psi' - m^{*}(u))^{T}D^{*-1}W' \in V^{\perp}_{0}$.\\
If this was not true, then one could find some $u_{1}= \alpha\cdot u_{0}$, with $\alpha\in \mathbb{R}$, such that the entropy at this point gets negative, which is a contradiction.\\
If also $(D'\Phi'^{-1}\psi' - m^{*}(u))^{T}D^{*-1}W'=0 \in \mathbb{R}^{Y'}$, then because $W'^{T}D^{*-1}W' \geq 0$, the minimum of the entropy is $c_{2}$ and it is for example attained for $0\in \mathbb{R}^{Y'}$.\\
In this case one can set $u' = 0$ and note that there is not enough information within the measured data vector $u$ to make any good prediction with our model.\\
Otherwise, if $(D'\Phi'^{-1}\psi' - m^{*}(u))^{T}D^{*-1}W' \in \mathbb{R}^{Y'}\backslash\left\lbrace 0 \right\rbrace$, $u_{0}$ can not be a minimizing vector, since then, by \cite[\S 7, Theorem 3]{forster2}, we would have a contradiction to
\begin{align}
0 &= \partial_{u'} D(\mathcal{P}_{\phi' \vert d'}(.,u') \vert \mathcal{P}_{\tilde{\phi}'' \vert d}(.,u) )\vert_{u'=u_{0}}\\
&= W'^{T}D^{*-1}W'u_{0} + W'^{T}D^{*-1}(D'\Phi'^{-1}\psi' - m^{*}(u))\\
&= W'^{T}D^{*-1}(D'\Phi'^{-1}\psi' - m^{*}(u)).
\end{align}
Then one can restrict the search for a minimizing vector of the relative entropy to $V^{\perp}_{0}$.\\ For each vector $u'= u_{0}+u_{1}$, with $u_{0}\in V_{0}$ and $u_{1}\in V^{\perp}_{0}$, the entropy of $u_{1}$ is equal to the one of $u'$. If $u_{\min,1}$ minimizes the entropy within $V^{\perp}_{0}$, then $u_{0}+u_{\min,1}$ minimizes it for every $u_{0}\in V_{0}$.\\
Due to
\[
\Vert u' \Vert_{2} = \Vert u_{0} \Vert_{2} + \Vert u_{1} \Vert_{2},
\]
the vector $u_{\min}= u_{0}+u_{\min,1}$ is therefore a minimum for the relative entropy which is also minimal in vector norm, if and only if $u_{0}=0$, i.e. if $u_{\min}=u_{\min,1}$.\\
We define $P_{V^{\perp}_{0}}$ to be the projection matrix of $V^{\perp}_{0}$ into $\mathbb{R}^{\mathrm{dim}(V^{\perp}_{0})}$  and $(P_{V^{\perp}_{0}})^{-1}$ the matrix which imbeds $\mathbb{R}^{\mathrm{dim}(V^{\perp}_{0})}$ back into $V^{\perp}_{0} \subset \mathbb{R}^{Y'}$.\\
\comment{
The projection matrix $P_{V^{\perp}_{0}}$ is given by
\begin{align}
P_{V^{\perp}_{0}}x&:= \left(x^{(1)T}x,...,x^{\left(\mathrm{dim}(V^{\perp}_{0})\right)T}x\right)^T \in \mathbb{R}^{\mathrm{dim}(V^{\perp}_{0})}
\end{align}
for $x\in \mathbb{R}^{Y'}$, where $\lbrace x^{(l)} \rbrace_{l=1}^{\mathrm{dim}(V^{\perp}_{0})}\subset \mathbb{R}^{Y'}$ is an orthonormal basis of $V^{\perp}_{0}$.\\
The projection of $\mathbb{R}^{\mathrm{dim}(V^{\perp}_{0})}$ back into $V^{\perp}_{0} \subset \mathbb{R}^{Y'}$ is then given by
\begin{align}
(P_{V^{\perp}_{0}})^{-1}c := \sum\limits_{l=1}^{\mathrm{dim}(V^{\perp}_{0})} c_{l}\cdot x^{(l)}\in \mathbb{R}^{Y'}
\end{align}
for $c\in \mathbb{R}^{\mathrm{dim}\left(V^{\perp}_{0}\right)}$.
Compare this also to \eqref{projection} in our section about functional calculus for matrices.
}\\

Then, for the minimizing vector $u_{\min}$, that is also minimal in vector norm, it is
\[
u_{\min}=(P_{V^{\perp}_{0}})^{-1}(\tilde{u}_{\min}),
\]
where $\tilde{u}_{\min}$ is the solution of
\begin{align}
&\min\limits_{\tilde{u}\in \mathbb{R}^{\mathrm{dim}\left(V^{\perp}_{0}\right)}} \left[ D(\mathcal{P}_{\phi' \vert d'}(.,u') \vert \mathcal{P}_{\tilde{\phi}'' \vert d}(.,u) )\vert_{u'=(P_{V^{\perp}_{0}})^{-1}(\tilde{u})} \right]\\
&=\min\limits_{\tilde{u}\in \mathbb{R}^{\mathrm{dim}\left(V^{\perp}_{0}\right)}} \left[
\frac{1}{2} \tilde{u}^{T} P_{V^{\perp}_{0}}(W'^{T} D^{*-1}W')(P_{V^{\perp}_{0}})^{-1}\tilde{u} + (D'\Phi'^{-1}\psi' - m^{*}(u))^{T}D^{*-1}W'(P_{V^{\perp}_{0}})^{-1}\tilde{u} + c_{2} \right].
\end{align}
This minimum is attained for
\begin{align}
\tilde{u}_{\min} &= \left(P_{V^{\perp}_{0}}(W'^{T} D^{*-1}W')(P_{V^{\perp}_{0}})^{-1}\right)^{-1} P_{V^{\perp}_{0}}W'^{T}D^{*-1} (m^{*}(u) -D'\Phi'^{-1}\psi'),
\end{align}
because the Hessian matrix $P_{V^{\perp}_{0}}(W'^{T} D^{*-1}W')(P_{V^{\perp}_{0}})^{-1}$ is positive definite.\\
In this case, $u' = u_{\min}=(P_{V^{\perp}_{0}})^{-1}(\tilde{u}_{\min})$ is the expression that was stated in the theorem.
\end{enumerate}
\end{proof}
For a measured data vector $d$, we want the simulated data $d'$ to minimize the relative entropy
\[
d'\mapsto D(\mathcal{P}_{\phi' \vert d'}(.,d') \Vert \mathcal{P}_{\tilde{\phi}'' \vert d}(.,d) )
\]
with a minimal vector norm.
\begin{definition}\label{substitution_function}
We define the substitution function $g: \mathbb{R}^Y \rightarrow \mathbb{R}^{Y'}$ for the data update from $d$ to $d'$ in the following way:\\
If $W'^{T}D^{*-1}W' > 0$, then we set
\begin{align}
g(u) = (W'^{T}D^{*-1}W')^{-1}W'^{T}D^{*-1}(m^{*}(u) - \psi') + R'\psi'\label{g_first_case}
\end{align}
for every $u\in \mathbb{R}^Y$.
Otherwise we set
\begin{align}
\label{g_second_case} g(u)=
 \begin{cases} 
0 \text{, if } (D'\Phi'^{-1}\psi' - m^{*}(u))^{T}D^{*-1}W'=0 \in \mathbb{R}^{Y'}\ \text{and} \\
(P_{V^{\perp}_{0}})^{-1}\left(P_{V^{\perp}_{0}}(W'^{T} D^{*-1}W')(P_{V^{\perp}_{0}})^{-1}\right)^{-1} P_{V^{\perp}_{0}}W'^{T}D^{*-1} (m^{*}(u) -D'\Phi'^{-1}\psi')\text{ else.}
\end{cases}
\end{align}
\end{definition}
All matrix multiplications are continuous and therefore measurable. That is, also in the second case, every preimage of a measurable set under $g$ is the union of measurable sets, i.e. again measurable. So in both cases, $g$ is measurable. Theorem \ref{thm:minimization_relative_entropy} proves, that the substitution $d' = g(d)$ has the properties that we desired.\\

Now for the definition of $g$ by the minimization of the relative entropy in Corollary \ref{cross_entropy}, we need the covariance matrix $D'$, which is only implicitly given by $D'= (\Phi' + R'^{T}N'^{-1}R')^{-1}$. We might also have some degrees of freedom here, leading to a parametrized representation of $D'$. We then also minimize the entropy with respect to these parameters. One should note here, that without any restrictions, the entropy is minimized with respect to $D'$ for $D'=D^{*}$, leading to 
\begin{align}
D(\mathcal{P}_{\phi' \vert d'}(.,u') \Vert \mathcal{P}_{\tilde{\phi}'' \vert d}(.,u) ) &= \frac{1}{2} \mathrm{Tr} \left[  \delta m(u,u') \delta m(u,u')^{T} D^{*-1}\right].
\end{align}
In general, when $D'$ is somehow parametrized due to the relation $D'= (\Phi' + R'^{T}N'^{-1}R')^{-1}$, then a $D'$ which minimizes the relative entropy with respect to these parameters can attain different forms from case to case.\\
So the general recipe to update the data is:
\begin{enumerate}
\item Minimize the relative entropy in Corollary \ref{cross_entropy} with respect to the degrees of freedom of $D'$.
\item Compute the substitution function $d'=g(d)$ from Definition \ref{substitution_function}.
\item If a measurement at time $t$ led to a data vector $d$, then the update of this vector to the later time $t'$ is given by $g(d)$.
\end{enumerate}
Note that the parameters which are fixed within the first step also determine $R',N'$ and $W'$, so that one has access to these expressions in step two.
The measured data vector $d$ is a realization of the random vector which we also called $d$. The distribution of this random vector is coupled to the one of $d'$ by $d'=g(d)$. Therefore, the measured vector $d$ leads to the simulation of a realization of $d'$ which is equal to $g(d)$.

\section{The algorithm}
Now let us summarize the whole procedure of data simulation with IFD.
At first, one chooses $\epsilon_{1,N}$ from Assumption \ref{ass:discretetime} according to the demanded degree of accuracy and a sufficiently large $N$ in order to reach this precision with a discretization of the time interval $[0,T]$.
One then constructs the operator $A_{N}: \lbrace t_{0},..., t_{2^N-1}\rbrace \rightarrow \mathrm{map}(V,V)$ from Assumption \ref{ass:discretetime}.\\
Afterwards, one defines $\phi_{N}: [0,T] \rightarrow V$ as in Definition \ref{brev_phi}.\\
At this point, one has a step function in time which has still values in $V$.\\
In a next step, one chooses $\epsilon_{2,n}$ from Assumption \ref{ass:finitedim} according to the demanded accuracy level and a basis of the subspace $V_{n}$ of $V$, where $n$ is large enough to reach also this precision.\\
With this basis, one constructs the operator valued function $A_{n,N}$ from Assumption \ref{ass:finiteevolution} and defines $\phi^{(n)}_{N}: \lbrace t_{0},...,t_{2^N} \rbrace \rightarrow V_{n}$ as in definition \ref{phi_n,N}.\\
This is our signal, discretized in time and approximated by a linear combination of the basis vectors of $V_{n}$, i.e. a representation finite dimensional in space and discrete in time.\\
This signal can be identified with the coefficients of the linear combination it exits of. If, this way, one interprets $A_{n,N}$ as an operator valued function which maps from $\lbrace t_{0},...,t_{2^N-1} \rbrace$ into $\mathrm{map}(\mathbb{R}_{n},\mathbb{R}_{n})$, then the approximated signal by construction evolves according to
\[
\phi^{(n)}_{N}(t_{i+1}) = A_{n,N}(t_{i})\phi^{(n)}_{N}(t_{i})
\] 
for $0\leq i \leq 2^N-1$.\\
By Theorem \ref{signalapproximation}, we also have the estimate
\[
\left\Vert \phi_{\mathrm{real}}(t_{i}) - \phi^{(n)}_{N}(t_{i}) \right\Vert_{V}\leq C_{1}\cdot \epsilon_{1,N} + 2\cdot\epsilon_{2,n}
\]
for every $i=0,...,2^N$.\\
In a next step, one specifies the discretized response operator $R_{n,N}(0)$ from relation \eqref{datadiscrete}.
Then, the prior $\mathcal{P}_{\phi^{(n)}_{N}(0)}$  and the distribution of the noise $n(0)$ at the initial time $t_{0}=0$ have to be determined.\\
For the time steps $t_{1},...,t_{2^N-1}$, if they are not known, one needs to make restrictions on $\mathcal{P}_{\phi^{(n)}_{N}(t_{i})}$, $R_{n,N}(t_{i})$, the distribution of $n(t_{i})$, as well as on $\mathcal{P}_{\phi^{(n)}_{N}(T)}$ and the distribution of $n(T)$, which lead to parametrized representations of these sizes.\\
With the resulting Wiener filter matrices $W_{t_{i}}$, or at least with the parametrized representations of them, one updates the measured data $d_{0}$, time step for time step, with help of the corresponding substitution functions from Definition \ref{substitution_function}, until the simulated data $d(T)$ has been computed.\\
This is our desired simulation of the data for time $T$.\\

\comment{
In Section \ref{field_evolution}, we have made the further approximation of $\phi^{(n)}_{N}(t_{i+1})$ by $\phi''^{(n)}_{N}(t_{i+1})$ within a simplification of the relation
\[
\phi^{(n)}_{N}(t_{i+1}) = A_{n,N}(t_{i})\phi^{(n)}_{N}(t_{i})
\]
with $i=0,...,2^N-1$.\\
By Assumption \ref{ass:linearization}, the bound $C\cdot (\delta t)^2$ of the error of this approximation in operator norm, for $\mathbb{R}^{n}$ endowed with the euclidean norm, does not depend on the number of steps in the discretization of time.\\
Nevertheless, one should be aware of this additional approximation when choosing $N$, because it leads to a possible error of
\[
2^N\cdot C\cdot (\delta t)^2 = 2^N\cdot C \cdot \dfrac{T^2}{2^{2N}}= C \dfrac{T^2}{2^N}
\]
after the whole simulation.\\
The further simplification of $\phi''^{(n)}_{N}(t_{i+1})$ by $\tilde{\phi}''^{(n)}_{N}(t_{i+1})$ in Theorem \ref{thm:further_approximation} leads to an additional error of order $(\delta t)^2$.\\
If also the assumptions of Theorem \ref{thm:invertdynamics} hold, then also this error, which arises in every evolution step, leads to an approximation still of order $\mathcal{O}\left(\dfrac{T^2}{2^N}\right)$ close to $\phi''^{(n)}_{N}$ for all time steps $t_{i}$, with $i=0,...,2^N$.\\
Also the fact of this error should be considered when choosing $N$, though.
}

\chapter{Maximum Entropy Principle}
\label{chap:Maximum_Entropy_Principle}

\section{Differential entropy as a measure of information}
In this chapter, we introduce the concept of entropy for a random vector $\xi$  in $\mathbb{R}^{n}$ with distribution $\mathbb{P}_{\xi}=\mathcal{P}_{\xi}\cdot \lambda_{\mathbb{R}^{n}}$.\\
This entropy is used as a measure of uncertainty for the random vector \cite[cf.][p.13]{cover}.
We begin with the definition of entropy for a continuous random variable $X$ with $\mathbb{P}_{X}=\mathcal{P}_{X}\cdot \lambda_{\mathbb{R}}$. With continuous we mean, that the function $x\mapsto \mathbb{P}(X\leq x)$ is continuous.

\begin{definition}[Differential entropy]
We define the differential entropy $h(X)$ of a continuous random variable $X$ as
\begin{align}
h(X):= - \int\limits_{\lbrace x\in \mathbb{R}: \mathcal{P}_{X}(x)>0 \rbrace} \mathcal{P}_{X}(x)\log \mathcal{P}_{X}(x) dx = -\mathbb{E}[\log \mathcal{P}_{X}(X)],
\end{align}
in case the integral exists \cite[cf.][chapter 8.1]{cover}
\end{definition}

One interpretation of differential entropy as a measure of information comes from its properties.\cite[Chapter 8.2]{cover} starts with a sequence of independent random variables $X_{1},X_{2},...$, all with the same distribution $\mathbb{P}_{X}=\mathcal{P}_{X}\cdot \lambda_{\mathbb{R}}$. For $\epsilon > 0$ and $n\in \mathbb{N}$, it then defines the typical set $A_{\epsilon}^{(n)}$ with respect to $\mathcal{P}_{X}$ as
\begin{align}
A_{\epsilon}^{(n)}:= \left\lbrace (x_{1},...,x_{n})\in \lbrace x\in \mathbb{R}: \mathcal{P}_{X}(x)>0 \rbrace^{n}: \left\vert -\dfrac{1}{n} \sum\limits_{k=1}^{n}\log \mathcal{P}_{X}(x_{k}) - h(X) \right\vert \leq \epsilon \right\rbrace.
\end{align}
In \cite[Theorem 8.2.2]{cover} it is then stated, that for the volume of this set, for sufficiently large $n$, one has 
\[\mathbb{P}\left(\left\lbrace (X_{1},...,X_{n})\in A_{\epsilon}^{(n)} \right\rbrace\right)> 1-\epsilon,\]
\[\int\limits_{A_{\epsilon}^{(n)}} d(x_{1},...,x_{n})\leq 2^{n(h(X)+\epsilon)}\]
and
\[\int\limits_{A_{\epsilon}^{(n)}} d(x_{1},...,x_{n})\geq (1-\epsilon)2^{n(h(X)-\epsilon)}.\]
\cite[Theorem 8.2.3]{cover} then states, that $A_{\epsilon}^{(n)}$ is also the smallest volume set, such that \[\mathbb{P}\left(\left\lbrace (X_{1},...,X_{n})\in A_{\epsilon}^{(n)} \right\rbrace\right) \geq 1-\epsilon,\] to first order in the exponent.\\
It is thus the smallest $n$-dimensional set, that approximately contains most of the probability mass of $(X_{1},...,X_{n})$. Its volume is about $2^{nh(X)}$. The side length of a rectangle with volume $2^{nh(X)}$ is $2^{h(X)}$.
In this way, differential entropy can be regarded as the $\log_{2}$ of the side length of the set that approximately contains most of the probability mass of the random vector $(X_{1},...X_{n})$. By this, low entropy for a random variable means that its probability mass is effectively contained in a small set, whereas high entropy indicates, that it is widely dispersed \cite[cf.][p.246 f.]{cover}.\\
In this way, differential entropy can be regarded as a measure of information, if we interpret wide dispersion of a random variable to indicate that we have only little information about it.\\
"It is [thus] a measure of the amount of information required on the average to describe the random variable" \cite[p.19]{cover}.\\
This concept of differential entropy can then straight forwardly be generalized to a random vector $\xi$ with distribution $\mathbb{P}_{\xi}=\mathcal{P}_{\xi}\cdot \lambda_{\mathbb{R}^{n}}$.
If for such a $\xi$, one only knows that it has zero mean, $\mathbb{E}[\xi]=0$, and covariance $K=\mathbb{E}\left[\xi\xi^{T}\right]$, then \cite[Theorem 8.6.5]{cover} states, that the entropy $h(\xi)$ is maximized, if and only if \begin{align}
\mathcal{P}_{\xi}(s)=\mathcal{G}(s,K).\label{least_information_entropy}
\end{align}
Because by \cite[Theorem 8.6.3]{cover}, the differential entropy is translation invariant, this property can be generalized to random vectors with a mean different from zero.

\section{Relative entropy}
Having a measure for information for our random vectors, we would now like to compare them w.r.t. this measure. This is done via relative entropy.

\begin{definition}[Relative entropy]
The relative entropy between two random vectors $\xi$ and $\eta$ in $\mathbb{R}^{n}$ with distributions $\mathbb{P}_{\xi}=\mathcal{P}_{\xi}\cdot \lambda_{\mathbb{R}^{n}}$ and $\mathbb{P}_{\eta}=\mathcal{P}_{\eta}\cdot \lambda_{\mathbb{R}^{n}}$ is defined by
\begin{align}
D(\xi \Vert \eta):=\int\limits_{\mathbb{R}^{n}} \mathcal{P}_{\xi}(s) \log \dfrac{\mathcal{P}_{\xi}(s)}{\mathcal{P}_{\eta}(s)} ds.
\end{align}
In this definition, we set $0\log\dfrac{0}{0}=0$ by continuity and remark that the relative entropy is only finite, if the set $\left\lbrace s\in \mathbb{R}^n: \mathcal{P}_{\xi}(s)>0\text{ and } \mathcal{P}_{\eta}(s)=0\right\rbrace$ is a $\lambda_{\mathbb{R}^{n}}$-null set \cite[cf.][p.251]{cover}.
\end{definition}
By \cite[p.19]{cover}, the relative entropy $D(\xi \Vert \eta)$ "is a measure of distance between two distributions" and a "measure of the inefficiency of assuming the distribution is [$\mathcal{P}_{\eta}$] when the true distribution is [$\mathcal{P}_{\xi}$]".
So if one knows that $\eta$ has the distribution $\mathbb{P}_{\eta}=\mathcal{P}_{\eta}\cdot \lambda_{\mathbb{R}^{n}}$, one might be interested in finding a random vector $\xi$ with distribution $\mathbb{P}_{\xi}=\mathcal{P}_{\xi}\cdot \lambda_{\mathbb{R}^{n}}$, and maybe some restrictions to the expectation values of some functions of it, that is the closest to $\eta$. This leads to the aim to minimize $D(\xi \Vert \eta)$ over all functions $\mathcal{P}_{\xi}:\mathbb{R}^{n}\rightarrow \mathbb{R}_{+}$ with
\[
\int\limits_{\mathbb{R}^{n}} \mathcal{P}_{\xi}(s)ds = 1,\label{density_constraint}
\]
and maybe several more constraints on expectation values.
One should remark here, that by \cite[Theorem 8.6.1]{cover}, one always has 
\begin{align}
D(\xi \Vert \eta)\geq 0, \label{positive_entropy}
\end{align}
with equality if and only if $\mathcal{P}_{\xi}=\mathcal{P}_{\eta}$ $\lambda_{\mathbb{R}^{n}}$-a.e.\\
That is, if the only constraint on $\mathcal{P}_{\xi}$ is given by \eqref{density_constraint}, then minimizing $D(\xi \Vert \eta)$ with respect to $\mathcal{P}_{\xi}$ leads to $\mathcal{P}_{\xi}=\mathcal{P}_{\eta}$ $\lambda_{\mathbb{R}^{n}}$-a.e.
\chapter{Example: Klein-Gordon field}
\label{chap:Example: Klein-Gordon field}
\section{Klein-Gordon field in one dimension}
\label{sec:Klein-Gordon field in one dimension}
We take the thermally excited Klein-Gordon field with one dimension in space \cite[cf.][p.5]{enss} to illustrate the concept of IFD. We suppose the field to be periodic in space over $[0,2\pi)$, and we assume it to follow the partial differential equation
\begin{align}
&\partial_{t}^2 \varphi(x,t) = (\partial_{x}^2 - \mu^2)\varphi(x,t) \text{ for } x\in (0,2\pi) \text{ and } t\in [0,T],\\
&\varphi(.,0),\chi(.,0):=\partial_t\varphi(.,0)\in L_2((0,2\pi)),\label{kg_equation}
\end{align}
with some $\mu\in \mathbb{R}$.\\
Our response will be a decomposition of integral operators on $L_{2}((0,2\pi))$, acting on the field $\varphi$ and its derivative in time $\chi$.\\
We assume that all fields in our setting and their derivatives are contained in $L_2((0,2\pi))$ for all times. This way we assure that our response describes the measurement in a well-defined way.\\
More than that, we suppose all our fields to have a Fourier space representation $(\hat{\varphi}_{\mathrm{c}}(k,t))_{k\in \mathbb{Z}}\in l_2(\mathbb{Z})$, with
\begin{align}
\hat{\varphi}_{\mathrm{c}}(k,t) = 0\ \forall k\text{ with } \vert k \vert\geq n,
\end{align}
for some $n\in \mathbb{N}$, and that for every $k\in \mathbb{Z}$ and $t\in [0,T]$, $\hat{\chi}_{\mathrm{c}}(k,t)$ is the time derivative of $\hat{\varphi}_{\mathrm{c}}(k,t)$, where both of the latter functions are marked with the index $\mathrm{c}$ to emphasize that they have values in $\mathbb{C}$.\\
The Fourier transform in our case is given by
\[
\hat{\varphi}_{\mathrm{c}}(k,t)= \int\limits_{0}^{2\pi} e^{ikx}\varphi(x,t) dx \in \mathbb{C}.
\]
Equality \eqref{kg_equation} in Fourier space representation, with $\omega_{k}:= \sqrt{k^2+\mu^2}$, reads as
\begin{align}
&\partial_{t}^2 \hat{\varphi}_{\mathrm{c}}(k,t) = -\omega_{k}^2\hat{\varphi}_{\mathrm{c}}(k,t) \text{ for } -n < k < n \text{ and}\\
&\left(\hat{\varphi}_{\mathrm{c}}(k,0)\right)_{k\in \mathbb{Z}},\left(\partial_t\hat{\varphi}_{\mathrm{c}}(k,0)\right)_{k\in \mathbb{Z}}\in l_2(\mathbb{Z}) \text{ for } k\in \mathbb{Z} \text{ and } t\in [0,T],\label{kg_equation_fourier}
\end{align}
and in our case all coefficients larger than or equal to $n$ vanish for all times.\\
Although this equation is exactly solvable, we want to test IFD as derived in the last chapters to develop the simulation scheme for a data vector.\\
Equality \eqref{kg_equation_fourier} can be brought into the form of \eqref{pde}, if one sets $\hat{\chi}_{\mathrm{c}}:= \partial_{t}\hat{\varphi}_{\mathrm{c}}$ and $\hat{\phi}_{\mathrm{c}}:= \begin{pmatrix}
\hat{\varphi}_{\mathrm{c}}\\
\hat{\chi}_{\mathrm{c}}
\end{pmatrix}.$
One then gets
\begin{align}
&\partial_{t} \hat{\phi}_{\mathrm{c}}(k,t) = \begin{pmatrix}
0 & 1\\
-\omega_{k}^2 & 0
\end{pmatrix}\hat{\phi}_{\mathrm{c}}(k,t) \text{ for } -n < k <n \text{ and } t\in [0,T].\label{partial_derivative}
\end{align}
In this case, by \citep[Theorem IV.2.9]{werner}, we also have
\begin{align}
\frac{1}{2\pi}\sum\limits_{k=-(n-1)}^{n-1}e^{-ikx}\hat{\phi}_{\mathrm{c}}(k,t)=\begin{pmatrix}
\varphi(x,t)\\
\chi(x,t)
\end{pmatrix}=: \phi(x,t)
\end{align}
in $L_{2}((0,2\pi))\times L_{2}((0,2\pi))$ for all $t\in [0,T]$.\\
By this fact, for two of our signals, evaluated at a time $t\in [0,T]$, which are vector valued functions $\phi = \begin{pmatrix}
\phi^{(\varphi)}\\
\phi^{(\chi)}
\end{pmatrix}$ and $\psi = \begin{pmatrix}
\psi^{(\varphi)}\\
\psi^{(\chi)}
\end{pmatrix}$ in $L_{2}((0,2\pi))\times L_{2}((0,2\pi))$ then, we get equality of the scalar products
\begin{align}
\phi^{\dagger}\psi &= \int\limits_{0}^{2\pi} \left( \overline{ \phi^{(\varphi)}(x) }\psi^{(\varphi)}(x) + \overline{\phi^{(\chi)}(x)} \psi^{(\chi)}(x) \right)dx\\
&= \frac{1}{2\pi}\sum\limits_{k=-(n-1)}^{n-1} \left( \overline{ \hat{\phi}_{\mathrm{c}}^{(\varphi)}(k) }\hat{\psi}_{\mathrm{c}}^{(\varphi)}(k) + \overline{\hat{\phi}_{\mathrm{c}}^{(\chi)}(k) }\hat{\psi}_{\mathrm{c}}^{(\chi)}(k) \right)
\end{align}
in $L_{2}((0,2\pi))\times L_{2}((0,2\pi))$ and $l_{2}(\mathbb{Z})\times l_{2}(\mathbb{Z})$ respectively \cite[cf.][p.5]{enss}.

\section{Exact solution}
The Klein-Gordon equation with the restrictions from Section \ref{sec:Klein-Gordon field in one dimension} is exactly solvable. We thus compute the exact solution in order to be able to make statements about errors which arise within the approximation steps of IFD. 
\eqref{kg_equation_fourier} has solutions of the form
\begin{align}
\hat{\varphi}_{\mathrm{c}}(k,t) &= a_{k}e^{i\omega_{k}t} + b_{k}e^{-i\omega_k t},\\
\hat{\chi}_{\mathrm{c}}(k,t) &= i\omega_k \left( a_{k}e^{i\omega_{k}t} - b_{k}e^{-i\omega_k t} \right),
\end{align}
for $\vert k \vert< n$ and $a_{k}, b_{k}\in \mathbb{C}$.\\
Together with the initial conditions given by $\hat{\varphi}_{\mathrm{c}}(k,0)$ and $\hat{\chi}_{\mathrm{c}}(k,0)$, and because
\begin{align}
\hat{\varphi}_{\mathrm{c}}(k,t)=\overline{\hat{\varphi}_{\mathrm{c}}(-k,t)},\\
\hat{\chi}_{\mathrm{c}}(k,t) = \overline{\hat{\chi}_{\mathrm{c}}(-k,t)},\label{complex_conj}
\end{align}
due to our field and its derivative being real valued in position space, we get
\begin{align}
\hat{\varphi}_{\mathrm{c}}(k,t) &= a_{k}e^{i\omega_{k}t} + \overline{a_{-k}}e^{-i\omega_k t},\\
\hat{\chi}_{\mathrm{c}}(k,t) &= i\omega_k \left( a_{k}e^{i\omega_{k}t} - \overline{a_{-k}}e^{-i\omega_k t} \right)\label{exact_solution}
\end{align}
and 
\begin{align}
a_k = \frac{1}{2}\hat{\varphi}_{\mathrm{c}}(k,0) - \frac{i}{2\omega_k}\hat{\chi}_{\mathrm{c}}(k,0),
\end{align}
for $\vert k \vert< n$.
This leads to 
\begin{align}
\hat{\phi}_{\mathrm{c}}(k,t)&= \begin{pmatrix}
\hat{\varphi}_{\mathrm{c}}(k,t)\\
\hat{\chi}_{\mathrm{c}}(k,t)
\end{pmatrix} 
= \begin{pmatrix}
\cos(\omega_k t) & \frac{\sin(\omega_k t)}{\omega_k}\\
-\omega_k \sin(\omega_k t) & \cos(\omega_k t)
\end{pmatrix} 
\begin{pmatrix}
\hat{\varphi}_{\mathrm{c}}(k,0)\\
\hat{\chi}_{\mathrm{c}}(k,0)
\end{pmatrix},\label{exact_evolution}
\end{align}
with $\hat{\varphi}_{\mathrm{c}}(k,0),\hat{\chi}_{\mathrm{c}}(k,0) \in \mathbb{C}$ and $\vert k \vert< n$. 
For two times $0\leq t < s \leq T$, the exact evolution of $\hat{\phi}_{\mathrm{c}}$ is therefore satisfies
\begin{align}
\hat{\phi}_{\mathrm{c}}(s) &= A_{\mathrm{c}}(t,s)\hat{\phi}_{\mathrm{c}}(t),\label{exact_evolution2}
\end{align}
where $A_{\mathrm{c}}(t,s)$ is a matrix in $\left( \mathbb{R}^2 \right)^{2n-1 \times 2n-1}$ with
\begin{align}
&(A_{\mathrm{c}}(t,s))_{kq}= \begin{pmatrix}
\cos(\omega_{k}s) & \frac{\sin(\omega_{k}s)}{\omega_{k}}\\
-\omega_{k}\sin(\omega_{k}s) & \cos(\omega_{k}s)
\end{pmatrix} \begin{pmatrix}
\cos(\omega_{k}t) & -\frac{\sin(\omega_{k}t)}{\omega_{k}}\\
\omega_{k}\sin(\omega_{k}t) & \cos(\omega_{k}t)
\end{pmatrix}\delta_{kq}\\
&=\begin{pmatrix}
\cos(\omega_{k}s)\cos(\omega_{k}t)+\sin(\omega_{k}s)\sin(\omega_{k}t) & \frac{1}{\omega_{k}}\left(\sin(\omega_{k}s)\cos(\omega_{k}t) - \cos(\omega_{k}s)\sin(\omega_{k}t)\right) \\
\omega_{k}\left( \cos(\omega_{k}s)\sin(\omega_{k}t) - \sin(\omega_{k}s)\cos(\omega_{k}t) \right) & \cos(\omega_{k}s)\cos(\omega_{k}t) + \sin(\omega_{k}s)\sin(\omega_{k}t)
\end{pmatrix}\delta_{kq}\\
&=\begin{pmatrix}
\cos(\omega_k (s-t)) & \frac{1}{\omega_k}\sin(\omega_k (s-t))\\
-\omega_k \sin(\omega_k (s-t)) & \cos(\omega_k (s-t))
\end{pmatrix}\delta_{kq},
\end{align}
for $-n < k,q <n$.\\
Note also, that because we suppose our Klein-Gordon fields to be real valued, it is $\hat{\phi}_{\mathrm{c}}(0,t)\in \mathbb{R}^2$ for all $t\in [0,T]$.
\section{Prior knowledge}
By \citep[12.6.\, Problems,\, 2]{evans} and with the notation of \citep{enss}, the energy of a Klein-Gordon field at time $t\in [0,T]$ in our setting is given by the energy Hamiltonian
\begin{align}
\mathcal{H}(\phi)(t) :&= \dfrac{1}{2} \int\limits_{0}^{2\pi} \left[ \chi^2(x,t) + (\partial_{x}\varphi(x,t))^2 + \mu^2\varphi^2(x,t) \right] dx\\
&= \frac{1}{4\pi} \sum\limits_{k=-(n-1)}^{n-1}\left[ \left\vert \hat{\chi}_{\mathrm{c}}(k,t) \right\vert^2 +(k^2 +  \mu^2) \left\vert \hat{\varphi}_{\mathrm{c}}(k,t) \right\vert^2 \right]\\
&\overset{\eqref{exact_solution}}{=} \frac{1}{4\pi} \sum\limits_{k=-(n-1)}^{n-1}2\omega_k^2\cdot \left[ \left\vert a_k \right\vert^2 + \left\vert a_{-k} \right\vert^2\right]\\
&= \mathcal{H}(\phi)(0).
\end{align}
So the energy of a field is determined by its energy at the initial time and then conserved over the whole interval $[0,T]$.
It is further
\begin{align}
\mathcal{H}(\phi)(0) &= \frac{1}{4\pi} \sum\limits_{k=-(n-1)}^{n-1}\left[ \left\vert \hat{\chi}_{\mathrm{c}}(k,0) \right\vert^2 +(k^2 +  \mu^2) \left\vert \hat{\varphi}_{\mathrm{c}}(k,0) \right\vert^2 \right]\\
&\overset{\eqref{complex_conj}}{=} \dfrac{1}{4\pi}\left\vert \hat{\chi}_{\mathrm{c}}(0,0) \right\vert^2 + \dfrac{1}{4\pi}\mu^2 \left\vert \hat{\varphi}_{\mathrm{c}}(0,0) \right\vert^2 + \frac{1}{2\pi} \sum\limits_{k=1}^{n-1}\left[ \left\vert \hat{\chi}_{\mathrm{c}}(k,0) \right\vert^2 +\omega_k^2 \left\vert \hat{\varphi}_{\mathrm{c}}(k,0) \right\vert^2 \right].
\end{align}
Because $\hat{\phi}_{\mathrm{c}}(t)$ is fully determined by its Fourier coefficients with $k\geq 0$, we identify the whole vector with $\left(\hat{\phi}_{\mathrm{c}}(k,t)\right)_{k=0}^{n-1}\in \mathbb{C}^{2n}$, and sometimes switch between the full representation of $\hat{\phi}_{\mathrm{c}}(t)$ and the one in $\mathbb{C}^{2n}$, which gets clear from whether negative Fourier modes are used or not.\\
As in \citep[p.6]{enss}, we assume "that the field was initially in contact and equilibrium with a thermal reservoir at temperature $\beta^{-1}$ and became decoupled from it at" time $t_{0}=0$. In \citep{enss} it is stated that in this case, the prior of the signal at time zero is an exponential function where the exponent is given by the negative energy of the argument $s_{\mathrm{c}}=\begin{pmatrix}
s_{\mathrm{c}}^{(\varphi)} \\
s_{\mathrm{c}}^{(\chi)}
\end{pmatrix}\in \mathbb{C}^{2n}$, multiplied with $\beta$, namely 
\begin{align}
\mathcal{P}_{\hat{\phi}_{\mathrm{c}}(0)}(s_{\mathrm{c}}) \propto \exp\left( -\beta \cdot \mathcal{H}(\phi)(0)\left\vert_{\hat{\phi}_{\mathrm{c}}(0)=s_{\mathrm{c}}}\right. \right).
\end{align}
We also have to include our knowledge that $\hat{\phi}_{\mathrm{c}}(0,t)\in \mathbb{R}^2$. So the negative exponent reads as
\begin{align}
\beta \cdot \mathcal{H}(\phi)(0)\left\vert_{\hat{\phi}_{\mathrm{c}}(0)=s_{\mathrm{c}}}\right. &= \beta\cdot \left( \dfrac{1}{4\pi}\left\vert \left(s^{(\chi)}_{\mathrm{c}}\right)_0 \right\vert^2 + \dfrac{1}{4\pi}\mu^2 \left\vert \left(s^{(\varphi)}_{\mathrm{c}}\right)_0 \right\vert^2 + \frac{1}{2\pi} \sum\limits_{k=1}^{n-1}\left[ \left\vert \left(s^{(\chi)}_{\mathrm{c}}\right)_k \right\vert^2 +\omega_k^2 \left\vert \left(s^{(\varphi)}_{\mathrm{c}}\right)_k \right\vert^2 \right] \right)\\
&= \frac{1}{2} s_{\mathrm{c}}^{\dagger}\Phi_{\mathrm{c}}^{-1} s_{\mathrm{c}},
\end{align}
with a matrix $\Phi_{\mathrm{c}}$ in $(\mathbb{R}^2)^{n\times n}$, that has the entries
\begin{align}
(\Phi_{\mathrm{c}})_{0q}&= \frac{2\pi}{\beta} \delta_{0q} \begin{pmatrix}
\mu^{-2} & 0\\
0 & 1
\end{pmatrix},\\
(\Phi_{\mathrm{c}})_{kq} &= \frac{\pi}{\beta} \delta_{kq} \begin{pmatrix}
\omega_k^{-2} & 0\\
0 & 1
\end{pmatrix}\ \text{ for } k>0,
\end{align}
with $0\leq k,q <n$.\\
We would like to stay in a real setting within our probabilistic framework and still need to include our knowledge about the zeroth Fourier coefficients to be real valued. Thus, we identify the vector $(\hat{\phi}_{\mathrm{c}}(k,t))_{k=0}^{n-1}\in \mathbb{C}^{2n}$ at time $t\in [0,T]$ with the vector
\begin{align}
\hat{\phi}_{\mathrm{r}}(t)&:= \left(\hat{\varphi}_{\mathrm{c}}(0,t), \mathrm{Re}\left(\hat{\varphi}_{\mathrm{c}}(1,t)\right),\mathrm{Im}\left(\hat{\varphi}_{\mathrm{c}}(1,t)\right),...,\mathrm{Re}\left(\hat{\varphi}_{\mathrm{c}}(n-1,t)\right),\mathrm{Im}\left(\hat{\varphi}_{\mathrm{c}}(n-1,t)\right),\right.\\
&\left. \hat{\chi}_{\mathrm{c}}(0,t), \mathrm{Re}\left(\hat{\chi}_{\mathrm{c}}(1,t)\right),\mathrm{Im}\left(\hat{\chi}_{\mathrm{c}}(1,t)\right),...,\mathrm{Re}\left(\hat{\chi}_{\mathrm{c}}(n-1,t)\right),\mathrm{Im}\left(\hat{\chi}_{\mathrm{c}}(n-1,t)\right) \right)^T \in \mathbb{R}^{4n-2},
\end{align}
and call this our signal.\\
The exponent of our prior for the real valued signal at time $t_{0}=0$, with the argument $s\in \mathbb{R}^{4n-2}$, is then given by
\begin{align}
\beta \cdot \mathcal{H}(\phi)(0)\left\vert_{\hat{\phi}_{\mathrm{r}}(0)=s} \right. = \frac{1}{2} s^{T}\Phi_{\mathrm{r}}^{-1} s,
\end{align}
with the matrix $\Phi_{\mathrm{r}}= \begin{pmatrix}
\Phi_{\mathrm{r}}^{(\varphi)} & \Phi_{\mathrm{r}}^{(\chi,\varphi)}\\
\Phi_{\mathrm{r}}^{(\varphi,\chi)} & \Phi_{\mathrm{r}}^{(\chi)}
\end{pmatrix}\in \mathbb{R}^{4n-2\times 4n-2}$ consisting of four matrices in $\mathbb{R}^{2n-1\times 2n-1}$, for each of which the first row and column is separated and the rest is the decomposition of matrices in $\mathbb{R}^{2\times 2}$, i.e.
\begin{align}
(\Phi_{\mathrm{r}}^{(\varphi)})_{00} &= \frac{2\pi}{\beta}\mu^{-2}, \\
(\Phi_{\mathrm{r}}^{(\varphi)})_{kq} &= \frac{\pi}{\beta}\omega_k^{-2} \delta_{kq}\mathbbm{1}_{\mathbb{R}^{2}}  \text{ for } k>0,\\
(\Phi_{\mathrm{r}}^{(\chi)})_{00} &= \frac{2\pi}{\beta},\\
(\Phi_{\mathrm{r}}^{(\chi)})_{kq} &= \frac{\pi}{\beta}\delta_{kq}\mathbbm{1}_{\mathbb{R}^{2}} \text{ for } k>0 \text{ and }\\
\Phi_{\mathrm{r}}^{(\chi,\varphi)} &= \Phi_{\mathrm{r}}^{(\varphi,\chi)} = 0\in \mathbb{R}^{2n-1\times 2n-1},\label{prior_covariance}
\end{align}
with $0< k,q <n$, and all the other entries in the first row and column of each of the matrices are zero.\\
As mentioned above, by \citep{enss}, the prior knowledge for this time is then described by a probability density that is an exponential function with $\beta$ times the negative energy of the signal configuration in the exponent. Thus, the prior for $t_0=0$ is a Gaussian density on $\mathbb{R}^{4n-2}$ with zero mean and covariance matrix $\Phi_{\mathrm{r}}$,
\begin{align}
\mathcal{P}_{\hat{\phi}_{\mathrm{r}}(0)}(s)=\dfrac{1}{\left\vert 2\pi \Phi_{\mathrm{r}} \right\vert^{1/2}}\exp\left(-\dfrac{1}{2} s^{T}\Phi_{\mathrm{r}}^{-1} s\right),
\end{align}
and this prior can be assumed for each time in our simulation.

\section{Data constraints}
In this section, we specify the measurement process in our example. We assume that our measurement device averages the field values and the values of the time derivative of the field within the elements of a partition $\left([i\Delta, (i+1)\Delta)\right)_{i=0}^{Y-1}$ of $[0,2\pi)$, consisting of $Y$ intervals of lengths $\Delta=\dfrac{2\pi}{Y}$. We also suppose that the same measurement relation is valid for all times. Our data at time $t\in [0,T]$ is therefore given by a vector 
\[
d(t)= \left(d^{(\varphi)}_{0}(t),...,d^{(\varphi)}_{Y-1}(t),d^{(\chi)}_{0}(t),...,d^{(\chi)}_{Y-1}(t)\right)^T \in \mathbb{R}^{2Y}
\]
and related to the signal by
\begin{align}
d(t)=\begin{pmatrix}
R & 0\\
0 & R
\end{pmatrix} \begin{pmatrix}
\varphi(t)\\
\chi(t)
\end{pmatrix} + \begin{pmatrix}
n^{(\varphi)}(t)\\
n^{(\chi)}(t)
\end{pmatrix},\label{data_relation}
\end{align}
where $n(t)=\begin{pmatrix}
n^{(\varphi)}(t)\\
n^{(\chi)}(t)
\end{pmatrix}$ is an uncertainty vector and $R=(R_{0},...,R_{Y-1})^{T}$ consists of $Y$ integral operators, that act on functions $f$ with the properties of our Klein-Gordon fields as
\begin{align}
R_{i}f &= \int\limits_{0}^{2\pi}\dfrac{1}{\Delta}1_{[i\Delta, (i+1)\Delta)}(x)f(x) dx,
\end{align}
for $i=0,...,Y-1$.\\
In order to get a relation for the Fourier representation of our signal, we consider the discrete Fourier transform $\hat{d}_{\mathrm{c}}(t)$ of our data and of the noise respectively, which is related to $d(t)$ by
\begin{align}
\left(\hat{d}_{\mathrm{c}}^{(\varphi)}\right)_{k}(t)&= \sum\limits_{j=0}^{Y-1}\Delta e^{ikj\Delta}d^{(\varphi)}_{j}(t),\\
\left(\hat{d}_{\mathrm{c}}^{(\chi)}\right)_{k}(t)&= \sum\limits_{j=0}^{Y-1}\Delta e^{ikj\Delta}d^{(\chi)}_{j}(t),
\end{align}
and
\begin{align}
d_{j}^{(\varphi)}(t)&= \dfrac{1}{2\pi}\sum\limits_{k=0}^{Y-1} e^{-ikj\Delta} \left(\hat{d}^{(\varphi)}_{\mathrm{c}}\right)_{k}(t),\\
d_{j}^{(\chi)}(t)&= \dfrac{1}{2\pi}\sum\limits_{k=0}^{Y-1} e^{-ikj\Delta} \left(\hat{d}_{\mathrm{c}}^{(\chi)}\right)_{k}(t),
\end{align}
where $0\leq k,j<Y$.\\
Because $d_{j}^{(\varphi)}(t)\in \mathbb{R}$  for all $j$, it is further 
\begin{align}
&\left(\hat{d}_{\mathrm{c}}^{(\varphi)}\right)_{0}(t)\in \mathbb{R} \text{ and}\\
&\left(\hat{d}_{\mathrm{c}}^{(\varphi)}\right)_{k}(t)= \overline{\left(\hat{d}_{\mathrm{c}}^{(\varphi)}\right)_{Y-k}(t)} \text{ for } 0<k<Y,
\end{align}
and the same for the $(\chi)$-part of our data and also for the noise.\\
Therefore, all the information that we have about the signal is already contained in $\left( \left(\hat{d}_{\mathrm{c}}^{(\varphi)}\right)_{k}(t)\right)_{k=0}^{(Y+1)/2}$ and $\left( \left(\hat{d}_{\mathrm{c}}^{(\chi)}\right)_{k}(t)\right)_{k=0}^{(Y+1)/2}$, where we assume $Y$ to be uneven and greater than one.\\
All computations can also be done if $Y$ is even, but we decide for one case only, to keep the notation clearly arranged.\\
We only have to consider those first Fourier coefficients of the data and the noise.\\
Our first aim is now to get a matrix representation $\hat{R}_{\mathrm{c}}$ of $R$, which acts on $\hat{\varphi}_{\mathrm{c}}(t)$ or $\hat{\chi}_{\mathrm{c}}(t)$ respectively.
Because relation \eqref{data_relation} is the same for all times $t\in [0,T]$ by assumption, we drop the index $t$ in the following, to keep the notation clearly arranged. We also only consider $\hat{\varphi}_{\mathrm{c}}$, because the response acts with $\hat{R}_{\mathrm{c}}$ on both components of $\hat{\phi}_{\mathrm{c}}$.\\ 
So the $(\varphi)$-part of \eqref{data_relation} in Fourier representation reads as
\begin{align}
\left(\hat{d}_{\mathrm{c}}^{(\varphi)}\right)_{k}&= \sum\limits_{j=0}^{Y-1}\Delta e^{ikj\Delta}d^{(\varphi)}_{j} \\
&= \sum\limits_{j=0}^{Y-1}\Delta e^{ikj\Delta} R_{j}\varphi + \left(\hat{n}_{\mathrm{c}}^{(\varphi)}\right)_{k}\\
&= \sum\limits_{j=0}^{Y-1}\Delta e^{ikj\Delta} \int\limits_{0}^{2\pi}\dfrac{1}{\Delta}1_{[j\Delta, (j+1)\Delta)}(x) \dfrac{1}{2\pi} \sum\limits_{l=-(n-1)}^{n-1}e^{-ilx}\hat{\varphi}_{\mathrm{c}}(l) dx + \left(\hat{n}_{\mathrm{c}}^{(\varphi)}\right)_{k}\\
&= \sum\limits_{l=-(n-1)}^{n-1}\dfrac{\hat{\varphi}_{\mathrm{c}}(l)}{2\pi} \sum\limits_{j=0}^{Y-1}e^{ikj\Delta} \underbrace{\int\limits_{j\Delta}^{(j+1)\Delta}e^{-ilx}dx}_{\substack{=\dfrac{1}{il}\left[e^{-ilj\Delta} - e^{-il(j+1)\Delta}\right]}} + \left(\hat{n}_{\mathrm{c}}^{(\varphi)}\right)_{k}\\
&= \sum\limits_{l=-(n-1)}^{n-1}\dfrac{\hat{\varphi}_{\mathrm{c}}(l)}{2\pi} \underbrace{\left(\sum\limits_{j=0}^{Y-1}e^{ij\Delta(k-l)}\right)}_{\substack{=\theta(k =l\, \mathrm{mod}\, Y )\cdot Y}} \frac{1-e^{-il\Delta}}{il}+ \left(\hat{n}_{\mathrm{c}}^{(\varphi)}\right)_{k}\\
&=\sum\limits_{l=-(n-1)}^{n-1}\left(\hat{R}_{\mathrm{c}}\right)_{kl}\hat{\varphi}_{\mathrm{c}}(l) + \left(\hat{n}_{\mathrm{c}}^{(\varphi)}\right)_{k} =\left(\hat{R}_{\mathrm{c}}\hat{\varphi}_{\mathrm{c}}\right)_k + \left(\hat{n}_{\mathrm{c}}^{(\varphi)}\right)_{k}
\end{align}
for $0\leq k \leq \frac{Y+1}{2}$ and with $\hat{R}_{\mathrm{c}}\in \mathbb{R}^{\left(\frac{Y+1}{2}+1\right)\times 2n-1}$ given by
\begin{align}
\left(\hat{R}_{\mathrm{c}}\right)_{kl} = \theta(k =l\, \mathrm{mod}\, Y )\cdot \frac{1-e^{-il\Delta}}{il\Delta} = \theta(k =l\, \mathrm{mod}\, Y ) \cdot e^{-\frac{1}{2}il\Delta} \cdot \mathrm{sinc}\left( \frac{1}{2}l\Delta \right).
\end{align}
In this context, $\theta$ is defined as
\begin{align}
\theta(k =l\, \mathrm{mod}\, Y )&:= 1\text{ if }k =l\, \mathrm{mod}\, Y,\\
\theta(k =l \, \mathrm{mod}\,  Y )&:= 0 \text{ else}
\end{align}
and
\begin{align}
\mathrm{sinc}(x):=\dfrac{\sin(x)}{x}.
\end{align}
Because we actually want to have the relation of our data to the signal, interpreted as an $\mathbb{R}^{4n-2}$-dimensional vector, only considering non-negative Fourier coefficients, we further interpret $\hat{d}_{\mathrm{c}}$ as the vector
\begin{align}
\hat{d}_{\mathrm{r}}&=\left(\left(\hat{d}_{\mathrm{c}}^{(\varphi)}\right)_{0},\mathrm{Re}\left(\left(\hat{d}_{\mathrm{c}}^{(\varphi)}\right)_{1}\right),\mathrm{Im}\left(\left(\hat{d}_{\mathrm{c}}^{(\varphi)}\right)_{1}\right),...,\mathrm{Re}\left(\left(\hat{d}_{\mathrm{c}}^{(\varphi)}\right)_{(Y+1)/2}\right),\mathrm{Im}\left(\left(\hat{d}_{\mathrm{c}}^{(\varphi)}\right)_{(Y+1)/2}\right),\right.\\
&\left.\left(\hat{d}_{\mathrm{c}}^{(\chi)}\right)_{0},\mathrm{Re}\left(\left(\hat{d}_{\mathrm{c}}^{(\chi)}\right)_{1}\right),\mathrm{Im}\left(\left(\hat{d}_{\mathrm{c}}^{(\chi)}\right)_{1}\right),...,\mathrm{Re}\left(\left(\hat{d}_{\mathrm{c}}^{(\chi)}\right)_{(Y+1)/2}\right),\mathrm{Im}\left(\left(\hat{d}_{\mathrm{c}}^{(\chi)}\right)_{(Y+1)/2}\right)\right)^T\in \mathbb{R}^{2(Y+2)},
\end{align}
and the noise with a vector consisting of the real and imaginary parts of its components respectively.\\
$\hat{d}_{\mathrm{r}}$ is related to $\hat{\phi}_{\mathrm{r}}=\begin{pmatrix}
\hat{\varphi}_{\mathrm{r}}\\
\hat{\chi}_{\mathrm{r}}
\end{pmatrix}$ by
\begin{align}
\hat{d}_{\mathrm{r}}= \begin{pmatrix}
\hat{R}_{\mathrm{r}} & 0\\
0 & \hat{R}_{\mathrm{r}}
\end{pmatrix} \begin{pmatrix}
\hat{\varphi}_{\mathrm{r}}\\
\hat{\chi}_{\mathrm{r}}
\end{pmatrix} +
\hat{n}_{\mathrm{r}},
\end{align}
where, as we will see, $\hat{R}_{\mathrm{r}} = \begin{pmatrix}
\left(\hat{R}_{\mathrm{r}}\right)_{00} & 0 &  \cdots & 0\\
0 & \left(\hat{R}_{\mathrm{r}}\right)_{11} & \cdots & \left(\hat{R}_{\mathrm{r}}\right)_{1(n-1)}\\
\vdots & \vdots & \ddots & \vdots\\
0 & \left(\hat{R}_{\mathrm{r}}\right)_{\frac{Y+1}{2}1} & \cdots & \left(\hat{R}_{\mathrm{r}}\right)_{\frac{Y+1}{2}(n-1)}
\end{pmatrix}$ is a matrix in $\mathbb{R}^{Y+2 \times 2n-1}$, with $\left(\hat{R}_{\mathrm{r}}\right)_{00}\in \mathbb{R}$ and $\left(\hat{R}_{\mathrm{r}}\right)_{kl} \in \mathbb{R}^{2\times 2}$ for $k,l>0$, so that the relation of $\hat{d}_{\mathrm{c}}$ to $\hat{\phi}_{\mathrm{c}}$ is captured.\\
We also define $\hat{\phi}_{\mathrm{r}}$ for negative coordinates in the spirit of \eqref{complex_conj} by
\begin{align}
\begin{pmatrix}
\mathrm{Re}\left(\hat{\varphi}_{\mathrm{c}}(l)\right) \\
\mathrm{Im}\left(\hat{\varphi}_{\mathrm{c}}(l)\right)
\end{pmatrix} = \begin{pmatrix}
1 & 0\\
0 & -1
\end{pmatrix} \begin{pmatrix}
\mathrm{Re}\left(\hat{\varphi}_{\mathrm{c}}(-l)\right) \\
\mathrm{Im}\left(\hat{\varphi}_{\mathrm{c}}(-l)\right)
\end{pmatrix}
\end{align}
for $l>0$ and the same for the $\chi$-part of $\hat{\phi}_{\mathrm{r}}$.\\
Because by continuity
\begin{align}
\mathrm{sinc}(0)&=1
\end{align}
and by Euler's formula for the exponential function, the $k$-th component of the data after subtracting the noise is given by
\begin{align}
&\theta(k =0 \, \mathrm{mod}\,  Y )\cdot \hat{\varphi}_{\mathrm{c}}(0) \\
&+ \sum\limits_{0< \vert l \vert < n-1} \theta(k =l \, \mathrm{mod}\,  Y )\cdot \mathrm{sinc}\left( \frac{1}{2}l\Delta \right) \cdot \begin{pmatrix}
\cos\left(\dfrac{1}{2}l\Delta\right) & \sin\left(\dfrac{1}{2}l\Delta\right)\\
-\sin\left(\dfrac{1}{2}l\Delta\right) & \cos\left(\dfrac{1}{2}l\Delta\right)
\end{pmatrix} \begin{pmatrix}
\mathrm{Re}\left(\hat{\varphi}_{\mathrm{c}}(l)\right) \\
\mathrm{Im}\left(\hat{\varphi}_{\mathrm{c}}(l)\right)
\end{pmatrix}.
\end{align}
In case $k=0$, because
\begin{align}
\sum\limits_{0< \vert l \vert <n}    \underbrace{\theta(l \, \mathrm{mod}\,  Y = 0 ) \mathrm{sinc}\left( \frac{1}{2}l\Delta \right)}_{\substack{= 0 \text{ for } l \neq 0}} 
\left[ 
...
\right]
\begin{pmatrix}
\mathrm{Re}\left(\hat{\varphi}_{\mathrm{c}}(l)\right) \\
\mathrm{Im}\left(\hat{\varphi}_{\mathrm{c}}(l)\right)
\end{pmatrix}=0,
\end{align}
this simplifies to
\begin{align}
\left(\hat{d}^{(\varphi)}_{\mathrm{c}}\right)_{0} - \left(\hat{n}^{(\varphi)}_{\mathrm{c}}\right)_{0}
&= \hat{\varphi}_{\mathrm{c}}(0).
\end{align}
Therefore, we have
\begin{align}
\left(\hat{R}_{\mathrm{r}}\right)_{00}&=1,
\end{align}
and the rest of the first row of $\hat{R}_{\mathrm{r}}$ is indeed zero.\\
For $k>0$, with
\begin{align}
\mathrm{sinc}(-x)&=\mathrm{sinc}(x),\\
\sin(x+ \pi) &= -\sin(x)\text{ and}\\
\cos(x+ \pi) &= -\cos(x),
\end{align}
we get
\begin{align}
&\begin{pmatrix}
\mathrm{Re}\left(\left(\hat{d}^{(\varphi)}_{\mathrm{c}}\right)_{k}\right)\\
\mathrm{Im}\left(\left(\hat{d}^{(\varphi)}_{\mathrm{c}}\right)_{k}\right)
\end{pmatrix} - \begin{pmatrix}
\mathrm{Re}\left(\left(\hat{n}^{(\varphi)}_{\mathrm{c}}\right)_{k}\right)\\
\mathrm{Im}\left(\left(\hat{n}^{(\varphi)}_{\mathrm{c}}\right)_{k}\right)
\end{pmatrix}\\
&= 
\sum\limits_{l=1}^{n-1}
\mathrm{sinc}\left( \frac{1}{2}l\Delta \right)\cdot \left[
\theta(l \, \mathrm{mod}\,  Y = k ) \cdot
\begin{pmatrix}
\cos\left(\dfrac{1}{2}l\Delta\right) & \sin\left(\dfrac{1}{2}l\Delta\right)\\
-\sin\left(\dfrac{1}{2}l\Delta\right) & \cos\left(\dfrac{1}{2}l\Delta\right)
\end{pmatrix} \right. \\
&\left.  +
\theta(-l \, \mathrm{mod}\,  Y = k )\cdot
\begin{pmatrix}
\cos\left(\dfrac{1}{2}l\Delta\right) & -\sin\left(\dfrac{1}{2}l\Delta\right)\\
\sin\left(\dfrac{1}{2}l\Delta\right) & \cos\left(\dfrac{1}{2}l\Delta\right)
\end{pmatrix}\begin{pmatrix}
1 & 0\\
0 & -1
\end{pmatrix}
\right]
\begin{pmatrix}
\mathrm{Re}\left(\hat{\varphi}_{\mathrm{c}}(l)\right) \\
\mathrm{Im}\left(\hat{\varphi}_{\mathrm{c}}(l)\right)
\end{pmatrix}\\
&= \sum\limits_{l=1}^{n-1} \mathrm{sinc}\left( \frac{1}{2}l\Delta \right)\cdot
\left[ 
\theta(l \, \mathrm{mod}\,  Y = k )\cdot
\begin{pmatrix}
\cos\left(\dfrac{1}{2}l\Delta\right) & \sin\left(\dfrac{1}{2}l\Delta\right)\\
-\sin\left(\dfrac{1}{2}l\Delta\right) & \cos\left(\dfrac{1}{2}l\Delta\right)
\end{pmatrix} \right. \\
&\left.  + \theta(-l \, \mathrm{mod}\,  Y = k )\cdot
\begin{pmatrix}
\cos\left(\dfrac{1}{2}l\Delta\right) & \sin\left(\dfrac{1}{2}l\Delta\right)\\
\sin\left(\dfrac{1}{2}l\Delta\right) & -\cos\left(\dfrac{1}{2}l\Delta\right)
\end{pmatrix}
\right]
\begin{pmatrix}
\mathrm{Re}\left(\hat{\varphi}_{\mathrm{c}}(l)\right) \\
\mathrm{Im}\left(\hat{\varphi}_{\mathrm{c}}(l)\right)
\end{pmatrix}.
\end{align}
Therefore also the rest of the first column except of $\left(\hat{R}_{\mathrm{r}}\right)_{00}$ is zero and
\begin{align}
\left(\hat{R}_{\mathrm{r}}\right)_{00}&=1 \text{ as well as}\\
\left(\hat{R}_{\mathrm{r}}\right)_{kl}&= \mathrm{sinc}\left( \frac{1}{2}l\Delta \right)\cdot
\left[ 
\theta(l \, \mathrm{mod}\,  Y = k )\cdot
\begin{pmatrix}
\cos\left(\dfrac{1}{2}l\Delta\right) & \sin\left(\dfrac{1}{2}l\Delta\right)\\
-\sin\left(\dfrac{1}{2}l\Delta\right) & \cos\left(\dfrac{1}{2}l\Delta\right)
\end{pmatrix} \right. \\
&\left.  + \theta(-l \, \mathrm{mod}\,  Y = k )\cdot
\begin{pmatrix}
\cos\left(\dfrac{1}{2}l\Delta\right) & \sin\left(\dfrac{1}{2}l\Delta\right)\\
\sin\left(\dfrac{1}{2}l\Delta\right) & -\cos\left(\dfrac{1}{2}l\Delta\right)
\end{pmatrix}
\right] \text{ for } 0<k\leq \frac{Y+1}{2} \text{ and } 0<l<n.
\end{align}
Now we assume, that the noise $\hat{n}_{\mathrm{r}}$ is the realization of a random vector $\hat{n}_{\mathrm{r}}$ in $\mathbb{R}^{2(Y+2)}$ with Gaussian distribution, i.e.
\begin{align}
\mathcal{P}_{\hat{n}_{\mathrm{r}}}(u)=\mathcal{G}(u,N)
\end{align}
with $N=\sigma_{n}^{2}\cdot \mathbbm{1}_{\mathbb{R}^{2(Y+2)}}=\begin{pmatrix}
N^{(\varphi)} & 0\\
0 & N^{(\chi)}
\end{pmatrix}$ for some $\sigma_{n}^{2}>0$ and $N^{(\mathrm{part})} := N^{(\varphi)} = N^{(\chi)}= \sigma_{n}^{2}\cdot \mathbbm{1}_{\mathbb{R}^{Y+2}}$.\\
To compute the information propagator $D$ from Theorem \ref{thm:posterior} of the posterior density for our signal given the data, we need to calculate $\hat{R}_{\mathrm{r}}^T \left(N^{(\mathrm{part})}\right)^{-1} \hat{R}_{\mathrm{r}}\in \mathbb{R}^{2n-1\times 2n-1} $, which we interpret as a matrix $\left((\hat{R}_{\mathrm{r}}^T \left(N^{(\mathrm{part})}\right)^{-1} \hat{R}_{\mathrm{r}})_{kl}\right)_{0\leq k,l <n }$, with real numbers in the first row and column and components in $\mathbb{R}^{2\times 2}$ for $\vert k \vert,\vert l \vert >0$, as we already did with $\hat{R}_{\mathrm{r}}$.\\
Those components simplify to
\begin{align}
\left(\hat{R}_{\mathrm{r}}^T \left(N^{(\mathrm{part})}\right)^{-1} \hat{R}_{\mathrm{r}}\right)_{kl} &= \sigma_{n}^{-2} \left(\hat{R}_{\mathrm{r}}^T \hat{R}_{\mathrm{r}}\right)_{kl}\\
&=\sigma_{n}^{-2} \sum\limits_{q=1}^{\frac{Y+1}{2}} \left(\hat{R}_{\mathrm{r}}^{T}\right)_{kq}\left(\hat{R}_{\mathrm{r}}\right)_{ql} = \sigma_{n}^{-2} \sum\limits_{q=1}^{\frac{Y+1}{2}} \left(\left(\hat{R}_{\mathrm{r}}\right)_{qk}\right)^T\left(\hat{R}_{\mathrm{r}}\right)_{ql}.
\end{align}
Therefore, we get
\begin{align}
\left(\hat{R}_{\mathrm{r}}^T \left(N^{(\mathrm{part})}\right)^{-1} \hat{R}_{\mathrm{r}}\right)_{00} &= \sigma_{n}^{-2}
\end{align}
and for $1\leq k,l <n$ it is
\begin{align}
&\left(\hat{R}_{\mathrm{r}}^T \left(N^{(\mathrm{part})}\right)^{-1} \hat{R}_{\mathrm{r}}\right)_{kl}= \sigma_{n}^{-2} \sum\limits_{q=1}^{\frac{Y+1}{2}} \left(\left(\hat{R}_{\mathrm{r}}\right)_{qk}\right)^T\left(\hat{R}_{\mathrm{r}}\right)_{ql}\\
&= \sigma_{n}^{-2} \theta(k\, \mathrm{mod}\, Y > 0) \mathrm{sinc}\left(\frac{1}{2}k\Delta \right)\mathrm{sinc}\left(\frac{1}{2} l\Delta\right)\cdot \left[ (-1)^{\frac{k-l}{Y}}\theta((k-l)\, \mathrm{mod}\, Y = 0)\mathbbm{1}_{\mathbb{R}^{2}} \right.\\
&\left. + (-1)^{\frac{k+l}{Y}}\theta((k+l)\, \mathrm{mod}\, Y = 0)\cdot  \begin{pmatrix}
1 - 2\sin^2\left(\dfrac{1}{2}(k\, \mathrm{mod}\, Y)\Delta\right) & 0\\
0 & -1
\end{pmatrix}
\right].
\end{align}
The inverse of $D=\begin{pmatrix}
D^{(\varphi)} & D^{(\chi,\varphi)}\\
D^{(\varphi,\chi)} & D^{(\chi)}
\end{pmatrix}$, with $\Phi_{\mathrm{r}}$ from \eqref{prior_covariance}, consists of four matrices in $\mathbb{R}^{2n-1\times 2n-1}$ that are given by
\begin{align}
(D^{-1})^{(\varphi)} &= \left(\Phi_{\mathrm{r}}^{(\varphi)}\right)^{-1} + \hat{R}_{\mathrm{r}}^T \left(N^{(\mathrm{part})}\right)^{-1} \hat{R}_{\mathrm{r}},\\
(D^{-1})^{(\chi)} &= \left(\Phi_{\mathrm{r}}^{(\chi)}\right)^{-1} + \hat{R}_{\mathrm{r}}^T \left(N^{(\mathrm{part})}\right)^{-1} \hat{R}_{\mathrm{r}},\\
(D^{-1})^{(\chi,\varphi)}&= (D^{-1})^{(\varphi,\chi)} = 0\in \mathbb{R}^{2n-1\times 2n-1}.
\end{align}
That is, $D$ is given by
\begin{align}
D^{(\varphi)} &= \left[\left(\Phi_{\mathrm{r}}^{(\varphi)}\right)^{-1} + \hat{R}_{\mathrm{r}}^T \left(N^{(\mathrm{part})}\right)^{-1} \hat{R}_{\mathrm{r}}\right]^{-1},\\
D^{(\chi)} &= \left[\left(\Phi_{\mathrm{r}}^{(\chi)}\right)^{-1} + \hat{R}_{\mathrm{r}}^T \left(N^{(\mathrm{part})}\right)^{-1} \hat{R}_{\mathrm{r}}\right]^{-1},\\
D^{(\chi,\varphi)}&= D^{(\varphi,\chi)} = 0\in \mathbb{R}^{2n-1\times 2n-1}.
\end{align}
The posterior mean from Theorem \ref{thm:posterior} $m\left(\hat{d}_{\mathrm{r}}\right)$ is then given by
\begin{align}
m\left(\hat{d}_{\mathrm{r}}\right)= D \begin{pmatrix}
{R}_{\mathrm{r}} & 0\\
0 & {R}_{\mathrm{r}}
\end{pmatrix}^T N^{-1}\hat{d}_{\mathrm{r}} = \sigma_{n}^{-2}D\begin{pmatrix}
{R}_{\mathrm{r}}^T & 0\\
0 & {R}_{\mathrm{r}}^T
\end{pmatrix} \hat{d}_{\mathrm{r}},
\end{align}
and the Wiener filter from \eqref{def:Wienerfilter} by
\begin{align}
W = D \begin{pmatrix}
{R}_{\mathrm{r}} & 0\\
0 & {R}_{\mathrm{r}}
\end{pmatrix}^T N^{-1} = \sigma_{n}^{-2}D\begin{pmatrix}
{R}_{\mathrm{r}}^T & 0\\
0 & {R}_{\mathrm{r}}^T
\end{pmatrix}.
\end{align}
We now include the time index again and split $(0,T]$ into $2^N$ intervals $(t_{i},t_{i+1}]= (i\cdot \delta t,(i+1)\cdot \delta t]$ for $i=0,...,2^N-1$, each of length $\delta t = \frac{T}{2^N}$. The posterior for the time $t_{i}$ is then given by
\begin{align}
\mathcal{P}_{\hat{\phi}_{\mathrm{r}}(t_{i})\vert \hat{d}_{\mathrm{r}}(t_{i})}(s,u)=\mathcal{G}(s-m(u),D),
\end{align}
with $s\in \mathbb{R}^{4n-2}$ and $u\in \mathbb{R}^{2(Y+2)}$.

\section{Field evolution with IFD}
The evolution of a field from time $t_{i}$ to $t_{i+1}$ in IFD is approximated by a linear and time independent operator. In our case, we take the sum of the identity matrix and the time derivative of the field at time $t_{i}$, multiplied by the evolution time $\delta t$.
By \eqref{partial_derivative} we have 
\begin{align}
\partial_{t}\hat{\phi}_{\mathrm{c}}(k,t) &= \partial_{t}\begin{pmatrix}
\hat{\varphi}_{\mathrm{c}}(k,t)\\
\hat{\chi}_{\mathrm{c}}(k,t)
\end{pmatrix} = \left( L_{\mathrm{c}} \hat{\phi}_{\mathrm{c}}(t)\right)_{k},
\end{align}
with 
\begin{align}
\left(L_{\mathrm{c}}\right)_{kq}= \delta_{kq}\begin{pmatrix}
0 & 1\\
-\omega_{k}^2 & 0
\end{pmatrix}
\end{align}
for $0\leq k,q <n$.\\
This means that we approximate the evolution of $\hat{\phi}_{\mathrm{c}}$ by
\begin{align}
\hat{\phi}'_{\mathrm{c}}(t_{i+1}) = \left( \mathbbm{1}_{\mathbb{C}^{2n}} + \delta t \cdot L_{\mathrm{c}}\right)\hat{\phi}_{\mathrm{c}}(t_{i}).
\end{align}
For our representation of the signal in real space, this means
\begin{align}
\hat{\phi}'_{\mathrm{r}}(t_{i+1}) = \left( \mathbbm{1}_{\mathbb{R}^{4n-2}} + \delta t \cdot L_{\mathrm{r}}\right)\hat{\phi}_{\mathrm{r}}(t_{i})=:g\left(\hat{\phi}_{\mathrm{r}}(t_{i})\right),
\end{align}
with $L_{\mathrm{r}}\in \mathbb{R}^{4n-2}$ given by
\begin{align}
L_{\mathrm{r}}= \begin{pmatrix}
L_{\mathrm{r}}^{(\varphi)} & L_{\mathrm{r}}^{(\chi,\varphi)}\\
L_{\mathrm{r}}^{(\varphi,\chi)} & L_{\mathrm{r}}^{(\chi)}
\end{pmatrix},
\end{align}
where
\begin{align}
L_{\mathrm{r}}^{(\varphi)}&= L_{\mathrm{r}}^{(\chi)} = 0\in \mathbb{R}^{2n-1},\\
L_{\mathrm{r}}^{(\chi,\varphi)} &= \mathbbm{1}_{\mathbb{R}^{2n-1}}, \\
\left(L_{\mathrm{r}}^{(\varphi,\chi)}\right)_{00} &= -\mu^2 \text{ and}\\
\left(L_{\mathrm{r}}^{(\varphi,\chi)}\right)_{kq} &= -\omega_{k}^2 \delta_{kq}\mathbbm{1}_{\mathbb{R}^{2}} \text{ for } 0< k,q <n,
\end{align}
and the rest of the first row and column of $L_{\mathrm{r}}^{(\varphi,\chi)}$ is zero.\\
The exact evolution is given by \eqref{exact_evolution2}, with $t=t_{i}$ and $s=t_{i+1}$.
As a matrix acting on the real space signal, $A_{\mathrm{c}}(t_{i},t_{i+1})$ reads as
\begin{align}
A_{\mathrm{r}}:=A_{\mathrm{r}}(t_{i},t_{i+1}) = \begin{pmatrix}
A_{\mathrm{r}}^{(\varphi)} & A_{\mathrm{r}}^{(\chi,\varphi)}\\
A_{\mathrm{r}}^{(\varphi,\chi)} & A_{\mathrm{r}}^{(\chi)}
\end{pmatrix},
\end{align}
with
\begin{align}
\left(A_{\mathrm{r}}^{(\varphi)}\right)_{00}&= \left(A_{\mathrm{r}}^{(\chi)}\right)_{00}\cos(\mu \cdot \delta t),\\
\left(A_{\mathrm{r}}^{(\varphi)}\right)_{kq}&= \left(A_{\mathrm{r}}^{(\chi)}\right)_{kq}\delta_{kq}\cos(\omega_{k}\cdot \delta t)\mathbbm{1}_{\mathbb{R}^{2}} \text{ for } 0<k,q<n,\\
\left(A_{\mathrm{r}}^{(\chi,\varphi)}\right)_{00} &=  \frac{1}{\mu}\sin(\mu\cdot \delta t),\\
\left(A_{\mathrm{r}}^{(\chi,\varphi)}\right)_{kq} &=  \delta_{kq} \frac{1}{\omega_{k}}\sin(\omega_{k}\cdot \delta t)\mathbbm{1}_{\mathbb{R}^{2}} \text{ for }0<k,q<n,\\
\left(A_{\mathrm{r}}^{(\varphi,\chi)}\right)_{00} &= -\mu\sin(\mu\cdot \delta t)\text{ and}\\
\left(A_{\mathrm{r}}^{(\varphi,\chi)}\right)_{kq} &= -\delta_{kq} \omega_{k}\sin(\omega_{k}\cdot \delta t)\mathbbm{1}_{\mathbb{R}^{2}} \text{ for }0<k,q<n,
\end{align}
and the rest of the first row and column of each matrix is zero.\\
We further name $B_{\mathrm{c}}$ the matrix
\begin{align}
\frac{1}{(\delta t)^{2}}\left(A_c(t_{i},t_{i+1}) - \left( \mathbbm{1}_{\mathbb{C}^{2n}} + \delta t \cdot L_{\mathrm{c}}\right)\right).
\end{align}
By a Taylor expansion of the components of $A_c(t_{i},t_{i+1})$ at zero, interpreted as functions of $\delta t$, we get
\begin{align}
\left(B_{\mathrm{c}}\right)_{kq} = \delta_{kq} \begin{pmatrix}
\left(B_{\mathrm{c}}\right)^{(1)}_{k} & \left(B_{\mathrm{c}}\right)^{(2)}_{k}\\
\left(B_{\mathrm{c}}\right)^{(3)}_{k}  & \left(B_{\mathrm{c}}\right)^{(4)}_{k}
\end{pmatrix},
\end{align}
with
\begin{align}
\left(B_{\mathrm{c}}\right)^{(1)}_{k} &= \left(B_{\mathrm{c}}\right)^{(4)}_{k} = -\frac{\omega_{k}^2}{2}\cos(\omega_k\xi_k^{(1)}), \\
\left(B_{\mathrm{c}}\right)^{(2)}_{k} &= -\frac{\omega_{k}}{2}\sin(\omega_k\xi_k^{(2)}),\\
\left(B_{\mathrm{c}}\right)^{(3)}_{k} &= \frac{\omega_{k}^3}{2}\sin(\omega_k\xi_k^{(2)}),
\end{align}
for some numbers $0< \xi^{(1)}_k,\xi^{(2)}_k < \delta t$ and $0\leq k,q <n$.\\
$B_{\mathrm{c}}$ as a matrix acting on $\hat{\phi}_{\mathrm{r}}$ reads as
$B_{\mathrm{r}}= \begin{pmatrix}
B^{(\varphi)}_{\mathrm{r}} & B^{(\chi,\varphi)}_{\mathrm{r}}\\
B^{(\varphi,\chi)}_{\mathrm{r}} & B^{(\chi)}_{\mathrm{r}}\end{pmatrix}\in \mathbb{R}^{4n-2}$ with
\begin{align}
\left(B^{(\varphi)}_{\mathrm{r}}\right)_{00}&= \left(B^{(\chi)}_{\mathrm{r}}\right)_{00}= -\frac{\mu^2}{2}\cos(\mu\xi^{(1)}_0),\\
\left(B^{(\varphi)}_{\mathrm{r}}\right)_{kq}&= \left(B^{(\chi)}_{\mathrm{r}}\right)_{kq}= -\frac{\omega_{k}^2}{2}\cos(\omega_k\xi^{(1)}_k) \delta_{kq} \mathbbm{1}_{\mathbb{R}^2} \text{ for } 0<k,q<n,\\
\left(B^{(\chi,\varphi)}_{\mathrm{r}}\right)_{00} &= -\frac{\mu}{2}\sin(\mu\xi^{(2)}_0),\\
\left(B^{(\chi,\varphi)}_{\mathrm{r}}\right)_{kq} &= -\frac{\omega_{k}}{2}\sin(\omega_k\xi^{(2)}_k)\delta_{kq}\mathbbm{1}_{\mathbb{R}^2} \text{ for } 0<k,q<n,\\
\left(B^{(\varphi,\chi)}_{\mathrm{r}}\right)_{00} &= \frac{\mu^3}{2}\sin(\mu\xi^{(2)}_0) \text{ and}\\
\left(B^{(\varphi,\chi)}_{\mathrm{r}}\right)_{kq} &= \frac{\omega_{k}^3}{2}\sin(\omega_k\xi^{(2)}_k)\delta_{kq}\mathbbm{1}_{\mathbb{R}^2} \text{ for }0<k,q<n.
\end{align}
By Theorem \ref{posterior_update}, the evolved posterior with the approximated evolution is given by 
\begin{align}
\mathcal{P}_{\hat{\phi}'_{\mathrm{r}}(t_{i+1})\vert \hat{d}_{\mathrm{r}}(t_{i})}(s,u)&=\mathcal{G}(g^{-1}(s)-m(u),D)\left\vert Dg^{-1}(s) \right\vert\\
&= \mathcal{G}( s - m''(u),D'')\ \lambda_{\mathbb{R}^{4n-2}}\otimes \mathbb{P}_{\hat{d}_{\mathrm{r}}(t_{i})}\text{-a.e in } \mathbb{R}^{4n-2}\times \mathbb{R}^{2(Y+2)}, \text{ with}\\
m''(u) &= m(u) + \delta t \cdot L_{\mathrm{r}}m(u)\\
&= (\mathbbm{1}_{\mathbb{R}^{4n-2}} + \delta t\cdot L_{\mathrm{r}})m(u)\\
&= (\mathbbm{1}_{\mathbb{R}^{4n-2}} + \delta t\cdot L_{\mathrm{r}})Wu \text{ and}\\
D'' &= \left[(\mathbbm{1}_{\mathbb{R}^{4n-2}} + \delta t\cdot L_{\mathrm{r}})^{-1T} D^{-1} (\mathbbm{1}_{\mathbb{R}^{4n-2}} + \delta t\cdot L_{\mathrm{r}})^{-1}\right]^{-1}\\
&= (\mathbbm{1}_{\mathbb{R}^{4n-2}} + \delta t\cdot L_{\mathrm{r}})D(\mathbbm{1}_{\mathbb{R}^{4n-2}} + \delta t\cdot L_{\mathrm{r}})^{T}\\
&= D + \delta t (L_{\mathrm{r}}D + DL_{\mathrm{r}}^{T}) + (\delta t)^{2}L_{\mathrm{r}}DL_{\mathrm{r}}^{T}.
\end{align}
The posterior evolved according to the exact equation is similarly given by
\begin{align}
\mathcal{P}_{\hat{\phi}_{\mathrm{r}}(t_{i+1})\vert \hat{d}_{\mathrm{r}}(t_{i})}(s,u)&=
\mathcal{G}( s - m'(u),D')\ \lambda_{\mathbb{R}^{4n-2}}\otimes \mathbb{P}_{\hat{d}_{\mathrm{r}}(t_{i})}\text{-a.e in } \mathbb{R}^{4n-2}\times \mathbb{R}^{2(Y+2)}, \text{ with}\\
m'(u) &= A_{\mathrm{r}}m(t) \\
&= (\mathbbm{1}_{\mathbb{R}^{4n-2}} + \delta t\cdot L_{\mathrm{r}})m(u) + (\delta t)^2B_{\mathrm{r}}m(u)\\
D' &= A_{\mathrm{r}}DA_{\mathrm{r}}^{T}\text{ and}\\
&= D'' + (\delta t)^2(\mathbbm{1}_{\mathbb{R}^{4n-2}} + \delta t\cdot L_{\mathrm{r}})DB_{\mathrm{r}}^T + (\delta t)^{2}B_{\mathrm{r}}D(\mathbbm{1}_{\mathbb{R}^{4n-2}} + \delta t\cdot L_{\mathrm{r}})^T\\
 &+ (\delta t)^{4}B_{\mathrm{r}}DB_{\mathrm{r}}^T.
\end{align}
In the last equality we used
\begin{align}
(\mathbbm{1}_{\mathbb{R}^{4n-2}} + \delta t\cdot L_{\mathrm{r}})^{T-1}= (\mathbbm{1}_{\mathbb{R}^{4n-2}} + \delta t\cdot L_{\mathrm{r}})^{-1T}.\label{transposed_inversion}
\end{align}
For the information theoretical error coming with our approximation, we should compute the relative entropy
\begin{align}
&D\left(\mathcal{P}_{\hat{\phi}_{\mathrm{r}}(t_{i+1})\vert \hat{d}_{\mathrm{r}}(t_{i})}(.,u) \Vert \mathcal{P}_{\hat{\phi}'_{\mathrm{r}}(t_{i+1})\vert \hat{d}_{\mathrm{r}}(t_{i})}(.,u) \right)\\
&\overset{\mathrm{Cor.} \, \ref{cross_entropy}}{=} \frac{1}{2} \mathrm{Tr} \left[ ( \delta m(u) \delta m(u)^{T} + D') D''^{-1} - \mathbbm{1}_{\mathbb{R}^{4n-2}} - \log(D'D''^{-1})\right]
\end{align}
for given $\hat{d}_{\mathrm{r}}(t_{i}) = u \in \mathbb{R}^{2(Y+2)}$ and $\delta m(u)= m'(u)-m''(u)$.\\
This expression simplifies because
\begin{align}
\delta m(u)=\left[ A_{\mathrm{r}} - (\mathbbm{1}_{\mathbb{R}^{4n-2}} + \delta t\cdot L_{\mathrm{r}})\right] m(u) =  (\delta t)^2\cdot B_{\mathrm{r}}m(u)
\end{align}
and 
\begin{align}
D'D''^{-1} &=  \mathbbm{1}_{\mathbb{R}^{4n-2}} + (\delta t)^2(\mathbbm{1}_{\mathbb{R}^{4n-2}} + \delta t\cdot L_{\mathrm{r}})DB_{\mathrm{r}}^TD''^{-1}\\
 &+ (\delta t)^{2}B_{\mathrm{r}}(\mathbbm{1}_{\mathbb{R}^{4n-2}} + \delta t\cdot L_{\mathrm{r}})^{-1} + (\delta t)^{4}B_{\mathrm{r}}DB_{\mathrm{r}}^TD''^{-1}\\
&=: \mathbbm{1}_{\mathbb{R}^{4n-2}} + (\delta t)^2C,
\end{align}
with a matrix $C\in \mathbb{R}^{4n-2\times 4n-2}$ that includes all matrices with $(\delta t)^2$ or $(\delta t)^4$ in front of them.\\
Note here that for $(\delta t)^2 < \omega_{n-1}^{-2}$
\begin{align}
(\mathbbm{1}_{\mathbb{R}^{4n-2}} + \delta t\cdot L_{\mathrm{r}})^{-1} = \mathbbm{1}_{\mathbb{R}^{4n-2}} + \mathcal{O}\left( \delta t\right)\label{evolution_ocalc}
\end{align}
and the same holds then for $(\mathbbm{1}_{\mathbb{R}^{4n-2}} + \delta t\cdot L_{\mathrm{r}})^{T-1}$ by \eqref{transposed_inversion}.\\
This is the case because we have
\begin{align}
(\mathbbm{1}_{\mathbb{R}^{4n-2}} + \delta t\cdot L_{\mathrm{r}})^{-1} = \begin{pmatrix}
J^{(\varphi)} & J^{(\chi, \varphi)}\\
J^{(\varphi, \chi)} & J^{(\chi)}
\end{pmatrix},
\end{align}
with
\begin{align}
J^{(\varphi)}_{00}&=J^{(\chi)}_{00}= \frac{1}{1+\mu^{2}(\delta t)^2},\\
J^{(\varphi)}_{kq} &= J^{(\chi)}_{kq} = \frac{1}{1+\omega_{k}^{2}(\delta t)^2} \delta_{kq} \mathbbm{1}_{\mathbb{R}^2} \text{ for } 0<k,q<n,\\
J^{(\chi, \varphi)}_{00}&= \frac{-\delta t}{1+\mu^{2}(\delta t)^2},\\
J^{(\chi, \varphi)}_{kq}&= \frac{-\delta t}{1+\omega_{k}^{2}(\delta t)^2} \delta_{kq} \mathbbm{1}_{\mathbb{R}^2} \text{ for } 0<k,q<n,\\
J^{(\varphi, \chi)}_{00}&= \frac{\omega_{k}^{2}\delta t }{1+\mu^{2}(\delta t)^2} \text{ and }\\
J^{(\varphi, \chi)}_{kq}&= \frac{\omega_{k}^{2}\delta t}{1+\omega_{k}^{2}(\delta t)^2} \delta_{kq} \mathbbm{1}_{\mathbb{R}^2} \text{ for } 0<k,q<n.\label{evolution_inversion}
\end{align}
For $(\delta t)^2 < \omega_{n-1}^{-2}$, all the prefactors $\frac{1}{1+\omega_{k}^{2}(\delta t)^2}$ with $0 \leq k <n$, can be expanded into $\sum\limits_{j=0}^{\infty} \left( - \omega_{k}\delta t \right)^{2j}$ by the geometric series and this yields \eqref{evolution_ocalc}.\\
By a Taylor expansion of the logarithm in the summands of the trace of $\log(D'D''^{-1})$ around one, we get
\begin{align}
\mathrm{Tr} \left[\log\left( D'D''^{-1} \right) \right]= \sum\limits_{k=1}^{4n-2}\log(1+(\delta t)^2c_{kk})= \sum\limits_{k=1}^{4n-2}\frac{1}{1+\xi_{k}}(\delta t)^2c_{kk},
\end{align}
for a vector $\xi\in \mathbb{R}^{4n-2}$ with components that are between zero and $(\delta t)^2c_{kk}$.\\
By cyclicity of the trace operator and because $\mathrm{Tr}[A]=\mathrm{Tr}\left[A^T\right]$ for all matrices $A$, we also get
\begin{align}
&\mathrm{Tr} \left[ D'D''^{-1} - \mathbbm{1}_{\mathbb{R}^{4n-2}} \right]\\
&= (\delta t)^2 \cdot \mathrm{Tr} \left[2 \cdot B_{\mathrm{r}}(\mathbbm{1}_{\mathbb{R}^{4n-2}} + \delta t \cdot L_{\mathrm{r}})^{-1} \right.\\
&\left. + (\delta t)^4\cdot B_{\mathrm{r}}DB_{\mathrm{r}}^T(\mathbbm{1}_{\mathbb{R}^{4n-2}} + \delta t\cdot L_{\mathrm{r}})^{-1}D^{-1}(\mathbbm{1}_{\mathbb{R}^{4n-2}} + \delta t\cdot L_{\mathrm{r}})^{-1T} \right],
\end{align}
which is in $\mathcal{O}((\delta t)^2)$ for $(\delta t)^2 < \omega_{n-1}^{-2}$, again using \eqref{evolution_ocalc} for $(\mathbbm{1}_{\mathbb{R}^{4n-2}} + \delta t\cdot L_{\mathrm{r}})^{-1}$ and $(\mathbbm{1}_{\mathbb{R}^{4n-2}} + \delta t\cdot L_{\mathrm{r}})^{-1T}$.\\
Furthermore, the matrix $B_{\mathrm{r}}$ converges to a diagonal matrix where the entries have a maximal absolute value of $\frac{\omega_{n-1}^2}{2}$, as $\delta t \rightarrow 0$. The largest eigenvalue of $B_{\mathrm{r}}$ is thus bounded by some $\lambda_{\delta t}$ and also this value is bounded by its convergence to $\frac{\omega_{n-1}^2}{2}$. The matrix $B_{\mathrm{r}}$ is the same for each evolution step and only depends on $\delta t$. Therefore, we get
\begin{align}
\mathrm{Tr} \left[  \delta m(u) \delta m(u)^{T}\right] &= \delta m(u)^{T}\delta m(u)\\
&= (\delta t)^4\cdot \left\Vert B_{\mathrm{r}}m(u) \right\Vert_2^2 \leq (\delta t)^4\lambda_{\delta t}^2 \cdot \left\Vert m(u) \right\Vert_2^2\\
&\leq (\delta t)^4\lambda_{\delta t}^2 \cdot \Vert W \Vert^2 \left\Vert u \right\Vert_2^2,
\end{align}
and this bound is independent of the evolution step.\\
All in all we get for fixed $\hat{d}_{\mathrm{r}}(t_{i}) = u \in \mathbb{R}^{2(Y+2)}$, that 
\begin{align}
D\left(\mathcal{P}_{\hat{\phi}_{\mathrm{r}}(t_{i+1})\vert \hat{d}_{\mathrm{r}}(t_{i})}(.,u) \Vert \mathcal{P}_{\hat{\phi}'_{\mathrm{r}}(t_{i+1})\vert \hat{d}_{\mathrm{r}}(t_{i})}(.,u) \right) &\leq \frac{1}{2}(\delta t)^4 \lambda_{\delta t}^2 \cdot \Vert W \Vert^2 \left\Vert u \right\Vert_2^2 + c\cdot (\delta t)^2\label{relative_entropy_bound}
\end{align}
for some $c>0$.\\
If for the measured data $\hat{d}_{\mathrm{r}}(0)\in \mathbb{R}^{2(Y+2)}$ there is an upper bound for the squared norm of the simulated data vectors $\left\Vert\hat{d}_{\mathrm{r}}(t_{i})\right\Vert_2^2$ for all $0 \leq i < 2^N$, then the sum over the relative entropies in \eqref{relative_entropy_bound} over all times is still in $\mathcal{O}(\delta t)$ and thus converges to zero with growing $N$, proportional to $N^{-1}$. This is actually the case as we will see.
\newpage

\section{Data update}
Because the prior, the response and the distribution of the noise in our example are the same for all time steps, the posterior is also equal for all $t_{i}$ with $i=0,...,2^N$.
The update of $\hat{d}_{\mathrm{r}}(t_{i})=u$ to $\hat{d}_{\mathrm{r}}(t_{i+1})=u'$ with IFD, by Corollary \ref{cross_entropy}, is then determined by the minimization of
\begin{align}
&D\left(\mathcal{P}_{\hat{\phi}_{\mathrm{r}}(t_{i+1})\vert \hat{d}_{\mathrm{r}}(t_{i+1})}(.,u') \Vert \mathcal{P}_{\hat{\phi}'_{\mathrm{r}}(t_{i+1})\vert \hat{d}_{\mathrm{r}}(t_{i})}(.,u) \right)\\
&= \frac{1}{2} \mathrm{Tr} \left[ ( \delta m(u,u') \delta m(u,u')^{T} + D) D''^{-1} - \mathbbm{1}_{\mathbb{R}^{4n}} - \log(DD''^{-1})\right]\\
&= \frac{1}{2}\left( (m(u')-m''(u))^{T} D''^{-1} (m(u')-m''(u)\right) + c_{1}\\
&= \frac{1}{2}\left( (Wu'  - m''(u))^{T} D''^{-1} (Wu' - m''(u)\right) + c_{1}\\
&=  \frac{1}{2} u'^{T}(W^{T} D''^{-1}W)u' - m''(u)^{T}D''^{-1}Wu' + c_{2}\\
&= \frac{1}{2} u'^{T}(W^{T} D''^{-1}W)u' - u^{T}W^T (\mathbbm{1}_{\mathbb{R}^{4n}} + \delta t\cdot L_{\mathrm{r}})^T D''^{-1}Wu' + c_{2}\\
&= \frac{1}{2} u'^{T}(W^{T} D''^{-1}W)u' - u^{T}(W^TD''^{-1}W)u' - \delta t \cdot u^{T}W^T L_{\mathrm{r}}^T D''^{-1}Wu'+ c_{2}\label{update_rule}
\end{align}
with respect to $u'$, where $c_1$ and $c_2$ are independent of $u'$.
The update to $u'$ is therefore independent of the time step and given by a matrix which we call $M_{\mathrm{r}}$.

If $(W^{T} D''^{-1}W)$ is positive definite, then $M_{\mathrm{r}}$ is of the form $\mathbbm{1}_{\mathbb{R}^{2(Y+2)}} + \delta t\cdot E$ for some matrix $E\in \mathbb{R}^{2(Y+2)\times 2(Y+2)}$.\\
If not, it either is the zero matrix in $\mathbb{R}^{2(Y+2)\times 2(Y+2)}$, or equal to the matrix \\
$(P_{V^{\perp}_{0}})^{-1}\left(P_{V^{\perp}_{0}}(W^{T} D''^{-1}W)(P_{V^{\perp}_{0}})^{-1}\right)^{-1} P_{V^{\perp}_{0}}W^{T}D''^{-1}m''(u)$ from \eqref{g_second_case} in Definition \ref{substitution_function}, which is of the form $(P_{V^{\perp}_{0}})^{-1} \mathbbm{1}_{V^{\perp}_{0}}P_{V^{\perp}_{0}} + \delta t\cdot E$ for some matrix $E\in \mathbb{R}^{2(Y+2)\times 2(Y+2)}$.\\
In both cases one has
\begin{align}
\Vert M_{\mathrm{r}} \Vert \leq (1+ \delta t \cdot \Vert E \Vert).
\end{align}
For given $\hat{d}_{\mathrm{r}}(0)$, the simulated data in time step $t_{i}$, with $1\leq i \leq 2^N$, is then given by $M_{\mathrm{r}}^{i}\hat{d}_{\mathrm{r}}(0)$.\\
Note that $M_{\mathrm{r}}$ depends on $N$ because $\delta t = \frac{T}{2^N}$.\\
We still need the squared norm of all these vectors to have a common upper bound as seen in \eqref{relative_entropy_bound}, so that the information error coming with our approximation of the exact evolution gets arbitrarily small as $\delta t \rightarrow 0$.\\
But it is
\begin{align}
\left\Vert M_{\mathrm{r}}^{i}\hat{d}_{\mathrm{r}}(0) \right\Vert_2^2 &\leq \left\Vert M_{\mathrm{r}} \right\Vert^{2i}  \left\Vert \hat{d}_{\mathrm{r}}(0) \right\Vert_2^2\\
&\leq (1+ \delta t\cdot \Vert E \Vert)^{2i}\cdot \left\Vert \hat{d}_{\mathrm{r}}(0) \right\Vert_2^2\\
&\leq (1+ \delta t\cdot \Vert E \Vert)^{2N}\cdot \left\Vert \hat{d}_{\mathrm{r}}(0) \right\Vert_2^2 = \left(1+ \dfrac{T}{2^N}\cdot \Vert E \Vert\right)^{2N} \cdot\left\Vert \hat{d}_{\mathrm{r}}(0) \right\Vert_2^2\\
&\leq \exp(T\cdot\Vert E \Vert) \cdot\left\Vert \hat{d}_{\mathrm{r}}(0) \right\Vert_2^2.
\end{align}
Therefore, with the measured data vector $\hat{d}_{\mathrm{r}}(0)$ and the simulated vectors $\hat{d}_{\mathrm{r}}(t_{i}) = M_{\mathrm{r}}^{i}\hat{d}_{\mathrm{r}}(0)$, for $1\leq i \leq 2^N$, by \eqref{relative_entropy_bound} we get
\begin{align}
&\sum\limits_{i=0}^{2^N-1}\left[ D\left(\mathcal{P}_{\hat{\phi}_{\mathrm{r}}(t_{i+1})\vert \hat{d}_{\mathrm{r}}(t_{i})}(.,\hat{d}_{\mathrm{r}}(t_{i})) \Vert \mathcal{P}_{\hat{\phi}'_{\mathrm{r}}(t_{i+1})\vert \hat{d}_{\mathrm{r}}(t_{i})}(.,\hat{d}_{\mathrm{r}}(t_{i})) \right)\right]\\
&\leq \sum\limits_{i=0}^{2^N-1}\left[ \frac{1}{2}(\delta t)^4 \lambda_{\delta t}^2 \cdot\Vert W \Vert^2 \left\Vert \hat{d}_{\mathrm{r}}(t_{i}) \right\Vert_2^2 + c\cdot (\delta t)^2 \right]\\
&\leq \frac{T}{2}(\delta t)^3 \lambda_{\delta t}^2 \cdot \Vert W \Vert^2 \exp(T\cdot \Vert E \Vert) \cdot \left\Vert \hat{d}_{\mathrm{r}}(0) \right\Vert_2^2 + T\cdot c\cdot \delta t,
\end{align}
and this expression converges to zero as $N$ grows. This justifies the approximation of the exact field evolution by a time independent and linear one.\\
For the corresponding $n$ and $N$, $M_{\mathrm{r}}$ can be computed from the minimization problem \eqref{update_rule}. As already mentioned, the simulation of our data vector for time $T$ from $\hat{d}_{\mathrm{r}}(0)$ is then given by $M_{\mathrm{r}}^{2^N}\hat{d}_{\mathrm{r}}(0)$.\\
The data $d(0)$ at time $t_0=0$ has therefore to be transformed into its Fourier representation. One then has to identify $\hat{d}_{\mathrm{c}}(0)$ with the real vector $\hat{d}_{\mathrm{r}}(0)$ and to update it to $\hat{d}_{\mathrm{r}}(T)$. This vector has then to be identified with $\hat{d}_{\mathrm{c}}(T)$ and to be back-transformed to the simulated vector $d(T)$ by another Fourier transform.\\
Now we would like to have a more concrete idea of how \eqref{update_rule} is solved in our example and aim to get a representation of $M_{\mathrm{r}}$ which avoids matrix inversions, as those are extremely expensive to compute numerically. For this, we first show that for $\delta t$ small enough, $(W^{T} D''^{-1}W)$ is indeed invertible. We consider the representation $W=\Phi_{\mathrm{r}} \begin{pmatrix}
\hat{R}_{\mathrm{r}} & 0\\
0 & \hat{R}_{\mathrm{r}}
\end{pmatrix}^T \left[\begin{pmatrix}
\hat{R}_{\mathrm{r}} & 0\\
0 & \hat{R}_{\mathrm{r}}
\end{pmatrix} \Phi_{\mathrm{r}} \begin{pmatrix}
\hat{R}_{\mathrm{r}} & 0\\
0 & \hat{R}_{\mathrm{r}}
\end{pmatrix}^T + N\right]^{-1}$ of the Wiener filter in \eqref{Wienerfilter_representations} and first compute $\begin{pmatrix}
\hat{R}_{\mathrm{r}} & 0\\
0 & \hat{R}_{\mathrm{r}}
\end{pmatrix}\Phi_{\mathrm{r}} \begin{pmatrix}
\hat{R}_{\mathrm{r}} & 0\\
0 & \hat{R}_{\mathrm{r}}
\end{pmatrix}^T$. It turns out that we have
\begin{align}
\left( \hat{R}_{\mathrm{r}}\Phi^{(\varphi)}_{\mathrm{r}} \hat{R}_{\mathrm{r}}^T \right)_{00} &= \frac{2\pi}{\beta \mu^2}\text{ and}\\
\left( \hat{R}_{\mathrm{r}}\Phi^{(\varphi)}_{\mathrm{r}} \hat{R}_{\mathrm{r}}^T \right)_{kl} &= b^{(\varphi)}_{k} \cdot \delta_{kl}\mathbbm{1}_{\mathbb{R}^2} \text{ for } 0<k,l \leq \frac{Y+1}{2},\label{R_Phi_R_varphi}
\end{align}
with
\begin{align}
b^{(\varphi)}_{k}&= \frac{\pi}{\beta}\cdot \sum\limits_{m=1}^{n-1} \omega_{m}^{-2}\mathrm{sinc}^2\left(\frac{1}{2}m\Delta\right)\cdot \left[ \theta(k =m\, \mathrm{mod}\, Y ) + \theta(Y-k = m\, \mathrm{mod}\, Y ) \right]\\
&= \frac{\pi\sin^2\left( \frac{1}{2}k\Delta \right)}{\beta}\cdot \sum\limits_{m=1}^{n-1} \frac{1}{(m^2 + \mu^2)\cdot \frac{\pi^2m^2}{Y^2}} \left[ \theta(k =m\, \mathrm{mod}\, Y ) + \theta(Y-k = m\, \mathrm{mod}\, Y ) \right],\label{b_k_varphi}
\end{align}
which is greater than zero for all $0<k\leq \frac{Y+1}{2}$.\\
This is because the above expression would only be zero for $\sin^2\left( \frac{1}{2}k\Delta \right)=0$. The only roots of $\sin$ are numbers in $\mathbb{Z}\cdot\pi$. But for all $1\leq k \leq \frac{Y+1}{2}$, since $\Delta = \frac{2\pi}{Y}$, it is $\frac{1}{2}k\Delta = \frac{k\pi}{Y}\in \mathbb{Z}\cdot \pi$ only for $k=Y=1$ and we assumed $Y$ to be greater than one.\\
The components of $(\hat{R}_{\mathrm{r}}\Phi^{(\varphi)}_{\mathrm{r}} \hat{R}_{\mathrm{r}}^T + N^{\mathrm{(part)}})^{-1}$ are then given by
\begin{align}
(\hat{R}_{\mathrm{r}}\Phi^{(\varphi)}_{\mathrm{r}} \hat{R}_{\mathrm{r}}^T + N^{\mathrm{(part)}})^{-1}_{00} &= \left( \frac{2\pi}{\beta \mu^2} + \sigma_{n}^2\right)^{-1}\text{ and}\\
(\hat{R}_{\mathrm{r}}\Phi^{(\varphi)}_{\mathrm{r}} \hat{R}_{\mathrm{r}}^T + N^{\mathrm{(part)}})^{-1}_{kl} &=  \delta_{kl} \left(b^{(\varphi)}_k + \sigma_n^2\right)^{-1}\mathbbm{1}_{\mathbb{R}^2}
&\text{for } 0<k,l\leq \frac{Y+1}{2}.\label{varphi_part_W_inversion}
\end{align}
Similarly, one gets 
\begin{align}
\left( \hat{R}_{\mathrm{r}}\Phi^{(\chi)}_{\mathrm{r}} \hat{R}_{\mathrm{r}}^T \right)_{00} &= \frac{2\pi}{\beta}\text{ and}\\
\left( \hat{R}_{\mathrm{r}}\Phi^{(\chi)}_{\mathrm{r}} \hat{R}_{\mathrm{r}}^T \right)_{kl} &= b^{(\chi)}_{k} \cdot \delta_{kl}\mathbbm{1}_{\mathbb{R}^2}\text{ for } 0<k,l\leq \frac{Y+1}{2},\label{R_Phi_R_chi}
\end{align}
with
\begin{align}
b^{(\chi)}_{k}&= \frac{\pi}{\beta}\cdot \sum\limits_{m=1}^{n-1} \mathrm{sinc}^2\left(\frac{1}{2}m\Delta\right)\cdot \left[ \theta(k =m\, \mathrm{mod}\, Y ) + \theta(Y-k = m\, \mathrm{mod}\, Y ) \right]\\
&= \frac{\pi\sin^2\left( \frac{1}{2}k\Delta \right)}{\beta}\cdot \sum\limits_{m=1}^{n-1} \frac{Y^2}{\pi^2m^2} \left[ \theta(k =m\, \mathrm{mod}\, Y ) + \theta(Y-k = m\, \mathrm{mod}\, Y ) \right],\label{b_k_chi}
\end{align}
and also those numbers are greater than zero for all $0<k\leq \frac{Y+1}{2}$.\\
Therefore it is
\begin{align}
\left(\hat{R}_{\mathrm{r}}\Phi^{(\chi)}_{\mathrm{r}} \hat{R}_{\mathrm{r}}^T + N^{\mathrm{(part)}}\right)^{-1}_{00} &= \left( \frac{2\pi}{\beta} + \sigma_{n}^2 \right)^{-1}\text{ and}\\
\left(\hat{R}_{\mathrm{r}}\Phi^{(\chi)}_{\mathrm{r}} \hat{R}_{\mathrm{r}}^T + N^{\mathrm{(part)}}\right)^{-1}_{kl} &=   \delta_{kl} \left( b^{(\chi)}_k + \sigma_n^2 \right)^{-1}\mathbbm{1}_{\mathbb{R}^2} \text{ for } 0<k,l \leq \frac{Y+1}{2}.\label{chi_part_W_inversion}
\end{align}

So now we can compute $W$ as 
\begin{align}
W= \begin{pmatrix}
\Phi^{(\varphi)}_{\mathrm{r}} \hat{R}_{\mathrm{r}}^T (\hat{R}_{\mathrm{r}}\Phi^{(\varphi)}_{\mathrm{r}} \hat{R}_{\mathrm{r}}^T + N^{(\mathrm{part})})^{-1} & 0\\
0 & \Phi^{(\chi)}_{\mathrm{r}} \hat{R}_{\mathrm{r}}^T (\hat{R}_{\mathrm{r}}\Phi^{(\chi)}_{\mathrm{r}} \hat{R}_{\mathrm{r}}^T + N^{(\mathrm{part})})^{-1}
\end{pmatrix}.
\end{align}
We consider the other representation $W= D \begin{pmatrix}
\hat{R}_{\mathrm{r}}^T & 0\\
0 & \hat{R}_{\mathrm{r}}^T
\end{pmatrix}N^{-1}$ of the Wiener filter in \eqref{Wienerfilter_representations}.\\
This together with the fact that for $(\delta t)^2<\omega_{n-1}^{-2}$, it is
\begin{align}
(\mathbbm{1}_{\mathbb{R}^{4n-2}} + \delta t\cdot L_{\mathrm{r}})^{-1} = \mathbbm{1}_{\mathbb{R}^{4n-2}} - \delta t\cdot L_{\mathrm{r}} +\mathcal{O}\left((\delta t)^2\right),
\end{align}
which follows directly from \eqref{evolution_inversion}, and with the corresponding formula for 
\[(\mathbbm{1}_{\mathbb{R}^{4n-2}} + \delta t\cdot L_{\mathrm{r}}^T)^{-1},
\]
we then find that
\begin{align}
W^{T} D''^{-1}W &= N^{-1}\begin{pmatrix}
\hat{R}_{\mathrm{r}} & 0\\
0 & \hat{R}_{\mathrm{r}}
\end{pmatrix} D\left(\mathbbm{1}_{\mathbb{R}^{4n-2}} + \delta t\cdot L^T_{\mathrm{r}}\right)^{-1}D^{-1}\left(\mathbbm{1}_{\mathbb{R}^{4n-2}} + \delta t\cdot L_{\mathrm{r}}\right)^{-1}W\\
&= N^{-1}\begin{pmatrix}
\hat{R}_{\mathrm{r}} & 0\\
0 & \hat{R}_{\mathrm{r}}
\end{pmatrix} W - \delta t \cdot \left[N^{-1}\begin{pmatrix}
\hat{R}_{\mathrm{r}} & 0\\
0 & \hat{R}_{\mathrm{r}}
\end{pmatrix} L_{\mathrm{r}}W + W^TL_{\mathrm{r}}^T\begin{pmatrix}
\hat{R}_{\mathrm{r}} & 0\\
0 & \hat{R}_{\mathrm{r}}
\end{pmatrix}^TN^{-1} \right] + \mathcal{O}((\delta t)^2)\\
&= N^{-1}\begin{pmatrix}
\hat{R}_{\mathrm{r}} & 0\\
0 & \hat{R}_{\mathrm{r}}
\end{pmatrix} \Phi_{\mathrm{r}} \begin{pmatrix}
\hat{R}_{\mathrm{r}} & 0\\
0 & \hat{R}_{\mathrm{r}}
\end{pmatrix}^T \left(\begin{pmatrix}
\hat{R}_{\mathrm{r}} & 0\\
0 & \hat{R}_{\mathrm{r}}
\end{pmatrix}\Phi_{\mathrm{r}} \begin{pmatrix}
\hat{R}_{\mathrm{r}} & 0\\
0 & \hat{R}_{\mathrm{r}}
\end{pmatrix}^T + N\right)^{-1}\\
& - \delta t \cdot \left[N^{-1}\begin{pmatrix}
\hat{R}_{\mathrm{r}} & 0\\
0 & \hat{R}_{\mathrm{r}}
\end{pmatrix}L_{\mathrm{r}}W + W^TL_{\mathrm{r}}^T\begin{pmatrix}
\hat{R}_{\mathrm{r}} & 0\\
0 & \hat{R}_{\mathrm{r}}
\end{pmatrix}^TN^{-1} \right] + \mathcal{O}((\delta t)^2)\\
&=: N^{-1}\begin{pmatrix}
\hat{R}_{\mathrm{r}} & 0\\
0 & \hat{R}_{\mathrm{r}}
\end{pmatrix} \Phi_{\mathrm{r}} \begin{pmatrix}
\hat{R}_{\mathrm{r}} & 0\\
0 & \hat{R}_{\mathrm{r}}
\end{pmatrix}^T \left(\begin{pmatrix}
\hat{R}_{\mathrm{r}} & 0\\
0 & \hat{R}_{\mathrm{r}}
\end{pmatrix}\Phi_{\mathrm{r}} \begin{pmatrix}
\hat{R}_{\mathrm{r}} & 0\\
0 & \hat{R}_{\mathrm{r}}
\end{pmatrix}^T + N\right)^{-1} - \delta t \cdot F,
\end{align}
where $- \delta t \cdot F$ includes $- \delta t \cdot \left[N^{-1}\begin{pmatrix}
\hat{R}_{\mathrm{r}} & 0\\
0 & \hat{R}_{\mathrm{r}}
\end{pmatrix}L_{\mathrm{r}}W + W^TL_{\mathrm{r}}^T\begin{pmatrix}
\hat{R}_{\mathrm{r}} & 0\\
0 & \hat{R}_{\mathrm{r}}
\end{pmatrix}^TN^{-1} \right]$ and all the terms that are in $\mathcal{O}((\delta t)^2)$.\\
$\hat{R}_{\mathrm{r}}\Phi^{(\varphi)}_{\mathrm{r}} \hat{R}_{\mathrm{r}}^T$ and $\hat{R}_{\mathrm{r}}\Phi^{(\chi)}_{\mathrm{r}} \hat{R}_{\mathrm{r}}^T$, given by \eqref{R_Phi_R_varphi} and \eqref{R_Phi_R_chi} are invertible since all the $b^{(\varphi)}_{k}$ and $b^{(\chi)}_k$ from \eqref{b_k_varphi} and \eqref{b_k_chi} are greater than zero. $\left(\hat{R}_{\mathrm{r}}\Phi^{(\varphi)}_{\mathrm{r}} \hat{R}_{\mathrm{r}}^T\right)^{-1}$ and $\left(\hat{R}_{\mathrm{r}}\Phi^{(\chi)}_{\mathrm{r}} \hat{R}_{\mathrm{r}}^T\right)^{-1}$ are then given by \eqref{varphi_part_W_inversion} and \eqref{chi_part_W_inversion} with $\sigma_n^2=0$.\\
This way we get invertibility of $N^{-1}\begin{pmatrix}
\hat{R}_{\mathrm{r}} & 0\\
0 & \hat{R}_{\mathrm{r}}
\end{pmatrix} \Phi_{\mathrm{r}} \begin{pmatrix}
\hat{R}_{\mathrm{r}} & 0\\
0 & \hat{R}_{\mathrm{r}}
\end{pmatrix}^T \left(\begin{pmatrix}
\hat{R}_{\mathrm{r}} & 0\\
0 & \hat{R}_{\mathrm{r}}
\end{pmatrix}\Phi_{\mathrm{r}} \begin{pmatrix}
\hat{R}_{\mathrm{r}} & 0\\
0 & \hat{R}_{\mathrm{r}}
\end{pmatrix}^T + N\right)^{-1}$. By the Neumann series, if $\delta t$ is small enough so that 
\begin{align}
\delta t \cdot \left\Vert N\left[\begin{pmatrix}
\hat{R}_{\mathrm{r}} & 0\\
0 & \hat{R}_{\mathrm{r}}
\end{pmatrix} \Phi_{\mathrm{r}} \begin{pmatrix}
\hat{R}_{\mathrm{r}} & 0\\
0 & \hat{R}_{\mathrm{r}}
\end{pmatrix}^T + N\right]\left[\begin{pmatrix}
\hat{R}_{\mathrm{r}} & 0\\
0 & \hat{R}_{\mathrm{r}}
\end{pmatrix} \Phi_{\mathrm{r}} \begin{pmatrix}
\hat{R}_{\mathrm{r}} & 0\\
0 & \hat{R}_{\mathrm{r}}
\end{pmatrix}^T \right]^{-1} F \right\Vert <1,
\end{align}
we get
\begin{align}
&(W^{T} D''^{-1}W)^{-1}\\
&= \sum\limits_{j=0}^{\infty}  \left[ \delta t \cdot N\left[ \begin{pmatrix}
\hat{R}_{\mathrm{r}} & 0\\
0 & \hat{R}_{\mathrm{r}}
\end{pmatrix} \Phi_{\mathrm{r}} \begin{pmatrix}
\hat{R}_{\mathrm{r}} & 0\\
0 & \hat{R}_{\mathrm{r}}
\end{pmatrix}^T + N\right] \left[ \begin{pmatrix}
\hat{R}_{\mathrm{r}} & 0\\
0 & \hat{R}_{\mathrm{r}}
\end{pmatrix} \Phi_{\mathrm{r}} \begin{pmatrix}
\hat{R}_{\mathrm{r}} & 0\\
0 & \hat{R}_{\mathrm{r}}
\end{pmatrix}^T \right]^{-1}F \right]^j \\
& N \left[ \begin{pmatrix}
\hat{R}_{\mathrm{r}} & 0\\
0 & \hat{R}_{\mathrm{r}}
\end{pmatrix} \Phi_{\mathrm{r}} \begin{pmatrix}
\hat{R}_{\mathrm{r}} & 0\\
0 & \hat{R}_{\mathrm{r}}
\end{pmatrix}^T + N \right]\left[ \begin{pmatrix}
\hat{R}_{\mathrm{r}} & 0\\
0 & \hat{R}_{\mathrm{r}}
\end{pmatrix} \Phi_{\mathrm{r}} \begin{pmatrix}
\hat{R}_{\mathrm{r}} & 0\\
0 & \hat{R}_{\mathrm{r}}
\end{pmatrix}^T \right]^{-1}\\
&= N\left[ \begin{pmatrix}
\hat{R}_{\mathrm{r}} & 0\\
0 & \hat{R}_{\mathrm{r}}
\end{pmatrix} \Phi_{\mathrm{r}} \begin{pmatrix}
\hat{R}_{\mathrm{r}} & 0\\
0 & \hat{R}_{\mathrm{r}}
\end{pmatrix}^T + N \right]\left[ \begin{pmatrix}
\hat{R}_{\mathrm{r}} & 0\\
0 & \hat{R}_{\mathrm{r}}
\end{pmatrix} \Phi_{\mathrm{r}} \begin{pmatrix}
\hat{R}_{\mathrm{r}} & 0\\
0 & \hat{R}_{\mathrm{r}}
\end{pmatrix}^T \right]^{-1} + \mathcal{O}(\delta t).
\end{align}
For $(W^{T} D''^{-1}W)$ is positive definite, the minimization problem \eqref{update_rule} has the unique solution 
\begin{align}
u' &= \left(W^{T} D''^{-1}W\right)^{-1}\left(W^{T} D''^{-1}W\right)u + \delta t\cdot \left(W^{T} D''^{-1}W\right)^{-1}W^TD''^{-1}L_{\mathrm{r}}W u\\
&= u + \delta t \cdot N\left[ \begin{pmatrix}
\hat{R}_{\mathrm{r}} & 0\\
0 & \hat{R}_{\mathrm{r}}
\end{pmatrix}\Phi_{\mathrm{r}} \begin{pmatrix}
\hat{R}_{\mathrm{r}} & 0\\
0 & \hat{R}_{\mathrm{r}}
\end{pmatrix}^T + N \right]\left[ \begin{pmatrix}
\hat{R}_{\mathrm{r}} & 0\\
0 & \hat{R}_{\mathrm{r}}
\end{pmatrix} \Phi_{\mathrm{r}} \begin{pmatrix}
\hat{R}_{\mathrm{r}} & 0\\
0 & \hat{R}_{\mathrm{r}}
\end{pmatrix}^T \right]^{-1}N^{-1} \begin{pmatrix}
\hat{R}_{\mathrm{r}} & 0\\
0 & \hat{R}_{\mathrm{r}}
\end{pmatrix} L_{\mathrm{r}}Wu\\
& + \mathcal{O}((\delta t)^2)\\
&= u + \delta t \cdot\left[\mathbbm{1}_{\mathbb{R}^{2(Y+2)}} + N \left[ \begin{pmatrix}
\hat{R}_{\mathrm{r}} & 0\\
0 & \hat{R}_{\mathrm{r}}
\end{pmatrix} \Phi_{\mathrm{r}} \begin{pmatrix}
\hat{R}_{\mathrm{r}} & 0\\
0 & \hat{R}_{\mathrm{r}}
\end{pmatrix}^T \right]^{-1} \right] \begin{pmatrix}
\hat{R}_{\mathrm{r}} & 0\\
0 & \hat{R}_{\mathrm{r}}
\end{pmatrix} L_{\mathrm{r}} \Phi_{\mathrm{r}} \begin{pmatrix}
\hat{R}_{\mathrm{r}} & 0\\
0 & \hat{R}_{\mathrm{r}}
\end{pmatrix}^T\\
& \left[ \begin{pmatrix}
\hat{R}_{\mathrm{r}} & 0\\
0 & \hat{R}_{\mathrm{r}}
\end{pmatrix} \Phi_{\mathrm{r}} \begin{pmatrix}
\hat{R}_{\mathrm{r}} & 0\\
0 & \hat{R}_{\mathrm{r}}
\end{pmatrix}^T + N\right]^{-1}u + \mathcal{O}((\delta t)^2).\label{solution}
\end{align}
Therefore, assuming $\delta t$ small, we approximate $u'$ by $M_{\mathrm{r}}u$, with
\begin{align}
&M_{\mathrm{r}}:= \mathbbm{1}_{\mathbb{R}^{2(Y+2)}} + \delta t \cdot\left[\mathbbm{1}_{\mathbb{R}^{2(Y+2)}} + N\left[ \begin{pmatrix}
\hat{R}_{\mathrm{r}} & 0\\
0 & \hat{R}_{\mathrm{r}}
\end{pmatrix} \Phi_{\mathrm{r}} \begin{pmatrix}
\hat{R}_{\mathrm{r}} & 0\\
0 & \hat{R}_{\mathrm{r}}
\end{pmatrix}^T \right]^{-1} \right] \begin{pmatrix}
\hat{R}_{\mathrm{r}} & 0\\
0 & \hat{R}_{\mathrm{r}}
\end{pmatrix} L_{\mathrm{r}} \Phi_{\mathrm{r}} \begin{pmatrix}
\hat{R}_{\mathrm{r}} & 0\\
0 & \hat{R}_{\mathrm{r}}
\end{pmatrix}^T \\
&\left[ \begin{pmatrix}
\hat{R}_{\mathrm{r}} & 0\\
0 & \hat{R}_{\mathrm{r}}
\end{pmatrix}\Phi_{\mathrm{r}} \begin{pmatrix}
\hat{R}_{\mathrm{r}} & 0\\
0 & \hat{R}_{\mathrm{r}}
\end{pmatrix}^T + N\right]^{-1}\\
&= \mathbbm{1}_{\mathbb{R}^{2(Y+2)}} + \delta t \cdot\left[\mathbbm{1}_{\mathbb{R}^{2(Y+2)}} + \sigma_n^2 \begin{pmatrix}
\left(\hat{R}_{\mathrm{r}}\Phi^{(\varphi)}_{\mathrm{r}}\hat{R}_{\mathrm{r}}^T\right)^{-1} & 0\\
0 & \left(\hat{R}_{\mathrm{r}}\Phi^{(\chi)}_{\mathrm{r}}\hat{R}_{\mathrm{r}}^T\right)^{-1}
\end{pmatrix}  \right] \begin{pmatrix}
\hat{R}_{\mathrm{r}} & 0\\
0 & \hat{R}_{\mathrm{r}}
\end{pmatrix} L_{\mathrm{r}} \Phi_{\mathrm{r}} \begin{pmatrix}
\hat{R}_{\mathrm{r}}^T & 0\\
0 & \hat{R}_{\mathrm{r}}^T
\end{pmatrix} \\
&\begin{pmatrix}
\left(\hat{R}_{\mathrm{r}}\Phi^{(\varphi)}_{\mathrm{r}} \hat{R}_{\mathrm{r}}^T + N^{(\mathrm{part})}\right)^{-1} & 0\\
0 & \left(\hat{R}_{\mathrm{r}}\Phi^{(\chi)}_{\mathrm{r}} \hat{R}_{\mathrm{r}}^T + N^{(\mathrm{part})}\right)^{-1}
\end{pmatrix},
\end{align}
and this matrix can be computed by matrix multiplications only, as all the inversions could be solved analytically.\\
Dividing both sides in \eqref{solution} by $\delta t$ and letting $\delta t \rightarrow 0$, with $u= u(t)$ and $u'=u(t + \delta t)$, we also get a differential equation
\begin{align}
\dot{u}(t) &= \lim\limits_{\delta t \rightarrow 0} \frac{u(t+\delta t)-u(t)}{\delta t}\\
& = \left[\mathbbm{1}_{\mathbb{R}^{2(Y+2)}} + \sigma_n^2 \begin{pmatrix}
\left(\hat{R}_{\mathrm{r}}\Phi^{(\varphi)}_{\mathrm{r}}\hat{R}_{\mathrm{r}}^T\right)^{-1} & 0\\
0 & \left(\hat{R}_{\mathrm{r}}\Phi^{(\chi)}_{\mathrm{r}}\hat{R}_{\mathrm{r}}^T\right)^{-1}
\end{pmatrix} \right] \begin{pmatrix}
\hat{R}_{\mathrm{r}} & 0\\
0 & \hat{R}_{\mathrm{r}}
\end{pmatrix} L_{\mathrm{r}} \Phi_{\mathrm{r}} \begin{pmatrix}
\hat{R}_{\mathrm{r}}^T & 0\\
0 & \hat{R}_{\mathrm{r}}^T
\end{pmatrix} \\
&\begin{pmatrix}
\left(\hat{R}_{\mathrm{r}}\Phi^{(\varphi)}_{\mathrm{r}} \hat{R}_{\mathrm{r}}^T + N^{(\mathrm{part})}\right)^{-1} & 0\\
0 & \left(\hat{R}_{\mathrm{r}}\Phi^{(\chi)}_{\mathrm{r}} \hat{R}_{\mathrm{r}}^T + N^{(\mathrm{part})}\right)^{-1}
\end{pmatrix}u(t)\\
&=: M'_{\mathrm{r}}u(t).
\end{align}
The solution of this equation at time $t$, with initial condition $u(0)$, is then given by
\begin{align}
u(t) = \exp\left( t\cdot M'_{\mathrm{r}}\right)u(0),
\end{align}
which yields us the compact simulation equation
\begin{align}
\hat{d}_{\mathrm{r}}(T) = \exp\left( T\cdot M'_{\mathrm{r}}\right)\hat{d}_{\mathrm{r}}(0)
\end{align} 
for our data at time $T$, if we measured $\hat{d}_{\mathrm{r}}(0)$.\\
Because $M'_{\mathrm{r}}$ is independent of $\delta t$, this update rule is continuous in time. $M'_{\mathrm{r}}$ has to be computed only once. For the simulation of a data vector at time $T$, one then only has to calculate $\exp\left( T\cdot M'_{\mathrm{r}}\right)$ and to multiply this matrix to the measured data vector $\hat{d}_{\mathrm{r}}(0)$.
\chapter{Conclusion}
\label{chap:Conclusion}
In the following, the novelties and the content of this work are summarized. First of all, the measure theoretical fundament and the necessary probabilistic framework for IFD were introduced in Chapter \ref{chap:Background on measure theory}. This way, the fundament for a mathematical interpretation of the physical language in \citep{enss} was set.
Amongst others, this included a mathematical definition for the prior $\mathcal{P}_{\phi}$, the posterior $\mathcal{P}_{\phi \vert d}$, the evidence $\mathcal{P}_{d}$ and the likelihood $\mathcal{P}_{d \vert \phi}$ as density functions, in case $\phi$ is the unknown quantity (signal) and $d$ the known one (data). Bayes's Theorem for this mathematical setting of IFD was derived and proven.\\
In Chapter \ref{chap:setting}, after a description of the setting for IFD, a general example scenario for the approximation of a physical process by a simplified model was constructed. This illustrated the steps to be done until a suitable setup for the simulation with IFD is derived.\\
The real process has to be approximated by a step function in time which is contained in a finite dimensional space for every time step, and the real evolution has to be approximated by a linear one.\\
In Section \ref{Wiener filter}, the derivation of the posterior from a Gaussian prior and a Gaussian zero centered noise distribution with help of the Wiener filter was interpreted in terms of the mathematical language from Chapter \ref{chap:Background on measure theory}.\\
For this framework and a partition $\left((t_{i},t_{i+1}]\right)_{i=0}^{2^N}$ of the time interval $(0,T]$, where $T$ is the time for which the data is simulated, in Chapter \ref{chap:Updating the data} one updating step of IFD was described in detail, following \cite{enss} and imbedding the work by Torsten En\ss lin in a mathematical setting.\\
In this part, approximation errors were indicated and the convergence with finer and finer partitions of $[0,T]$ and a growing number of degrees of freedom of the signal was shown. This was done by proving that the information theoretical error for $\delta t = \frac{T}{2^N}$ is in $\mathcal{O}((\delta t)^2)$ in one simulation step, ensuring the sum over all approximation errors within the whole simulation to be still in $\mathcal{O}(\delta t)$. For the simulation of one data evolution step by multiplying a matrix to the data from the previous step, the convex minimization problem, which arises within entropic matching, was solved in detail. In contribution to \citep{enss}, also the case of a possibly non-positive definite Hessian matrix was taken into account. The chapter ended with a receipt for the whole algorithm produced by IFD. Also the connection between accuracy and the number of the signal's degrees of freedom as well as the number of steps within the time discretization was explained here.\\
Chapter \ref{chap:Maximum_Entropy_Principle} contains an explanation for the concept of differential entropy as a measure of information and the approach of density determination by minimizing the relative entropy between two densities.\\
In Chapter \ref{chap:Example: Klein-Gordon field}, the whole procedure of IFD was illustrated for the Klein-Gordon field in one dimension in space and periodic over $[0,2\pi)$. The data for arbitrary $t\in [0,T]$ was averaging the signal $\phi(t)$ within a partition of the interval $[0,2\pi)$. The noise was assumed to be Gaussian distributed for all times, with zero mean and covariance $N=\sigma_{n}^2\mathbbm{1}$, for $\sigma_{n}^2>0$. Instead of the assumption of no noise, as made in \citep{enss}, measurement inaccuracies where taken into account in the example of this work, to ensure the concept of Wiener filtering with its information propagator $D$ and its information source $j(d)$ to make sense at all. The vague ansatz in \citep{enss} for this example was elaborated in detail to provide a suitable illustration for the theoretical part of the work. At first, the specific solutions of the Klein-Gordon equation, considered in the example, were specified. Afterwards, a correction of the prior from \citep{enss} was made, taking into account dependencies between negative and positive Fourier coefficients of the signal and the fact that the zeroth Fourier mode has to be a real number in case the signal is real valued. Also the correct evolution of a Klein-Gordon field was elaborated and compared to the linear approximation up to first order in the step size, in terms of relative entropy between the evolving posterior densities. With the partition $\left((t_{i},t_{i+1}]\right)_{i=0}^{2^N}$ of $(0,T]$, this way, convergence of the information theoretical error to zero, as $\delta t = \frac{T}{2^N}\rightarrow 0$, was shown.\\
Not only the matrix connection between the old data $d(t_{i})$ and the new one $d(t_{i+1})$ for a single updating step from $t_{i}$ to $t_{i+1}$ was derived, but also a direct simulation equation between the data $d(T)$ at the time of interest and the measured vector $d(0)$ at the initial time could be constructed.\\
This construction was rooted in the fact that the prior, the response and the noise distribution were assumed to be equal for all the evolution steps and that the exact evolution could be approximated by a linear one sufficiently well. This way, the matrix connection between known and simulated data turned out to be the same for all time steps and to converge to an ordinary differential equation for the data for infinitesimally small intervals in the partition of $(0,T]$. The solution of this equation resulted in the direct simulation equation for the data.\\
All matrices could be expressed by simple matrix multiplications and sums over matrices, in order to avoid complicated numerical matrix inversions.\\
The derivation of the direct simulation of the data can easily be generalized for physical processes, for which a linear approximation of the real evolution produces an information theoretical error in $\mathcal{O}((\delta t)^2)$, and where the response, the Gaussian prior and the Gaussian distributed noise are assumed to be constant in time, as it is the case in this example.\\
All in all, IFD has the advantage of not assuming any explicit subgrid structure of the signal and is therefore suitable for the construction of simulation algorithms for a large number of evolution equations.

\chapter{Dictionary for physicists}
\label{chap:Dictionary for physicists}
This chapter contains a short dictionary for a better understanding of the notation in this work. On the left side of the following tabular one finds the notation used in this work, whereas on the right side the corresponding translation into an expression which physicists might be used to is given.
The signal $\phi$ and the data $d$ will always be random vectors in $\mathbb{R}^{n}$ and  $\mathbb{R}^{m}$ for $n,m\in \mathbb{N}$. $f$ is always a measurable function into $\mathbb{R}$ and $A$ a measurable set.\\

\begin{tabular}{l|l}
\underline{mathematical expression} & \underline{physical expression} \medskip \\
$\mathbb{E}[f(\phi)] = \mathbb{E}_{\phi}[f]= \int\limits_{\mathbb{R}^{n}} f(s) \mathbb{P}_{\phi}(ds)$     &     $\langle f(\phi) \rangle_{(\phi)}=\int\mathcal{D}\phi\,  f(\phi)$\\
$\mathbb{E}[\phi;\lbrace \phi \in A \rbrace] = \int\limits_{\phi^{-1}(A)} \phi d\mathbb{P} = \int\limits_{A} s \mathbb{P}_{\phi}(ds)$      &     $\int\mathcal{D}\phi\,  \phi \cdot 1_{A}(\phi)$\\
(prior) $\mathcal{P}_{\phi}:  \mathbb{R}^{n} \rightarrow \mathbb{R}$ & $\mathcal{P}(\phi)$ \\
(posterior) $\mathcal{P}_{\phi \vert d}: \mathbb{R}^{n}\times \mathbb{R}^{m} \rightarrow \mathbb{R}$ & $\mathcal{P}(\phi \vert d)$ \\
(likelihood) $\mathcal{P}_{d \vert \phi}: \mathbb{R}^{m}\times \mathbb{R}^{n} \rightarrow \mathbb{R}$ & $\mathcal{P}(d \vert \phi)$ \\
(evidence) $\mathcal{P}_{d}: \mathbb{R}^{m} \rightarrow \mathbb{R}$ & $\mathcal{P}(d)$ \\
(information Hamiltonian) $H(d,\phi):\mathbb{R}^{m}\times \mathbb{R}^{n} \rightarrow \mathbb{R}\cup \lbrace\infty \rbrace$ & $H(d,\phi)$ \\
(Gaussian prior) $\mathcal{G}(s-\psi,\Phi) = \frac{1}{\vert 2\pi \Phi \vert^{1/2}} \exp\left( -\frac{1}{2}(s-\psi)^{\dagger} \Phi^{-1} (s-\psi) \right)$ & $\mathcal{G}(\phi-\psi,\Phi)$
\end{tabular}

\end{document}